\newcommand{\figref}[1]{Figure~\ref{#1}}
\newcounter{dummy} 
\numberwithin{dummy}{section}
\newtheorem{remark}[dummy]{Remark}
\newtheorem{theorem}[dummy]{Theorem}
\newtheorem{lemma}[dummy]{Lemma}
\newtheorem{example}[dummy]{Example}
\newtheorem{cor}[dummy]{Corollary}
\newtheorem{proposition}[dummy]{Proposition}
\DeclareMathOperator{\R}{\mathbb R}
\DeclareMathOperator{\N}{\mathbb N}
\let\P\relax
\DeclareMathOperator{\P}{\mathcal P}
\DeclareMathOperator{\C}{\mathcal C}
\DeclareMathOperator{\F}{\mathcal F}
\DeclareMathOperator{\id}{id}
\DeclareMathOperator{\NN}{\mathcal N}
\DeclareMathOperator{\erf}{erf}
\newcommand{\lebesgue}{\Lambda}
\DeclareMathOperator{\dom}{dom}
\DeclareMathOperator{\conv}{conv}
\DeclareMathOperator{\ran}{Range}
\DeclareMathOperator{\loc}{loc}
\DeclareMathOperator{\supp}{supp}
\DeclareMathOperator*{\argmin}{arg\,min}
\renewcommand{\d}{\mathop{}\!\mathrm{d}}
\let\vareps\varepsilon
\title{Wasserstein Gradient Flows of MMD Functionals with Distance Kernel
and 
Cauchy Problems on Quantile Functions}
\author{Richard Duong\footnote{Institute of Mathematics,
	   Technische Universität Berlin,
	   Stra{\ss}e des 17.\ Juni 136, 
	   10623 Berlin, Germany,
	   {\ttfamily\{duong, stein, beinert, steidl\}@math.tu-berlin.de},
      \url{https://tu.berlin/imageanalysis}.
	} 
\and
Viktor Stein\footnotemark[1]
\and
Robert Beinert\footnotemark[1]
\and
Johannes Hertrich\footnote{Univesité Paris Dauphine-PSL and Inria Mokaplan, Paris, France,
\texttt{johannes.hertrich@dauphine.psl.eu}.}
		\and Gabriele Steidl\footnotemark[1] 
 }
\date{\today}
\begin{document}
\maketitle

\begin{abstract}
    We give a comprehensive description of Wasserstein gradient flows of maximum mean discrepancy (MMD) functionals $\mathcal F_\nu \coloneqq \text{MMD}_K^2(\cdot, \nu)$ towards given target measures $\nu$ on the real line, where we focus on the negative
    distance kernel $K(x,y) \coloneqq -|x-y|$.
    In one dimension, the Wasserstein-2 space can be isometrically embedded
    into the cone $\mathcal C(0,1) \subset L_2(0,1)$ of quantile functions 
    leading to a characterization of Wasserstein gradient flows
    via the solution of an associated Cauchy problem on $L_2(0,1)$.
    Based on the construction of an appropriate counterpart of $\mathcal F_\nu$ on $L_2(0,1)$
    and its subdifferential,
    we provide a solution of the Cauchy problem.
    For discrete target measures $\nu$, this results in a piecewise linear solution formula.
    We prove invariance and smoothing properties of the flow on subsets of $\mathcal C(0,1)$.
    For certain $\mathcal F_\nu$-flows this implies that initial point measures instantly become absolutely continuous, and stay so over time.
    Finally, 
    we illustrate the behavior of the flow by various numerical examples using an implicit Euler scheme, which is easily computable by a bisection algorithm. For continuous targets $\nu$, also the explicit Euler scheme can be employed, although with limited convergence guarantees.
\end{abstract}

\section{Introduction}
Wasserstein gradient flows have long been studied in stochastic analysis 
and have recently received increasing interest in machine learning,
leading to intriguing research questions that 
often fall outside the scope of the existing theory, see, e.g., \cite{CHHRS2023,HHABCS2023,LBADDP2022}.
In this paper, we concentrate on
Wasserstein gradient flows of MMD functionals 
$\mathcal F_\nu \coloneqq \text{MMD}_K^2(\cdot,\nu)$ towards given target measures $\nu$. 
Wasserstein gradient flows of MMDs with smooth kernels $K$ like, e.g., the Gaussian one, preserve absolutely continuous measures as well as empirical measures, so that in the latter case, 
just the movement of particles has to be taken into account
\cite{AKSG2019}.
For non-smooth kernels like, e.g., the negative distance kernel, 
the properties of the measure can heavily vary during the flow. 
\figref{fig:gradient_flow_dirac_dirac_P2R} shows the simple example
of a Wasserstein gradient flow on the line starting from an initial measure $\mu_0 = \delta_{-1}$ towards the target measure $\nu = \delta_0$, where the behavior of the flow changes completely, see also Example~\ref{ex:delta0_to_uniform}.

\begin{figure}[tb]
      \centering
      \includegraphics[width=0.7\textwidth, trim=0pt 20pt 0pt 30pt, clip]{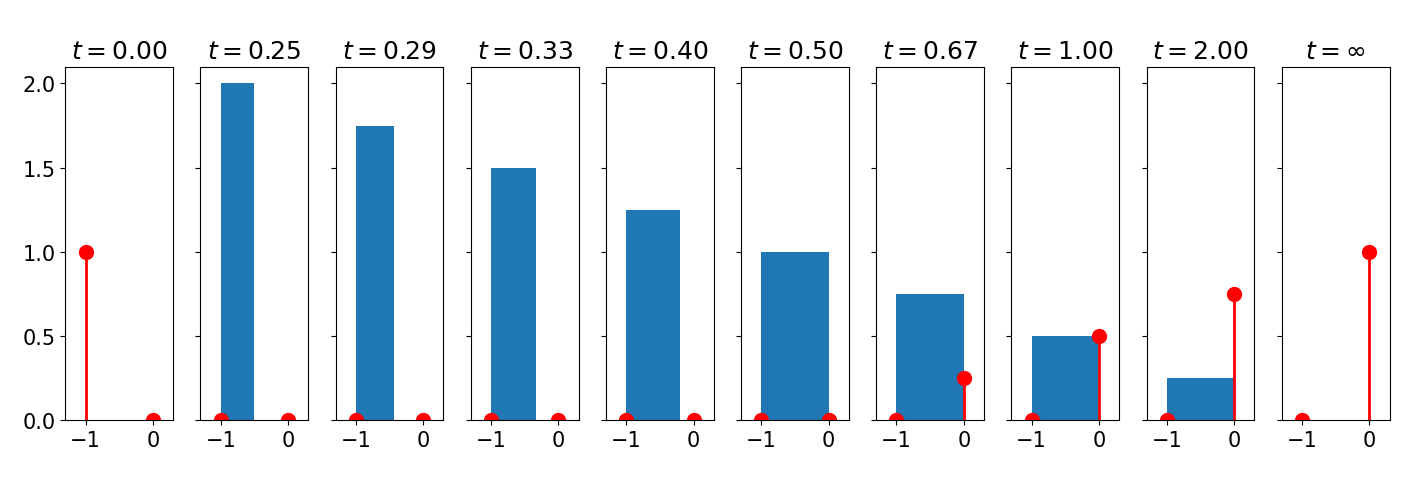}
      \vspace{-5pt}
      \caption{
      Wasserstein gradient flow of $\mathcal F_\nu$
      from $\delta_{-1}$ towards $\nu = \delta_0$.
      The absolutely continuous part is visualized by its density in blue (area equals mass) and the atomic part by the red dotted vertical line (height equals mass).
      The flow changes immediately from a point measure to a uniform one
      with increasing support. It stays absolutely continuous until approaching
      the target support point $0$. Then it becomes the sum of an absolutely continuous measure and a discrete one, where the weight of the latter increases in time, see \cite{HBGS2023}.}
      \label{fig:gradient_flow_dirac_dirac_P2R}
\end{figure}

In general, the characterization of Wasserstein gradient flows of MMD functionals with non-smooth kernels appears to be complicated. For the interaction energy, which is only one part of the MMD functional,
we refer to \cite{HGBS2022} and in relation with potential theory to \cite{CMSVW2024};
for functionals which are the sum of the interaction energy and a potential energy from $\C^1(\R^d)$, which are moreover geodesically convex, see \cite{CLW2016}.
Existence and global convergence results of MMD flows with the Coulomb kernel were proven in \cite{BV2023}.

However, the flow examination can be simplified in one dimension, since in this case the
Wasserstein space $\P_2(\R)$ can be isometrically embedded
into the cone $\mathcal C(0,1) \subset L_2(0,1)$ of quantile functions of measures.
Using this isometry, gradient flows of the negative squared Wasserstein distance were studied in \cite{NS2009}. Concerning the MMD functional, 
the flow of just the interaction energy, and also for a system of two interacting measures, was considered in
\cite{BCDP15,CDEFS2020}, see Remark~\ref{rem:carrillo}. 
The recent preprint \cite{FFR2024} investigates a non-local conservation law, where the interaction kernel is Lipschitz continuous and supported on the negative half line.
Another special case, namely with a single point target measure, was treated in \cite{HBGS2023}.
In \cite{dFFHM2014}, the quantile embedding was used to study the convergence of the gradient flow of kernel-based attraction-repulsion functionals. 
Discretized versions of $\F_\nu$ in the one-dimensional case and their convergence properties to the continuous setting were considered in \cite{DB2022,DLJ2023,FHS2013}.
Our work is also different from the recent article \cite{HMVV2024}, where the functional is a porous-medium type approximation of the interaction energy for the Riesz (or Coulomb) kernel $- | x - y |^{r}$ with $r \in (- 1, 0)$; and the kernel $- | x - y |^r$ for $r > 1$ (but not $r = 1$) is covered by the results in \cite{CCH2014}.
Furthermore, connections between continuity equations -- or, more generally, scalar conservation laws in one dimension -- and the corresponding PDEs for quantile functions associated to various Wasserstein-$p$ distances have been investigated in, e.g., \cite{BD2008,FR2011,LT2004}, but focusing on the longtime behavior of solutions.
Here, we refer to the overview paper \cite{F2016}.

In this paper, we consider the flow of the \emph{whole} MMD functional with repulsion term \textit{and attraction term} for \emph{arbitrary} starting and target measures, concentrating on the negative distance kernel $K(x,y) \coloneqq -|x-y|$.
Taking the isometric embedding into account, a functional $F_\nu$ on $\mathcal C(0,1)$ can be associated to the
MMD functional $\F_\nu$ on the Wasserstein space. We construct a continuous extension of this functional $F_\nu$ to the space $L_2(0,1)$ and compute its subdifferential. Then, starting with a measure $\mu_0$ with quantile function $Q_{\mu_0}$,
we characterize the Wasserstein gradient flow of $\F_\nu$ via $\gamma_t \coloneqq (g(t))_\# \Lambda_{(0,1)}$, where
$g$ is the unique strong solution of the associated Cauchy problem on $L_2(0,1)$: 
\begin{align*}
    \begin{cases}
        \partial_t g(t) \in - \partial F_\nu (g(t)), \quad t \in (0,\infty), \\
         g(0) = Q_{\mu_0}.
    \end{cases}
\end{align*}
Indeed, the strong solution of the Cauchy problem exists and stays within the cone $\mathcal C(0,1)$, i.e., \textit{quantile functions remain quantile functions}.
We identify further interesting invariant subset of $\mathcal C(0,1)$,
in which the $F_\nu$-flow remains once it has started there.
In particular, we prove that the $F_\nu$-flow preserves continuity and Lipschitz properties of the initial datum $Q_{\mu_0}$ under suitable conditions on the target $Q_{\nu}$. This includes cases where point masses \textit{cannot} form during the flow -- in contrast to the example given by Figure~\ref{fig:gradient_flow_dirac_dirac_P2R}.
Also, the phenomenon, where initial point masses immediately explode into absolutely continuous measures, is covered by our smoothing result, see Theorem~\ref{thm:regularisation-Lip}.
This smoothing effect is driven by the repulsive nature of the \textit{interaction energy} part of our MMD functional, see also \cite{BCDP15, CDEFS2020, FFR2024}. But note that in our case, the \textit{potential energy} part in conjunction with a \textit{fixed} target measure $\nu$ plays a fundamental role, whether or not this smoothing property can come into effect, and remain over time -- see again Figure~\ref{fig:gradient_flow_dirac_dirac_P2R}.

Moreover, we discuss the explicit pointwise solution $g_s$ of the Cauchy problem
at continuity points $s \in (0,1)$ of the quantile function $Q_\nu$ and show that
the family of these functions determines the solution of the Cauchy problem
via $[g(t)] (s) = g_s(t)$.
Furthermore, we derive the Euler backward and forward schemes which are easy to implement due to our explicit representation of $\partial F_\nu$, and we illustrate the flow by some numerical examples.

\paragraph{Outline of the paper.}
In Section~\ref{sec:prelim_1}, we recall Wasserstein gradient flows, and especially flows on the line in Section~\ref{sec:prelim}. 
We take great care of the relation between quantile functions and cumulative distribution functions (CDFs) of probability measures and recall properties of maximal monotone operators
on Hilbert spaces which we need later.
Finally, we formulate our central characterization of
Wasserstein gradient flows by the solution of a Cauchy problem in $L_2(0,1)$.
The MMD and the functional $\F_\nu \coloneqq \text{MMD}_K^2(\cdot,\nu)$ with the distance kernel is introduced in Section~\ref{sec:mmd}.
We determine a continuous functional $F_\nu$ on $L_2(0,1)$ whose restriction to $\mathcal C(0,1)$ is associated with $\F_\nu$, meaning that it fulfills $\F_\nu(\mu) =F_\nu(Q_\mu)$ for all measures $\mu$ in the Wasserstein space.
We compute the subdifferential of $F_\nu$ and show that the related Cauchy problem
produces indeed a flow that stays within the cone $\mathcal C(0,1)$.
In Section~\ref{sec:explicit}, we provide a solution of the pointwise Cauchy problem
and show that it also determines the overall solution. In particular, an explicit solution formula is given for discrete target measures $\nu$.
Section~\ref{sec:inv} deals with invariance and smoothing properties of our gradient flows. 
We show that certain $F_\nu$-flows preserve (and improve) Lipschitz properties of the initial quantile $Q_{\mu_0}$ by describing the time evolution of the Lipschitz constants.
Also, we prove for general targets $\nu$ that the support of the starting measure stays convex and grows monotonically.
Finally, Section~\ref{sec:numerics} briefly introduces Euler backward and forward schemes and illustrates properties of the flow by numerical examples.
The appendix collects auxiliary and additional material.
\\[2ex]

\section{Wasserstein Gradient Flows}\label{sec:prelim_1}
Let $\mathcal M(\R^d)$ denote the space of $\sigma$-additive, signed Borel measures and 
$\mathcal P(\R^d)$ the set of probability measures on $\R^d$.
For $\mu \in \mathcal M(\R^d)$ and a measurable map $T\colon\R^d \to \R^n$,
the \emph{push-forward} of $\mu$ via $T$ is given by
$T_{\#}\mu \coloneqq \mu \circ T^{-1}$.
We consider the \emph{Wasserstein space}
$
  \P_2(\R^d) 
  \coloneqq 
  \{ \mu \in \P(\R^d) \colon \int_{\R^d}\|x\|_2^2 \d \mu(x) < \infty \}
$
equipped with the \emph{Wasserstein distance} 
$W_2\colon\P_2(\R^d) \times \P_2(\R^d) \to [0,\infty)$,
\begin{equation}        \label{def:W_2}
  W_2^2(\mu, \nu)
  \coloneqq 
  \min_{\pi \in \Gamma(\mu, \nu)} 
  \int_{\R^d\times \R^d}
  \|x - y\|_2^2
  \d \pi(x, y),
  \qquad \mu,\nu \in \P_2(\R^d),
\end{equation}
where 
$
\Gamma(\mu, \nu)
\coloneqq
\{ \pi \in \P_2(\R^d \times \R^d):
(\pi_{1})_{\#} \pi = \mu,\; (\pi_{2})_{\#} \pi = \nu\}
$ 
and $\pi_i(x) \coloneqq x_i$, $i = 1,2$ for $x = (x_1,x_2)$, see e.g., \cite{Vil03,BookVi09}.
Further, $\| \cdot \|_2$ denotes the Euclidean norm on $\R^d$.
The set of optimal transport plans $\pi$ realizing the minimum in \eqref{def:W_2} is denoted by $\Gamma^{\rm{opt}}(\mu, \nu)$.

A curve $\gamma \colon I \to \P_2(\R^d)$ on an interval $I \subset \R$, is called a \emph{geodesic} 
if there exists a constant $C \ge 0$ 
such that 
\begin{equation}
    \label{eq:geodesic}
    W_2(\gamma_{t_1}, \gamma_{t_2}) = C |t_2 - t_1|, \qquad \text{for all } t_1, t_2 \in I.
\end{equation}
There also exists the notion of \emph{generalized geodesics},
 which in one dimension coincides with that of geodesics, so we do not introduce it here.
 
The Wasserstein space is a geodesic space, meaning that any two measures $\mu, \nu \in \P_2(\R^d)$ can be connected by a geodesic.
For $\lambda \in \R$, a function $\F\colon \P_2(\R^d) \to (-\infty,\infty]$ is called 
\emph{$\lambda$-convex along geodesics} if, for every 
$\mu, \nu \in \dom(\F) \coloneqq \{\mu \in \P_2(\R^d): \F(\mu) < \infty\}$,
there exists at least one geodesic $\gamma \colon [0, 1] \to \P_2(\R^d)$ 
between $\mu$ and $\nu$ such that
\begin{equation} \label{def:lambda_convex}
    \F(\gamma_t) 
    \le
    (1-t) \, \F(\mu) + t \, \F(\nu) 
    - \tfrac{\lambda}{2} \, t (1-t) \, W_2^2(\mu, \nu), 
    \qquad t \in [0,1].
\end{equation}
In the case $\lambda = 0$, we just speak about convex functions.

Let $L_{2,\mu}$ denote the Bochner space of (equivalence classes of) 
functions $\xi:\R^d \to \R^d$ with
$\|\xi \|_{L_{2,\mu}} ^2 \coloneqq \int_{\R^d} \|\xi(x)\|_2^2 \d \mu(x) < \infty$.
The \emph{(regular) tangent space} at $\mu \in \P_2(\R^d)$ is given by
\begin{align} \label{tan_reg}
    {\text T}_{\mu}\mathcal P_2(\R^d)
    &\coloneqq 
      \overline{
      \left\{ \lambda (T- I): (I ,T)_{\#} \mu \in \Gamma^{\text{opt}} (\mu , T_{\#} \mu), \; \lambda >0
      \right\} }^{L_{2,\mu}}.
\end{align}
For a proper and lower semicontinuous (lsc) function $\F \colon \P_2(\R^d) \to (-\infty, \infty]$ 
and $\mu \in \P_2(\R^d)$, 
the \emph{reduced Fr\'echet subdifferential $\partial \F(\mu)$ at $\mu$} 
consists of all $\xi \in L_{2,\mu}$ satisfying
{\small
\begin{equation}\label{need}
  \F(\nu) - \F(\mu)
    \ge 
    \inf_{\pi \in \Gamma^{\text{opt}}(\mu,\nu)}
    \int_{\R^{2d}}
    \langle \xi(x), y - x \rangle
    \d \pi (x, y)
    + o(W_2(\mu,\nu))
\end{equation}
}%
for all $\nu \in \P_2(\R^d)$.

A curve $\gamma\colon I \to \P_2(\R^d)$ is \emph{(locally) $p$-absolutely continuous} for $p \in [1, \infty]$ \cite[Def.~1.1.1]{BookAmGiSa05} if there exists $m \in L_{p}(I, \R)$ (resp. $L_{p, \loc}(I, \R)$) such that
\begin{equation*}
    W_2(\gamma_t, \gamma_s)
    \le \int_{s}^{t} m(r) \d{r} \qquad
    \text{for all } s, t \in I, s < t,
\end{equation*}
and we omit the $p$ if $p = 1$.
By \cite[Thm.~8.3.1]{BookAmGiSa05}, if $\gamma$ is \emph{absolutely continuous}, then
there exists a Borel velocity field $v$ of functions $v_t \colon \R^d \to \R^d$ with $\int_I \| v_t \|_{L_{2,\gamma_t}} \d t < \infty$ such that the \emph{continuity equation}
\begin{equation} \label{eq:CE}
  \partial_t \gamma_t + \nabla_x \cdot ( v_t \, \gamma_t) = 0
\end{equation}
holds on $I \times \R^d$ in the sense of distributions, i.e., for all $\varphi \in C_{\mathrm c}^\infty(I \times \R^d)$
it holds
\begin{equation} \label{eq:CE_distr}
    \int_I \int_{\R^d} \partial_t \varphi(t, x) + v_t(x) \cdot \nabla_x \, \varphi(t, x) \d \gamma_t(x) \d{t}
    = 0.
\end{equation}
The velocity field $v_t$ can be chosen to have minimal norm $\|v_t\|_{L_2, \gamma_t}$ among all velocity fields satisfying \eqref{eq:CE}. This unique optimal vector field fulfills $v_t \in \text{T}_{\gamma_t} \mathcal P_2(\R^d)$.

A locally $2$-absolutely continuous curve 
  $\gamma \colon (0,\infty) \to \P_2(\R^d)$ 
  with tangent velocity field $v_t \in \text{T}_{\gamma_t} \mathcal P_2(\R^d)$
  is called \emph{Wasserstein gradient flow 
  with respect to} $\F\colon \P_2(\R^d) \to (-\infty, \infty]$
  if 
  \begin{equation}\label{wgf}
      v_t \in - \partial \F(\gamma_t) \quad \text{for a.e. } t > 0.
  \end{equation}

We have the following theorem which holds also true in $\R^d$ when switching to so-called generalized geodesics, 
see \cite[Thm.~11.2.1]{BookAmGiSa05}.

\begin{theorem}[Existence and uniqueness of Wasserstein gradient flows] \label{thm:WGF}
    Let $\F \colon \P_2(\R) \to (- \infty, \infty]$ be bounded from below, lower semicontinuous (lsc) and $\lambda$-convex along geodesics and $\mu_0 \in \overline{\dom(\F)}$.
    Then there exists a unique Wasserstein gradient flow $\gamma \colon (0, \infty) \to \P_2(\R)$ with respect to $\F$ with $\gamma(0+) = \mu_0$.
    Furthermore, the piecewise constant curve 
    constructed from the iterates of the minimizing movement scheme
    \begin{equation} \label{jko}
        \mu_{n+1} \coloneqq \argmin_{\mu \in \P_2(\R^d)} \Big\{ \F(\mu) + \frac{1}{2 \tau} W_2^2(\mu_{n}, \mu) \Big\}, \qquad \tau > 0,
    \end{equation}
    i.e., $\gamma_\tau$ defined by $\gamma_\tau|_{(n \tau,(n+1)\tau]} \coloneqq \mu_n$, $n=0,1,\ldots$,
    converges locally uniformly to $\gamma$ as $\tau \downarrow 0$.

    If $\lambda > 0$, then $\F$ admits a unique minimizer $\bar{\mu} \in \P_2(\R)$ and we observe exponential convergence:
    \begin{equation*}
        W_2(\gamma_t, \bar{\mu})
        \le e^{- \lambda t} W_2(\mu_0, \bar{\mu})
        \quad \text{and} \quad
        \F(\gamma_t) - \F(\bar{\mu})
        \le e^{-2 \lambda t} \big( \F(\mu_0) - \F(\bar{\mu}) \big).
    \end{equation*}
    If $\lambda = 0$ and $\bar{\mu}$ is a minimizer of $\F$, then we have
    \begin{equation*}
        \F(\gamma_t) - \F(\bar{\mu})
        \le \frac{1}{2 t} W_2^2(\mu_0, \bar{\mu}).
    \end{equation*}
\end{theorem}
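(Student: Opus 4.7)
The plan is to reduce the statement to a classical gradient flow in the Hilbert space $L_2(0,1)$ via the isometric embedding $\mu \mapsto Q_\mu$ from $(\P_2(\R), W_2)$ onto the closed convex cone $\mathcal C(0,1)$ of quantile functions, and then invoke Brezis's theorem for maximal monotone operators. Crucially, in one dimension, geodesics in $\P_2(\R)$ correspond precisely to straight line segments in $\mathcal C(0,1)$ under this isometry, so $\lambda$-convexity of $\F$ along geodesics is equivalent to classical $\lambda$-convexity of the associated Hilbert-space functional $F\colon L_2(0,1) \to (-\infty,\infty]$, defined by $F(g) \coloneqq \F(g_\# \lebesgue_{(0,1)})$ on $\mathcal C(0,1)$ and $+\infty$ elsewhere. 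Boundedness from below and lower semicontinuity transfer in the same way, turning $F$ into a proper, lsc, $\lambda$-convex functional on a Hilbert space, with $\overline{\dom F}$ corresponding to $\overline{\dom \F}$.

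By the Brezis theorem for gradient flows of $\lambda$-convex lsc functionals on Hilbert spaces, for $g_0 \coloneqq Q_{\mu_0} \in \overline{\dom F}$ the Cauchy problem $\partial_t g(t) \in -\partial F(g(t))$ with $g(0+) = g_0$ admits a unique strong solution $g\colon [0,\infty) \to L_2(0,1)$ which remains inside $\overline{\dom F} \subset \mathcal C(0,1)$. Setting $\gamma_t \coloneqq g(t)_\# \lebesgue_{(0,1)}$ produces a locally $2$-absolutely continuous curve in $\P_2(\R)$ with $\gamma(0+) = \mu_0$. The central technical step is identifying the subdifferentials under the isometry: I would show that a tangent vector field $v_t \in \mathrm T_{\gamma_t}\P_2(\R)$ belongs to $-\partial \F(\gamma_t)$ precisely when its precomposition $v_t \circ g(t) \in L_2(0,1)$ belongs to $-\partial F(g(t))$, so that \eqref{wgf} translates into the Cauchy problem above. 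This follows by expanding the defining inequality \eqref{need} for the Wasserstein subdifferential against $\nu = h_\# \lebesgue_{(0,1)}$, using $W_2(\mu,\nu) = \|Q_\mu - Q_\nu\|_{L_2(0,1)}$ and the fact that in one dimension the unique optimal plan is the monotone rearrangement $\pi = (Q_\mu, Q_\nu)_\# \lebesgue_{(0,1)}$.

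For the minimizing movement scheme, I would transport the JKO step \eqref{jko} through the isometry to the proximal step $g_{n+1} = \argmin_{g \in L_2(0,1)} \{ F(g) + \frac{1}{2\tau} \|g - g_n\|_{L_2}^2 \}$, whose piecewise-constant interpolant converges locally uniformly to $g$ by classical proximal-point theory in Hilbert spaces. Pushing forward by $\lebesgue_{(0,1)}$ and using that the pushforward map is $1$-Lipschitz from $(\mathcal C(0,1), \|\cdot\|_{L_2})$ into $(\P_2(\R), W_2)$ transfers this convergence to $\gamma_\tau \to \gamma$ locally uniformly in time. The quantitative estimates for $\lambda > 0$ (exponential contraction of $W_2(\gamma_t, \bar\mu)$ and of the energy) and $\lambda = 0$ (the $\frac{1}{2t} W_2^2(\mu_0, \bar\mu)$ decay) follow verbatim from the corresponding classical estimates for $\lambda$-convex gradient flows on Hilbert spaces, applied to $F$, and are transferred via the isometry; existence and uniqueness of the minimizer $\bar\mu$ when $\lambda > 0$ is immediate from strong convexity of $F$ on the closed convex set $\mathcal C(0,1)$.

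The delicate point is the subdifferential correspondence, especially at points $g$ on the boundary of the cone $\mathcal C(0,1)$, where elements of the Hilbert subdifferential $\partial F(g)$ may a priori contain normal-cone contributions that need not correspond to Wasserstein tangent vectors in the sense of \eqref{tan_reg}. Showing that the minimal-norm selection of $\partial F(g)$ projects onto the image of $\mathrm T_\mu\P_2(\R)$ under the isometry — equivalently, that the intrinsic Moreau–Yosida regularizations of $F$ and $\F$ agree — is where working in one dimension genuinely simplifies matters compared to the $\R^d$ proof in \cite{BookAmGiSa05}, which must instead rely on generalized geodesics. Once this identification is in place, the remaining assertions are essentially bookkeeping through the isometry.
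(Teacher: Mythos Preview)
The paper does not prove this theorem: it is quoted as a background result from \cite[Thm.~11.2.1]{BookAmGiSa05}, so there is no ``paper's own proof'' to compare against. Your proposal is therefore not a reconstruction of the paper's argument but an independent proof sketch for the one-dimensional case.

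That said, your route is essentially the strategy the paper develops later, in Theorem~\ref{thm:L2_representation}, for the specific functional $F_\nu$. The key difference is how the cone constraint is handled. You extend $F$ by $+\infty$ outside $\mathcal C(0,1)$ and then have to confront the normal-cone contributions in $\partial F$ at boundary points; as you correctly flag, showing that the minimal-norm element of $\partial F(g)$ is of the form $\xi\circ g$ for some $\xi\in L_{2,\mu}$ (i.e.\ constant on the plateaus of $g$) is the genuine content, and you have not actually carried this out. The paper avoids this issue entirely: it works with a functional $F_\nu$ that is finite and convex on all of $L_2(0,1)$, so $\partial F_\nu$ has no indicator contribution, and then separately checks (Lemma~\ref{l:RC-C}) that the resolvent maps $\mathcal C(0,1)$ into itself, which via the exponential formula keeps the flow in the cone. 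This is cleaner, but of course only works once one has an explicit extension of $F$ off the cone, which is not available for a general $\F$ as in the statement of Theorem~\ref{thm:WGF}.

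So: for the general statement you would either need to cite \cite{BookAmGiSa05} as the paper does, or complete the subdifferential identification you left open. The rest of your outline (JKO $\leftrightarrow$ proximal step, transfer of the $\lambda>0$ and $\lambda=0$ rates through the isometry) is routine and correct.
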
 

\section{Wasserstein Gradient Flows in 1D}\label{sec:prelim}
In the following, we recall that $\P_2(\R)$ can be isometrically embedded into $L_2(0, 1)$ via so-called quantile functions. 
This reduces Wasserstein gradient flows in one dimension to the
consideration of gradient flows in the Hilbert space $L_2(0, 1)$, or more precisely, in the cone of quantile functions. We will see in the main Theorem~\ref{thm:L2_representation} of this section, that we have finally
to deal with a Cauchy inclusion problem.


The \emph{cumulative distribution function} (CDF) of $\mu \in \P(\R)$
is given by
\begin{equation*}
    R_{\mu}^+ \colon \R \to [0, 1], \quad
    R_{\mu}^+(x) \coloneqq \mu\big( (-\infty, x] \big), 
\end{equation*}
and its \emph{quantile function} 
by 
\begin{equation*}
    Q_{\mu} \colon (0, 1) \to \R, \quad
    Q_{\mu}(s) \coloneqq \min\{ x \in \R: R_{\mu}^+(x) \ge s \}.
\end{equation*}
\begin{remark}\label{l:DLOW-eq-lem}
    It is easy to check that $Q_\mu$ is strictly increasing if and only if $R_\mu^+$ is continuous. Further, $Q_\mu$ is continuous if and only if $R_\mu^+$ is strictly increasing on $\overline{(R_\mu^+)^{-1}(0,1)}$. 
\end{remark}
We will also need the function
\begin{equation*}
    R_{\mu}^{-} \colon \R \to [0, 1], \quad
    R_{\mu}^{-}(x) \coloneqq \mu\big( (- \infty, x) \big) .
\end{equation*}
The functions $R_\mu^+, R_\mu^-$ and $Q_\mu$ are monotonically increasing with only countably many discontinuities. 
If $\mu(\{x\}) = 0$ for all $x \in \R$, then the functions 
 $R_\mu \coloneqq R_{\mu}^{-} = R_{\mu}^{+}$ are continuous.
In general, the points of continuity $x \in \R$ of $R_\mu^+$ and $R_\mu^-$ coincide, and there it holds $R_\mu^-(x) = R_\mu^+(x)$. Generally, we have $R_\mu^- \le R_\mu^+$ such that 
$$
[R_\mu^-(x), R_\mu^+(x)] \ne \emptyset \hspace*{4mm} \text{ for all } x \in \R.
$$
Both $R_\mu^-$ and $Q_\mu$ are left-continuous and, since they are increasing, also lower semicontinuous (lsc), whereas $R_\mu^+$ is right-continuous and, since it is increasing, upper semicontinuous (usc).
Further, we have for $x \in \R$ that 
\begin{equation} \label{eq:bounds}
 R_{\mu}^+(x)
        = \begin{cases}
            \max\{ s \in (0, 1): Q_{\mu}(s) \le x \}, & \text{if } x \in [\inf Q_{\mu}(s) , \sup Q_{\mu}(s)), \\
            1, & \text{if } x \ge \sup Q_{\mu}(s), \\
            0, & \text{if } x < \inf Q_{\mu}(s),
        \end{cases}
\end{equation}
where the infimum and supremum is taken over all $s \in (0, 1)$, and
\begin{equation} \label{eq:bounds_1}
R_{\mu}^{-}(x)= \sup \{s \in (0, 1):  Q_{\mu}(s) < x \} \quad \text{for} \quad x > \inf_{s \in (0, 1)} Q_{\mu}(s).
\end{equation}
Note that formula \eqref{eq:bounds} is different from an erroneous one in \cite{RoRo14}.
Moreover, it holds the Galois inequalities which state that
\begin{gather}
    \label{eq:galois}
    Q_\mu(s) \le x \quad \text{if and only if} \quad s \le R_\mu^+(x).
   \end{gather}
The inequalities \eqref{eq:galois} state a generalized inversion relation between $R_\mu^+$ and $Q_\mu$, see Figure \ref{fig:CDF_vs_Quantile}.   
\begin{figure}[t]
   \centering
    \includegraphics[height=4.5cm,valign=c]{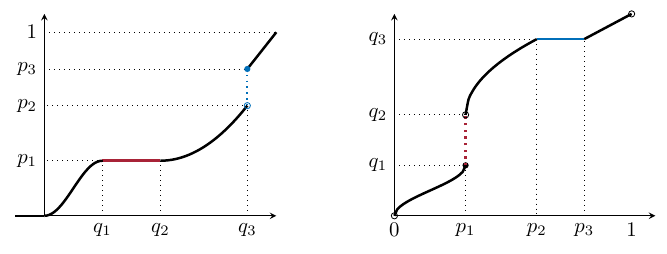}
   \caption{Left: the CDF $R_{\mu}^+$ of a probability measure $\mu \in \P(\R)$.
   Right: the corresponding quantile function $Q_{\mu}$. Intervals of constancy translate to jumps and vice-versa.}
   \label{fig:CDF_vs_Quantile}
\end{figure}

The quantile functions of measures in $\mathcal P_2(\R)$ form a closed, convex cone
$$
\mathcal C (0,1) := \{ f \in L_2(0,1): f ~ \text{is increasing\footnotemark (a.e.)} \},
$$
\footnotetext{For convenience, we define "increasing" as \emph{non-decreasing} ($\ge$), and we distinguish it from "strictly increasing" ($>$).}
see also Remark~\ref{rem:cone} in the appendix. For a further overview on quantile functions in convex analysis, see also \cite{RoRo14}.

By the following theorem, see, e.g., \cite[Thm.~2.18]{Vil03}, the mapping $\mu \mapsto Q_\mu$ is an isometric embedding 
of $\P_2(\R)$ into $L_2( 0,1 )$.

\begin{theorem}\label{prop:Q}
    For $\mu, \nu \in \P_2(\R)$,
    the quantile function $Q_{\mu} \in \mathcal C(0,1)$ 
		satisfies $\mu = (Q_{\mu})_{\#} \lebesgue_{(0,1)}$
  with the Lebesgue measure $\lebesgue$ on $(0,1)$
   	and 
    \begin{equation}
        W_2^2(\mu, \nu) = \int_{0}^1 |Q_{\mu}(s) - Q_{\nu}(s)|^2 \d s.
    \end{equation}
\end{theorem}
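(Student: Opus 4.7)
My plan is to split the statement into two parts: first establish the push-forward identity $\mu = (Q_\mu)_\# \Lambda_{(0,1)}$, then derive the $L_2$-integral representation of $W_2^2(\mu,\nu)$ from it.

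For the push-forward identity, I would use the Galois inequality \eqref{eq:galois}. Since cumulative distribution functions uniquely determine a Borel probability measure on $\R$, it suffices to check equality of the CDFs. For each $x \in \R$,
\begin{equation*}
    \bigl((Q_\mu)_\# \Lambda_{(0,1)}\bigr)\bigl((-\infty,x]\bigr)
    = \Lambda_{(0,1)}\bigl(\{s \in (0,1) : Q_\mu(s) \le x\}\bigr)
    = \Lambda_{(0,1)}\bigl(\{s \in (0,1) : s \le R_\mu^+(x)\}\bigr)
    = R_\mu^+(x),
\end{equation*}
where the middle equality is precisely \eqref{eq:galois}. The right-hand side equals $\mu((-\infty,x])$ by definition, proving the identity. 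The membership $Q_\mu \in \mathcal C(0,1)$ is immediate from the fact that $Q_\mu$ is increasing, while $\|Q_\mu\|_{L_2}^2 = \int_\R x^2 \d\mu(x) < \infty$ follows from $\mu \in \P_2(\R)$ together with the push-forward identity just established (applied to the function $x \mapsto x^2$).

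For the isometry formula, the push-forward identity immediately yields that $\pi^\star \coloneqq (Q_\mu, Q_\nu)_\# \Lambda_{(0,1)}$ lies in $\Gamma(\mu,\nu)$, and a change of variables gives
\begin{equation*}
    \int_{\R^2} |x-y|^2 \d\pi^\star(x,y) = \int_0^1 |Q_\mu(s) - Q_\nu(s)|^2 \d s,
\end{equation*}
which is the desired upper bound $W_2^2(\mu,\nu) \le \int_0^1 |Q_\mu - Q_\nu|^2 \d s$. The harder direction is to show this coupling is optimal, i.e., the reverse inequality. I would do this via the classical Fréchet--Hoeffding / rearrangement argument: for the strictly convex cost $c(x,y) = (x-y)^2$, the functional $\pi \mapsto \int c \d\pi$ is minimized on $\Gamma(\mu,\nu)$ by the unique monotone (comonotone) coupling. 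Concretely, one shows that any $\pi \in \Gamma(\mu,\nu)$ satisfies the pointwise inequality coming from the submodularity identity $c(x_1,y_1) + c(x_2,y_2) \le c(x_1,y_2) + c(x_2,y_1)$ whenever $x_1 \le x_2$ and $y_1 \le y_2$, from which cyclical monotonicity and thus optimality of $\pi^\star$ follow. Alternatively, one can use the Hoeffding--Fréchet bound $\pi((-\infty,x] \times (-\infty,y]) \le \min(R_\mu^+(x), R_\nu^+(y))$, together with the Cambanis--Simons--Stout formula writing $\int c \d\pi$ as a double integral of joint CDFs, to conclude directly.

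The main obstacle is the optimality step in paragraph three; the push-forward identity is essentially bookkeeping via Galois, but optimality of the monotone rearrangement is the substantive one-dimensional fact. Since this is a well-known classical result, I would simply invoke the appropriate reference (e.g.\ \cite[Thm.~2.18]{Vil03}) at that point rather than reproving it, and focus the written exposition on the push-forward identity and the change of variables.
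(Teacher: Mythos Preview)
Your argument is correct. However, the paper does not actually prove this theorem: it is stated with the reference ``see, e.g., \cite[Thm.~2.18]{Vil03}'' and no proof is given. So there is nothing to compare against beyond noting that you invoke the very same reference for the optimality of the monotone coupling, while additionally spelling out the push-forward identity via the Galois inequality~\eqref{eq:galois} and the easy upper bound---details the paper simply takes for granted.
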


Thus, instead of working with 
$\F: \P_2(\R) \to (-\infty,\infty]$, we can just deal with associated
functions 
$$F \colon L_2( 0,1 ) \to (-\infty,\infty] \quad \text{with} \quad
F (Q_\mu) \coloneqq \F(\mu).
$$
Note that this relation determines 
$F$ only on $\mathcal C(0,1)$ and several extension to the whole space
$L_2( 0,1 )$ are possible.
One possibility would be to extend $F$ outside of $\mathcal C(0,1)$ by $\infty$. Yet, later in this work we will deal with a continuous extension of a specific functional which is everywhere finite on $L_2(0,1)$.

Now, instead of the (reduced) Fr\'echet subdifferential \eqref{need}, 
we will use the \emph{regular subdifferential} of functions 
$F \colon L_2( 0,1 ) \to (-\infty,\infty]$ defined by 
\begin{equation} \label{eq:subdiff_1}
    \partial F(u) \coloneqq 
    \bigl\{ v \in L_2( 0,1 ): F(w) \ge F(u) + \langle v, w-u \rangle +o(\|w-u\|) \; ~\text{for all } w\in L_2(0,1)\bigr\}.
\end{equation}
If $F$ is convex, then the $o$-term can be skipped.

The following theorem collects well-known properties of subdifferential operators
on Hilbert spaces \cite{BC2011}. 
Here, the \textit{domain} of a multivalued operator $A \colon L_2(0, 1) \to 2^{L_2(0, 1)}$ is denoted by $\dom(A) \coloneqq \{ u \in L_2(0, 1): A u \ne \emptyset \}$.
 
\begin{theorem} \label{general}
Let $F: L_2(0,1) \to (-\infty,\infty]$ be proper and convex.
Then $\partial F: L_2(0,1) \to 2^{L_2(0,1)}$ is a monotone operator, i.e.
for every $u_i$ and $v_i \in \partial F( u_i)$, $i=1,2$ it holds
$$
\langle u_1-u_2, v_1 - v_2 \rangle \ge 0.
$$
If $F$ is in addition lsc, then $\partial F$ is maximal monotone and thus for any $\vareps >0$, 
$$\text{{\rm Range}} (I + \vareps \partial F) = \bigcup\limits_{u \in L_2(0,1)} (I + \vareps \partial F)(u) = L_2(0,1).$$
Moreover, $\partial F$ is a closed operator, $\overline{\dom(F)} = \overline{\dom(\partial F)}$
and the resolvent 
$$J_{\varepsilon}^{\partial F} \coloneqq (I + \vareps \partial F)^{-1} : L_2(0,1) \to L_2(0,1)$$ 
is single-valued.
\end{theorem}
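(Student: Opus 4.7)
The plan is to verify the five claims in sequence, relying on the standard Hilbert-space convex-analysis machinery of \cite{BC2011}; most items reduce to Minty's theorem once the range condition is established.

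First I would dispatch monotonicity directly from the subgradient inequality. If $v_i \in \partial F(u_i)$ for $i=1,2$, then
\begin{equation*}
    F(u_2) \ge F(u_1) + \langle v_1, u_2 - u_1 \rangle, \qquad
    F(u_1) \ge F(u_2) + \langle v_2, u_1 - u_2 \rangle,
\end{equation*}
and summing yields $\langle v_1 - v_2, u_1 - u_2 \rangle \ge 0$. Single-valuedness of the resolvent $J_\varepsilon^{\partial F}$ is a quick corollary: if $w \in (I + \varepsilon \partial F)(v_i)$ for $i = 1,2$, then $\varepsilon^{-1}(w - v_i) \in \partial F(v_i)$, so by monotonicity $-\varepsilon^{-1}\|v_1 - v_2\|^2 \ge 0$, forcing $v_1 = v_2$.

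The key analytic step is the range identity. For $w \in L_2(0,1)$ and $\varepsilon > 0$, I consider the Moreau-type functional $\Phi(u) \coloneqq F(u) + \tfrac{1}{2\varepsilon}\|u - w\|^2$. Since $F$ is proper, convex and lsc, it is bounded below by an affine functional (a standard Hahn--Banach argument), so $\Phi$ is proper, strictly convex, lsc and coercive on the Hilbert space $L_2(0,1)$. By the direct method, $\Phi$ has a unique minimizer $u^*$, whose Fermat condition reads $0 \in \partial F(u^*) + \varepsilon^{-1}(u^* - w)$, i.e.\ $w \in (I + \varepsilon \partial F)(u^*)$. This proves $\text{Range}(I + \varepsilon \partial F) = L_2(0,1)$ and, combined with monotonicity, Minty's theorem delivers maximal monotonicity of $\partial F$.

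The remaining two items are now formal consequences. Closedness of $\partial F$ follows from maximal monotonicity: if $u_n \to u$ and $v_n \in \partial F(u_n)$ with $v_n \to v$, then passing to the limit in the monotonicity inequality against an arbitrary pair $(u',v') \in \text{graph}(\partial F)$ shows that $(u,v)$ satisfies the monotonicity relation with all of $\text{graph}(\partial F)$, so by maximality $v \in \partial F(u)$. For the density statement $\overline{\dom(F)} = \overline{\dom(\partial F)}$, the inclusion $\dom(\partial F) \subset \dom(F)$ is trivial, and conversely every $u \in \overline{\dom(F)}$ is approximated by $J_\varepsilon^{\partial F} u$ as $\varepsilon \downarrow 0$, which lies in $\dom(\partial F)$ by the range identity. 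The main obstacle is really only the invocation of Minty's theorem; since this is catalogued in \cite{BC2011}, I would simply cite it rather than reprove it.
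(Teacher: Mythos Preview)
Your proof sketch is correct and follows the standard convex-analysis route. However, the paper does not actually prove this theorem: it is stated as a collection of well-known facts with a reference to \cite{BC2011}, and no argument is given. So while your approach is perfectly fine (and indeed the textbook one), there is nothing to compare against beyond noting that the authors chose to cite rather than reprove.
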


Concerning the $\lambda$-convexity and lower semicontinuity 
of $\mathcal F$, we have the following proposition, whose proof is given in the appendix. 

\begin{proposition}    \label{prop:conv-geo} 
    If $F: \mathcal C(0,1) \to (-\infty,\infty]$ is $\lambda$-convex, then 
    $\mathcal F(\mu) \coloneqq F(Q_\mu)$, $\mu \in \P_2(\R)$, is $\lambda$-convex along geodesics. If $F$ is lsc, then the same holds true for $\mathcal F$.
\end{proposition}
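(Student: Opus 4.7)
The key ingredient is that the isometric embedding $\mu \mapsto Q_\mu$ from $(\mathcal P_2(\R), W_2)$ onto the convex cone $\mathcal C(0,1) \subset L_2(0,1)$ converts geodesics into line segments. More precisely, given $\mu,\nu \in \mathcal P_2(\R)$, the curve
\begin{equation*}
    \gamma_t \coloneqq \bigl((1-t)\,Q_\mu + t\,Q_\nu\bigr)_{\#}\Lambda_{(0,1)}, \qquad t \in [0,1],
\end{equation*}
is well-defined since the convex combination of two increasing $L_2$-functions is again increasing and hence in $\mathcal C(0,1)$; by Theorem~\ref{prop:Q} its quantile function is exactly $Q_{\gamma_t} = (1-t)\,Q_\mu + t\,Q_\nu$. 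Using the isometry once more,
\begin{equation*}
    W_2(\gamma_{t_1},\gamma_{t_2}) = \bigl\|Q_{\gamma_{t_1}} - Q_{\gamma_{t_2}}\bigr\|_{L_2(0,1)} = |t_2 - t_1|\,\|Q_\mu - Q_\nu\|_{L_2(0,1)} = |t_2-t_1|\,W_2(\mu,\nu),
\end{equation*}
so $\gamma$ is indeed a geodesic connecting $\mu$ and $\nu$. This is the only genuinely one-dimensional ingredient; I would either cite it or justify it in one short remark.

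For the $\lambda$-convexity claim, I take $\mu,\nu \in \dom(\mathcal F)$, i.e., $Q_\mu, Q_\nu \in \dom(F)$, and the geodesic $\gamma$ as above. Then $\mathcal F(\gamma_t) = F(Q_{\gamma_t}) = F\bigl((1-t)\,Q_\mu + t\,Q_\nu\bigr)$, and the $\lambda$-convexity of $F$ on $\mathcal C(0,1)$ gives
\begin{equation*}
    F\bigl((1-t)\,Q_\mu + t\,Q_\nu\bigr)
    \le (1-t)\,F(Q_\mu) + t\,F(Q_\nu) - \tfrac{\lambda}{2}\,t(1-t)\,\|Q_\mu - Q_\nu\|_{L_2(0,1)}^2.
\end{equation*}
Substituting $\|Q_\mu - Q_\nu\|_{L_2(0,1)}^2 = W_2^2(\mu,\nu)$ yields exactly the geodesic $\lambda$-convexity inequality~\eqref{def:lambda_convex} along $\gamma$. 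This shows $\mathcal F$ is $\lambda$-convex along geodesics in the sense of the definition (existence of at least one geodesic satisfying the inequality).

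For lower semicontinuity, suppose $\mu_n \to \mu$ in $\mathcal P_2(\R)$, i.e., $W_2(\mu_n,\mu) \to 0$. By the isometry this is equivalent to $Q_{\mu_n} \to Q_\mu$ in $L_2(0,1)$, and since $Q_{\mu_n}, Q_\mu \in \mathcal C(0,1)$, the lower semicontinuity of $F$ on $\mathcal C(0,1)$ gives $\liminf_{n\to\infty} F(Q_{\mu_n}) \ge F(Q_\mu)$, which is precisely $\liminf_{n\to\infty} \mathcal F(\mu_n) \ge \mathcal F(\mu)$.

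The only subtle point is making sure that the definition of $\lambda$-convexity along geodesics only requires the existence of one such geodesic (so the candidate above suffices) and that the domain compatibility $Q_\mu \in \dom(F) \iff \mu \in \dom(\mathcal F)$ is built into the construction $\mathcal F = F \circ Q_{(\cdot)}$. Everything else is a direct transfer of the convexity/lsc inequality through the isometry, and there is no real obstacle beyond invoking Theorem~\ref{prop:Q}.
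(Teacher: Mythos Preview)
Your proof is correct and follows essentially the same approach as the paper: both arguments use the isometry from Theorem~\ref{prop:Q} to identify the geodesic $\gamma_t$ with the line segment $(1-t)Q_\mu + tQ_\nu$ in $L_2(0,1)$, then transfer the $\lambda$-convexity inequality of $F$ back to $\mathcal F$, and handle lower semicontinuity by the same isometry. The only cosmetic difference is that the paper starts from an arbitrary geodesic $\gamma$ and deduces $Q_{\gamma_t} = (1-t)Q_\mu + tQ_\nu$ via uniqueness of geodesics in $L_2(0,1)$, whereas you construct this specific curve and verify directly that it is a geodesic; since the definition of $\lambda$-convexity along geodesics only requires one such geodesic, your route is equally valid.
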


The next theorem is central for our further considerations.
It characterizes Wasserstein gradient flows by the solution of a Cauchy problem,
where we have to ensure that the solution remains in the cone $\mathcal C(0,1)$.
To this end, recall that given an 
operator $A: \dom(A) \subseteq L_2(0,1) \to 2^{L_2(0,1)}$ and an initial function $g_0 \in L_2(0,1)$, a \textit{strong solution} $g: [0, \infty) \to L_2(0,1)$ of the Cauchy problem
\begin{align}\label{eq:cauchy-strong}
    \begin{cases}
    \partial_t g(t) + Ag(t) ~\ni~ 0,  \quad t \in (0,\infty), \\
    g(0) ~=~ g_0,
    \end{cases}
\end{align}
is a function $g \in W^{1}_1\big((0,T];L_2(0,1) \big) \cap C\big([0,\infty); L_2(0,1)\big)$
for any $T > 0$ which meets the initial condition and solves the differential inclusion in \eqref{eq:cauchy-strong} pointwise for a.e. $t > 0$, where $\partial_t g(t)$ denotes the strong derivative of $g$ at $t$.

\begin{theorem}\label{thm:L2_representation}
    Let $F \colon L_2(0,1) \to (-\infty,\infty]$
    be a proper, convex and lsc function. Assume that for all (small) $\varepsilon > 0$, the resolvent $J_{\varepsilon}^{\partial F}$ maps $\C(0,1)$ into itself.
    Then, for any initial datum $g_0 \in \mathcal C(0,1) \cap \dom(\partial F)$, 
    the Cauchy problem 
    \begin{align}\label{eq:cauchy}
    \begin{cases}
    \partial_t g(t) + \partial F (g(t)) ~\ni~ 0, \quad t \in (0,\infty), \\
    g(0) ~=~ g_0,
    \end{cases}
    \end{align}
    has a unique strong solution $g: [0, \infty) \to \mathcal C(0, 1)$ which is expressible by the exponential formula
    \begin{equation} \label{eq:form}
            g(t) = \text{e} ^{-t \partial F} g_0 \coloneqq \lim_{n \to \infty} \left( I + \frac{t}{n} \partial F\right)^{-n} g_0.
        \end{equation}
    The curve $\gamma_t \coloneqq (g(t))_{\#} \lebesgue_{(0,1)}$ has quantile functions $Q_{\gamma_t} = g(t)$ and 
    is a Wasserstein gradient flow of $\F$ with $\gamma(0+) = (g_0)_{\#} \lebesgue_{(0,1)}$.
\end{theorem}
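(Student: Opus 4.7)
The plan has three parts: solve the Cauchy problem in $L_2(0,1)$ by the classical semigroup theory for maximal monotone operators; use the assumed cone invariance of the resolvent to confine the solution to $\mathcal{C}(0,1)$; and then match the resulting curve with the Wasserstein gradient flow via the minimizing movement characterization in Theorem~\ref{thm:WGF}.

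By Theorem~\ref{general}, the operator $\partial F$ is maximal monotone on $L_2(0,1)$. Brezis' generation theorem for evolution equations governed by maximal monotone operators on Hilbert spaces then gives, for every $g_0 \in \dom(\partial F)$, a unique strong solution $g \in W^{1}_1((0,T];L_2(0,1)) \cap C([0,\infty); L_2(0,1))$ of \eqref{eq:cauchy} for every $T>0$, together with the exponential formula \eqref{eq:form}. Writing the right-hand side as iterations of the resolvent, $g(t) = \lim_{n \to \infty} (J_{t/n}^{\partial F})^{n} g_0$, the hypothesis that $J_{\varepsilon}^{\partial F}$ maps $\mathcal{C}(0,1)$ into itself together with $g_0 \in \mathcal{C}(0,1)$ shows that every iterate lies in $\mathcal{C}(0,1)$. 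Since $\mathcal{C}(0,1)$ is closed in $L_2(0,1)$, the limit $g(t)$ stays in $\mathcal{C}(0,1)$, and by Theorem~\ref{prop:Q} (applied to the left-continuous representative, see also Remark~\ref{rem:cone}) $g(t)$ is the quantile function of $\gamma_t \coloneqq g(t)_\# \lebesgue_{(0,1)}$.

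The main step is to identify $\gamma_t$ with the Wasserstein gradient flow of $\F$ starting from $\mu_0 \coloneqq (g_0)_\# \lebesgue_{(0,1)}$. By Proposition~\ref{prop:conv-geo}, $\F$ is convex along geodesics and lsc, so Theorem~\ref{thm:WGF} yields a unique such flow, obtained as the locally uniform limit of the JKO iterates \eqref{jko}. Using the isometry of Theorem~\ref{prop:Q}, the $n$-th JKO problem with previous iterate $\mu_n = (g_n)_\# \lebesgue_{(0,1)}$ and $g_n = Q_{\mu_n} \in \mathcal{C}(0,1)$ is equivalent to
\[
    \min_{g \in \mathcal{C}(0,1)}\; F(g) + \tfrac{1}{2\tau}\, \|g - g_n\|_{L_2(0,1)}^{2}.
\]
The unique $L_2$-minimizer of the same functional without the cone constraint is $J_{\tau}^{\partial F} g_n$; by assumption this already lies in $\mathcal{C}(0,1)$, so the constrained and unconstrained minimizers coincide and $Q_{\mu_{n+1}} = J_{\tau}^{\partial F} g_n$. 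Induction with $\tau = t/n$ gives $Q_{\mu_n} = (J_{t/n}^{\partial F})^{n} g_0$, hence the quantile functions of the JKO curve converge to $g(t)$ in $L_2(0,1)$ by \eqref{eq:form}, i.e., in $W_2$ to $\gamma_t$. Uniqueness in Theorem~\ref{thm:WGF} then forces $\gamma_t$ to equal the Wasserstein gradient flow.

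The hardest step is this last identification, in particular the equality of the $L_2$-proximal and the $W_2$-proximal operators on $\mathcal{C}(0,1)$, which depends critically on the cone invariance of $J_{\varepsilon}^{\partial F}$; without it, the two proximal problems have different solutions and the bridge between the Cauchy problem and the Wasserstein flow breaks. A minor technical caveat is that Theorem~\ref{thm:WGF} requires $\F$ to be bounded from below, which is either inherited from additional structure of $F$ (e.g.\ $F \ge 0$ as in the MMD setting) or circumvented by noting that the discrete iterates coincide and passing directly through the $L_2$-convergence of the exponential formula.
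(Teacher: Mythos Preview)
Your first part (maximal monotonicity, Brezis' generation theorem, exponential formula, and cone invariance via closedness of $\mathcal C(0,1)$) coincides with the paper's proof. For the identification of $\gamma_t$ with the Wasserstein gradient flow you take a genuinely different route: you match the JKO scheme \eqref{jko} with the $L_2$-proximal iteration (using that the resolvent already lands in $\mathcal C(0,1)$, so the constrained and unconstrained proximal problems agree), and then invoke the convergence and uniqueness statement of Theorem~\ref{thm:WGF}. The paper instead verifies the definition \eqref{wgf} directly: it shows $\gamma$ is locally $2$-absolutely continuous from the Lipschitz continuity of $g$, extracts the tangent velocity field $v_t$ via \cite[Prop.~8.4.6]{BookAmGiSa05}, identifies $v_t\circ g(t)=\partial_t g(t)\in -\partial F(g(t))$, and then rewrites the $L_2$-subgradient inequality as the Fr\'echet subdifferential inequality \eqref{need} along the (unique) optimal plan $(g(t),Q_\mu)_\#\lebesgue_{(0,1)}$.

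Your approach is cleaner when it applies, but it does not prove the theorem at its stated generality. Theorem~\ref{thm:WGF} requires $\F$ to be bounded from below, while Theorem~\ref{thm:L2_representation} assumes only that $F$ is proper, convex and lsc. Your proposed circumvention (``the discrete iterates coincide and pass through the exponential formula'') only yields $W_2$-convergence of the JKO curve to $\gamma$; it does not by itself produce a tangent velocity field $v_t\in -\partial\F(\gamma_t)$, which is what \eqref{wgf} demands. Closing that gap without a lower bound on $F$ essentially forces you back to the paper's direct velocity-field computation. So: for the MMD application ($F_\nu\ge 0$) your argument is complete and arguably more conceptual; for the general statement, the paper's route is the one that actually works.
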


\begin{proof}
    1. By assumption, $\partial F$ is the subdifferential of a proper, convex and lsc function, and hence maximal monotone by Theorem~\ref{general}. By standard results of semigroup theory, see e.g., Theorem~\ref{theorem:BrezisRegularity}, there exists a strong solution
    $g: [0, \infty) \to L_2(0, 1)$ of \eqref{eq:cauchy} satisfying the exponential formula \eqref{eq:form}. It remains to show that starting in $g_0 \in \mathcal C(0,1)$, the solution remains in the cone $\mathcal C(0,1)$. Indeed, since $J_{\varepsilon}^{\partial F}$ maps $\C(0,1)$ into itself, we can conclude that $\left( I + \frac{t}{n} \partial F \right)^{-n} g_0 \in \mathcal C(0,1)$ for all $n \in \N$. Since $\mathcal C(0,1)$ is closed in $L_2(0,1)$, this also holds true when the limit
    in \eqref{eq:form} is taken, hence $g(t) \in \C(0,1)$ for all $t \ge 0$.
    
    2. Let $\gamma_t \coloneqq (g(t))_{\#} \lebesgue_{(0,1)}$.
    First, we show that $\gamma$ is locally $2$-absolutely continuous.
     By Theorem~\ref{theorem:BrezisRegularity}, the function $g$ is Lipschitz continuous, so there exists a $L > 0$ such that for all $s, t \ge 0$ with $s < t$ we have
    \begin{equation*}
        W_2(\gamma_t, \gamma_s)
        = \| g(t) - g(s) \|_{L_2(0, 1)}
        \le L (t - s)
        = \int_{s}^{t} L \d{\tau},
    \end{equation*}
    where the first equality is due to Theorem~\ref{prop:Q}.
    Hence, the curve $\gamma \colon [0, \infty) \to (\P_2(\R), W_2)$ is Lipschitz continuous, and in particular, locally $2$-absolutely continuous.
    
    Next, we show that the velocity field $v_t \in \text{T}_{\gamma_t} \mathcal P_2(\R)$ from \eqref{eq:CE} fulfills $v_t\in -\partial \F(\gamma_t)$.
    To calculate $v_t$, 
    we exploit \cite[Prop~8.4.6]{BookAmGiSa05}
    stating that,
    for a.e.~$t \in (0,\infty)$,
    the velocity field satisfies
    \begin{align}
        0
        &=\lim_{\tau\downarrow0}\frac{W_2(\gamma_{t+\tau},(I+\tau \, v_t)_\#\gamma_t)}{\tau} \\
        & =\lim_{\tau\downarrow0}\frac{W_2(g(t + \tau)_\#\lebesgue_{(0,1)},
        \bigl(g(t) +\tau \left(v_t\circ g(t) \right) \bigr)_\#\lebesgue_{(0,1)})}{\tau}.
    \end{align}
    First, let us assume that there exists $\tilde \tau > 0$ such that $I + \tilde\tau v_t$ is monotonically increasing. In the Appendix \ref{sec:appendix}, we will give the arguments in the general case. 
    Hence, for sufficiently small $0 < \tau < \tilde \tau$,
    the mappings $I + \tau v_t$ are still monotonically increasing.
    Consequently,
    the functions $g(t) + \tau (v_t \circ g(t))$ are also monotonically increasing,
    and their left-continuous representatives are quantile functions.
    Employing the isometry to $L_2(0,1)$,
    we hence obtain
    \begin{equation}
    0
         = \lim_{\tau \downarrow0} \Big\| \frac{g(t+\tau)-g(t)}{\tau}- v_t \circ g (t) \Big\|_{L_2(0,1)}
        = \|\partial_t g(t) - v_t \circ g(t) \|_{L_2(0,1)}.
    \end{equation}
    Thus, by construction \eqref{eq:cauchy} of $g$, we see that
    $v_t \circ g(t) \in - \partial F \left( g(t) \right)$ for a.e.~$t$.
    In particular, 
    for any $\mu\in\P_2(\R)$,
    we obtain
    \begin{align}
       0 \,\le \, & F(Q_\mu) - F\left(g(t) \right)
        + \int_0^1 \left( v_t\circ g(t)\right)(s) \, \big(Q_\mu(s)- \left(g(t)\right)(s) \big) \d s \\
        =\, &\mathcal F(\mu)-\mathcal F(\gamma_t) +\int_{\R\times\R} v_t(x) \, (y-x) \d \pi(x,y),
    \end{align}
    where $\pi\coloneqq (g(t),Q_\mu)_\#\lebesgue_{(0,1)}$.
    By Theorem~\ref{prop:Q}, the plan
    $\pi$ is optimal between $\gamma_t$ and $\mu$, see also \cite[Thm.~2.18]{Vil03}.
    By \cite[Thm.~16.1(i),(ii)]{M2023}, we also know that $\pi$ is unique, so \eqref{need} yields that $v_t\in -\partial \F(\gamma_t)$. 
\end{proof}

By the same arguments as in the proof of Theorem~\ref{thm:L2_representation}, we have the following corollary concerning invariant subsets of $\C(0,1)$ of $F$-flows.

\begin{cor} \label{remark:GeneralizedRangeCondition}
Let $D \subset \C(0, 1)$ be a closed subset
and let $F \colon L_2(0,1) \to (-\infty,\infty]$
be a proper, convex and lsc function. 
 Assume that for all (small) $\varepsilon > 0$, the resolvent $J_{\varepsilon}^{\partial F}$ maps $D$ into itself.
Then, the solution of the Cauchy problem \eqref{eq:cauchy}
starting in $g_0 \in D$ fulfills 
$g(t) \in D$ for all $t \ge 0$.
\end{cor}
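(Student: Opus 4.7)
My plan is to mirror the first step of the proof of Theorem~\ref{thm:L2_representation}, replacing the cone $\C(0,1)$ by the closed set $D$. Since $F$ is proper, convex, and lsc, Theorem~\ref{general} ensures that $\partial F$ is maximal monotone, and Brezis' semigroup theory (Theorem~\ref{theorem:BrezisRegularity}) then produces, for $g_0 \in D \cap \overline{\dom(\partial F)}$, the unique strong solution $g$ of \eqref{eq:cauchy}, given explicitly by the exponential formula
\begin{equation*}
    g(t) \,=\, \lim_{n \to \infty} \bigl( J_{t/n}^{\partial F} \bigr)^n g_0.
\end{equation*}

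Next, I would iterate the invariance hypothesis. For any fixed $t > 0$ and all $n$ sufficiently large, the step size $\varepsilon = t/n$ lies in the range of ``small'' resolvent parameters covered by the assumption, so $J_{t/n}^{\partial F}$ maps $D$ into $D$; starting from $g_0 \in D$ and applying this operator $n$ times yields $\bigl( J_{t/n}^{\partial F} \bigr)^n g_0 \in D$. Since $D$ is closed in $L_2(0,1)$, the limit in the exponential formula also lies in $D$, so $g(t) \in D$ for every $t > 0$. The case $t = 0$ follows immediately from the initial condition $g(0) = g_0 \in D$.

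There is no real obstacle here: the argument is essentially a verbatim transcription of the cone-invariance step in Theorem~\ref{thm:L2_representation}, and the only two properties of $\C(0,1)$ actually used there are precisely those now isolated as hypotheses on $D$, namely invariance under the resolvents for small parameters and closedness in $L_2(0,1)$. In this sense the corollary is less a new result than a conceptual isolation of the mechanism driving the cone-invariance part of the preceding theorem.
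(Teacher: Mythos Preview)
Your proposal is correct and matches the paper's approach exactly: the paper states the corollary ``by the same arguments as in the proof of Theorem~\ref{thm:L2_representation}'', and your write-up spells out precisely that argument --- iterating the resolvent invariance on $D$ and passing to the limit in the exponential formula using closedness of $D$.
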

\section{Flows of MMD with Distance Kernel}\label{sec:mmd} 
In this paper, we are mainly interested in Wasserstein gradient flows 
of MMD functionals 
$\mathcal F_\nu: \P_2(\R) \to [0, \infty)$ 
for the negative distance kernel. After introducing these functionals, we will define an
associated functional $F_\nu: L_2(0,1) \to \R$ with
$F_\nu(Q_\mu) = \mathcal F_\nu(\mu)$. More precisely, we will extend 
$F_\nu$ from $\mathcal C(0,1)$ to $L_2(0,1)$ such that its subdifferential can be
easily computed.


MMDs are defined with respect to kernels $K\colon\R^d \times \R^d \to \R$.
In this paper, we are interested in the negative distance kernel 
\begin{equation} \label{eq:riesz}
K(x,y) \coloneqq - |x-y|,
\end{equation}
which is symmetric and \emph{conditionally positive definite} of order one.
Then we define
    \begin{align}         \label{eq:DK2}
        \text{MMD}_K^2(\mu, \nu) &\coloneqq 
        \frac12 \int_{\R^d \times \R^d} K(x,y) \d \mu(x) \d \mu(y) 
        -
        \int_{\R^d \times \R^d} K(x,y) \d \mu(x) \d \nu(y) \\
        & \quad +
        \frac12 \int_{\R^d \times \R^d} K(x,y) \d \nu(x) \d \nu(y).
\end{align}
The square root of the above formula defines a distance on $\P_2(\R^d)$ for 
many kernels of interest including
the negative distance kernel \eqref{eq:riesz}.
In particular, we have that
$\text{MMD}_K(\mu, \nu) \ge 0$ with equality if and only if $\mu = \nu$.
Fixing the target measure $\nu$, the third summand becomes a constant
and we may consider the \emph{MMD functional}
\begin{equation}  \label{eq:dis-decomp_all}
    \F_\nu(\mu)    
    \coloneqq \frac12 \int_{\R^d \times \R^d} K(x,y) \d \mu(x) \d \mu(y) 
    -
    \int_{\R^d \times \R^d} K(x,y) \d \mu(x) \d \nu(y). 
\end{equation}
The first summand is known as \emph{interaction energy}, while
 the second one is called \emph{potential energy} of 
 $V_\nu \coloneqq \int_{\R^d} K(\cdot,y) \,\d\nu(y)$.

 In the following, we are exclusively interested in $d=1$ and the negative distance kernel \eqref{eq:riesz}.
Note that the MMD of the negative distance kernel 
is also known as energy distance \cite{szekely2002,szekely_energy} and that in one dimension we have a relation to the Cramer distance
$$
\mathrm{MMD}_K^2(\mu,\nu)=\int_{-\infty}^\infty (R_\mu(x)-R_\nu(x))^2 \d x,
$$ 
see \cite{mmd_energy_eq}.
More precisely, we will deal with Wasserstein gradient flows of
\begin{equation}    \label{eq:dis-decomp}
 \F_\nu(\mu) = - \frac12 \int_{\R \times \R} |x-y| \d \mu(x) \d \mu(y) 
 + \int_{\R \times \R} |x-y| \d \mu(x) \d \nu(y) .
\end{equation}
In dimensions $d \ge 2$, neither the interaction energy nor
the whole $\text{MMD}_K^2$ functional with the negative distance kernel are $\lambda$-convex along geodesics, see \cite{HGBS2022},
so that Theorem~\ref{thm:WGF} does not apply.
We will see that this is different on the real line. 
Note that in 1D, but not in higher dimensions,
$\lambda$-convexity along geodesics implies the stronger property of $\lambda$-convexity along so-called generalized geodesics. 

Next, we propose an associated functional of \eqref{eq:dis-decomp} on 
the \emph{whole space} $L_2(0,1)$, 
which is determined on $\mathcal C(0,1)$ by $\mathcal F_\nu (\mu) = F_\nu(Q_\mu)$.
We consider the functional $F_{\nu} \colon L_2(0, 1) \to \R$ given via 
\begin{align} \label{eq:Fnu}
    F_{\nu}(u)
    &\coloneqq \int_{0}^{1} \left( (1 - 2 s)  u(s) 
    + \int_{0}^{1} | u(s) - Q_{\nu}(t) | \d{t} \right) \d{s}\\
    &=
    \int_{0}^{1} j\left(s,u(s) \right) \d{s},
\end{align}
where $j \colon (0, 1) \times \R \to \R$ is given by 
\begin{equation}\label{def_H}
    j(s, u) \coloneqq (1 - 2 s) u 
    + H(u), \quad 
    H(u) \coloneqq \int_{0}^{1} | u - Q_{\nu}(t) | \d{t}.
\end{equation}

Indeed, by the following lemma the functional $F_{\nu}$
is associated with $\mathcal F_\nu$.

\begin{lemma}\label{lem:extended_fun}
For $\F_\nu$ in \eqref{eq:dis-decomp},
the functional $F_\nu \colon L_2(0,1) \to \R$ defined by \eqref{eq:Fnu}
fulfills $F_\nu(Q_\mu)=\F_\nu(\mu)$ for all $\mu\in\P_2(\R)$.
\end{lemma}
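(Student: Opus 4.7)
The plan is to rewrite the two integrals in $\mathcal{F}_\nu(\mu)$ as iterated integrals over $(0,1)$ using the push-forward identity $\mu = (Q_\mu)_\#\lebesgue_{(0,1)}$ from Theorem~\ref{prop:Q} (and similarly for $\nu$), and then to match the resulting expressions against the two summands in the definition \eqref{eq:Fnu} of $F_\nu$.

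First, for the potential energy term, a direct change of variables gives
\begin{equation}
    \int_{\R\times\R} |x-y|\d\mu(x)\d\nu(y) = \int_0^1\int_0^1 |Q_\mu(s)-Q_\nu(t)|\d t\d s = \int_0^1 H(Q_\mu(s))\d s,
\end{equation}
which is precisely the $H$-part of $F_\nu(Q_\mu)$. Both integrals are finite since $\mu,\nu\in\P_2(\R)$ implies $Q_\mu, Q_\nu\in L_2(0,1)$.

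The main computation is the interaction energy term. I would first rewrite
\begin{equation}
    -\tfrac12\int_{\R\times\R}|x-y|\d\mu(x)\d\mu(y) = -\tfrac12\int_0^1\int_0^1 |Q_\mu(s)-Q_\mu(t)|\d s\d t,
\end{equation}
and then exploit the monotonicity of $Q_\mu \in \mathcal C(0,1)$, which gives $|Q_\mu(s)-Q_\mu(t)| = Q_\mu(\max(s,t))-Q_\mu(\min(s,t))$. Splitting the unit square along the diagonal and using symmetry yields
\begin{equation}
    \int_0^1\int_0^1 |Q_\mu(s)-Q_\mu(t)|\d s\d t = 2\int_0^1\int_0^t (Q_\mu(t)-Q_\mu(s))\d s\d t.
\end{equation}
Expanding and applying Fubini to swap the order of integration in $\int_0^1\int_0^t Q_\mu(s)\d s\d t = \int_0^1 (1-s)Q_\mu(s)\d s$, and similarly identifying $\int_0^1 tQ_\mu(t)\d t = \int_0^1 sQ_\mu(s)\d s$, gives after simplification
\begin{equation}
    \int_0^1\int_0^1 |Q_\mu(s)-Q_\mu(t)|\d s\d t = \int_0^1 (4s-2)Q_\mu(s)\d s,
\end{equation}
so that the interaction term equals $\int_0^1 (1-2s)Q_\mu(s)\d s$, which is the remaining summand in $F_\nu(Q_\mu)$.

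There is no real obstacle: everything is bookkeeping. The only detail to watch is integrability when justifying Fubini, but this is immediate from $Q_\mu\in L_2(0,1)\subset L_1(0,1)$. Adding the two identified contributions gives $\mathcal F_\nu(\mu) = F_\nu(Q_\mu)$, as claimed.
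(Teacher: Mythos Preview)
Your argument is correct. Both identifications are sound: the potential-energy term is an immediate change of variables via the push-forward identity $\mu = (Q_\mu)_\#\lebesgue_{(0,1)}$ and $\nu = (Q_\nu)_\#\lebesgue_{(0,1)}$, and your computation for the interaction term is exactly the standard one---the monotonicity of $Q_\mu$ removes the absolute value, symmetry halves the square, and Fubini (justified since $Q_\mu \in L_1(0,1)$) yields $\int_0^1\int_0^1 |Q_\mu(s)-Q_\mu(t)|\d s\d t = \int_0^1 (4s-2)Q_\mu(s)\d s$, hence $-\tfrac12$ of it is $\int_0^1 (1-2s)Q_\mu(s)\d s$.

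The paper's own proof does not spell this out: it simply observes the same identity for $\nu$, namely $\int_0^1 (1-2s)Q_\nu(s)\d s = -\tfrac12\int_{\R\times\R}|x-y|\d\nu(x)\d\nu(y)$, and then defers to \cite[Lemma~1]{HBGS2023} for the rest. Your write-up is therefore a self-contained version of what the paper outsources to a reference; the underlying mechanism is the same.
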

\begin{proof}
The following calculation was derived in \cite[Lemma 1]{HBGS2023}. For convenience, we restate it here: for all $\mu \in \P_2(\R)$ it holds
\begin{align}   
    &\text{MMD}_K^2(\mu, \nu)
    = - \frac12 \int_{\R \times \R} |x - y| \,
    (\d \mu(x) - \d \nu(x))(\d \mu(y) - \d \nu(y))
    \\
    &=
    - \frac12 \int_0^1 \int_0^1 
    |Q_\mu (s)- Q_\mu(t)| 
    - 2 |Q_\mu(s)-Q_\nu(t)| + |Q_\nu(s)-Q_\nu(t)| \, \d s \, \d t
    \\
    & = \int_0^1 \int_t^1
    Q_\mu(t)-Q_\mu(s) + Q_\nu(t)-Q_\nu(s) \, \d s \, \d t
    + \int_0^1 \int_0^1 |Q_\mu(s) - Q_\nu(t) | \, \d s \, \d t
    \\[-0pt]
    & = \int_0^1 \int_t^1
    Q_\mu(t) + Q_\nu(t) \, \d s \, \d t
    - \int_0^1 \int_0^s Q_\mu(s) + Q_\nu(s) \,\d t \, \d s
    + \int_0^1 \int_0^1 |Q_\mu(s) - Q_\nu(t) | \, \d s \, \d t
    \\
    &= \int_0^1 
    \Bigl((1-2s) (Q_\mu(s) + Q_\nu(s)) 
    + \int_0^1 |Q_\mu(s)-Q_\nu(t)| \, \d t \Bigr) \, \d s.
\end{align} 
Finally noticing that $\int_0^1 (1-2s) Q_\nu(s) \d{s} = -\frac{1}{2}\int_{\R \times \R} |x-y| \d \nu(x) \d \nu(y)$ yields the claim.
\end{proof}

By the next lemma, whose proof is given in the appendix, the functional $F_{\nu}$ has further desirable properties.

\begin{lemma} \label{lemma:FnuGamma0}
    The functional $F_{\nu} \colon L_2(0, 1) \to \R$ in \eqref{eq:Fnu} is convex and continuous. 
\end{lemma}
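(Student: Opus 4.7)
The plan is to verify finiteness, convexity, and continuity separately, with the continuity being the least trivial step (but still quite short).

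First I would check that $F_\nu(u)$ is finite for every $u \in L_2(0,1)$. Since $\nu \in \P_2(\R)$, the quantile $Q_\nu$ lies in $L_2(0,1) \subset L_1(0,1)$ by Theorem~\ref{prop:Q}. From the triangle inequality
$|u(s) - Q_\nu(t)| \le |u(s)| + |Q_\nu(t)|$, Tonelli's theorem yields $H(u(s)) \le |u(s)| + \|Q_\nu\|_{L_1(0,1)}$ as a measurable function of $s$. Combined with $|(1-2s) u(s)| \le |u(s)|$ and the embedding $L_2(0,1) \subset L_1(0,1)$, this makes the outer integral in \eqref{eq:Fnu} absolutely convergent.

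For convexity I would argue pointwise in $s$: the map $v \mapsto (1-2s) v$ is affine and, for every fixed $t$, $v \mapsto |v - Q_\nu(t)|$ is convex on $\R$; integrating the latter in $t$ preserves convexity, so $j(s, \cdot)$ is convex. Convexity of $F_\nu$ then follows by taking a convex combination $(1-\lambda) u_0 + \lambda u_1$, applying the pointwise convexity of $j(s,\cdot)$, and integrating in $s$.

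For continuity, I would split $F_\nu = L + G$ with $L(u) \coloneqq \int_0^1 (1-2s) u(s)\,\mathrm{d}s$ and $G(u) \coloneqq \int_0^1 H(u(s))\,\mathrm{d}s$. The functional $L$ is linear and bounded on $L_2(0,1)$ since $1-2s \in L_\infty(0,1) \subset L_2(0,1)$. For $G$, the key observation is that $H \colon \R \to \R$ is $1$-Lipschitz:
\begin{equation*}
    |H(a) - H(b)| \le \int_0^1 \bigl| |a - Q_\nu(t)| - |b - Q_\nu(t)| \bigr| \,\mathrm{d}t \le |a - b|.
\end{equation*}
Hence, for $u, v \in L_2(0,1)$, an application of the Cauchy--Schwarz inequality on the unit-measure interval gives
\begin{equation*}
    |G(u) - G(v)| \le \int_0^1 |u(s) - v(s)|\,\mathrm{d}s \le \|u - v\|_{L_2(0,1)},
\end{equation*}
so $G$ is (globally) $1$-Lipschitz continuous on $L_2(0,1)$, and therefore $F_\nu = L + G$ is continuous.

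The only mild subtlety is that $H$ is defined as an integral over $t \in (0,1)$, so one has to make sure $H \circ u$ is measurable in $s$ before integrating; this follows from Fubini/Tonelli once finiteness is established, so I do not expect a real obstacle. The whole argument is standard; the main point worth highlighting is the $1$-Lipschitz estimate on $H$, which is what makes the double integral well-behaved.
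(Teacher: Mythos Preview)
Your proof is correct. The finiteness and convexity parts match the paper's argument essentially verbatim. For continuity, however, you take a genuinely different and cleaner route: the paper proves continuity by taking a sequence $u_n \to u$ in $L_2(0,1)$, extracting an a.e.\ convergent subsequence with an $L_1$ majorant, and applying Lebesgue's dominated convergence theorem (twice, once for each integral); one then needs the standard subsequence argument to upgrade this to full sequential continuity. Your approach instead isolates the $1$-Lipschitz property of $H$ and obtains global Lipschitz continuity of $G$ on $L_2(0,1)$ directly via Cauchy--Schwarz, with no subsequence extraction or DCT needed. This is shorter, more quantitative (it gives an explicit modulus of continuity), and arguably more robust; the paper's DCT argument, on the other hand, is the more ``generic'' template one would use if the integrand were not Lipschitz in $u$.
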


The subdifferential of $F_\nu$ can be computed explicitly.
First note that by working in $L_2$, our subdifferential $\partial F_{\nu}(u)$ is always nonempty, in contrast to the $\P_2$-subdifferentials $\partial \F_{\nu}(\mu)$, \emph{cf.}\ \cite[Theorem 5.1]{BCDP15}.
Second, we emphasize that the extension of our functional's domain to the \emph{whole space} $L_2(0,1)$ crucially enables the explicit representation of the subdifferential. We explicitly avoid the restriction to the smaller domain $\C(0,1)$, as this would enlarge the subdifferential and conceal its explicit form in the general case, \emph{cf.}\ \cite[Proposition 2.10]{BCDP15}.
This explicit form will be vital for the remaining sections of this paper.

\begin{lemma}[Subdifferential of $F_{\nu}$] \label{thm:subdiffF_nu}
    For $u \in L_2(0, 1)$, it holds
    \begin{align} \label{eq:subdiff}
    \partial F_{\nu}(u) &= \big\{ f \in L_2(0,1): \\
    & \quad \; \;
    f(s) \in 2 \big[ R_{\nu}^-\big(u(s)\big), R_{\nu}^+\big(u(s)\big)\big] - 2s
    ~\text{for a.e.} \; s \in (0, 1) \big\}.
    \end{align}
    In particular, we have $\dom(\partial F_\nu) = L_2(0,1)$.
\end{lemma}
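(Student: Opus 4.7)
The functional $F_\nu$ is a Nemytskii-type integral, $F_\nu(u) = \int_0^1 j(s, u(s))\, \d{s}$, with a convex integrand. My strategy is therefore first to compute the pointwise subdifferential $\partial_u j(s, u)$ on $\R$, and then to lift it to $L_2(0,1)$ using the fact that the subgradients of such a convex integral functional are precisely the measurable selections of the pointwise subgradient map.

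The pointwise computation reduces to finding $\partial H$ on $\R$, since $j(s, u) = (1-2s) u + H(u)$. The function $H(u) = \int_\R |u - y|\, \d \nu(y)$ is finite and convex, so its left and right derivatives exist at every $u \in \R$. Using dominated convergence (the difference quotients of $|{\cdot} - y|$ are bounded by $1$), one obtains
\begin{equation}
H'_-(u) = \int_\R \bigl(\mathbf{1}_{y < u} - \mathbf{1}_{y \ge u} \bigr)\, \d\nu(y) = 2 R_\nu^-(u) - 1, \qquad H'_+(u) = 2 R_\nu^+(u) - 1,
\end{equation}
whence $\partial H(u) = [2 R_\nu^-(u) - 1,\, 2 R_\nu^+(u) - 1]$ and hence $\partial_u j(s, u) = 2 [R_\nu^-(u), R_\nu^+(u)] - 2 s$, which is exactly the right-hand interval in \eqref{eq:subdiff}.

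For the transfer to $L_2$, the ``$\supseteq$'' inclusion is immediate: if $f(s) \in \partial_u j(s, u(s))$ for a.e.\ $s$, then convexity of $j(s, \cdot)$ yields $j(s, w(s)) \ge j(s, u(s)) + f(s)\bigl(w(s) - u(s)\bigr)$ pointwise, and integrating gives $f \in \partial F_\nu(u)$. For the ``$\subseteq$'' direction, I would argue by contradiction: assume $f \in \partial F_\nu(u)$ but on some measurable set $E$ of positive measure one has, say, $f(s) > 2 R_\nu^+(u(s)) - 2 s + \delta$ for some $\delta > 0$. Testing the subgradient inequality with $w = u + \vareps \mathbf{1}_E$, dividing by $\vareps > 0$ and letting $\vareps \downarrow 0$, I would exchange limit and integral by dominated convergence (the difference quotients of $j(s, \cdot)$ are bounded since $R_\nu^+$ is bounded), arriving at $\int_E (2 R_\nu^+(u(s)) - 2s)\,\d s \ge \int_E f(s)\,\d s$, which contradicts the assumption. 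The symmetric perturbation $w = u - \vareps \mathbf{1}_E$ handles the lower bound.

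Finally, $\dom(\partial F_\nu) = L_2(0,1)$ follows because $R_\nu^\pm$ take values in $[0,1]$, so for any $u \in L_2(0,1)$ the selection $f(s) \coloneqq 2 R_\nu^+(u(s)) - 2s$ lies in $[-2, 2]$; it is measurable since $R_\nu^+$ is monotone (hence Borel) and $u$ is measurable, and it is thus an element of $L_\infty(0,1) \subset L_2(0,1)$ belonging to $\partial F_\nu(u)$. The main delicate point will be the careful dominated-convergence step in the converse inclusion, in particular verifying that the one-sided difference quotients $\tfrac{1}{\vareps}\bigl(j(s, u(s) + \vareps) - j(s, u(s))\bigr)$ are dominated by an integrable function uniformly in small $\vareps$; boundedness of $\partial H$ by $1$ makes this routine.
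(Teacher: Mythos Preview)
Your proposal is correct and follows essentially the same route as the paper: both compute $\partial H(u)=2[R_\nu^-(u),R_\nu^+(u)]-1$ via one-sided derivatives and dominated convergence, then transfer pointwise subgradients to $L_2$. The only substantive difference is that the paper invokes a standard result on subdifferentials of convex integral functionals (citing \cite[Cor.~1B]{R1971} or \cite[Thm.~10.39]{SGGHL2009}) to get the characterization $f\in\partial F_\nu(u)\Leftrightarrow f(s)\in\partial[j(s,\cdot)](u(s))$ a.e., whereas you prove this directly by testing with $w=u\pm\vareps\mathbf{1}_E$; your argument is more self-contained, the paper's is more concise.
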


\begin{proof}
It holds $f \in \partial F_{\nu}(u)$ if and only if
    \begin{equation*}
        f(s) \in \partial \big[ j(s, \cdot) \big]\big(u(s)\big)
        \quad \text{for a.e. } s \in (0, 1),
    \end{equation*}
see, e.g., \cite[Cor.~1B]{R1971} or \cite[Thm.~10.39]{SGGHL2009}.
Now,
\begin{equation} \label{sum}
    \partial \big[ j(s, \cdot) \big](u)
    = (1 - 2 s) + \partial H(u),
\end{equation}
so that it remains to consider the second summand.
Recall that for the convex, one-dimensional function $H$, we have
$\partial H (u) = [D_-H(u),D_+H(u)]$ 
with the one-sided derivatives 
\begin{equation*}
    D_{\pm}H(u)
    \coloneqq \lim_{\lambda \downarrow 0} \frac{H(u \pm \lambda) - H(u)}{\lambda}.
\end{equation*}
Let $h_t(u) \coloneqq |u- Q_\nu(t)|$ in the definition \eqref{def_H} of $H$. Then we conclude by Lebesgue's dominated convergence theorem that
 \begin{align}
     D_{\pm } H(u)
     &=
     \lim_{\lambda \downarrow 0} 
     \frac{H(u \pm \lambda) - H(u)}{\lambda}
    =
     \lim_{\lambda \downarrow 0}
     \int_0^1 
     \frac{h_t(u\pm \lambda) - h_t(u)}{\lambda}
     \d t
     \\
     &=
     \int_0^1 \lim_{\lambda \downarrow 0}
     \frac{h_t(u\pm \lambda) - h_t(u)}{\lambda}
     \d t          
     = 
     \int_0^1
     D_{\pm} h_t(u) 
     \d t,
 \end{align}
and consequently
 \begin{equation}
     \partial H(u)
     =
     \Bigl[
     \int_0^1 D_- h_t(u) \d t,
     \int_0^1 D_+ h_t(u) \d t
     \Bigr].
 \end{equation}
 Next, we have 
 \begin{align*}
 D_- h_t(u) &= D_+ h_t(u) = 1, \qquad \; \; \; \text{if } u > Q_{\nu}(t),\\
 D_- h_t(u) &= D_+ h_t(u) = -1, \qquad \, \text{if } u < Q_{\nu}(t),\\
 D_- h_t(u) &= -1, \; D_+ h_t(u) = 1, \quad \text{if } u = Q_{\nu}(t),
 \end{align*}
so that we obtain by \eqref{eq:bounds} and \eqref{eq:bounds_1} the value
$$ 
\int_0^1 D_- h_t(u) \d t 
= 
\int_0^{R_\nu^- (u)} 1 \d{t}
+
\int_{R_\nu^- (u)}^{R_\nu^+ (u)} - 1 \d{t}
+
\int_{R_\nu(u)^+}^{1} -1 \d{t}
=
2 R_\nu^- (u) - 1,
$$
and similarly, $\int_0^1 D_+ h_t(u) \d t = 2 R_\nu^+ (u) - 1$.
This implies 
\begin{equation} \label{helper1}
\partial H(u) = 2[ R_\nu^- (u), R_\nu^+ (u)] - 1,
\end{equation}
and by \eqref{sum} finally
\begin{equation*}
   \partial \big[ j(s, \cdot) \big](u)
   = 2 \big[ R_{\nu}^-\big(u(s)\big), R_{\nu}^+\big(u(s)\big)\big] - 2s.\qedhere 
\end{equation*}
\end{proof}

By the following lemma, $J_{\varepsilon}^{\partial F_\nu}$ fulfills the invariance condition from Theorem~\ref{thm:L2_representation}.

\begin{lemma}    \label{l:RC-C}
    Let $F_\nu$ 
    be defined by \eqref{eq:Fnu}. Then,
    $J_{\varepsilon}^{\partial F_\nu}$ maps $\C(0,1)$ into itself
    for all $\varepsilon >0$.
\end{lemma}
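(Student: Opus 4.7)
The plan is to exploit that the resolvent minimizes a functional which decouples pointwise in $s$, so that the minimization reduces to scalar problems parametrized by $s$, and then to show that the pointwise minimizer depends monotonically on $s$ via a two-point comparison.

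First, I would rewrite $w \coloneqq J_{\varepsilon}^{\partial F_\nu}(u)$ as the unique minimizer of
\begin{equation*}
w \mapsto F_\nu(w) + \tfrac{1}{2\varepsilon} \|w - u\|_{L_2(0,1)}^2 = \int_0^1 \phi_s(w(s)) \d s, \qquad \phi_s(x) \coloneqq j(s,x) + \tfrac{(x-u(s))^2}{2\varepsilon}.
\end{equation*}
For each $s \in (0,1)$, the function $\phi_s$ is continuous, strictly convex (due to the quadratic penalty), and coercive (since $H(x)$ grows like $|x|$ at infinity while $(1-2s)x$ is only linear), so it admits a unique pointwise minimizer $w^*(s)$.

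Assuming $u \in \C(0,1)$, I would then prove that $s \mapsto w^*(s)$ is increasing. For $s_1 < s_2$, set $w_i \coloneqq w^*(s_i)$; adding the optimality inequalities $\phi_{s_1}(w_1) \le \phi_{s_1}(w_2)$ and $\phi_{s_2}(w_2) \le \phi_{s_2}(w_1)$, the nonsmooth terms $H(w_1), H(w_2)$ cancel in pairs and expanding the squared penalties yields after simplification
\begin{equation*}
0 \le (w_2 - w_1) \Bigl[ 2 (s_2 - s_1) + \tfrac{u(s_2) - u(s_1)}{\varepsilon} \Bigr].
\end{equation*}
Since $u$ is increasing and $s_2 > s_1$, the bracket is strictly positive, which forces $w_2 \ge w_1$.

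It remains to confirm integrability and to identify $w^*$ with the resolvent. From Lemma~\ref{thm:subdiffF_nu}, the optimality inclusion at $w^*(s)$ yields $w^*(s) = u(s) + 2\varepsilon (s - r(s))$ for some $r(s) \in [R_\nu^-(w^*(s)), R_\nu^+(w^*(s))] \subseteq [0,1]$, so $|w^*(s) - u(s)| \le 4 \varepsilon$ and hence $w^* \in L_2(0,1)$; the monotonicity just established gives measurability for free. Since the separable functional $w \mapsto \int_0^1 \phi_s(w(s)) \d s$ is minimized at $w^*$ pointwise almost everywhere, $w^*$ is also the global $L_2$-minimizer, whence $w^* = J_{\varepsilon}^{\partial F_\nu}(u) \in \C(0,1)$. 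The main obstacle I anticipate is cleanly justifying the passage from pointwise scalar minimization to the global $L_2$-resolvent: strict convexity of each $\phi_s$ is needed for pointwise uniqueness, and the monotonicity of $s \mapsto w^*(s)$ is what delivers both the cone property and the measurability required to close the loop — everything else is essentially the classical two-point comparison.
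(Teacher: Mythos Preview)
Your proof is correct. The route differs from the paper's in presentation rather than in substance: the paper works directly with the first-order optimality inclusion $u(s) + \varepsilon [R_\nu^-(u(s)), R_\nu^+(u(s))] \ni h(s) + \varepsilon s$ (already available from Lemma~\ref{thm:subdiffF_nu}) and argues by contradiction, using that $R_\nu^-(u(s_1)) \ge R_\nu^+(u(s_2))$ whenever $u(s_1) > u(s_2)$. You instead use the zeroth-order optimality of the pointwise minimizers and add the two inequalities so that the convex term $H$ cancels identically, never needing its explicit subdifferential for the monotonicity step; the explicit form is invoked only afterwards for the $L_2$-bound. Both are the same comparison principle viewed through different lenses. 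Your variational version is marginally more robust (it would go through verbatim with $H$ replaced by any convex function of $x$ alone), whereas the paper's version is a line shorter because the subdifferential has already been computed. One small point worth making explicit in your write-up: fix a monotone representative of $u$ before running the pointwise argument, so that $u(s_2) \ge u(s_1)$ holds for \emph{all} pairs $s_1 < s_2$ rather than almost all.
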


\begin{proof}
Let $\varepsilon >0$ be arbitrarily fixed and $h \in \C(0,1)$.
By Theorem~\ref{general} and Lemma~\ref{lemma:FnuGamma0}, we have that
$\ran(I + \frac{\vareps}{2} \partial F_\nu) = L_2(0, 1)$, so we obtain the existence of $u \in L_2(0,1)$ 
fulfilling $h \in \left(I + \frac{\vareps}{2} \partial F_\nu\right) u$.
The explicit representation of $\partial F_\nu$ in Lemma~\ref{thm:subdiffF_nu} yields
\begin{equation} \label{to_hold}
    u(s) + \vareps [R_\nu^-(u(s)), R_\nu^+(u(s))] ~\ni~ h(s) + \vareps s \hspace*{3mm} \text{ for a.e. } s \in (0,1).
\end{equation} 
We have to show that $u \in \C(0, 1)$.
Since $h + \vareps (\cdot) \in \C(0,1)$, 
there exists a null set $\NN \subset (0,1)$ 
such that outside of $\NN$, $h + \vareps (\cdot)$ is increasing and \eqref{to_hold} holds.
Assume that $u \notin \C(0,1)$.
Then, there exist $s_1,s_2 \in (0,1)$ outside of $\NN$ with $s_1 < s_2$ 
and $u(s_1) > u(s_2)$.
But since $R_{\nu}^-(u(s_1)) - R_{\nu}^+(u(s_2)) = \nu( (u(s_2), u(s_1) ) \ge 0$, it follows that
\begin{align*}
    h(s_1) + \vareps s_1 \ge u(s_1) + \vareps R_\nu^- (u(s_1))
     > u(s_2) + \vareps R_\nu^+ (u(s_2))
     \ge h(s_2) + \vareps s_2,
\end{align*}
contradicting that $h + \vareps (\cdot)$ 
is increasing outside of $\NN$, and the proof is finished. 
\end{proof}


Combining the results from Lemmas~\ref{lemma:FnuGamma0}
and~\ref{l:RC-C}, we can apply Theorem~\ref{thm:L2_representation} to $F_\nu$.
By Proposition~\ref{prop:conv-geo}, the properties of $F_\nu$ carry over to $\mathcal F_\nu$, thus Theorem~\ref{thm:WGF} applies to $\F_\nu$. Together, 
we obtain the following theorem.

\begin{theorem} \label{main_mmd}
Let $\mathcal F_\nu$ and $F_\nu$ be defined 
by \eqref{eq:dis-decomp} and \eqref{eq:Fnu}, respectively, and let $\mu_0 \in \P_2(\R)$.
Then the Cauchy problem
\begin{align}\label{eq:cauchy2}
    \begin{cases}
        \partial_t g(t) \in -  \partial F_\nu (g(t)),   \quad  t \in (0,\infty), \\
         g(0) = Q_{\mu_0},
    \end{cases}
\end{align}
has a unique strong solution $g \colon [0, \infty) \to \mathcal C(0, 1)$,
and the associated curve
$\gamma_t \coloneqq (g(t))_\# \Lambda_{(0,1)}$
is the unique Wasserstein gradient flow of $\mathcal F_\nu$ with $\gamma(0+) = (Q_{\mu_0})_{\#} \lebesgue_{(0,1)}$.
\end{theorem}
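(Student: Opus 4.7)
The plan is to simply assemble the pieces that the preceding lemmas and propositions have already put in place. Nothing new really needs to be done; the statement is a direct combination of Theorems \ref{thm:L2_representation} and \ref{thm:WGF} applied to $F_\nu$ and $\mathcal F_\nu$.

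First I would verify the hypotheses of Theorem \ref{thm:L2_representation} for $F_\nu$. Convexity, lower semicontinuity and properness of $F_\nu$ are provided by Lemma \ref{lemma:FnuGamma0}; the invariance $J_\varepsilon^{\partial F_\nu}(\C(0,1)) \subseteq \C(0,1)$ is exactly Lemma \ref{l:RC-C}; and the fact that $Q_{\mu_0} \in \C(0,1) \cap \dom(\partial F_\nu)$ follows from $\dom(\partial F_\nu) = L_2(0,1)$, which is part of Lemma \ref{thm:subdiffF_nu}. Consequently, Theorem \ref{thm:L2_representation} yields a unique strong solution $g \colon [0,\infty) \to \C(0,1)$ of the Cauchy problem \eqref{eq:cauchy2}, and the push-forward curve $\gamma_t \coloneqq (g(t))_\# \Lambda_{(0,1)}$ is \emph{a} Wasserstein gradient flow of $\mathcal F_\nu$ starting from $(Q_{\mu_0})_\# \Lambda_{(0,1)} = \mu_0$.

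It then remains to upgrade this to \emph{the unique} Wasserstein gradient flow. Here I would appeal to Theorem \ref{thm:WGF}. Convexity of $F_\nu$ (again Lemma \ref{lemma:FnuGamma0}) together with Proposition \ref{prop:conv-geo} transfers to $0$-convexity of $\mathcal F_\nu$ along geodesics and lower semicontinuity, using the identification $\mathcal F_\nu(\mu) = F_\nu(Q_\mu)$ from Lemma \ref{lem:extended_fun}. Boundedness from below is immediate from the relation $\mathcal F_\nu(\mu) = \mathrm{MMD}_K^2(\mu,\nu) + \tfrac{1}{2}\int_{\R\times\R}|x-y|\d\nu(x)\d\nu(y) \ge 0$. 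Moreover, $\dom(\mathcal F_\nu) = \P_2(\R)$ so $\mu_0 \in \overline{\dom(\mathcal F_\nu)}$ trivially. Theorem \ref{thm:WGF} therefore guarantees a \emph{unique} Wasserstein gradient flow of $\mathcal F_\nu$ starting from $\mu_0$, and the flow $\gamma_t$ constructed in the first step must coincide with it.

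There is no genuine obstacle left, since Theorem \ref{thm:L2_representation} was already formulated to do all of the work: the subtle issue of keeping iterates of $J_\varepsilon^{\partial F_\nu}$ inside the cone of quantile functions was handled there (and verified for $F_\nu$ in Lemma \ref{l:RC-C}), and the identification of the velocity field $v_t$ with $-\partial \mathcal F_\nu(\gamma_t)$ was also carried out in that proof. The only point deserving a sentence of care is to note that uniqueness of the strong solution $g$ on the Hilbert-space side and uniqueness of the Wasserstein gradient flow on the measure side are two \emph{separate} statements; the first follows from maximal monotonicity of $\partial F_\nu$ via semigroup theory, while the second requires the $\lambda$-convexity hypothesis of Theorem \ref{thm:WGF}. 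Once both are in hand, the isometry $\mu \mapsto Q_\mu$ makes them consistent, completing the proof.
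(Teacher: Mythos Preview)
Your proposal is correct and matches the paper's approach essentially line for line: the paper also derives the theorem by combining Lemmas~\ref{lemma:FnuGamma0} and~\ref{l:RC-C} to invoke Theorem~\ref{thm:L2_representation}, and then uses Proposition~\ref{prop:conv-geo} to transfer the convexity and lower semicontinuity to $\mathcal F_\nu$ so that Theorem~\ref{thm:WGF} yields uniqueness of the Wasserstein gradient flow. Your additional remarks on $\dom(\partial F_\nu)=L_2(0,1)$, boundedness from below of $\mathcal F_\nu$, and the distinction between the two uniqueness statements are all correct and simply make explicit what the paper leaves implicit.
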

Note that due to Lemma \ref{l:RC-C}, there is no need to enforce the cone constraint $g(t) \in \C(0,1)$ as usually done via indicator functionals, \emph{cf.}\ \cite[Sec.\ 2.2]{BCDP15}. Instead, this is automatically given via \eqref{eq:form}.

Finally, let us briefly have a look at the results of the paper \cite{BCDP15} (based on \cite{B2013}) concerning only the interaction energy part of the MMD functional.

\begin{remark} \label{rem:carrillo}
The authors of \cite{BCDP15} showed a similar result as Theorem~\ref{main_mmd}, but only for the interaction energy part $\mathcal E(\mu)=\frac12\int_{\R^d\times\R^d}K(x,y)\d\mu(x)\d\mu(y)$ of the MMD \eqref{eq:DK2}.
More precisely, they derived two equivalent criteria for a curve $\gamma\colon(0,\infty)\to\P_2(\R)$ to be a Wasserstein gradient flow with respect to $\mathcal E$.
The first characterization is that the distributions functions $R_t\coloneqq R_{\gamma(t)}^+$ solve
\begin{equation*}
    \partial_t R_t(x) + \partial_x (R_t^2(x) - R_t(x))
    = 0    
\end{equation*}
subject to some minimum entropy condition. 
Second, this can be characterized via quantile functions. 
That is, $\gamma$ is a Wasserstein gradient flow with respect to $\mathcal E$
if and only if its quantile functions $Q_t\coloneqq Q_{\gamma(t)}$ solve the $L_2$-subgradient inclusion
\begin{equation*}
    -\frac{\d}{\d t} Q_t \in \partial E(Q_t), \quad\text{where}\quad E(g)=\begin{cases}\int_{0}^{1} (1 - 2 s) g(s) \d{s},&\text{if }g\in \C(0,1),\\
    \infty,&\text{otherwise,}
    \end{cases}
\end{equation*}
for almost every $t > 0$.
Based on this representation, they can explicitly compute the Wasserstein gradient flow with respect to $\mathcal E$ via the quantile function $Q_{\gamma(t)}(s) = Q_{\gamma(0)}(s) + t (2 s - 1)$ and prove that $\gamma(t)$ is absolutely continuous for all $t>0$.
These results were extended in \cite{CDEFS2020}, where the authors consider gradient flows for the functional $\mathrm{MMD}_K^2\colon\P_2(\R)\times\P_2(\R)\to\R$ and the related PDE system. In contrast to our setting, these gradient flows are defined on $\P_2(\R)\times\P_2(\R)$, i.e., they consider two interacting measures (species), while on our side, the second entry is fixed as the target measure $\nu$.
Using again the embedding of the Wasserstein space into $L^2(0,1)$, they prove that gradient flows exist and that they remain absolutely continuous whenever the initial measures are absolutely continuous.
Stability of stationary states of singular interaction energies and differentiable potentials on $\R$ with bounded, compactly supported initial data is studied in \cite{FR2011}.
\end{remark}
To our best knowledge, the explicit form \eqref{eq:subdiff} of the $L_2$-subdifferential of the (associated) MMD functional \emph{in presence of the potential energy part} is new to the literature, see the above Remark \ref{rem:carrillo}. It has been substantial for proving the invariance result of Lemma \ref{l:RC-C} used in Theorem \ref{main_mmd}, and will stay so for
our following considerations, including (pointwise) solution formulas, the characterization of invariant subsets, and the numerical calculation of the flow. We like to note that the addition of the attraction towards the target measure $\nu$, in combination with our \emph{non-smooth} kernel $K$, facilitates a far greater control and flexibility of the MMD flow, making it more accessible to applications, e.g., in generative modeling \cite{HHABCS2023, HWAH2023}. This includes present work of the authors to apply the $1$D MMD flow in a high-dimensional setting for image generation; for preliminary results see \cite{DCFS2025}.

\section{Explicit Solution of the Cauchy Problem} \label{sec:explicit}
Having the sub\-differential of $F_\nu$ in \eqref{eq:subdiff} in mind,
we first aim to find a solution of the pointwise Cauchy problem, i.e., for fixed $s\in (0,1)$ we are interested in
\begin{equation}
    \label{eq:pw-ode}
    \begin{cases}
        \partial_t g_s(t)
        \in
        2 s - 2 [R_{\nu}^- (g_s(t)), R_{\nu}^+ (g_s(t))], \quad \text{for a.e. } \; t > 0,
        \\
        g_s(0) = Q_{\mu_0}(s),
    \end{cases}
\end{equation}
satisfying $g_s(t) = [g(t)] (s)$.
Since the proposed method works in the same way
for the left- and right-continuous version of the CDF,
we use $R_\nu^\pm$ to indicate one of them.

\begin{lemma}[Pointwise solution]
    \label{lem:pt-ode}
    Let $\nu, \mu_0 \in \P_2(\R)$.
    For any fixed continuity point $s \in (0,1)$ of $Q_\nu$, 
    the curve
    \begin{equation}
        \label{eq:expl-sol}
        g_s(t)
        \coloneqq
        \begin{cases}
            \Phi_{\nu, s}^{-1}\big(2 t \big),
            &
            t < T_s,
            \\
            Q_\nu(s),
            &
            \text{otherwise},
        \end{cases}
    \end{equation}
    is a strong solution of \eqref{eq:pw-ode},
    where $T_s \coloneqq \frac{1}{2} \Phi_{\nu, s}\left(Q_{\nu}(s) \right)$ and
    $$
    \Phi_{\nu, s}(x) 
    \coloneqq 
    \int_{Q_{\mu_0}(s)}^x \left(s - R_{\nu}^{\pm}(z) \right)^{-1} \d z
    $$ 
    with $x \in [\min\{ Q_{\mu_0}(s), Q_{\nu}(s)\}, \max\{Q_{\mu_0}(s), Q_{\nu}(s)\}]$.
    In particular,
    the inclusion in \eqref{eq:pw-ode} holds true
    for every $t \in (0, \infty)$,
    where $\dot g_s(t)$ exists.
    \end{lemma}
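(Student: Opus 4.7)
The plan is to argue pointwise by case analysis on the sign of $Q_{\mu_0}(s) - Q_\nu(s)$, use the Galois inequality \eqref{eq:galois} to control the sign and size of the integrand defining $\Phi_{\nu,s}$, and then differentiate the defining identity $\Phi_{\nu,s}(g_s(t)) = 2t$ to read off $\dot g_s(t)$ and match it to the subdifferential set.

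A common preliminary step will be the observation that, for every $s \in (0,1)$,
\begin{equation*}
    R_\nu^-(Q_\nu(s)) \le s \le R_\nu^+(Q_\nu(s)),
\end{equation*}
which follows from \eqref{eq:galois} at $x = Q_\nu(s)$ (for the upper bound) and from $R_\nu^-(Q_\nu(s)) = \sup_{x < Q_\nu(s)} R_\nu^+(x)$ together with the strict inequality $R_\nu^+(x) < s$ for $x < Q_\nu(s)$ (for the lower bound). This already yields $0 \in 2s - 2[R_\nu^-(Q_\nu(s)), R_\nu^+(Q_\nu(s))]$, which handles both the degenerate case $Q_{\mu_0}(s) = Q_\nu(s)$, where $T_s = 0$ and $g_s \equiv Q_\nu(s)$, and the constant tail $t > T_s$ in the two nontrivial cases.

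In the main case $Q_{\mu_0}(s) < Q_\nu(s)$ -- the opposite inequality being symmetric with $R_\nu^-$ replacing $R_\nu^+$ -- I would first use \eqref{eq:galois} to see that $R_\nu^+(z) < s$ for every $z \in [Q_{\mu_0}(s), Q_\nu(s))$, so that the integrand $(s - R_\nu^+(z))^{-1}$ is positive and bounded below by $1/s$ on the integration range. Consequently $\Phi_{\nu,s}$ is absolutely continuous and strictly increasing on that interval, with $\Phi_{\nu,s}(Q_{\mu_0}(s)) = 0$ and left-limit $2T_s \in (0,\infty]$ at $Q_\nu(s)$. The inverse function theorem then yields that $\Phi_{\nu,s}^{-1}$ is $s$-Lipschitz on $[0, 2T_s)$, so $g_s(t) = \Phi_{\nu,s}^{-1}(2t)$ is $2s$-Lipschitz on $[0, T_s]$, meets the initial condition $g_s(0) = Q_{\mu_0}(s)$, and extends continuously to $g_s(T_s) = Q_\nu(s)$. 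Differentiating the identity $\Phi_{\nu,s}(g_s(t)) = 2t$ via the chain rule then gives
\begin{equation*}
    \dot g_s(t)
    = \frac{2}{\Phi'_{\nu,s}(g_s(t))}
    = 2\big(s - R_\nu^+(g_s(t))\big)
    \in 2s - 2\big[R_\nu^-(g_s(t)), R_\nu^+(g_s(t))\big]
\end{equation*}
wherever all derivatives in sight exist, which is the required inclusion on $(0, T_s)$.

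The hard part will be the rigorous justification of the chain-rule step, since $\Phi_{\nu,s}$ is only absolutely continuous -- $R_\nu^+$ has countably many jumps, and $\Phi'_{\nu,s}$ may even be unbounded near $Q_\nu(s)$. To transfer the a.e.\ identity $\Phi'_{\nu,s}(z) = (s - R_\nu^+(z))^{-1}$ from $z$-space to $t$-space along the Lipschitz bijection $g_s$, I plan to invoke the change-of-variables formula for strictly monotone absolutely continuous functions, together with the fact that $\dot g_s > 0$ almost everywhere on $(0, T_s)$ (because $R_\nu^+(g_s(t)) < s$ there): preimages of $z$-null sets under $g_s$ then remain $t$-null. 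Once this is in place, the inclusion \eqref{eq:pw-ode} holds at a.e.\ $t$ where $\dot g_s$ exists, and together with the already-discussed constant tail this completes the verification that $g_s$ is a strong solution.
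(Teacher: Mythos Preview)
Your proposal is correct and follows essentially the same case analysis as the paper. The one substantive difference is how you justify the derivative formula for $g_s = \Phi_{\nu,s}^{-1}(2\,\cdot)$: you first observe that the integrand is bounded below by $1/s$, deduce that $\Phi_{\nu,s}^{-1}$ is $s$-Lipschitz, and then differentiate the composite identity $\Phi_{\nu,s}(g_s(t)) = 2t$, which forces you to worry about the chain rule for merely absolutely continuous outer functions. The paper avoids this detour by invoking Zareckii's theorem, which directly states that the inverse of a strictly increasing absolutely continuous function is again absolutely continuous with derivative $(\Phi_{\nu,s}^{-1})'(t) = 1/\Phi_{\nu,s}'(\Phi_{\nu,s}^{-1}(t))$ almost everywhere; this gives the formula $\dot g_s(t) = 2(s - R_\nu^\pm(g_s(t)))$ in one stroke, with no need to transfer null sets through $g_s$. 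Your Lipschitz bound is a nice shortcut for the regularity of $g_s$, but once you have it, citing the inverse-derivative formula is simpler than the chain-rule argument you sketch. One further point: you call the case $Q_{\mu_0}(s) > Q_\nu(s)$ ``symmetric'', but this is precisely where the hypothesis that $s$ is a continuity point of $Q_\nu$ is actually used---without it, $R_\nu^\pm(z) = s$ can occur for some $z > Q_\nu(s)$, and the integrand blows up. The paper makes this explicit; you should too.
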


    \begin{proof}
1.        First, assume that $s \in (0,1)$ is \emph{any} point such that  $Q_{\mu_0}(s) < Q_{\nu}(s)$. Since $R^- \le R^+$ and by
        the Galois inequality \eqref{eq:galois}, we have
        for any $z < Q_{\nu}(s)$ that $R_{\nu}^{\pm}(z) < s$. 
        Hence $\left(s - R_{\nu}^{\pm}(z) \right)^{-1}$ exists and
        $\Phi_{\nu, s}(x)$ is finite for all $x < Q_{\nu}(s)$. 
        For $x= Q_\nu(s)$ we distinguish two cases.
        \\[1ex]
        \emph{Case 1:} $\Phi_{\nu, s}(Q_{\nu}(s)) = \infty$.
        Fix any $\hat{x} < Q_{\nu}(s)$. Then, $\Phi_{\nu, s} : [Q_{\mu_0}(s), \hat{x}] \to [0, \infty)$ is absolutely continuous, and its derivative exists a.e. and is given by 
        $$
        \Phi_{\nu, s}'(x) = \left(s - R_{\nu}^{\pm}(x) \right)^{-1} > 0, \quad \text{for a.e. } x \in [Q_{\mu_0}(s), \hat{x}].
        $$
        Hence, the function $\Phi_{\nu, s} : [Q_{\mu_0}(s), \hat{x}] \to [0, \Phi_{\nu, s}(\hat{x})]$ is invertible, and by a result of Zareckii \cite{BC2009,nathanson1955}, its inverse 
        is also absolutely continuous with derivative
        $$
        (\Phi_{\nu, s}^{-1})'(t) = \frac{1}{\Phi_{\nu, s}'(\Phi_{\nu, s}^{-1}(t))} 
        = s - R_{\nu}^{\pm}(\Phi_{\nu, s}^{-1}(t)), \quad \text{for a.e. } t \in [0,\Phi_{\nu, s}(\hat{x})].
        $$
        Hence, by definition \eqref{eq:expl-sol}, the function $g_s$ is absolutely continuous on $[0,\frac12 \Phi_{\nu, s}(\hat{x}))$, and it holds 
        \begin{equation}
        \dot g_s(t) = 
        2 s - 2 R_{\nu}^{\pm} (g_s(t)), \quad \text{for a.e. } t \in [0,\frac12 \Phi_{\nu, s}(\hat{x})).
    \end{equation}
    Considering the above arguments, 
    we notice that $g_s$ is differentiable at $t$
    if and only if $g_s(t)$ is a continuity point of $R_\nu^{\pm}$,
    i.e.\ $R_\nu^+(g_s(t)) = R_\nu^-(g_s(t))$.
    Since $\Phi_{\nu, s}(\hat{x}) \uparrow \infty$ for $\hat{x} \uparrow Q_\nu(s)$, we see that $g_s$ is a strong solution of \eqref{eq:pw-ode}.
    \\[1ex]
    \emph{Case 2:} $\Phi_{\nu, s}(Q_{\nu}(s)) < \infty$. Then above arguments with $\hat{x} = Q_\nu(s)$ show that $g_s$ is absolutely continuous on $[0,T_s)$, and it holds 
        \begin{equation}
        \dot g_s(t) = 
        2 s - 2 R_{\nu}^{\pm} (g_s(t)), \quad \text{for a.e. } t \in [0,T_s).
    \end{equation}
    By the continuity of $\Phi_{\nu, s}^{-1} : [0, 2 T_s] \to [Q_{\mu_0}(s), Q_\nu(s)]$ and by construction of $g_s$, we conclude that $g_s$ is continuous in $T_s$. Since $g_s \equiv Q_\nu(s)$ on $[T_s, \infty)$, it is absolutely continuous on the whole half-line $[0,\infty)$.  
    On the one hand,
    \eqref{eq:galois} immediately yields $s \le R_\nu^+(Q_\nu(s))$,
    and on the other hand,
    its negation $s > R_\nu^+(x) \ge R_\nu^-(x)$ for $x < Q_\nu(s)$
    implies $s \ge R_\nu^-(Q_\nu(s))$
    due to the left-continuity of $R_\nu^-$.
    For this reason,
    we finally have
       \begin{equation}
        \dot g_s(t) = 0 \in 
        2 s - 2 [ R_{\nu}^{-} (Q_\nu(s)), R_{\nu}^{+} (Q_\nu(s)) ] \quad \text{for all } t \in [T_s, \infty),
    \end{equation}    
    which means that $g_s$ is a strong solution of \eqref{eq:pw-ode}.  
    \\[1ex]
2.     
Next, assume that $s \in (0,1)$ is a \textit{continuity} point of $Q_\nu$ such that
$Q_{\mu_0}(s) > Q_{\nu}(s)$.
Then Galois inequality \eqref{eq:galois} together with the fact that $s$ is a continuity point of $Q_\nu$ implies for $z > Q_\nu(s)$ that $R_{\nu}^{\pm}(z) > s$. Now, the rest follows analogously as in the first part. 
\\[1ex]
3. For the case $Q_{\mu_0}(s) = Q_{\nu}(s)$, we clearly get the constant solution, and the proof is done.
\end{proof}

Next
we show that the curve $g: [0,\infty) \to \mathcal C(0,1)$ is differentiable almost everywhere
and solves the original $L_2$ problem \eqref{eq:cauchy2}.

\begin{theorem}[$L_2$ solution]    \label{thm:pw-l2}
    The family 
    \begin{equation} \label{son_pontwise}
    \{g_s \text{ in } \eqref{eq:expl-sol} \colon \, s \in (0,1) \; \text{ is a continuity point of } \; Q_\nu \}
    \end{equation} 
    strongly solves \eqref{eq:cauchy2}
    via $[g(t)](s) \coloneqq g_s(t)$.
\end{theorem}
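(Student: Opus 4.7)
The strategy is to invoke the uniqueness of the strong solution $\tilde g \colon [0, \infty) \to \mathcal C(0, 1)$ to \eqref{eq:cauchy2} guaranteed by Theorem~\ref{main_mmd}, and to identify $\tilde g$ pointwise with the family $\{g_s\}$ from Lemma~\ref{lem:pt-ode}. Thus it suffices to show $[\tilde g(t)](s) = g_s(t)$ for a.e.\ $s \in (0, 1)$ and every $t \ge 0$, which I would do by reducing to a scalar Cauchy problem and exploiting its pointwise uniqueness.

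First I would pass from the $L_2$-valued strong solution to a scalar description. Since $\tilde g$ is Lipschitz $[0, \infty) \to L_2(0, 1)$, it belongs to $W^{1, \infty}_{\mathrm{loc}}((0, \infty); L_2(0, 1))$, and standard Bochner/Fubini arguments together with the canonical identification $L_2((0, T); L_2(0, 1)) \cong L_2((0, T) \times (0, 1))$ yield a jointly measurable representative $\tilde g(t, s)$ for which, for a.e.\ $s \in (0, 1)$, the scalar curve $t \mapsto \tilde g(t, s)$ is absolutely continuous on every compact subinterval and satisfies $\partial_t \tilde g(t, s) = [\partial_t \tilde g(t)](s)$ for a.e.\ $t$. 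Combining this with Lemma~\ref{thm:subdiffF_nu}, the $L_2$-inclusion $\partial_t \tilde g(t) \in -\partial F_\nu(\tilde g(t))$ translates pointwise to
\begin{equation*}
\partial_t \tilde g(t, s) \in 2s - 2 \bigl[ R_\nu^-(\tilde g(t, s)), R_\nu^+(\tilde g(t, s)) \bigr]
\end{equation*}
for a.e.\ $(t, s)$. Since $\tilde g(0, s) = Q_{\mu_0}(s)$ a.e., the scalar curve $\tilde g(\cdot, s)$ is then an absolutely continuous solution of the pointwise Cauchy problem \eqref{eq:pw-ode} for a.e.\ $s \in (0, 1)$.

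The core of the argument is a scalar uniqueness statement for \eqref{eq:pw-ode}. Let $\varphi_1, \varphi_2$ be two absolutely continuous solutions sharing the same initial value, and choose measurable selections $\alpha_i(t) \in [R_\nu^-(\varphi_i(t)), R_\nu^+(\varphi_i(t))]$ with $\dot \varphi_i(t) = 2s - 2 \alpha_i(t)$. Monotonicity of $R_\nu^\pm$ yields $(\varphi_1(t) - \varphi_2(t))(\alpha_1(t) - \alpha_2(t)) \ge 0$: if $\varphi_1(t) > \varphi_2(t)$, then $R_\nu^-(\varphi_1(t)) \ge R_\nu^+(\varphi_2(t))$, whence $\alpha_1(t) \ge \alpha_2(t)$, and the reverse inequality is symmetric. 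Consequently
\begin{equation*}
\tfrac{1}{2} \tfrac{\mathrm d}{\mathrm d t} (\varphi_1(t) - \varphi_2(t))^2 = -2 (\varphi_1(t) - \varphi_2(t))(\alpha_1(t) - \alpha_2(t)) \le 0
\end{equation*}
for a.e.\ $t$, and coincidence at $t = 0$ forces $\varphi_1 \equiv \varphi_2$.

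Finally, Lemma~\ref{lem:pt-ode} supplies, at every continuity point of $Q_\nu$, an absolutely continuous solution $g_s$ of \eqref{eq:pw-ode}. Since these continuity points are co-countable in $(0, 1)$, combining the pointwise reduction of $\tilde g$ with the scalar uniqueness above gives $\tilde g(t, s) = g_s(t)$ for a.e.\ $s$ and every $t \ge 0$, which is exactly the claim. The main obstacle is the first step: rigorously passing from strong differentiability of $\tilde g$ in $L_2(0, 1)$ to pointwise-in-$s$ differentiability of $\tilde g(t, s)$ in the time variable, which rests on slightly technical Bochner/Fubini machinery; once that is in place, the monotonicity built into $R_\nu^\pm$ quickly delivers scalar uniqueness and hence the desired identification.
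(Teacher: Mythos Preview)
Your argument is correct but proceeds in the opposite direction from the paper. The paper \emph{constructs} the curve $g$ by setting $[g(t)](s) \coloneqq g_s(t)$ and then directly verifies that this $g$ is a strong solution of the $L_2$ Cauchy problem: the uniform bound $|\dot g_s| \le 4$ makes $g$ Lipschitz into $L_2(0,1)$; at an $L_2$-differentiability point $\hat t$ they pass to a subsequence of difference quotients converging a.e.\ in $s$, and then identify the limit with the \emph{right} derivative $\tfrac{d^+}{dt} g_s(\hat t)$, which by Theorem~\ref{theorem:BrezisRegularity} exists for \emph{every} $t$ and lies in the correct interval. Uniqueness of the $L_2$ solution is then invoked only implicitly. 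You instead \emph{start} from the abstract $L_2$ solution $\tilde g$, disintegrate it via Fubini/Bochner into scalar trajectories, and match these to the explicit $g_s$ through a clean monotonicity-based scalar uniqueness argument. Your route makes the contractive structure of the scalar problem transparent and avoids the right-derivative trick, at the cost of the disintegration step you flag as the main obstacle; the paper's route sidesteps that technicality entirely by building up rather than breaking down, but needs the somewhat ad hoc subsequence/right-derivative manoeuvre to link the $L_2$ and pointwise time derivatives.
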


\begin{proof}
    Since \eqref{eq:expl-sol} is a strong solution of \eqref{eq:pw-ode}, it holds for a.e. $t > 0$ that
    \begin{equation*}
        \lvert \dot g_s(t) \rvert
        \le
        \lvert 2s - 2 R_{\nu}^\pm (g_s(t)) \rvert
        \le 4.
    \end{equation*}
   Since $g_s$ is absolutely continuous in $t$, this implies
   that $g_s: [0,\infty) \to \R$ is Lipschitz continuous for a.e. fixed $s \in (0,1)$, and therefore, $g:[0,\infty) \to L_2(0,1)$ is  Lipschitz continuous. In particular, $g$ is differentiable at a.e. $t>0$, i.e., for any differentiability point $\hat{t} > 0$ outside a null set, the sequence of functions
   $$
   \partial_t g(\hat{t}) \coloneqq \lim_{h\to 0} \frac{g(\hat{t} + h) - g(\hat{t})}{h}
   $$
   converges in $L_2(0,1)$.
   In particular, this holds true for any
   \textit{positive} zero sequence $(h_n)_{n \in \N}$. Then, there exists a subsequence $(h_{n_k})_k$ and a null set $N_{\hat{t}} \subset (0,1)$ such that
   \begin{equation*}
   [\partial_t g(\hat{t})](s) = \lim_{k\to \infty} \frac{g_s(\hat{t} + h_{n_k}) - g_s(\hat{t})}{h_{n_k}} \quad \text{for all } s\in (0,1)\setminus \NN_{\hat{t}}.
   \end{equation*}
   Now, fix $s \in (0,1)\setminus N_{\hat{t}}$ which is also a continuity point of $Q_\nu$. By Theorem~\ref{theorem:BrezisRegularity}, the strong solution $g_s$ of \eqref{eq:pw-ode} solves
   \begin{equation*}
        \frac{d^+}{dt} g_s(t)
        \in
        2 s - 2 [R_{\nu}^- (g_s(t)), R_{\nu}^+ (g_s(t))]
   \end{equation*}
   even for \textit{every} $t > 0$, where $\frac{d^+}{dt}$ denotes the right derivative. Altogether, for the choice $t = \hat{t}$, it follows
   \begin{align*}
    [\partial_t g(\hat{t})](s) &= \lim_{k\to \infty} \frac{g_s(\hat{t} + h_{n_k}) - g_s(\hat{t})}{h_{n_k}}  \\
    &= \frac{d^+}{dt} g_s(\hat{t})
       ~ \in ~
        2 s - 2 [R_{\nu}^- (g_s(\hat{t})), R_{\nu}^+ (g_s(\hat{t}))].
   \end{align*}
   This proves that $g$ is a strong solution of \eqref{eq:cauchy2}, and we are done.
\end{proof}

Next, we apply the explicit solution formula
\eqref{eq:expl-sol}
to describe the flow 
from an arbitrary starting measure $\mu_0 \in \mathcal P_2(\R)$
to a discrete measure $\nu$. 

\begin{table}
    \centering\footnotesize
    \begin{tabular}{llll}
         \toprule
         & $Q_{\mu_0}(s) \le Q_\nu(s)$& $Q_{\mu_0}(s) \ge Q_\nu(s)$& 
         \\
         \midrule
         $\ell_s$
         &  $W_{\ell_s-1} < s < W_{\ell_s}$
         & $W_{\ell_s-1} < s < W_{\ell_s}$
         &
         \\
         $k_s$
         & $x_{k_s} \le Q_{\mu_0}(s) < x_{k_s + 1}$
         & $x_{k_s - 1} < Q_{\mu_0}(s) \le x_{k_s}$
         &
         \\
         $x_{s,j}$
         & $x_{k_s + j}$
         & $x_{k_s - j}$
         & $j = 1, \dots, \lvert \ell_s - k_s \rvert$
         \\
         $R_{s,j}$
         & $W_{k_s + j}$
         & $W_{k_s - j - 1}$& $j = 0, \dots, \lvert \ell_s - k_s \rvert - 1$
         \\
         \bottomrule
    \end{tabular}
    \caption{Quantities from  Corollary~\ref{cor:pt-meas-tar},
    where $W_k \coloneqq \sum_{j=1}^k w_j$ and $x_0 \coloneqq -\infty$, $x_{n+1} \coloneqq \infty$.}
    \label{tab:pt-meas-tar}
\end{table}

\begin{cor}[Point measure target]    \label{cor:pt-meas-tar}
    Let 
    \begin{equation}    \label{eq:pt-meas}
    \nu \coloneqq \sum_{j=1}^n w_j \delta_{x_j}    
\end{equation}
with weights $0 < w_j \le 1$ fulfilling $\sum_{j=1}^n w_j = 1$
and $x_1 < x_2 < \dots < x_n$.
Then the strong solution of \eqref{eq:cauchy2}     is given by
    \begin{equation*}
        [g(t)](s)
        \coloneqq
        \begin{cases}
            Q_{\mu_0}(s) + 2 \, (s - R_{s,0}) \, t,
            & t \in [t_{s,0}, t_{s,1}),
            \\
            x_{s,j} + 2 \, (s - R_{s,j}) \, (t - t_{s,j}),
            & t \in [t_{s,j}, t_{s,j+1}),
            \\
            Q_\nu(s),
            & t \ge t_{s,\lvert \ell_s - k_s \rvert} ,
        \end{cases}
    \end{equation*}
    where
     \begin{equation*}
            t_{s,0} \coloneqq 0, 
            \quad
            t_{s,1} \coloneqq \tfrac{x_{s,1} - Q_{\mu_0}(s)}{2(s - R_{s,0})},
            \quad
            t_{s,j+1} \coloneqq t_{s,j} + \frac{x_{s,j+1} - x_{s,j}}{2(s - R_{s,j})},
    \end{equation*}
    for $j \in \{ 1, \ldots, | \ell_s - k_s | - 1 \}$, 
    and the quantities $\ell_s,k_s,x_{s,j},R_{s,j}$  are given in Table~\ref{tab:pt-meas-tar}.
\end{cor}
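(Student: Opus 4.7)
The plan is to invoke the pointwise solution formula from Lemma~\ref{lem:pt-ode} and assemble it into an $L_2$ solution via Theorem~\ref{thm:pw-l2}, exploiting the fact that for a discrete target $\nu$ the CDFs $R_\nu^\pm$ are piecewise constant with jumps only at the atoms $x_1, \dots, x_n$, and $Q_\nu$ is piecewise constant with jumps only at $s = W_1, \dots, W_{n-1}$. Consequently, for every continuity point $s \in (0, 1) \setminus \{W_1, \dots, W_{n-1}\}$ of $Q_\nu$, the integrand $(s - R_\nu^\pm(z))^{-1}$ in the definition of $\Phi_{\nu, s}$ is piecewise constant on the relevant interval, making $\Phi_{\nu, s}$ piecewise linear. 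Since the excluded set of $s$'s is Lebesgue-null, it does not affect the equality of $L_2$-representatives defining $g(t)$.

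Fix such a continuity point $s$, which determines a unique index $\ell_s$ with $s \in (W_{\ell_s - 1}, W_{\ell_s})$, so that $Q_\nu(s) = x_{\ell_s}$. I would treat three cases. In the generic case $Q_{\mu_0}(s) < Q_\nu(s)$, I define $k_s$ by $x_{k_s} \le Q_{\mu_0}(s) < x_{k_s + 1}$, use $R_\nu^+$, and split the integration range $[Q_{\mu_0}(s), Q_\nu(s)]$ at the atoms $x_{k_s + 1}, \dots, x_{\ell_s}$. On each subinterval the integrand is a \emph{positive} constant $(s - W_{k_s + j})^{-1}$ (positivity follows from $W_{k_s + j} \le W_{\ell_s - 1} < s$), so $\Phi_{\nu, s}$ is strictly increasing and piecewise linear with explicit slopes. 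Inverting it segment by segment yields exactly the claimed concatenation of affine pieces in $t$; the switching times $t_{s, j}$ are obtained by enforcing $\Phi_{\nu, s}(x_{k_s + j}) = 2 t_{s, j}$, which directly produces the telescoping recursion for $t_{s, j+1}$ stated in the corollary, and $T_s = t_{s, \ell_s - k_s}$ is reached exactly when $g_s$ hits $Q_\nu(s)$, after which $g_s$ remains constant by definition~\eqref{eq:expl-sol}.

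The mirror case $Q_{\mu_0}(s) > Q_\nu(s)$ is handled symmetrically using $R_\nu^-$: defining $k_s$ by $x_{k_s - 1} < Q_{\mu_0}(s) \le x_{k_s}$, the integrand becomes the \emph{negative} constant $(s - W_{k_s - j - 1})^{-1}$ on $(x_{k_s - j - 1}, x_{k_s - j}]$, so both $\Phi_{\nu, s}$ and its inverse are again piecewise linear and the same telescoping yields the stated formula with the substitutions $x_{s, j} = x_{k_s - j}$ and $R_{s, j} = W_{k_s - j - 1}$ from Table~\ref{tab:pt-meas-tar}. The trivial case $Q_{\mu_0}(s) = Q_\nu(s)$ produces the constant solution $g_s \equiv Q_\nu(s)$, consistent with the formula when $\ell_s = k_s$ (interpreting the intermediate branch as absent).

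The main obstacle is not conceptual but organisational: unifying the two directions in Table~\ref{tab:pt-meas-tar}, correctly choosing between $R_\nu^+$ and $R_\nu^-$ so that the integrand has a definite sign and is bounded away from zero on each subinterval, and verifying that the sign of $s - R_{s, j}$ always points towards the target so that the piecewise linear trajectory monotonically approaches $Q_\nu(s)$ and halts there exactly at $T_s$. Once the bookkeeping is in place, the statement follows from a direct inversion of a piecewise linear strictly monotone function and an application of Theorem~\ref{thm:pw-l2}.
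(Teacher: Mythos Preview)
Your proposal is correct and follows essentially the same approach as the paper: both exploit that for a discrete target the right-hand side of the pointwise ODE~\eqref{eq:pw-ode} is piecewise constant, so the trajectory $g_s$ is piecewise affine with breakpoints at the atoms $x_j$ and switching times given by the stated recursion. The only cosmetic difference is that you route the argument through the explicit integral $\Phi_{\nu,s}$ of Lemma~\ref{lem:pt-ode} and invert it segment by segment, whereas the paper simply says ``solving \eqref{eq:pw-ode} piecewise gives the stated explicit form''---these are the same computation.
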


\begin{proof}
    Recall that $g_s$ is either monotonically increasing or monotonically decreasing
    until it hits the target quantile function $Q_\nu(s)$.
    The values $R_{s,j}$ in Table~\ref{tab:pt-meas-tar}
    denote the values of the constant plateaus,
    which the solution passes.
    The values $x_{s,j}$ denote the locations
    where the derivative jumps.
    Solving \eqref{eq:pw-ode} piecewise gives the stated explicit form.
\end{proof}

In \figref{fig:discrete_nu} we illustrate that from a pointwise view,
the flow $g_s$ changes only linearly over time.
However, since all quantities in Table~\ref{tab:pt-meas-tar},
especially the time points $t_{s,j}$
corresponding to the discontinuities of the derivative,
depend non-linearly on $s \in (0,1)$,
the actual evolution of the quantile function itself,
becomes highly non-linear. This is illustrated in the following example.

\begin{figure}[t]
    \centering
    \includegraphics[height=3.4cm]{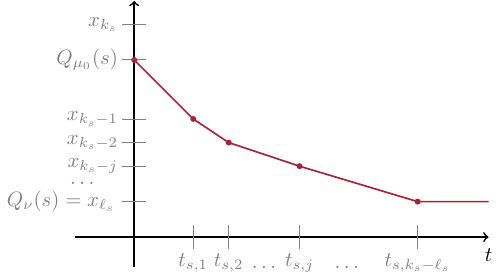}
    \includegraphics[height=3.4cm]{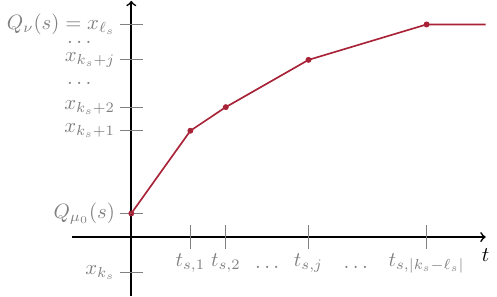}
    \caption{The map $[0, \infty) \to \R$, $t \mapsto [g(t)](s)$ for fixed $s \in (0, 1)$ with $Q_{\mu}(s) < Q_{\nu}(s)$ \textbf{(left)} and  $Q_{\mu}(s) > Q_{\nu}(s)$ \textbf{(right)}.
    The slopes of the affine linear pieces, determined by $s - R_{s, j}$, decrease for increasing $t$, that is, for increasing $j$.}
    \label{fig:discrete_nu}
\end{figure}

\begin{example}   \label{example:TwoToThree}
    Let
    \begin{equation*}
        \mu_0
        \coloneqq 
        \tfrac{1}{3}  \delta_{-1}
        + \tfrac{1}{3}  \delta_{1/2} 
        + \tfrac{1}{3}  \delta_{2} 
        \quad\text{and}\quad
        \nu
        \coloneqq 
        \tfrac{1}{4} \delta_0 
        + \tfrac{3}{4}  \delta_1.
    \end{equation*}
    The evolution of the quantile function can be computed as
    \begin{equation}
        [g(t)](s)
        =
        \begin{cases}
            \min\{2st - 1, 0\},
            & 
            s \in (0, \frac14),
            \\
            \eqref{eq:excl-int},
            &
            s \in (\frac14, \frac13),
            \\
            \min\{(\frac12 - \frac{t}4) + 2st, 1 \},
            &
            s \in (\frac13,\frac23),
            \\
            \max\{(2-2t) - 2st, 1 \},
            &
            s \in (\frac23, 1),
        \end{cases}
    \end{equation}
    where
    \begin{equation}
        \label{eq:excl-int}
        [g(t)] \big\vert_{(\frac14, \frac13)}(s)
        = 
        \begin{cases}
            2 s t - 1, 
            & 
            s \le t_1^{-1}(t),
            \\
            \frac{1}{4 s} - (\frac{t}{2} + 1) + 2 s t , 
            & 
            t_1^{-1}(t) \le s \le t_2^{-1}(t),
            \\
            1, 
            & 
            t_2^{-1}(t) \le s,
        \end{cases}
    \end{equation}
    with the monotonically decreasing, continuous functions 
    $t_1(s) \coloneqq t_{s,1}$
    and $t_2(s) \coloneqq t_{s,2}$.
    Except for $s \in (1/4,1/3)$,
    the quantile function is piecewise linear,
    which corresponds to uniform measures and point measures.
    For $s \in (1/4, 1/3)$, it becomes non-linear.
    The flow $g(t)_\# \Lambda_{(0,1)}$ is illustrated in Figure~\ref{fig:pt-target}.
    \end{example}

\begin{figure}[H]
    \includegraphics[width=.22\textwidth,page=1]{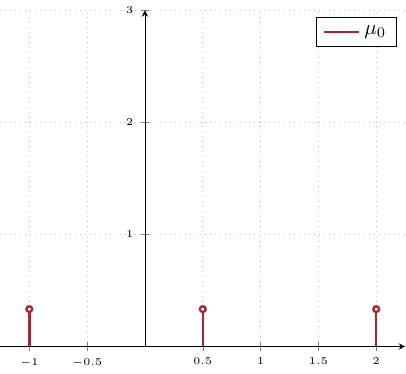}
    \hspace{0.2cm}
    \includegraphics[width=.22\textwidth,page=2]{imgs/Three_Dirac_to_Two_Diracs/Three_Dirac_to_Two_Diracs.pdf}
    \hspace{0.2cm}
    \includegraphics[width=.22\textwidth,page=4]{imgs/Three_Dirac_to_Two_Diracs/Three_Dirac_to_Two_Diracs.pdf}
    \hspace{0.2cm}
    \includegraphics[width=.22\textwidth,page=6]{imgs/Three_Dirac_to_Two_Diracs/Three_Dirac_to_Two_Diracs.pdf}
    \\[2ex]
    \includegraphics[width=.22\textwidth,page=14]{imgs/Three_Dirac_to_Two_Diracs/Three_Dirac_to_Two_Diracs.pdf}
    \hspace{0.2cm}
    \includegraphics[width=.22\textwidth,page=18]{imgs/Three_Dirac_to_Two_Diracs/Three_Dirac_to_Two_Diracs.pdf}
    \hspace{0.2cm}
    \includegraphics[width=.22\textwidth,page=22]{imgs/Three_Dirac_to_Two_Diracs/Three_Dirac_to_Two_Diracs.pdf}
    \hspace{0.2cm}
    \includegraphics[width=.22\textwidth,page=29]{imgs/Three_Dirac_to_Two_Diracs/Three_Dirac_to_Two_Diracs.pdf}
        \caption{The Wasserstein gradient flow between the point measures
        in Example~\ref{example:TwoToThree}. 
        The gray regions illustrate the density of the absolute continuous part of the flow,
        whereas the red spikes illustrate the discrete part. 
        In the lower row,
        the mass from $\delta_{-1}$ splits up,
        and a portion moves towards $\delta_{1}$.
        The movement speed of this portion, however, significantly decreases
        due to its attraction by $\delta_0$. 
        Figuratively,
        the mass sticks to $\delta_0$
        and can escape only slowly.
        For the corresponding quantile functions, see \figref{fig:pt-target_q}.}
        \label{fig:pt-target}
\end{figure}

\section{Invariant Subsets and Smoothing Properties of \texorpdfstring{$F_\nu$}{F}-Flows} \label{sec:inv}
In this section, we are interested in \textit{invariant subsets of} $\C(0,1)$, i.e., subsets in which $F_\nu$-flows remain once starting there. We also prove a \textit{smoothing} result, where the $F_\nu$-flow immediately becomes more regular than the initial datum.

Note that \cite[p.~131, Prop.~4.5]{B1973} generally characterizes conditions for closed subsets to be invariant. 
However, we take a more refined approach involving the resolvent $J_{\varepsilon}^{\partial F_\nu}$, the exponential formula \eqref{eq:form}, and our explicit calculation \eqref{eq:subdiff} of $\partial F_\nu$. 
This approach will yield more precise results -- and is not limited to closed subsets.

As a starting point, recall that $J_{\varepsilon}^{\partial F_\nu}$ maps $\C(0,1)$ into itself by Lemma~\ref{l:RC-C}. By \eqref{eq:subdiff} this means: For all $\varepsilon > 0$ and any $h \in \C(0,1)$, there exists $u \in \C(0,1)$ such that
\begin{equation} \label{eq:RC-start}
    u(s) + \vareps [R_\nu^-(u(s)), R_\nu^+(u(s))] ~\ni~ h(s) + \vareps s \hspace*{3mm} \text{for a.e. } s \in (0,1).
\end{equation} 

To simplify the following arguments and notations,
we identify an (equivalence class) $v \in \C(0,1)$ with its unique left-continuous and increasing version, i.e., $v \coloneqq Q_{\mu_v}$, where $\mu_v \coloneqq v_{\#} \lebesgue_{(0,1)}$, such that $v$ is uniquely defined \textit{everywhere} on $(0,1)$ (and not only up to a null set)
\footnote{Further, we make the agreement to always exclude denominators of (difference) quotients being zero, implicitly.}.

With the above identification of $v \in \C(0,1)$ with its quantile function, the following lemma shows that \eqref{eq:RC-start} holds \textit{everywhere} on $(0,1)$.

\begin{lemma}\label{lem:helper}
    Let $\nu \in \P_2(\R)$ be arbitrary.
    Then, for any $h \in \C(0,1)$ there exists $u \coloneqq J_{\varepsilon /2}^{\partial F_\nu} h \in \C(0,1)$ such that
    \begin{equation} \label{eq:RC-everywhere}
    u(s) + \vareps [R_\nu^-(u(s)), R_\nu^+(u(s))] ~\ni~ h(s) + \vareps s \hspace*{3mm} \text{ for all } \hspace*{3mm} s \in (0,1).
\end{equation} 
\end{lemma}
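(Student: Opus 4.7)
The plan is to start from Lemma~\ref{l:RC-C} and upgrade the a.e. validity of the inclusion to an everywhere statement, exploiting that after identifying $u$ with its canonical left-continuous representative, both sides of the inclusion enjoy enough one-sided continuity in $s$. Concretely, I set $u \coloneqq J_{\varepsilon/2}^{\partial F_\nu} h$, so that $h \in (I + (\varepsilon/2)\partial F_\nu)(u)$, and combine this with the explicit description of $\partial F_\nu$ from Lemma~\ref{thm:subdiffF_nu} to obtain a conull set $(0,1) \setminus \mathcal{N}$ on which
\[
    h(s) + \varepsilon s \in u(s) + \varepsilon[R_\nu^-(u(s)), R_\nu^+(u(s))].
\]
Replacing $u$ by its left-continuous, increasing version $Q_{\mu_u}$ changes $u$ on at most a null set, which I absorb into $\mathcal{N}$; the inclusion then still holds on a conull set.

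The heart of the argument is a left-approximation. Fix $s \in (0,1)$ and, since $\mathcal{N}$ is null, choose $s_n \uparrow s$ with $s_n \in (0,1) \setminus \mathcal{N}$. Writing
\[
    L(s') \coloneqq u(s') + \varepsilon R_\nu^-(u(s')), \qquad U(s') \coloneqq u(s') + \varepsilon R_\nu^+(u(s')),
\]
the a.e. inclusion reads $L(s_n) \le h(s_n) + \varepsilon s_n \le U(s_n)$. Left-continuity of $h$ as a quantile function gives $h(s_n) + \varepsilon s_n \to h(s) + \varepsilon s$, and left-continuity together with monotonicity of $u$ yield $u(s_n) \uparrow u(s)$; combined with the left-continuity of $R_\nu^-$, this delivers $L(s_n) \to L(s)$.

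The main obstacle is that $R_\nu^+$ is only \emph{right}-continuous, so $R_\nu^+(u(s_n))$ need not converge to $R_\nu^+(u(s))$ along the chosen approximating sequence. I would resolve this by case analysis. If a subsequence satisfies $u(s_{n_k}) = u(s)$, then $L(s_{n_k}) = L(s)$ and $U(s_{n_k}) = U(s)$ from some index on, and passing to the limit in the sandwich immediately yields $h(s) + \varepsilon s \in [L(s), U(s)]$. Otherwise, $u(s_n) < u(s)$ strictly for all large $n$; then $R_\nu^+(u(s_n)) = \nu((-\infty, u(s_n)])$ converges to $\nu((-\infty, u(s))) = R_\nu^-(u(s))$ by continuity of $\nu$ from below along the expanding sets $(-\infty, u(s_n)] \uparrow (-\infty, u(s))$, whence $U(s_n) \to L(s)$. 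The squeeze then forces $h(s) + \varepsilon s = L(s)$, which still lies in $[L(s), U(s)]$. In either subcase the inclusion \eqref{eq:RC-everywhere} holds at the arbitrary point $s$, finishing the proof.
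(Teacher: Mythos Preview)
Your proof is correct and follows essentially the same left-approximation strategy as the paper: pick $s_n \uparrow s$ in the conull set and pass to the limit using left-continuity of $h$, $u$, and $R_\nu^-$. The only cosmetic difference is that the paper handles the upper bound in one stroke via the upper semicontinuity of $R_\nu^+$ (so $\limsup_n R_\nu^+(u(s_n)) \le R_\nu^+(u(s))$), which spares the case distinction you make between $u(s_n) = u(s)$ along a subsequence and $u(s_n) < u(s)$ strictly.
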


The proof is given in the appendix.
\\

Now, let $- \infty \le a \le b \le \infty$ and $L > 0$. We will study the following closed subsets of $\mathcal C(0,1)$:
\begin{enumerate}
\item[I)] bounded quantile functions:
\begin{align*}
D_{[a,b]} &:=\{Q_\mu \in \C(0,1) : \overline{Q_\mu(0,1)} \subseteq [a,b]\} =
\{Q_\mu \in \C(0,1):  \supp \mu \subseteq [a,b] \} ,
\end{align*}
\item[II)] quantile functions admitting a lower $L$-Lipschitz condition:
\begin{align*}
D_{L}^- &:= \{Q_\mu \in \C(0,1) : L \le \frac{Q_\mu(s_1) - Q_\mu(s_2)}{s_1-s_2} ~\text{ for all } s_1,s_2 \in (0,1)\},
\end{align*}
\item[III)]   $L$-Lipschitz continuous quantile functions:
\begin{align*}
D_{L}^+ &:=\{Q_\mu \in \C(0,1) : 
\frac{Q_\mu(s_1) - Q_\mu(s_2)}{s_1-s_2} \le L  ~\text{ for all } s_1,s_2 \in (0,1)\}.
\end{align*}
\end{enumerate}
We will also consider the following subset which is not closed in $\C(0,1)$:
\begin{enumerate}
    \item[IV)] continuous quantile functions:
    \begin{align*}
    D_{c} &:=\{Q_\mu \in \C(0,1) : Q_\mu \text{ is continuous}\}\\
    & = \{Q_\mu \in \C(0,1) : R_\mu^+ \text{ is strictly increasing on } \overline{(R_\mu^+)^{-1}(0,1)} \}\\
    &= \{Q_\mu \in \C(0,1):  \supp \mu \text{ is convex} \}.
    \end{align*}
\end{enumerate}	
 
Note that the  subset of quantile functions of absolutely continuous measures 
with respect to the Lebesgue measure is also not closed in $\mathcal C(0,1)$.

Next, we provide some examples of probability measures $\nu \in \P_2(\R)$ and discuss to which of the above sets their quantile functions belong.

\begin{example}
    \begin{enumerate}[i)]
        \item
        For any atomic measure $\nu$ with a finite number of atoms we have $Q_{\nu} \in D_{[a, b]}$ for some $a, b \in \R$ and $Q_{\nu} \not\in D_{L}^{-}$ for any $L > 0$.
        
        \item 
        If $\nu$ defines a uniform distribution on $[a, b]$, then its quantile function is $s \mapsto a + s (b - a)$.
        Hence $Q_{\nu} \in D_{b - a}^{+} \cap D_{b - a}^{-} \cap D_{[a, b]} \cap D_c$.
        
        \item
        For the normal distribution $\nu \sim \mathcal{N}(\mu, \sigma^2)$, we have $\supp \nu = \R$, so $Q_\nu \in D_c$, and that $Q_{\nu}(s) = \mu + \sqrt{2} \sigma \erf^{-1}(2 s - 1)$ is not Lipschitz continuous.
        However, $Q_{\nu} \in D_L^{-}$ with $L \coloneqq (2 \pi \sigma^2)^{\frac{1}{2}}$.
        The same is true for the Laplace distribution or a mixture of two normal distributions.
        \item
        The exponential, Pareto and folded norm distributions, see \eqref{eq:folded_norm}, have a support that is unbounded in only one direction, so their quantile functions belong to $D_c \cap D_{[a, \infty)}$ for some $a \in \R$.
        As above, these quantile functions do not belong to $D_L^+$, but to $D_L^{-}$.
    \end{enumerate}
    \medskip\noindent
     Note that if $\mu \in \P_2(\R)$ is absolutely continuous and $Q_{\mu} \not\in D_{[a, b]}$ for any $a, b \in \R$, then we can not have $Q_{\mu} \in D_L^+$ for any $L > 0$.
\end{example}

By the following Proposition~\ref{l:DLOW-eq}, the above sets $D^\pm_L$ can be described in terms of CDFs 
instead of quantile functions, see \figref{fig:Q_Lipschitz}.

\begin{figure}[t]
    \centering
    \includegraphics[width=.49\textwidth, page=1, valign=c]{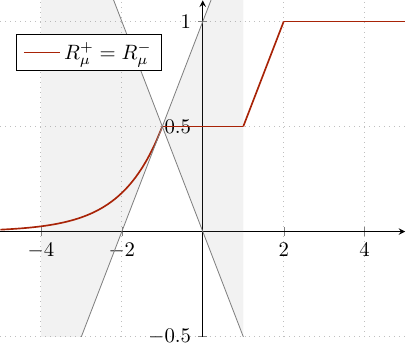}
    \includegraphics[width=.49\textwidth, page=2, valign=c]{imgs/Q_Lipschitz}
    \caption{ \textbf{Left:} 
     $R_{\mu}^+$ is $\frac{1}{2}$-Lipschitz continuous, since there is a \enquote{double cone} (white) with extremal rays of slope $\frac{1}{2}$ such that if it is centered at any point of the graph of $R_{\mu}^+$, then the   graph has no points within this double cone.
    \textbf{Right:} 
    $Q_{\mu}$ admits a lower $2$-Lipschitz condition, since the white double cone with extremal rays of slope $2$ contains the whole graph of $Q_\mu$.}
    \label{fig:Q_Lipschitz}
\end{figure}

\begin{proposition}\label{l:DLOW-eq}
  Let $Q_\mu \in \C(0,1)$. Then the following holds true:
  \begin{itemize}
      \item[i)] 
       $Q_\mu \in D_{L}^-$ if and only if $R_\mu = R_\mu^+$ is $\frac1L$-Lipschitz continuous, i.e., 
              \begin{equation}
     \frac{R_\mu(x) - R_\mu(y)}{x-y} \le \frac{1}{L} \quad
     \text{ for all } x, y \in \R.
   \end{equation} 
In this case, $\mu$ is absolutely continuous, and the above holds iff its density $f_\mu$ fulfills
   $$
   f_\mu \le \frac{1}{L} \quad\text{a.e. on } \R.
   $$
      \item[ii)]  $Q_\mu \in D_{L}^+$ if and only if $R_\mu^+$ admits a lower $\frac1L$-Lipschitz condition on $\overline{(R_\mu^+)^{-1}(0,1)}$, i.e.,
  \begin{equation}
     \frac{1}{L} \le \frac{R_\mu^+(x) - R_\mu^+(y)}{x-y} \quad\text{ for all } x, y \in \overline{(R_\mu^+)^{-1}(0,1)}.
   \end{equation}  
   If $\mu$ is absolutely continuous, then the above holds iff its density $f_\mu$ fulfills
   $$
   \frac{1}{L} \le f_\mu  \quad\text{a.e. on } \conv (\supp \mu ).
   $$
  \end{itemize}
  \end{proposition}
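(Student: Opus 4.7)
The plan is to derive each equivalence from the Galois inequality \eqref{eq:galois}, which exhibits $R_\mu^+$ and $Q_\mu$ as mutual generalized inverses, together with Remark~\ref{l:DLOW-eq-lem}, which relates (strict) monotonicity on one side to continuity on the other. In this picture, a lower (resp.\ upper) Lipschitz bound on one function should translate into the reciprocal upper (resp.\ lower) Lipschitz bound on its inverse, and the density statements then follow from the absolute continuity of an increasing Lipschitz function combined with the Radon--Nikodym theorem. The main obstacle will be the careful treatment of the boundaries: $R_\mu^+$ is in general constant on the possibly unbounded tails $(-\infty, \inf\supp\mu)$ and $[\sup\supp\mu, \infty)$, and $Q_\mu$ may tend to $\pm\infty$ as $s \to 0, 1$. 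This is precisely why (ii) restricts the lower Lipschitz condition on $R_\mu^+$ to the closure $\overline{(R_\mu^+)^{-1}(0,1)}$ of its \emph{active range}.

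For part (i), assume first $Q_\mu \in D_L^-$. The lower bound on the difference quotient makes $Q_\mu$ strictly increasing and Lipschitz continuous, so Remark~\ref{l:DLOW-eq-lem} yields that $R_\mu^+ = R_\mu^- = R_\mu$ is continuous on $\R$. For any $x$ with $R_\mu(x) \in (0,1)$, one application of Galois gives $Q_\mu(R_\mu(x)) \le x$, while applying Galois to any $s > R_\mu(x)$ gives $Q_\mu(s) > x$; letting $s \downarrow R_\mu(x)$ and invoking the continuity of $Q_\mu$ produces the inverse identity $Q_\mu(R_\mu(x)) = x$. Substituting $s_i = R_\mu(x_i)$ into the hypothesis and rearranging yields $R_\mu(x_1) - R_\mu(x_2) \le (x_1 - x_2)/L$ on the active range; this extends trivially to all of $\R$ by monotonicity and the constancy of $R_\mu$ on the two tails. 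The converse is symmetric, now using the dual identity $R_\mu(Q_\mu(s)) = s$. The density statement then follows because a $1/L$-Lipschitz, increasing $R_\mu$ is absolutely continuous with derivative bounded by $1/L$ almost everywhere, and this derivative is identified with $f_\mu$ via Radon--Nikodym.

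Part (ii) follows the same blueprint but is slightly more delicate, since $Q_\mu$ being $L$-Lipschitz is only a continuity statement and not strict monotonicity: atoms of $\mu$ correspond to plateaus of $Q_\mu$ (which are Lipschitz-compatible) and to jumps of $R_\mu^+$. For the forward direction, Remark~\ref{l:DLOW-eq-lem} gives strict monotonicity of $R_\mu^+$ on the active range, and the same two Galois applications, now relying on right-continuity of $R_\mu^+$, still yield the inverse identity $Q_\mu(R_\mu^+(x)) = x$ on the interior $(R_\mu^+)^{-1}(0,1)$. Substituting $x_i = Q_\mu(s_i)$ into the hypothesis $Q_\mu(s_1) - Q_\mu(s_2) \le L(s_1 - s_2)$ produces the lower Lipschitz bound for $R_\mu^+$ on the interior, and one extends it to $\overline{(R_\mu^+)^{-1}(0,1)}$ by approaching limit points from within the interior and invoking right-continuity of $R_\mu^+$. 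For the converse, one first transfers the lower Lipschitz bound from $R_\mu^+$ to $R_\mu^-$ by taking left limits; combining this with the Galois-derived sandwich $R_\mu^-(Q_\mu(s)) \le s \le R_\mu^+(Q_\mu(s))$ and the elementary fact that $R_\mu^+(x_2) \le R_\mu^-(x_1)$ whenever $x_2 < x_1$ yields $Q_\mu(s_1) - Q_\mu(s_2) \le L(s_1 - s_2)$. The density version then follows from the same Radon--Nikodym identification, together with the observation that for absolutely continuous $\mu$, $\overline{(R_\mu^+)^{-1}(0,1)} = \conv(\supp\mu)$.
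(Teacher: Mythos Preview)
Your overall strategy via the Galois correspondence is the same as the paper's, and parts (i) ``if'', (ii) ``only if'', and (ii) ``if'' are essentially correct and match the paper (your sandwich argument for (ii) ``if'' using $R_\mu^-(Q_\mu(s)) \le s \le R_\mu^+(Q_\mu(s))$ is a clean variant of the paper's $\varepsilon$-shift).

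However, there is a genuine gap in your (i) ``only if''. You assert that $Q_\mu \in D_L^-$ makes $Q_\mu$ \emph{Lipschitz continuous}, and then use this continuity to pass to the limit $s \downarrow R_\mu(x)$ and conclude the inverse identity $Q_\mu(R_\mu(x)) = x$. But a \emph{lower} bound on the difference quotient gives strict monotonicity only, not an upper bound; $Q_\mu$ can have jumps. Concretely, take $\mu = \tfrac12 \Lambda_{[0,1]} + \tfrac12 \Lambda_{[2,3]}$: then $Q_\mu \in D_2^-$, yet $Q_\mu$ jumps at $s = \tfrac12$, and for $x = \tfrac32$ one has $R_\mu(\tfrac32) = \tfrac12$ but $Q_\mu(R_\mu(\tfrac32)) = Q_\mu(\tfrac12) = 1 \ne \tfrac32$. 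So the inverse identity you rely on fails on the gaps of $\supp\mu$, and your substitution $s_i = R_\mu(x_i)$ does not give back $x_i$.

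The paper avoids this by never claiming the exact inverse identity. Instead it uses the two one-sided Galois inequalities $Q_\mu(R_\mu(x)) \le x$ and $Q_\mu(R_\mu(y) + \varepsilon) > y$ for small $\varepsilon > 0$, applies the lower Lipschitz bound of $Q_\mu$ to the pair $R_\mu(x)$ and $R_\mu(y) + \varepsilon$, and then lets $\varepsilon \downarrow 0$. This yields the $\tfrac1L$-Lipschitz bound on $R_\mu$ restricted to $R_\mu^{-1}(0,1)$, which then extends to all of $\R$ because $R_\mu$ is constant on the tails and continuous (the latter following, correctly, from strict monotonicity of $Q_\mu$ via Remark~\ref{l:DLOW-eq-lem}). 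Your argument can be repaired along exactly these lines; alternatively, you could restrict to $x$ in the closure of the range of $Q_\mu$ (where the inverse identity does hold) and then argue separately that $R_\mu$ is constant on each gap.
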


The proof is given in the appendix.

The following proposition says that the support of flows cannot escape the convex hull of the support of the target measure $\nu$ once they start there.

\begin{proposition}[Invariance of $D_{[a,b]}$]\label{p:rangeDab}
    Let $\nu \in \P_2(\R)$ be such that $\overline{\conv} (\supp  \nu) \subseteq [a,b]$.
    Then,  $J_{\varepsilon}^{\partial F_\nu}$ maps $D_{[a,b]}$ into itself for all $\vareps > 0$.
    Hence,
    the solution of the Cauchy problem \eqref{eq:cauchy2}
    starting in $g_0 \in D_{[a,b]}$ fulfills 
    $g(t) \in D_{[a,b]}$ for all $t \ge 0$.
\end{proposition}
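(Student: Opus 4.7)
The plan is to reduce the invariance statement to Corollary~\ref{remark:GeneralizedRangeCondition} applied to the closed subset $D_{[a,b]} \subset \C(0,1)$. Concretely, it suffices to show that for every $\varepsilon > 0$ the resolvent $J_\varepsilon^{\partial F_\nu}$ sends $D_{[a,b]}$ into itself; the second claim about the flow $g(t)$ then follows automatically from the exponential formula \eqref{eq:form}.

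The key tool is the pointwise version of the resolvent equation provided by Lemma~\ref{lem:helper}. Applying that lemma with parameter $2\varepsilon$ in place of $\varepsilon/2$, we obtain for any $h \in D_{[a,b]}$ that $u \coloneqq J_\varepsilon^{\partial F_\nu} h \in \C(0,1)$ (identified with its left-continuous representative) satisfies
\begin{equation}
    u(s) + 2\varepsilon \bigl[R_\nu^-(u(s)), R_\nu^+(u(s))\bigr] \;\ni\; h(s) + 2\varepsilon s
    \qquad \text{for every } s \in (0,1). \label{eq:planeq}
\end{equation}
Since $h \in D_{[a,b]}$ and $[a,b]$ is closed, we moreover have $h(s) \in [a,b]$ pointwise on $(0,1)$.

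The central observation is that $\supp \nu \subseteq [a,b]$ forces $R_\nu^\pm$ to saturate outside $[a,b]$: for $x > b$ one has $R_\nu^-(x) = R_\nu^+(x) = 1$, while for $x < a$ one has $R_\nu^-(x) = R_\nu^+(x) = 0$. We then argue by contradiction. If $u(s_0) > b$ for some $s_0 \in (0,1)$, the inclusion \eqref{eq:planeq} degenerates to the equation $u(s_0) + 2\varepsilon = h(s_0) + 2\varepsilon s_0$, so $u(s_0) = h(s_0) + 2\varepsilon(s_0 - 1) < h(s_0) \le b$, contradicting $u(s_0) > b$ since $s_0 < 1$. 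Symmetrically, if $u(s_0) < a$, then \eqref{eq:planeq} yields $u(s_0) = h(s_0) + 2\varepsilon s_0 > h(s_0) \ge a$, again a contradiction. Hence $u(s) \in [a,b]$ for all $s \in (0,1)$, that is, $u \in D_{[a,b]}$.

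The argument is essentially algebraic once \eqref{eq:planeq} is in hand, so the only real obstacle is bookkeeping: one must track the factor $2$ appearing in the explicit form \eqref{eq:subdiff} of $\partial F_\nu$ so as to invoke Lemma~\ref{lem:helper} with the correct scaling, and one must verify that the endpoints $s \in \{0,1\}$ of the parameter interval never enter the contradiction (which is exactly why the strict inequality $s_0 \in (0,1)$ suffices to make both $s_0 - 1 < 0$ and $s_0 > 0$). With this in place, Corollary~\ref{remark:GeneralizedRangeCondition} delivers $g(t) \in D_{[a,b]}$ for every $t \ge 0$.
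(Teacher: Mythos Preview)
Your proof is correct and follows essentially the same route as the paper: both reduce to Corollary~\ref{remark:GeneralizedRangeCondition} by showing the resolvent preserves $D_{[a,b]}$, invoke the pointwise inclusion from Lemma~\ref{lem:helper}, and derive a contradiction from $R_\nu^\pm \equiv 0$ below $a$ and $R_\nu^\pm \equiv 1$ above $b$. The only cosmetic differences are that the paper works with $J_{\varepsilon/2}^{\partial F_\nu}$ via \eqref{eq:RC-everywhere} rather than rescaling to $J_\varepsilon^{\partial F_\nu}$, and explicitly reduces to the case where $a,b$ are finite (which you handle implicitly since the contradiction is vacuous on any unbounded side).
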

  
\begin{proof}
    Let $h \in D_{[a,b]}$ and consider a solution $u\in \C(0,1)$ of \eqref{eq:RC-everywhere}.
    W.l.o.g., let $a,b$ be finite (otherwise there is nothing to show). Assume there exists $s^* \in (0,1)$ such that
    \begin{equation*}
        u(s^*) < a \le \min (\supp  \nu).
    \end{equation*}
    Then, it holds $R_{\nu}^-(u(s^*)) = R_{\nu}^+(u(s^*)) = 0$ and by \eqref{eq:RC-everywhere} that
    \begin{equation*}
        a > u(s^*) =
        h(s^*) + \vareps s^* \ge a + \vareps s^*,
    \end{equation*}
    which is a contradiction. Now, assume there is $s^* \in (0,1)$ such that
    \begin{equation*}
        u(s^*) > b \ge \max (\supp  \nu).
    \end{equation*}
    Then, we obtain $R_{\nu}^-(u(s^*)) = R_{\nu}^+(u(s^*)) = 1$ and by \eqref{eq:RC-everywhere} further
    \begin{equation*}
        b < u(s^*) =
        h(s^*) + \vareps (s^*-1) \le b + \vareps (s^*-1),
    \end{equation*}
    again a contradiction. Together, we have proved $u(s) \in [a,b]$ for \textit{every} $s \in (0,1)$, which shows $u \in D_{[a,b]}$. The remaining claim follows by Corollary~\ref{remark:GeneralizedRangeCondition}.
\end{proof}

  In order to prove invariance results for $D_L^-$ and $D_L^+$, 
  we define\footnote{Notice that in the definitions of $L_{\text{low}}(Q_\mu)$ and ${\rm Lip}(Q_\mu)$, \textit{``for all''} can be replaced by \textit{``for almost all''}, yielding the same constants since $Q_\mu$ is left-continuous.} 
  for given $Q_\mu \in \C(0,1)$ the largest lower Lipschitz constant 
  \begin{align*}
  L_{\text{low}}(Q_\mu) := \max \{L \ge 0: \frac{Q_\mu(s_1) - Q_\mu(s_2)}{s_1-s_2} \ge L ~ \text{ for all } s_1,s_2 \in (0,1)\} \ge 0,
  \end{align*}
  where $L_{\text{low}}(Q_\mu) = 0$ is explicitly allowed. Note that by Proposition~\ref{l:DLOW-eq}, it holds $L_{\text{low}}(Q_\mu) > 0$ if and only if $R_\mu$ is Lipschitz continuous with constant $L_{\text{low}}(Q_\mu)^{-1}$.
  Further, consider the usual smallest Lipschitz constant
  \begin{align*}
  {\rm Lip}(Q_\mu) := \min \{L \ge 0: \frac{Q_\mu(s_1) - Q_\mu(s_2)}{s_1-s_2} \le L ~ \text{ for all } s_1,s_2 \in (0,1)\} \le \infty.
  \end{align*}
  By Proposition~\ref{l:DLOW-eq}, it holds $0 < {\rm Lip}(Q_\mu) < \infty$ if and only if $R_\mu^+$ admits a lower ${\rm Lip}(Q_\mu)^{-1}$-Lipschitz condition on $\overline{(R_\mu^+)^{-1}(0,1)}$.\\

  Now, we can formulate an invariance and smoothing property of $F_\nu$-flows. In addition, we can accurately describe how the Lipschitz constants of the $F_\nu$-flow evolve over time. We start with the lower constant.
  
  \begin{theorem}\label{thm:regularisation-Lip}
  Let $\nu \in \P_2(\R)$ with $L_{{\rm low}}(Q_\nu) > 0$. In particular, it is assumed that $R_\nu$ is Lipschitz continuous.
  Consider any initial value $g_0 = Q_{\mu_0} \in \C(0,1)$, where $L_{{\rm low}}(g_0) = 0$ is explicitly allowed.
   Then, the strong solution $g:[0,\infty) \to \C(0,1)$ of the Cauchy problem \eqref{eq:cauchy2} enjoys for all $t > 0$ the smoothing property
  \begin{equation}\label{eq:smoothing1}
     L_{{\rm low}}( g(t) )  \ge  L_{{\rm low}}( g_0 ) \cdot e^{-\frac{2t}{L_{{\rm low}}( Q_\nu )}}
    + L_{{\rm low}}( Q_\nu ) \cdot (1 - e^{-\frac{2t}{L_{{\rm low}}( Q_\nu) }}) > 0.
    \end{equation}
    In particular, the CDF $R_{\gamma_t}$ of the associated Wasserstein gradient flow $\gamma_t$ is Lipschitz continuous for any $t > 0$ with Lipschitz constant
    \begin{equation}\label{eq:smoothing2}
    {\rm Lip}(R_{\gamma_t}) \le \Big( L_{{\rm low}}( g_0 ) \cdot e^{-\frac{2t}{L_{{\rm low}}( Q_\nu )}}
    + L_{{\rm low}}( Q_\nu ) \cdot (1 - e^{-\frac{2t}{L_{{\rm low}}( Q_\nu )}}) \Big)^{-1} < \infty.
    \end{equation}
    \end{theorem}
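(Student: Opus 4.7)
The plan is to derive the estimate via a sharp one-step bound on how $L_{\rm low}$ transforms under $J_\varepsilon^{\partial F_\nu}$, then iterate using the exponential formula from Theorem~\ref{thm:L2_representation}.

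\textbf{Step 1 (resolvent estimate).} Fix $h \in \C(0,1)$ and set $u \coloneqq J_{\varepsilon/2}^{\partial F_\nu} h \in \C(0,1)$. By Lemma~\ref{lem:helper}, the inclusion
\[
u(s) + \varepsilon [R_\nu^-(u(s)), R_\nu^+(u(s))] \ni h(s) + \varepsilon s
\]
holds for every $s \in (0,1)$. The standing assumption $L_\nu \coloneqq L_{\rm low}(Q_\nu) > 0$ combined with Proposition~\ref{l:DLOW-eq}(i) shows that $\nu$ is absolutely continuous, so $R_\nu \coloneqq R_\nu^- = R_\nu^+$ is single-valued and $\tfrac{1}{L_\nu}$-Lipschitz. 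Hence the inclusion becomes the identity $u(s) + \varepsilon R_\nu(u(s)) = h(s) + \varepsilon s$. Subtracting this identity at $s_1 < s_2$ and using monotonicity of $u$ together with the Lipschitz bound on $R_\nu$ yields
\[
u(s_2) - u(s_1) + \tfrac{\varepsilon}{L_\nu}(u(s_2) - u(s_1)) \ge h(s_2) - h(s_1) + \varepsilon(s_2 - s_1),
\]
and therefore
\[
\frac{u(s_2) - u(s_1)}{s_2 - s_1} \ge \frac{L_\nu}{L_\nu + \varepsilon}\left(\frac{h(s_2) - h(s_1)}{s_2 - s_1} + \varepsilon\right).
\]
Taking the infimum over $s_1 < s_2$ gives the affine one-step estimate
\[
L_{\rm low}(u) \ge \Psi_\varepsilon(L_{\rm low}(h)), \qquad \Psi_\varepsilon(\ell) \coloneqq \frac{L_\nu(\ell + \varepsilon)}{L_\nu + \varepsilon}.
\]

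\textbf{Step 2 (iteration and limit).} The map $\Psi_\varepsilon$ has $L_\nu$ as fixed point and satisfies $\Psi_\varepsilon(\ell) - L_\nu = \frac{L_\nu}{L_\nu+\varepsilon}(\ell - L_\nu)$. Applied $n$ times with $\varepsilon_n \coloneqq 2t/n$ (this is the value of $\varepsilon$ in Lemma~\ref{lem:helper} corresponding to the resolvent $J_{t/n}^{\partial F_\nu}$ appearing in the exponential formula \eqref{eq:form}), we obtain
\[
L_{\rm low}\bigl((I + \tfrac{t}{n}\partial F_\nu)^{-n} g_0\bigr) - L_\nu \;\ge\; \bigl(L_{\rm low}(g_0) - L_\nu\bigr)\left(\frac{L_\nu}{L_\nu + 2t/n}\right)^n.
\]
The right-hand side converges to $(L_{\rm low}(g_0) - L_\nu)\, e^{-2t/L_\nu}$ as $n\to\infty$, which rearranges to the claimed lower bound for $L_{\rm low}(g(t))$ at the level of the pre-limit iterates. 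Since the iterates converge to $g(t)$ in $L_2(0,1)$ by Theorem~\ref{thm:L2_representation}, a subsequence converges pointwise almost everywhere; pointwise evaluating the lower-Lipschitz inequality on a full-measure set and then extending it to every $s_1 < s_2 \in (0,1)$ via the left-continuous canonical representative in $\C(0,1)$ preserves the bound in the limit, establishing \eqref{eq:smoothing1}. The CDF estimate \eqref{eq:smoothing2} is immediate from \eqref{eq:smoothing1} and Proposition~\ref{l:DLOW-eq}(i), noting that the convex-combination form on the right of \eqref{eq:smoothing1} is strictly positive for all $t > 0$ whenever $L_\nu > 0$, even in the critical case $L_{\rm low}(g_0) = 0$.

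\textbf{Main obstacle.} The substantive technical step is the one-step resolvent estimate of Step~1, which hinges on the pointwise-everywhere formulation granted by Lemma~\ref{lem:helper} together with single-valuedness of $R_\nu$ forced by $L_{\rm low}(Q_\nu) > 0$. The subsequent iteration is algebraically routine, but one must be careful that the lower-Lipschitz property -- a pointwise statement about left-continuous representatives -- survives the $L_2$-limit of the exponential formula; the identification of $\C(0,1)$-elements with their canonical left-continuous versions used throughout the section handles this cleanly.
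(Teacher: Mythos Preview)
Your proof is correct and follows essentially the same approach as the paper: a one-step resolvent estimate on $L_{\rm low}$ via Lemma~\ref{lem:helper} and the $\tfrac{1}{L_\nu}$-Lipschitz bound on $R_\nu$, then iteration through the exponential formula. Your fixed-point formulation $\Psi_\varepsilon(\ell)-L_\nu=\tfrac{L_\nu}{L_\nu+\varepsilon}(\ell-L_\nu)$ is a slightly cleaner packaging of the same geometric-series computation the paper carries out explicitly, and you are somewhat more explicit than the paper about why the lower-Lipschitz bound survives the $L_2$-limit.
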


    \begin{proof}
   i)
   First, let $h \in \C(0,1)$ and $\vareps > 0$. By \eqref{eq:RC-everywhere} and since $R_\nu^+ = R_\nu^- = R_\nu$ is continuous, there exists a solution $u\in \C(0,1)$ of
    \begin{equation*}
        h(s) + \vareps s = u(s) + \vareps R_{\nu}(u(s))
        \quad \text{for all } s \in (0, 1).
    \end{equation*}
    By the Lipschitz continuity of $R_\nu$, it holds for all $s_1, s_2 \in (0,1)$, $s_1 > s_2$, that
    \begin{align*}
    \frac{u(s_1) - u(s_2)}{s_1 - s_2}
    &= \frac{h(s_1) - h(s_2)}{s_1 - s_2} + \vareps - \vareps \frac{R_\nu (u(s_1)) - R_\nu(u(s_2))}{s_1 - s_2}  \\
    &\ge L_{\text{low}}(h) + \vareps - \vareps \frac{1}{L_{\text{low}}(Q_\nu)}\cdot \frac{u(s_1) - u(s_2)}{s_1 - s_2},
    \end{align*}
    which yields
    \begin{align*}
    \frac{u(s_1) - u(s_2)}{s_1 - s_2} \ge  \frac{L_{\text{low}}(h) + \vareps}{q}, \quad q \coloneqq  1 + \frac{\vareps}{L_{\text{low}}(Q_\nu)}.
    \end{align*}
    This means $ L_{\text{low}}(u) \ge \frac{L_{\text{low}}(h) + \vareps}{q} > 0$. (Notice the improvement $L_{\text{low}}(u) > 0$, even if $L_{\text{low}}(h) = 0$.) We have proved for any $h \in \C(0,1)$ and any $\vareps > 0$ that
    \begin{equation}
     L_{\text{low}}( (I + \frac{\vareps}{2}\partial F_\nu)^{-1} h) 
     \ge 
     \frac{L_{\text{low}}(h) + \vareps}{q}.   
    \end{equation}
    Now, fix $n \in \N$ and observe inductively that
    \begin{align*}
     L_{\text{low}}( (I + \frac{\vareps}{2}\partial F_\nu)^{-n} g_0) 
     &\ge \frac{L_{\text{low}}((I + \frac{\vareps}{2}\partial F_\nu)^{-(n-1)} g_0)}{q} + \frac{\vareps}{q} 
     \\
     &\ge
     \frac{L_{\text{low}}((I + \frac{\vareps}{2}\partial F_\nu)^{-(n-2)} g_0)}{q^2} 
     + \frac{\vareps}{q^2} 
     + \frac{\vareps}{q}
     \ge \ldots\\
     &\ge 
     \frac{L_{\text{low}}( g_0)}{q^n} + \vareps \sum_{k=1}^n \frac{1}{q^k}\\
     &= 
     \frac{L_{\text{low}}( g_0)}{q^n}
     +  \frac{\vareps}{q} \cdot \frac{1 - q^{-n}}{1 - q^{-1}}\\
     &= \frac{L_{\text{low}}( g_0)}{q^n}
     + L_{\text{low}}(Q_\nu) (1- q^{-n}).
    \end{align*}
    Now, for $t > 0$ and choosing $\vareps \coloneqq \frac{2t}{n}$, it follows
    \begin{align*}
    L_{\text{low}}( (I + \frac{t}{n}\partial F_\nu)^{-n} g_0)
    &\ge \frac{L_{\text{low}}( g_0)}{(1 + \frac{2t}{n L_{\text{low}}(Q_\nu)})^n}
     + L_{\text{low}}(Q_\nu) (1- (1 + \frac{2t}{n L_{\text{low}}(Q_\nu)})^{-n})\\
    &\overset{n \uparrow \infty}{\longrightarrow} L_{\text{low}}( g_0) \cdot e^{-\frac{2t}{L_{\text{low}}(Q_\nu)}}
    + L_{\text{low}}(Q_\nu) \cdot (1 - e^{-\frac{2t}{L_{\text{low}}(Q_\nu)}}).
    \end{align*}
    Finally, the solution $g: [0,\infty) \to \C(0,1)$ of the Cauchy problem \eqref{eq:cauchy2} is given by the exponential formula (and $L_2$-limit) \eqref{eq:form} which implies 
    \begin{equation}
     L_{\text{low}}( g(t) )   \ge  L_{\text{low}}( g_0) \cdot e^{-\frac{2t}{L_{\text{low}}(Q_\nu)}}
    + L_{\text{low}}(Q_\nu) \cdot (1 - e^{-\frac{2t}{L_{\text{low}}(Q_\nu)}}) > 0
    \end{equation}
    for all $t > 0$, which concludes the proof.
\end{proof}
    
Note that even if the {\rm initial} measure satisfies $L_{{\rm low}}( g_0 ) = 0$ (e.g., atomic measures), the Lipschitz continuity of the {\rm target} CDF, i.e., $L_{{\rm low}}( Q_\nu ) > 0$, is enough to force the  flow's CDFs to immediately become Lipschitz continuous for any $t > 0$, and the Lipschitz constants exponentially improve for $t \to \infty$ towards the target Lipschitz constant $L_{{\rm low}}( Q_\nu )^{-1}$ as described in \eqref{eq:smoothing2}.


 The following example demonstrates that the bound \eqref{eq:smoothing2} of the Lipschitz constants in Theorem~\ref{thm:regularisation-Lip} is sharp. Furthermore, it features an explicit flow from a Dirac point towards a target uniform distribution, where the flow stays uniformly distributed at any given time point.

\begin{example} \label{ex:delta0_to_uniform_0}
 Consider as target measure $\nu$ the uniform distribution on $[0,1]$, and as initial measure $\mu_0 \coloneqq \delta_0$ the Dirac measure at $0$. Then, for the corresponding CDFs, we get that
      \begin{equation}
          R_\nu (x) = 
          \begin{cases}
            0, & ~x \le 0, \\
            x, & ~0 < x < 1 , \\
            1, & ~x \ge 1,
        \end{cases}
      \end{equation}
      is $1$-Lipschitz continuous, whereas
      \begin{equation}
          R_{\mu_0}^{+}(x) = 
          \begin{cases}
            0, & ~x < 0, \\
            1, & ~x \ge 0,
        \end{cases}
      \end{equation}
      is a jump function. Still, the $\F_\nu$-Wasserstein gradient flow $\gamma_t$ immediately regularizes to uniform distributions for $t > 0$ given by
      \begin{equation}
          R_{\gamma_t}(x) = 
          \begin{cases}
            0, & ~x \le 0, \\
            (1-e^{-2t})^{-1} x, & ~0 < x < 1-e^{-2t} , \\
            1, & ~x \ge 1-e^{-2t},
        \end{cases}
      \end{equation}
      which are $(1-e^{-2t})^{-1}$-Lipschitz continuous.
      Thus, the upper bound of the Lipschitz constants \eqref{eq:smoothing2}  is sharp.
    \end{example}

Next, we deal with the upper Lipschitz constant.

    \begin{theorem}\label{thm:regularisation-Lip_1}
    Let $\nu \in \P_2(\R)$ such that $\supp \nu = [a,b]$ and ${\rm Lip}(Q_\nu) < \infty$. Consider an initial value $g_0 = Q_{\mu_0} \in \C(0,1)$ with convex support $\supp {\mu_0} \subseteq [a,b]$ and ${\rm Lip}(g_0) < \infty$.
    In particular, it is assumed that $\supp {\mu_0}, \, \supp {\nu}$ are convex.
    Then, the strong solution $g:[0,\infty) \to \C(0,1)$ of the Cauchy problem \eqref{eq:cauchy2} fulfills for all $t > 0$ the invariance property
    \begin{equation}\label{eq:invariance1}
    {\rm Lip}( g(t) )   \le  {\rm Lip}( g_0) \cdot e^{-\frac{2t}{{\rm Lip}(Q_\nu)}}
    + {\rm Lip}(Q_\nu) \cdot (1 - e^{-\frac{2t}{{\rm Lip}(Q_\nu)}}) < \infty.
    \end{equation}
    In particular, the CDF $R_{\gamma_t}^+$ of the associated Wasserstein gradient flow $\gamma_t$ admits a lower Lipschitz constant on $\overline{(R_{\gamma_t}^+)^{-1}(0,1)}$ for all $t > 0$, and the constant is
    \begin{equation}\label{eq:invariance2}
    L_{\rm low}(R_{\gamma_t}^+) \ge  \Big( {\rm Lip}( g_0) \cdot e^{-\frac{2t}{{\rm Lip}(Q_\nu)}}
    + {\rm Lip}(Q_\nu) \cdot (1 - e^{-\frac{2t}{{\rm Lip}(Q_\nu)}}) \Big)^{-1} > 0.
    \end{equation}
\end{theorem}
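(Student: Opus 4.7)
The plan is to follow the blueprint of Theorem~\ref{thm:regularisation-Lip}, replacing $L_{{\rm low}}$ with ${\rm Lip}$ and reversing the relevant inequalities. Concretely, I would first derive the one-step resolvent estimate
\begin{equation*}
{\rm Lip}\bigl( J_{\vareps/2}^{\partial F_\nu} h \bigr)
\;\le\; \frac{{\rm Lip}(h) + \vareps}{1 + \vareps/L},
\qquad L \coloneqq {\rm Lip}(Q_\nu),
\end{equation*}
for every $h \in \C(0,1) \cap D_{[a,b]}$ with ${\rm Lip}(h) < \infty$. Iterating this $n$ times, setting $\vareps = 2t/n$, and passing to the limit via the exponential formula \eqref{eq:form} then yields \eqref{eq:invariance1} verbatim as in the cited proof. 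The $L_2$-limit preserves the Lipschitz bound because an a.e.-convergent subsequence transfers the pointwise Lipschitz inequality to a dense set and hence to the left-continuous quantile representative of $g(t)$. Finally, \eqref{eq:invariance2} is immediate from \eqref{eq:invariance1} and Proposition~\ref{l:DLOW-eq}(ii).

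The main obstacle I expect is a preliminary inequality needed inside the one-step estimate, namely
\begin{equation*}
R_\nu^-(x_1) - R_\nu^+(x_2) \;\ge\; \frac{x_1 - x_2}{L},
\qquad x_1, x_2 \in [a,b],\; x_1 > x_2.
\end{equation*}
In Theorem~\ref{thm:regularisation-Lip} the analogue was trivial because $R_\nu$ was assumed continuous, so $R_\nu^- = R_\nu^+$. Here, however, ${\rm Lip}(Q_\nu) < \infty$ still permits $\nu$ to carry atoms, so the one-sided versions must be distinguished. I would obtain the inequality by applying Proposition~\ref{l:DLOW-eq}(ii) to an auxiliary point $y \in (x_2, x_1) \subseteq \overline{(R_\nu^+)^{-1}(0,1)} = [a,b]$, where the last equality relies on $\supp \nu = [a,b]$, giving $R_\nu^+(y) - R_\nu^+(x_2) \ge (y-x_2)/L$, and then letting $y \uparrow x_1$ while invoking $\lim_{y \uparrow x_1} R_\nu^+(y) = R_\nu^-(x_1)$.

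With the preliminary inequality in hand, I set $u \coloneqq J_{\vareps/2}^{\partial F_\nu} h$ and $\xi(s) \coloneqq (h(s) + \vareps s - u(s))/\vareps$. By Lemma~\ref{lem:helper}, $\xi(s) \in [R_\nu^-(u(s)), R_\nu^+(u(s))]$ for every $s \in (0,1)$, and by Proposition~\ref{p:rangeDab} combined with $\supp \mu_0 \subseteq [a,b]$ (an invariance propagated inductively through the resolvent iteration) one has $u(s) \in [a,b]$ for all $s$. For $s_1 > s_2$, monotonicity of $u$ gives $u(s_1) \ge u(s_2)$; in the non-trivial case $u(s_1) > u(s_2)$, the preliminary inequality together with $\xi(s_1) \ge R_\nu^-(u(s_1))$ and $\xi(s_2) \le R_\nu^+(u(s_2))$ yields $\xi(s_1) - \xi(s_2) \ge (u(s_1)-u(s_2))/L$, while the degenerate case $u(s_1) = u(s_2)$ follows from $\xi(s_1) - \xi(s_2) = ((h(s_1)-h(s_2)) + \vareps(s_1-s_2))/\vareps \ge 0$. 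Substituting this lower bound into the identity $u(s_1) - u(s_2) = (h(s_1)-h(s_2)) + \vareps(s_1-s_2) - \vareps (\xi(s_1)-\xi(s_2))$ and rearranging produces $(1 + \vareps/L)(u(s_1)-u(s_2)) \le ({\rm Lip}(h) + \vareps)(s_1-s_2)$, which is the sought one-step estimate.
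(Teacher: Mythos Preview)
Your proposal is correct and matches the paper's proof essentially step for step: the paper also derives the one-step resolvent estimate ${\rm Lip}(u) \le ({\rm Lip}(h)+\vareps)/(1+\vareps/{\rm Lip}(Q_\nu))$ for $h \in D_{[a,b]}$, handles the $R_\nu^-$ vs.\ $R_\nu^+$ mismatch via an auxiliary parameter $\kappa>0$ with $u(s_1)-\kappa$ in place of $u(s_1)$ (your limit $y\uparrow x_1$ is the same device), and then refers to the iteration and exponential-formula argument of Theorem~\ref{thm:regularisation-Lip} for the remainder. The only addition in the paper is an explicit disposal of the trivial case ${\rm Lip}(Q_\nu)=0$ at the outset.
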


  \begin{proof}
  If ${\rm Lip}(Q_\nu) = 0$, then by assumption, it holds $\supp {\mu_0} \subseteq \supp {\nu} = \{a\}$. Hence, the solution of \eqref{eq:cauchy2} is given by $g(t) \equiv a$ for all $t \ge 0$, and trivially satisfies \eqref{eq:invariance1}. So, we can assume that ${\rm Lip}(Q_\nu) > 0$.
    Now, let $h \in D_{[a,b]}$ with ${\rm Lip}(h) < \infty$, and $\vareps > 0$. By Proposition~\ref{p:rangeDab}, there exists a solution $u \in D_{[a,b]}$ of \eqref{eq:RC-everywhere}. 
    Let $s_1, s_2 \in (0,1)$, $s_1 > s_2$. W.l.o.g., we can assume that $u(s_2) < u(s_1)$.  
    Hence, we can choose $\kappa > 0$ small enough such that $a \le u(s_2) < u(s_1) - \kappa \le b$. Now, by \eqref{eq:RC-everywhere}, it holds
    \begin{equation}
        u(s_1) \le h(s_1) + \vareps s_1 - \vareps R_\nu^-(u(s_1)) 
    \end{equation}
    and 
    \begin{equation}
        u(s_2) \ge h(s_2) + \vareps s_2 - \vareps R_\nu^+(u(s_2)). 
    \end{equation}
    By Proposition~\ref{l:DLOW-eq}, $R_\nu^+$ admits a lower ${\rm Lip}(Q_\nu)^{-1}$-Lipschitz condition on $\overline{(R_\nu^+)^{-1}(0,1)} = \supp \nu = [a,b]$, so that
    \begin{align*}
    \frac{u(s_1) - u(s_2)}{s_1 - s_2}
      &\le \frac{h(s_1) - h(s_2)}{s_1 - s_2} + \vareps - \vareps \frac{R_\nu^- (u(s_1)) - R_\nu^+(u(s_2))}{s_1 - s_2}  \\
    &\le \frac{h(s_1) - h(s_2)}{s_1 - s_2} + \vareps - \vareps \frac{R_\nu^+ (u(s_1) - \kappa) - R_\nu^+(u(s_2))}{s_1 - s_2}  \\
    &\le \frac{h(s_1) - h(s_2)}{s_1 - s_2} + \vareps - \vareps \frac{1}{{\rm Lip}(Q_\nu)} \cdot \frac{(u(s_1)-\kappa) - u(s_2)}{s_1 - s_2}.
    \end{align*}
    Letting $\kappa \downarrow 0$ leads to
    \begin{align*}
      \frac{u(s_1) - u(s_2)}{s_1 - s_2}
       \le {\rm Lip}(h) + \vareps -  \frac{\vareps}{{\rm Lip}(Q_\nu)}\cdot \frac{u(s_1) - u(s_2)}{s_1 - s_2},
    \end{align*}
    which yields
    \begin{align*}
    \frac{u(s_1) - u(s_2)}{s_1 - s_2} \le  \frac{{\rm Lip}(h) + \vareps}{q}, \quad q \coloneqq  1 + \frac{\vareps}{{\rm Lip}(Q_\nu)}.
    \end{align*}
    This shows ${\rm Lip}(u) \le \frac{{\rm Lip}(h) + \vareps}{q} < \infty$. In other words, for any $h \in D_{[a,b]}$ with ${\rm Lip}(h) < \infty$ and $\vareps > 0$, it holds 
    \begin{equation}
     {\rm Lip}( (I + \frac{\vareps}{2}\partial F_\nu)^{-1} h) 
     \le 
     \frac{{\rm Lip}(h) + \vareps}{q}.   
    \end{equation}
    Now, setting $h := g_0$, we obtain the desired estimate \eqref{eq:invariance1} in the same lines as in the previous proof, and we are done.
  \end{proof}

We mention that formally, the conditions ${\rm Lip}(Q_\nu) < \infty$ and ${\rm Lip}(Q_{\mu_0}) = \infty$ bring no improvement in the estimate \eqref{eq:invariance1} for ${\rm Lip}(g(t))$.
Further, by the following example, the support assumption in Theorem~\ref{thm:regularisation-Lip_1}
cannot be skipped.

 \begin{example} \label{ex:delta0_to_uniform}
  In our example in the introduction,
    we considered the Wasserstein gradient flow
    from $\mu_0 = \delta_{-1}$ 
    to $\nu = \delta_0$ with ${\rm Lip}(Q_{\mu_0}) = {\rm Lip}(Q_\nu) = 0$. 
    By Corollary~\ref{cor:pt-meas-tar}, the quantile functions  given by  
        \begin{equation} \label{eq:delta-1_to_delta0}
        [g(t)](s)
        = 
        \min \{2st - 1, 0 \},
    \end{equation}
    are piecewise linear and sharply contained in $D_{2t}^+$, see 
    Figure~\ref{fig:qunat}. In particular, the Lipschitz constants cannot be
bounded.
    The corresponding Wasserstein gradient flow reads as
    \begin{equation}\label{eq:initioal}
        \gamma_t 
        =
        \begin{cases}
            \delta_{-1},
            &
            t = 0, 
            \\
            \tfrac{1}{2t} \Lambda_{[-1,-1+2t]},
            &
            0 \le t \le \frac12,
            \\
            \tfrac{1}{2t} \Lambda_{[-1,0]} 
            + \bigl(1 - \tfrac{1}{2t} \bigr) \delta_0,
            &
            \frac12 < t,
        \end{cases}
    \end{equation}
    and was already depicted in Figure~\ref{fig:gradient_flow_dirac_dirac_P2R}.
           
    \begin{figure}[t]
        \centering
        \includegraphics[width=.4\textwidth]{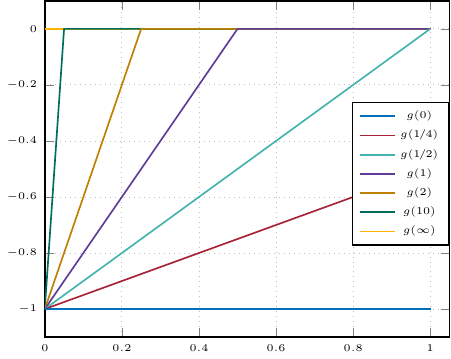}
        \caption{The functions $g(t)$ from \eqref{eq:delta-1_to_delta0} for $t \in \{ 0, \frac{1}{4}, \frac{1}{2}, 1, 2, 10 \}$.
        }\label{fig:qunat}
    \end{figure} 
  
  \end{example}

 As a consequence of Theorems~\ref{thm:regularisation-Lip},~\ref{thm:regularisation-Lip_1} and their proofs, we immediately obtain the following invariance properties of $D_L^-$ and $D_{[a,b]} \cap D_L^+$ with respect to the $F_\nu$-flow. 

   \begin{cor}[Invariance of $D_L^-$ and $D_{L}^+ \cap D_{[a,b]}$]\label{c:rangeDL}
  Let $\nu \in \P_2(\R)$. 
   \begin{enumerate}[i)]
\item   Assume $Q_\nu \in D_{L}^-$. 
    Then, the solution of the Cauchy problem \eqref{eq:cauchy2}
    starting in $g_0 \in D_{L}^-$ fulfills 
    $g(t) \in D_{L}^-$ for all $t \ge 0$.
    More precisely, it holds \eqref{eq:smoothing1}.
    Moreover, $J_{\varepsilon}^{\partial F_\nu}$ maps $D_L^-$ into itself for all $\varepsilon >0$.
    
    In particular,
    if the initial measure $\mu_0$ has a $L_0$-Lipschitz continuous CDF $R_{\mu_0}$, then the flow's CDF $R_{\gamma_t}$ remains Lipschitz (and thus absolutely) continuous with constant $\le \max \{L_0, L^{-1}\}$ for all $t \ge 0$.    
\item
    Assume  $\supp \nu = [a,b]$ and  $Q_\nu \in D_L^+$.
    Then, the solution of the Cauchy problem \eqref{eq:cauchy2}
    starting in $g_0 \in D_{[a,b]} \cap D_L^+$ fulfills 
    $g(t) \in D_{[a,b]} \cap D_L^+$ for all $t \ge 0$.
    More precisely, it holds \eqref{eq:invariance1}.
  \\ 
  Moreover, $J_{\varepsilon}^{\partial F_\nu}$ maps $D_{[a,b]} \cap D_L^+$ into itself for all $\varepsilon >0$.
    
    In particular,
    if the initial  measure $\mu_0$ has convex support $\supp \mu_0 \subseteq [a,b]$ and is in $D_L^+$, then $\gamma_t$ has convex support for all $t \ge 0$ and no 'gaps' of empty mass can form\footnote{ This particular invariance property of $F_\nu$-flows concerning the convexity of supports, we will generalize to arbitrary target measures $\nu \in \P_2(\R)$ further below.}.  
    \end{enumerate}
  \end{cor}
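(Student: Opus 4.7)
The plan is to read off both invariance claims directly from the resolvent estimates already established inside the proofs of Theorems~\ref{thm:regularisation-Lip} and~\ref{thm:regularisation-Lip_1}, and then promote resolvent invariance to flow invariance via Corollary~\ref{remark:GeneralizedRangeCondition}. For this, I first note that $D_L^-$ and $D_L^+$ are both closed in $L_2(0,1)$: for any $L_2$-convergent sequence one can pass to an a.e.-convergent subsequence, and the pointwise difference-quotient inequalities defining these sets survive in the limit (using the left-continuous representatives).

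For part~(i), the proof of Theorem~\ref{thm:regularisation-Lip} derives, for every $h\in\C(0,1)$ and every $\varepsilon>0$, the bound
\begin{equation*}
L_{\mathrm{low}}\bigl(J_{\varepsilon/2}^{\partial F_\nu} h\bigr)
\;\ge\;
\frac{L_{\mathrm{low}}(h)+\varepsilon}{1+\varepsilon/L_{\mathrm{low}}(Q_\nu)}.
\end{equation*}
When both $L_{\mathrm{low}}(h)\ge L$ and $L_{\mathrm{low}}(Q_\nu)\ge L$, a one-line algebraic check shows that the right-hand side is itself $\ge L$, so $J_{\varepsilon/2}^{\partial F_\nu}$ maps $D_L^-$ into itself. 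Corollary~\ref{remark:GeneralizedRangeCondition} then lifts this to the flow, and the quantitative estimate \eqref{eq:smoothing1} is exactly Theorem~\ref{thm:regularisation-Lip}. The CDF consequence follows from Proposition~\ref{l:DLOW-eq}: the hypothesis that $R_{\mu_0}$ be $L_0$-Lipschitz translates into $L_{\mathrm{low}}(g_0)\ge 1/L_0$, the convex combination in \eqref{eq:smoothing1} stays in $[\min\{L,1/L_0\},\max\{L,1/L_0\}]$, and inverting back through Proposition~\ref{l:DLOW-eq} yields the Lipschitz bound $\max\{L_0,L^{-1}\}$ on $R_{\gamma_t}$.

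Part~(ii) is fully parallel. Inside the proof of Theorem~\ref{thm:regularisation-Lip_1} one obtains, for $h\in D_{[a,b]}$ with ${\rm Lip}(h)<\infty$, the dual estimate
\begin{equation*}
{\rm Lip}\bigl(J_{\varepsilon/2}^{\partial F_\nu} h\bigr)
\;\le\;
\frac{{\rm Lip}(h)+\varepsilon}{1+\varepsilon/{\rm Lip}(Q_\nu)}.
\end{equation*}
Under ${\rm Lip}(h)\le L$ and ${\rm Lip}(Q_\nu)\le L$ the right-hand side is again $\le L$, and combined with Proposition~\ref{p:rangeDab}, which handles the $D_{[a,b]}$-component, this shows that $J_{\varepsilon/2}^{\partial F_\nu}$ preserves the closed set $D_{[a,b]}\cap D_L^+$. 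Corollary~\ref{remark:GeneralizedRangeCondition} promotes this to the flow, and \eqref{eq:invariance1} is Theorem~\ref{thm:regularisation-Lip_1}. No genuine obstacle is expected, since every estimate and closure property is already in place; the only thing worth double-checking is that the convex combinations appearing in \eqref{eq:smoothing1} and \eqref{eq:invariance1} stay on the correct side of $L$, which is automatic from the fact that a convex combination of numbers $\ge L$ (resp.\ $\le L$) is $\ge L$ (resp.\ $\le L$).
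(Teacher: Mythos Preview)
Your proposal is correct and matches exactly what the paper indicates: the corollary is stated as an immediate consequence of Theorems~\ref{thm:regularisation-Lip} and~\ref{thm:regularisation-Lip_1} \emph{and their proofs}, and you have spelled out precisely those details---the one-step resolvent estimates, the algebraic check that $\frac{L+\varepsilon}{1+\varepsilon/L}=L$, closedness of $D_L^\pm$, and the appeal to Corollary~\ref{remark:GeneralizedRangeCondition} and Proposition~\ref{p:rangeDab}. The only imprecision is your phrase that the convex combination in \eqref{eq:smoothing1} ``stays in $[\min\{L,1/L_0\},\max\{L,1/L_0\}]$''---the combination is of $L_{\mathrm{low}}(g_0)$ and $L_{\mathrm{low}}(Q_\nu)$, which may exceed $1/L_0$ and $L$ respectively---but all you need is the lower bound $\ge\min\{1/L_0,L\}$, which your argument does establish.
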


 Note that in part i), no point masses can arise during the flow -- given that the target measure is Lipschitz continuous -- as opposed to the example of Figure~\ref{fig:gradient_flow_dirac_dirac_P2R}.

	Finally, we study the set $D_c$ of continuous quantile functions. By the following lemma,
 it is also invariant with respect to the resolvent $J_{\varepsilon}^{\partial F_\nu}$.
 
		\begin{lemma}\label{l:rangeCont}
			Let $\nu \in \P_2(\R)$ be arbitrary. Then, $J_{\varepsilon}^{\partial F_\nu}$ maps $D_{c}$ into itself for all $\varepsilon >0$.
		\end{lemma}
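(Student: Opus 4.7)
The plan is to invoke Lemma~\ref{lem:helper} (with its parameter $\varepsilon$ replaced by $2\varepsilon$), which produces $u \coloneqq J_{\varepsilon}^{\partial F_\nu} h \in \C(0,1)$ together with a selection $\xi \colon (0,1) \to [0,1]$ satisfying
$$\xi(s) \in \bigl[R_\nu^-(u(s)),\, R_\nu^+(u(s))\bigr], \qquad u(s) + 2\varepsilon\, \xi(s) = h(s) + 2\varepsilon s, \qquad s \in (0,1).$$
Under the standing convention that $u$ is its left-continuous increasing representative, showing $u \in D_c$ reduces to proving the absence of right-jumps: one needs $u(s_0^+) \coloneqq \lim_{s \downarrow s_0} u(s) = u(s_0)$ for every $s_0 \in (0,1)$.

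Suppose for contradiction that $u(s_0^+) > u(s_0)$ at some $s_0 \in (0,1)$. Monotonicity of $u$ gives $u(s) \ge u(s_0^+)$ for every $s > s_0$, hence $R_\nu^-(u(s)) \ge R_\nu^-(u(s_0^+))$. Since $u(s_0) < u(s_0^+)$, the set inclusion $(-\infty, u(s_0)] \subseteq (-\infty, u(s_0^+))$ yields $R_\nu^+(u(s_0)) \le R_\nu^-(u(s_0^+))$. Combining these bounds,
$$u(s) + 2\varepsilon\, \xi(s) \;\ge\; u(s_0^+) + 2\varepsilon\, R_\nu^+(u(s_0)) \qquad \text{for all } s > s_0.$$
Rewriting the left-hand side as $h(s) + 2\varepsilon s$ and passing to the limit $s \downarrow s_0$, the continuity of $h$ (ensured by $h \in D_c$) gives $h(s_0) + 2\varepsilon s_0 \ge u(s_0^+) + 2\varepsilon R_\nu^+(u(s_0))$. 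At $s = s_0$ itself, however, $h(s_0) + 2\varepsilon s_0 = u(s_0) + 2\varepsilon \xi(s_0) \le u(s_0) + 2\varepsilon R_\nu^+(u(s_0))$. Chaining these inequalities yields $u(s_0) \ge u(s_0^+)$, contradicting the assumed jump. Therefore $u$ has no right-discontinuities, so $u$ is continuous and $u \in D_c$.

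The main hurdle is promoting the merely almost-everywhere pointwise characterization of the resolvent (cf.\ \eqref{eq:RC-start}) to an \emph{everywhere} identity on $(0,1)$; this is precisely the role of Lemma~\ref{lem:helper}, and it is what allows one to compare $u$ with the \emph{specific} continuous function $h$ at each point and to exploit the one-sided limits of $u$ at a hypothetical jump. Once this identity holds on all of $(0,1)$, the contradiction reduces to a short monotonicity argument pairing the left/right limits of $R_\nu^{\pm}$ across the jump with the continuity of $h$.
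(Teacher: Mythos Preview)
Your proof is correct and follows essentially the same strategy as the paper's: invoke Lemma~\ref{lem:helper} to obtain the everywhere-valid inclusion \eqref{eq:RC-everywhere}, assume a right-jump of $u$ at some $s_0$, and derive a contradiction by comparing the limit $s \downarrow s_0$ (using the continuity of $h$ and the monotonicity of $R_\nu^\pm$) with the value at $s_0$. The paper's version is slightly more compressed (it writes a single chain of inequalities rather than introducing the selection $\xi$ explicitly), but the logical content is identical.
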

  
		\begin{proof}
			For $h \in D_c$, we consider a solution $u \in \C(0,1)$ of \eqref{eq:RC-everywhere}. 
			Suppose that $u$ is not continuous at some $s_0 \in (0, 1)$.
			Since $u$ is left-continuous and increasing, we have $\lim_{t \downarrow s_0} u(t) > u(s_0)$. But then, \eqref{eq:RC-everywhere} and the continuity of $h$ imply
			\begin{align*}
			h(s_0) + \vareps s_0
			& = \lim_{t \downarrow s_0}\left(  h(t) + \vareps t \right)
			\ge 
   \lim_{t \downarrow s_0} \left( u(t) + \vareps R_{\nu}^-(u(t)) \right)\\
			& > u(s_0) + \vareps R_{\nu}^+(u(s_0))
			\ge h(s_0) + \vareps s_0,
			\end{align*}
			a contradiction.
			Thus, $u \in D_{c}$.
		\end{proof}
  
		Since $D_c$ is not closed with respect to the $L_2$-norm, Corollary~\ref{remark:GeneralizedRangeCondition} cannot be applied directly. Still, we have the following invariance and monotonicity result. Note that there are no restrictions on the target measure $\nu \in \P_2(\R)$.
		
    \begin{theorem}[Invariance of $D_c$ \& monotonicity of the support]\label{p:rangeDc}
	Let $\nu \in \P_2(\R)$ and $g_0 \in D_{c}$.
        Then the following holds true:
        \begin{enumerate}[i)]
		\item
            The solution $g$ of the Cauchy problem \eqref{eq:cauchy2}
            starting in $g_0 \in D_{c}$ fulfills $g(t) \in D_{c}$ for all $t \ge 0$.
            
		\item
            The ranges fulfill $\overline{g(t_1) (0,1)} \subseteq \overline{g(t_2) (0,1)}$ for all $0 \le t_1 \le t_2$. 
        \end{enumerate}
    \end{theorem}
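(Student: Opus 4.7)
The plan is to prove (i) by distilling a pointwise modulus-of-continuity estimate for the resolvent $J_{\varepsilon}^{\partial F_\nu}$, then propagating it through the exponential-formula iterates and the $L_2$-limit, and to prove (ii) by direct integration of the differential inclusion along continuity points of $Q_\nu$.

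For (i), fix $\varepsilon > 0$, $h \in D_c$, and set $u \coloneqq J_{\varepsilon}^{\partial F_\nu} h \in D_c$ by Lemma~\ref{l:rangeCont}. From \eqref{eq:RC-everywhere}, the two one-sided inclusions
\begin{equation*}
    u(s) + \vareps R_\nu^-(u(s)) \le h(s) + \vareps s \le u(s) + \vareps R_\nu^+(u(s))
\end{equation*}
hold for every $s \in (0,1)$. Subtracting the lower bound at $s_1$ from the upper bound at $s_2$ (for $s_1 < s_2$) and invoking the observation that $R_\nu^-(y) = \nu((-\infty, y)) \ge \nu((-\infty, x]) = R_\nu^+(x)$ whenever $y > x$ (applied with $x = u(s_1)$, $y = u(s_2)$ when $u(s_2) > u(s_1)$; the case $u(s_2) = u(s_1)$ is trivial) yields the key estimate
\begin{equation*}
    0 \le u(s_2) - u(s_1) \le (h(s_2) - h(s_1)) + \vareps (s_2 - s_1).
\end{equation*}
Iterating, the $n$-th iterate $u_n \coloneqq \bigl(J_{t/n}^{\partial F_\nu}\bigr)^n g_0 \in D_c$ satisfies $0 \le u_n(s_2) - u_n(s_1) \le (g_0(s_2) - g_0(s_1)) + t(s_2 - s_1)$. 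By the exponential formula in Theorem~\ref{thm:L2_representation}, $u_n \to g(t)$ in $L_2(0,1)$; along an a.e.-convergent subsequence and using the left-continuity of the quantile representative of $g(t)$ together with the continuity of $g_0$, the bound extends to every pair $s_1 < s_2$ in $(0,1)$. As $g_0$ is continuous, $g(t)(s_2) - g(t)(s_1) \to 0$ as $s_2 - s_1 \to 0$, so $g(t) \in D_c$ for every $t \ge 0$.

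For (ii), part (i) guarantees that the range of $g(t)$ is a connected interval, so $\overline{g(t)(0,1)} = [a(t), b(t)]$ where $a(t) \coloneqq \lim_{s \downarrow 0} g_s(t)$ and $b(t) \coloneqq \lim_{s \uparrow 1} g_s(t)$, the limits running over the dense set of continuity points of $Q_\nu$ (on which Theorem~\ref{thm:pw-l2} gives $[g(t)](s) = g_s(t)$). The differential inclusion $\dot g_s(\tau) \in 2 s - 2 [R_\nu^-(g_s(\tau)), R_\nu^+(g_s(\tau))]$ combined with $R_\nu^\pm \in [0,1]$ forces $2(s - 1) \le \dot g_s(\tau) \le 2 s$ a.e., so integration from $t_1$ to $t_2$ yields
\begin{equation*}
    g_s(t_1) + 2(s-1)(t_2 - t_1) \le g_s(t_2) \le g_s(t_1) + 2 s (t_2 - t_1).
\end{equation*}
Letting $s \downarrow 0$ yields $a(t_2) \le a(t_1)$, and letting $s \uparrow 1$ yields $b(t_2) \ge b(t_1)$, hence $\overline{g(t_1)(0,1)} \subseteq \overline{g(t_2)(0,1)}$.

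The main obstacle is the proof of (i): since $D_c$ is not closed in $L_2(0,1)$, Corollary~\ref{remark:GeneralizedRangeCondition} does not apply, and one must show that the continuity of the iterates persists in the $L_2$-limit. The trick is to extract from \eqref{eq:RC-everywhere} a \emph{quantitative} stability estimate by playing the two one-sided inclusions at $s_1$ and $s_2$ against each other: the comparison $R_\nu^-(u(s_2)) \ge R_\nu^+(u(s_1))$ renders the selector correction automatically nonnegative, so no explicit selector needs to be exhibited, and the additive error $\vareps (s_2 - s_1)$ accumulates only linearly along the iteration, leaving the finite remainder $t(s_2 - s_1)$ after the exponential-formula limit.
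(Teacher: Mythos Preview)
Your proof is correct and follows essentially the same strategy as the paper's: for (i) you derive the identical one-step modulus-of-continuity estimate $0 \le u(s_2) - u(s_1) \le (h(s_2) - h(s_1)) + \varepsilon(s_2 - s_1)$ from \eqref{eq:RC-everywhere} and iterate it; for (ii) you integrate the same pointwise bounds $2(s-1) \le \dot g_s \le 2s$ and take limits at the endpoints. The only methodological differences are in the limit-passing: the paper packages the iterated estimate as equicontinuity plus pointwise boundedness and invokes Arzel\`a--Ascoli on compact subintervals to upgrade the $L_2$-limit to a continuous function, whereas you pass the estimate through an a.e.-convergent subsequence and then extend to all pairs via left-continuity of the quantile representative---this is slightly more direct and avoids Arzel\`a--Ascoli entirely. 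Likewise, for (ii) the paper uses the Bochner integral representation of the strong solution and pointwise evaluation, while you use the explicit pointwise solutions $g_s$ from Theorem~\ref{thm:pw-l2}; both routes produce the same inequalities for $[g(t)](s)$ on a set of full measure, after which part (i) and the intermediate value theorem finish the argument.
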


In other words the theorem says: if the initial measure $\mu_0$ has convex support, then $\supp \gamma_t$ stays convex for all $t\ge 0$, and we have the monotonicity $\supp \gamma_{t_1} \subseteq \supp \gamma_{t_2}$ for all $0 \le t_1 \le t_2$.
  
	    \begin{proof}
	    i) Fix $t > 0$. For the initial datum $g_0 \in D_c$, Lemma~\ref{l:rangeCont} yields	
	    \begin{equation*}
	    g_n \coloneqq (I + \frac{t}{n} \partial F_\nu)^{-n} g_0 \in D_c \quad \text{ for all } n \in \N.
	    \end{equation*}
	    We verify that $(g_n)_{n \in \N}$ fulfills the assumptions of the Arzel\`a-Ascoli theorem.
     \\
	    1) Fix $s \in (0,1)$ and $n \in \N$. By \eqref{eq:RC-everywhere}, it holds for $u \coloneqq (I + \frac{t}{n} \partial F_\nu)^{-1} g_0$ that 
	    \begin{align*}
	    u(s) &\le  g_0(s) + \frac{2t}{n} s - \frac{2t}{n} R_\nu^-(u(s)) \le g_0(s) + \frac{2t}{n} s,
	    \\
	    u(s) &\ge  g_0(s) + \frac{ 2t}{n} s - \frac{2t}{n} R_\nu^+(u(s)) \ge g_0(s) + \frac{2t}{n} s - \frac{2t}{n},
	    \end{align*}
	    so that
	    \begin{equation*}
	    |u(s)| \le |g_0(s)| + \frac{2t}{n} (s + 1).
	    \end{equation*}
	    Iteratively, it follows 
	    \begin{equation*}
	    |g_n(s)| \le |g_0(s)| + 2 t (s + 1).
	    \end{equation*}
	    Since $n \in \N$ was arbitrary, $(g_n)_n$ is pointwise bounded.
     \\
	    2) Fix $0 < s_2 < s_1 < 1$ and $n \in \N$. W.l.o.g. we can assume for $u$ as above that $u(s_2) < u(s_1)$. Since $R_\nu^-(u(s_1)) - R_\nu^+(u(s_2)) \ge 0$, it follows by \eqref{eq:RC-everywhere} that
	    \begin{align*}
	     |u(s_1) - u(s_2)|
	    &\le g_0(s_1) - g_0(s_2) + \frac{2t}{n} (s_1 - s_2) - \frac{2t}{n} \big(R_\nu^-(u(s_1)) - R_\nu^+(u(s_2)) \big)\\
	    &\le |g_0(s_1) - g_0(s_2)| + \frac{2t}{n} |s_1 - s_2|.
	    \end{align*}
	    Again, it follows inductively that
	    \begin{equation*}
	    |g_n(s_1) - g_n(s_2)| \le |g_0(s_1) - g_0(s_2)| + 2t |s_1 - s_2|.
	    \end{equation*}
	    Since $n \in \N$ was arbitrary, $(g_n)_{n \in \N}$ is equicontinuous.
     \\[1ex]
	    Now let $K \subset (0,1)$ be compact. By Arzel\`a-Ascoli's theorem, (for a subsequence) $(g_n)$ converges uniformly on $K$ to a continuous function $\tilde{g} \colon K \to \R$. By the exponential formula \eqref{eq:form}, the $L_2$-limit of $(g_n)$ is already given by $g(t)$. By the uniqueness of the limit, it follows that $g(t)|_K \equiv \tilde{g}$ is continuous on $K$. Since $K$ was an arbitrary compact subset of $(0,1)$, we obtain that $g(t)$ is continuous on $(0,1)$, which means $g(t) \in D_c$ as claimed.
     \\[1ex]
	    ii) For the monotonicity claim, let $0 \le t_1 \le t_2 < \infty$ be arbitrary fixed. 
        The strong solution $g$ of the Cauchy problem \eqref{eq:cauchy2} has the Bochner integral representation
	    \begin{equation*}
	    g(t_2) = \int_{t_1}^{t_2} \partial_t g(t) \d t + g(t_1),
	    \end{equation*}
	    which allows a pointwise evaluation
	    \begin{equation*}
	    g(t_2)(s) = \int_{t_1}^{t_2} \partial_t g(t)(s) \d t + g(t_1)(s) \quad \text{for a.e. } s \in (0,1).
	    \end{equation*}
	    Since $\partial_t g(t) \in -\partial F_\nu(g(t))$ for a.e. $t>0$, and noting that $0 \le R_\nu^- \le R_\nu^+ \le 1$, 
        it follows for a.e. $s \in (0,1)$ that
	    \begin{align*}
	    g(t_2)(s) &\le \int_{t_1}^{t_2} 2s - 2R_\nu^-\big( g(t)(s) \big) \d t + g(t_1)(s) \\
	    &\le 2s(t_2 - t_1) + g(t_1)(s).
	    \end{align*}
	    This yields
	    \begin{equation}\label{eq:left-limit}
	    \lim_{s \downarrow 0} g(t_2)(s) \le \lim_{s \downarrow 0} g(t_1)(s).
	    \end{equation}
	    Analogously, it holds for a.e. $s \in (0,1)$ that
	    \begin{align*}
	    g(t_2)(s) &\ge \int_{t_1}^{t_2} \Big( 2s - 2R_\nu^+\big( g(t)(s) \big) \Big) \d t + g(t_1)(s) \\
	    &\ge 2(s-1) (t_2 - t_1) + g(t_1)(s),
	    \end{align*}
	    which implies
	    \begin{equation}\label{eq:right-limit}
	    \lim_{s \uparrow 1} g(t_2)(s) \ge \lim_{s \uparrow 1} g(t_1)(s).
	    \end{equation}
	    Combining \eqref{eq:left-limit} and \eqref{eq:right-limit}, we have proved that
	    \begin{equation}\label{eq:left-right-limit}
	    \lim_{s \downarrow 0} g(t_2)(s) \le \lim_{s \downarrow 0} g(t_1)(s) \le \lim_{s \uparrow 1} g(t_1)(s) \le \lim_{s \uparrow 1} g(t_2)(s).
	    \end{equation}
	    By part i), we know that $g(t_2) \in D_c$. The intermediate value theorem finally implies that $\overline{g(t_1) (0,1)} \subseteq \overline{g(t_2) (0,1)}$, and we are done.
	    \end{proof}

        \begin{remark}
            Theorem~\ref{p:rangeDc} states that the support of the flow $\gamma_t$ monotonically increases with the time $t$, given that the initial support $\supp \mu_0$ is convex. We leave it as an open problem, whether this still holds in general without the convexity assumption on $\supp \mu_0$.
        \end{remark}

Many of the results of Section \ref{sec:inv} can be derived via the pointwise solution \eqref{eq:expl-sol} of Section \ref{sec:explicit}. This can be found in the Appendix~\ref{app:diff}. But note that the application of the resolvent $J_{\varepsilon}^{\partial F_\nu}$ proves these results \emph{also} for the implicit Euler steps \eqref{impl} used in our numerical section.

\section{Numerical Experiments}\label{sec:numerics}

In this section, we first discuss a backward and a forward Euler scheme
for the numerical computation of the one-dimensional MMD flow with the negative distance kernel and give some examples afterwards.
Note that a direct calculation of the flow via the explicit pointwise formula \eqref{eq:expl-sol} should also be feasible in our case. Yet, we like to stress that Euler schemes are applicable to a broader range of (quantile) Cauchy problems, where pointwise solution formulas might be absent, see e.g., \cite{DRSS2025}, so that we pursue the more general Euler approach.

\subsection{Euler Schemes}
\paragraph{Implicit (backward) Euler scheme.} The minimizing movement scheme \eqref{jko}, see also JKO scheme \cite{JKO1998}, 
$$
\mu_{n + 1} \coloneqq \argmin_{\mu \in \P_2(\R^d)} \Big\{\F_\nu(\mu) + \frac{1}{2 \tau} W_2^2(\mu_{n}, \mu) \Big\}, \qquad \tau > 0,
$$
can be rewritten by 
Theorem~\ref{prop:Q} 
in terms of quantile functions as
\begin{equation} \label{iter}
g_{n+1} = 
\argmin_{g \in \mathcal C(0,1)} \Big\{F_\nu (g) + \frac{1}{2 \tau} \int_0^1 |g - g_{n} |^2 \d s\Big\}, \qquad \tau > 0.
\end{equation}
Because $F_\nu$ is proper, convex and lsc, the solution of this problem is given by (also see \eqref{eq:form})
\begin{equation} \label{impl}
    g_{n + 1} = (I + \tau \partial F_{\nu})^{-1}(g_n)
\end{equation}
which is by Lemma~\ref{thm:subdiffF_nu} and Lemma~\ref{lem:helper} equivalent to
\begin{equation} \label{eq:implicit_Euler_pointwise}
    g_n(s) + 2 \tau s
    \in g_{n + 1}(s) + 2 \tau [R_{\nu}^{-}(g_{n + 1}(s)), R_{\nu}^{+}(g_{n + 1}(s))]
    \qquad \text{for all } s \in (0, 1).
\end{equation}
Note that by Lemma~\ref{lem:helper} the functions $g_n$ and $g_{n + 1}$ 
are quantile functions and thus increasing,
so that $g_n + 2 \tau I$ is strictly increasing.
Figure~\ref{fig:one_step_implicit_Euler} gives a visual intuition for solving \eqref{eq:implicit_Euler_pointwise} using bisection.

\begin{figure}[H]
    \centering
    \includegraphics[width=0.5\textwidth]{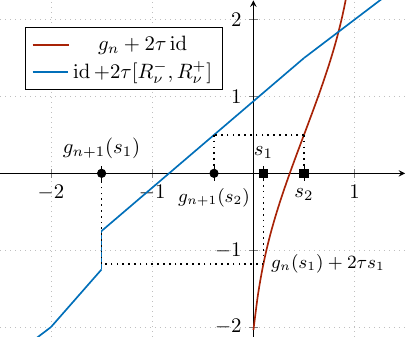}
    \caption{Implicit Euler step visualized for $g_n = Q_{\gamma_n}$ with $\gamma_n \sim \NN(0, 1)$ and $\tau = \tfrac{1}{2}$ and $\nu$ being a mixture of two uniform distributions.
    Starting from any point $s_1 \in (0,1)$ and
    $y \coloneqq (g_n + 2 \tau I)(s_1)$,
    we are searching for $x \coloneqq g_{n+1}(s) \in \R$ 
    such that $y \in x + 2 \tau [R_{\nu}^{-}(x), R_{\nu}^{+}(x)]$.
    This is also illustrated for a point $s_2 \in (0,1)$.
    The next point $x$ can be easily found by the bisection method.
    }
    \label{fig:one_step_implicit_Euler}
\end{figure}

For fixed $\tau > 0$, we perform $n=1,2,\ldots$ implicit Euler steps \eqref{impl} via solving \eqref{eq:implicit_Euler_pointwise}.
By Theorem~\ref{thm:WGF}, we see that $(g_n)_\# \Lambda_{(0,1)}$ is an approximation of the Wasserstein gradient flow 
in the time interval $(n\tau, (n+1)\tau]$.
Further, for fixed $\tau$ and $n \to \infty$, the sequence
$(g_n)_{n \in \N}$ converges weakly to the target $Q_\nu$ in $L_2(0,1)$, see \cite{BC2011,PS2010}, and
the corresponding measures
 $\mu_n \coloneqq (g_n)_\# \Lambda_{(0,1)}$ converge narrowly
to the minimizer $\nu$ of $\mathcal F_\nu$, see \cite[Thm 6.7]{NS2022}. 
The scheme \eqref{impl} resembles a proximal point algorithm,
and for convergence results for more general so-called quasi $\alpha$-firmly nonexpansive mappings, 
we refer to \cite{BS2024}. Finally note that the implicit Euler Scheme \eqref{eq:implicit_Euler_pointwise} works for \emph{arbitrary} target measures $\nu \in \P_2(\R)$ including discrete ones.


\paragraph{Explicit (forward) Euler scheme.} 
For constant step size $\tau > 0$, we will also consider the explicit Euler discretization
\begin{equation*}
    g_{n + 1} =  g_n - \tau \nabla F_{\nu}(g_n), 
\end{equation*}
which is only available if $R^+_\nu = R^-_\nu =: R_\nu$ 
is continuous, so that
\begin{equation*}
    \nabla F_\nu(g)
    = 2 R_\nu \circ g - 2 \id_{(0,1)},
    \qquad g \in L_2(0, 1).   
\end{equation*}
The explicit Euler scheme has the advantage that we do \emph{not} have to solve an inclusion in each step.
However, this method comes with weaker convergence guarantees: neither the local uniform convergence nor the long-term convergence is clear in general as in the implicit case.
Moreover, it might not preserve $\C(0, 1)$, that is, the iterates might not be monotone
\footnote{In our experiments, monotonicity was not preserved for large step sizes, e.g., $\tau = 5$.}.
Nonetheless, in all our numerical examples we chose a step size $\tau = \tfrac{1}{100}$, which preserved monotonicity, and the differences between the implicit and explicit Euler schemes were negligible.

\subsection{Numerical Experiments}\label{sec:numerics_2}
Next, we compare the implicit and explicit Euler schemes for various
combinations of absolutely continuous initial and target measures\footnote{The python code recreating these plots can be found at \url{https://github.com/ViktorAJStein/MMD_Wasserstein_gradient_flow_on_the_line/tree/main}.}.
The following examples are covered by Theorem~\ref{thm:regularisation-Lip}, or Corollary~\ref{c:rangeDL}~i): since $L_{\text{low}}(Q_{\nu}) > 0$ in each figure, we have $L_{\text{low}}(Q_{\gamma_t}) > 0$ for all $t > 0$.

\paragraph{Flow between Gaussians, Laplacians, and uniform distributions.}
    In \figref{fig:Norm_To_Norm_Different_Means}, we plot the MMD flow, 
    where $\mu_0$ and $\nu$ are both Gaussians with different means, but equal variance.
    The behaviors of the implicit and explicit schemes are very similar, and visually, no difference is noticeable. 
    Further, the shape of the normal distribution is not at all preserved during the flow and instead, the densities first spread out, become more flat and then form a peak again, when they meet the mean of the target distribution.

    \noindent In \figref{fig:Laplace_to_Laplace},
    this behavior can also be observed when the initial and the target measure are both Laplacians. Here we can also see that the non-differentiability of the density at its peak is smoothed out and then re-created during the flow.
    
    \noindent In \figref{fig:Uniform-to-Uniform},
    we also see that the MMD flow between two uniform distributions does not stay a uniform distribution itself. Notice, how the mass outside the target's support $\supp \nu = [2,3]$ becomes arbitrarily slim, but never vanishes completely due to Theorem~\ref{p:rangeDc}. 
        
    \noindent In \figref{fig:Norm_To_Norm_Different_Scales}, we plot the MMD flow, where $\mu_0$ and $\nu$ are both Gaussians with different variance, but equal mean.
    Again, the behavior of both discretizations is very similar, and the measures do not stay Gaussian.
    In particular, as Theorem~\ref{p:rangeDc} suggests, the visible support of $\gamma_t$ is increasing, and matches the tails of the target distribution only exponentially slowly.

\begin{figure}
    \centering
    \includegraphics[width=.32\textwidth]{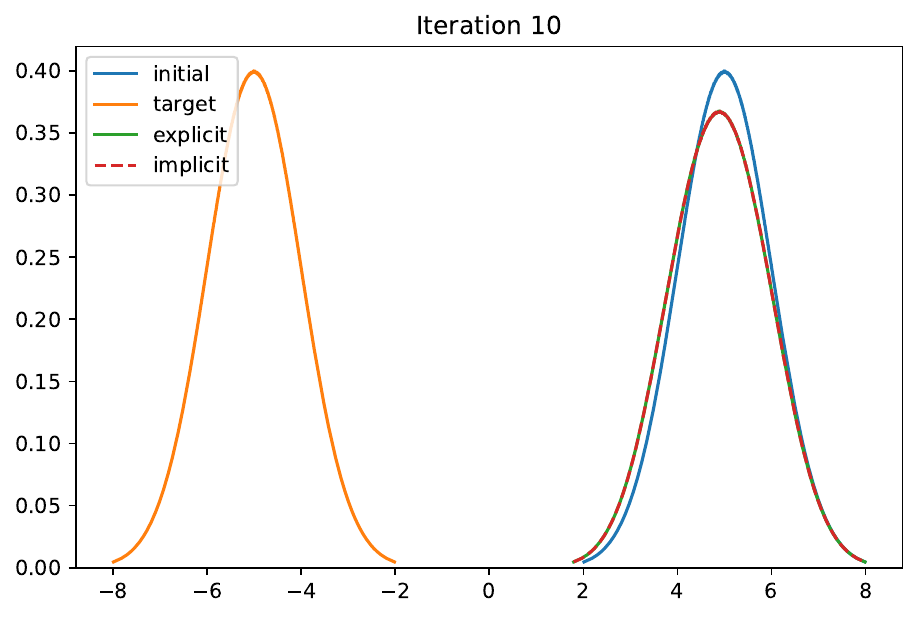}
    \includegraphics[width=.32\textwidth]{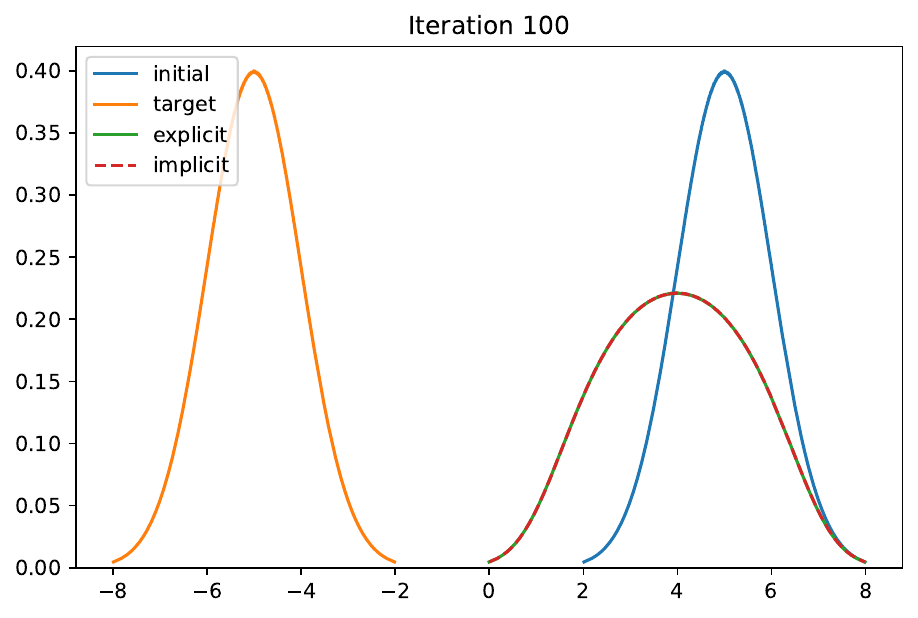}
    \includegraphics[width=.32\textwidth]{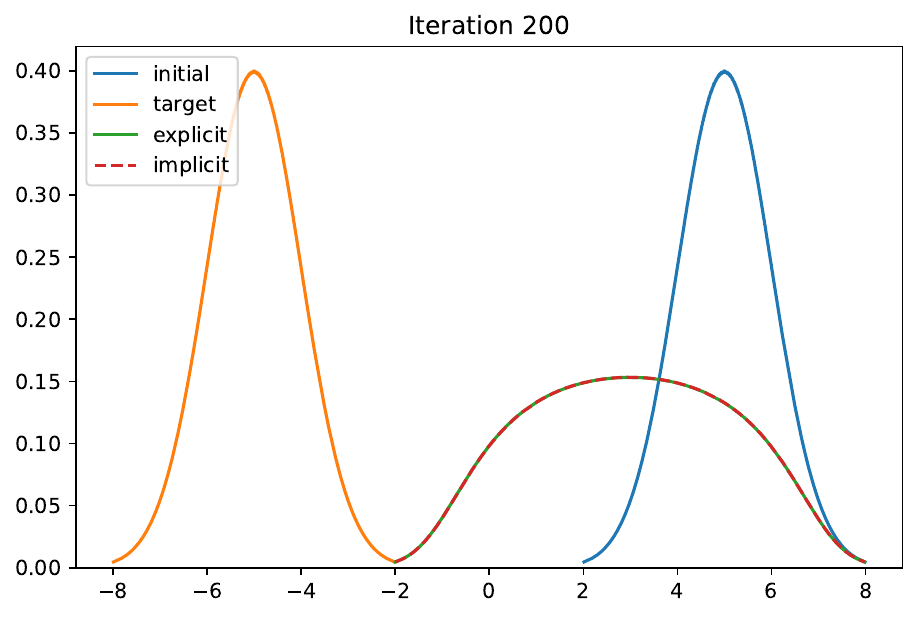}
    \\
    \includegraphics[width=.32\textwidth]{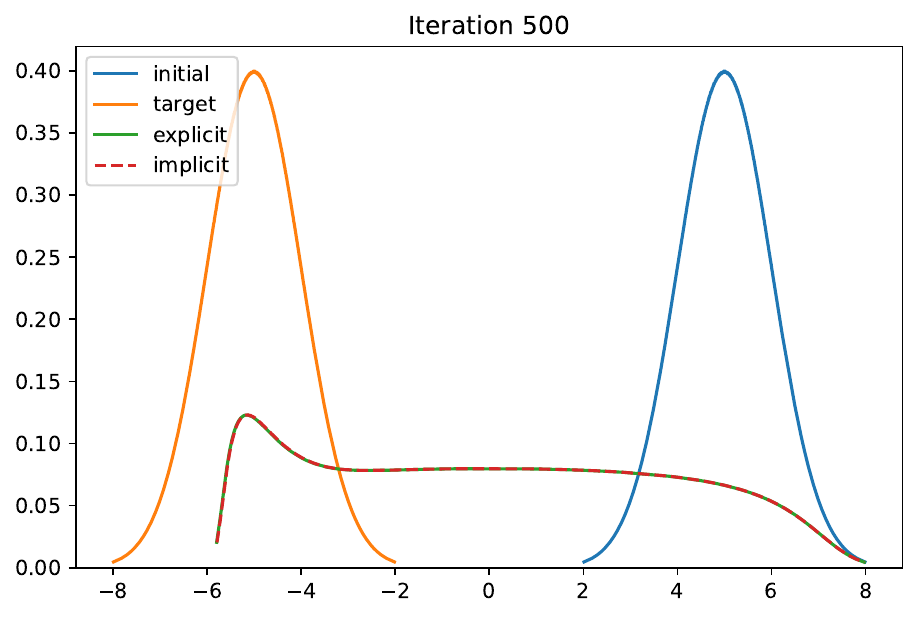}
    \includegraphics[width=.32\textwidth]{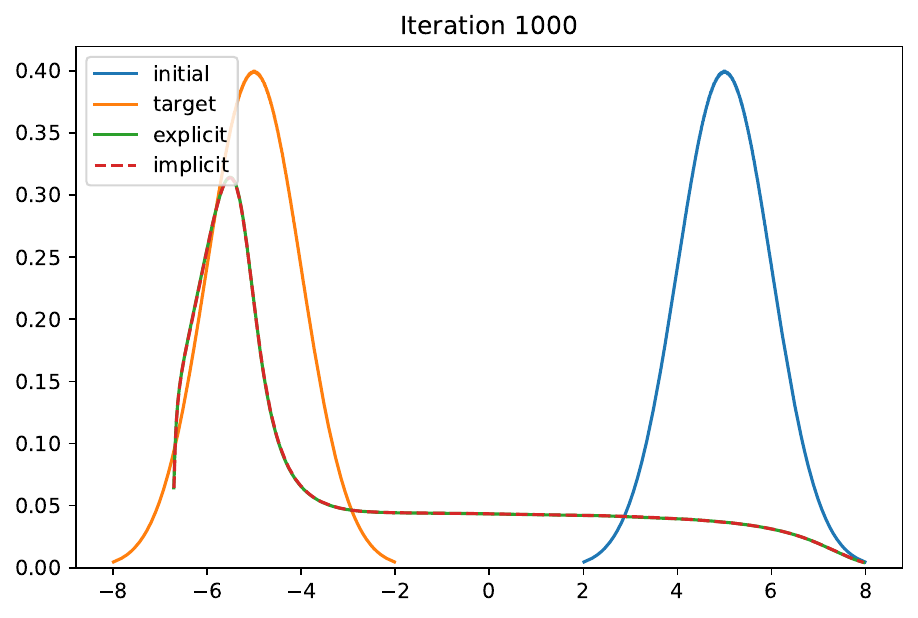}
    \includegraphics[width=.32\textwidth]{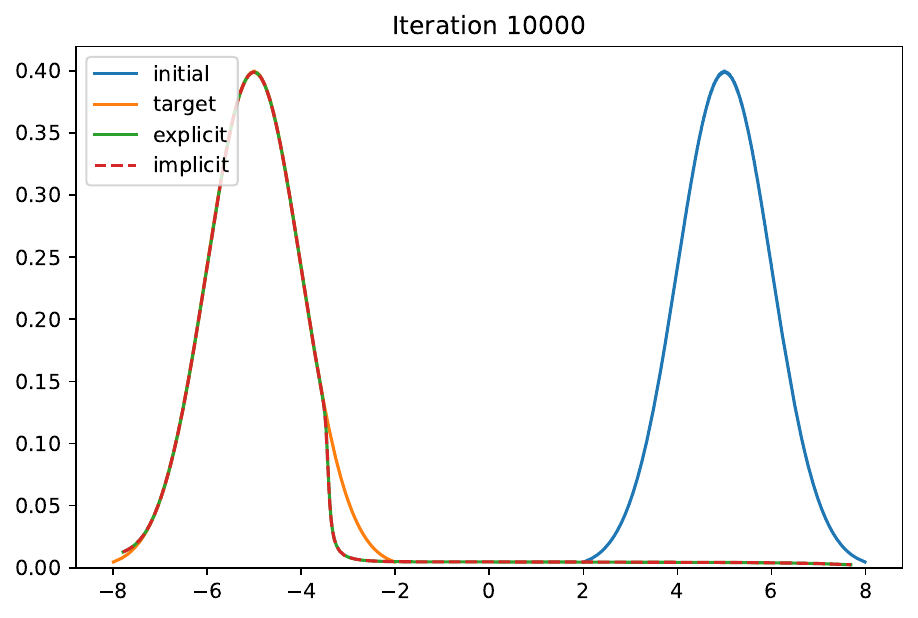}
    \caption{Comparison of implicit (red) and explicit (green) Euler schemes between two Gaussians $\mu_0 \sim \NN(5, 1)$ and $\nu \sim \NN(-5, 1)$ with $\tau = \tfrac{1}{100}$.
    For the corresponding quantile functions, see \figref{fig:Norm_To_Norm_Different_Means_q}.
    }
    \label{fig:Norm_To_Norm_Different_Means}
\end{figure}

\begin{figure}
    \centering
    \includegraphics[width=.32\textwidth]{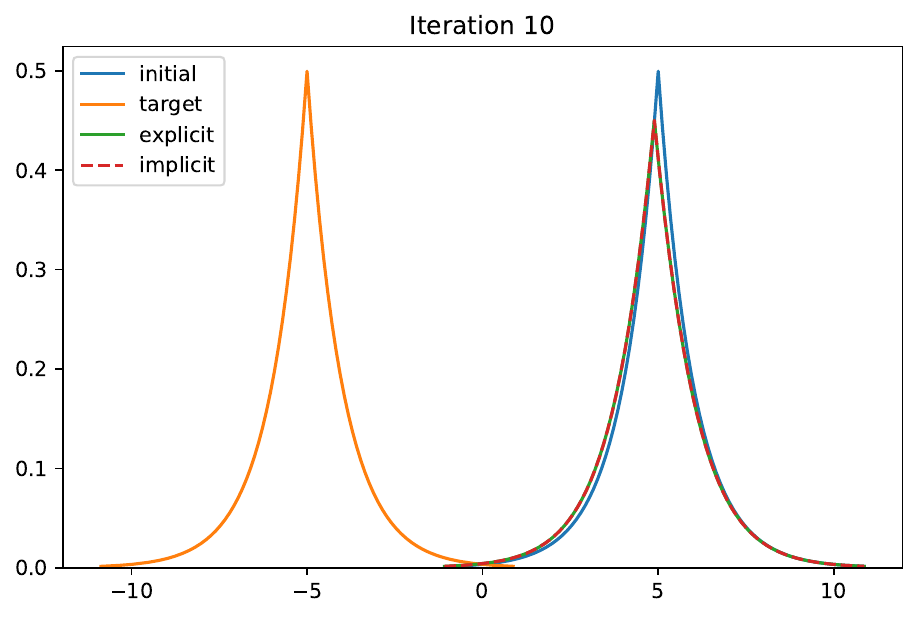}
    \includegraphics[width=.32\textwidth]{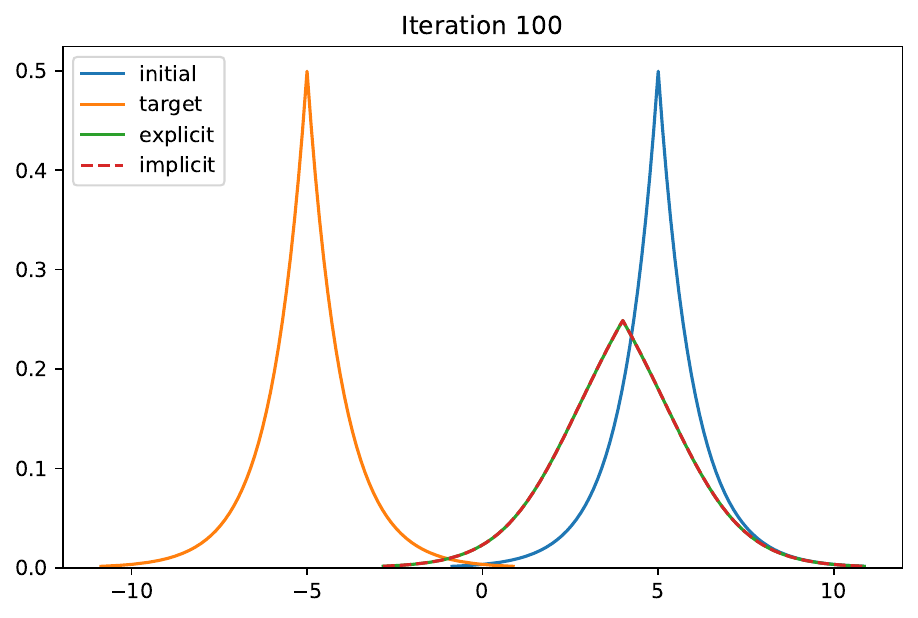}
    \includegraphics[width=.32\textwidth]{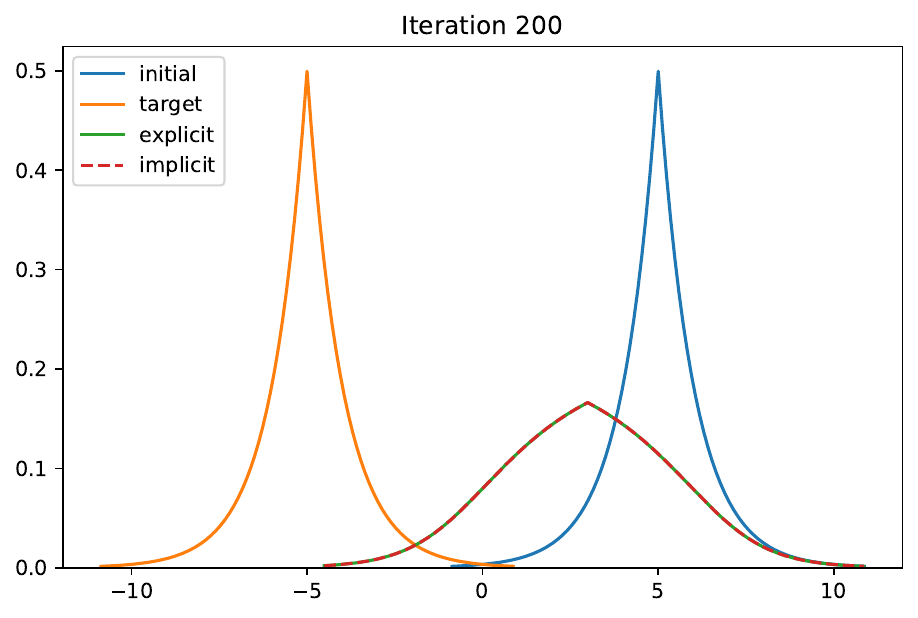}
    \includegraphics[width=.32\textwidth]{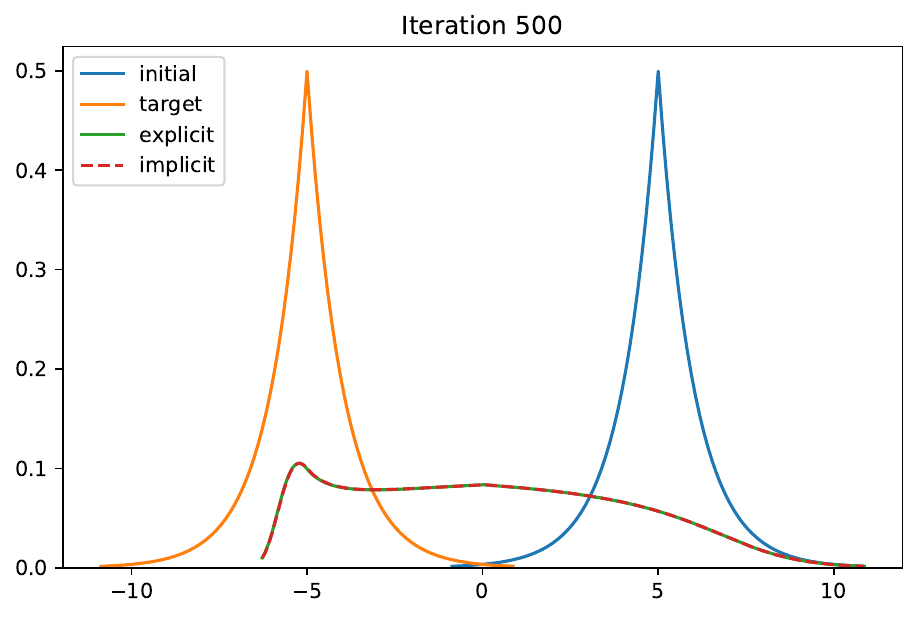}
    \includegraphics[width=.32\textwidth]{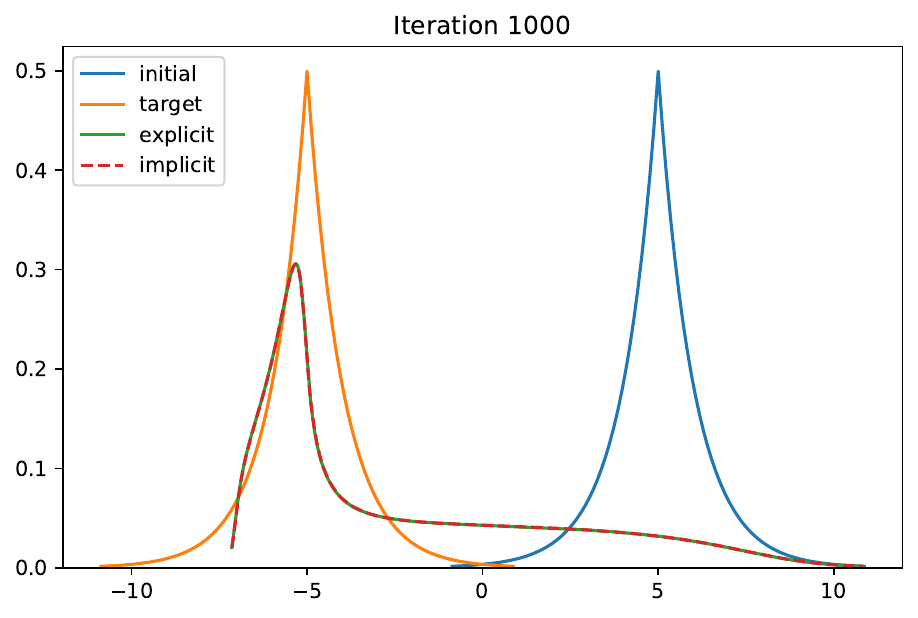}
    \includegraphics[width=.32\textwidth]{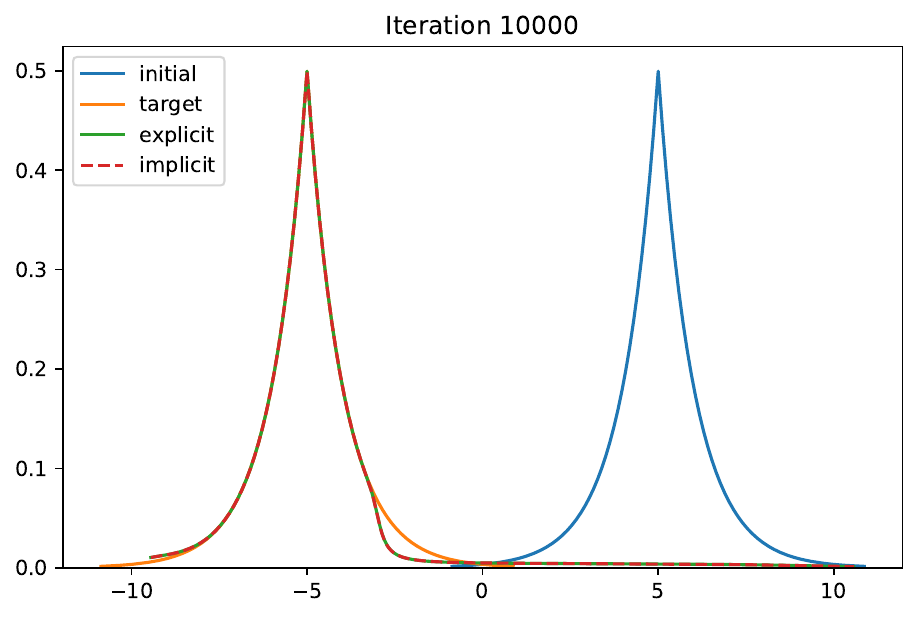}
    \caption{Comparison of implicit (red) and explicit (green) Euler schemes between two Laplacians $\mu_0 \sim \mathcal L(5, 1)$ and $\nu \sim \mathcal L(-5, 1)$ with $\tau = \tfrac{1}{100}$.
    For the corresponding quantile functions, see \figref{fig:Laplace_to_Laplace_q}.
    }
    \label{fig:Laplace_to_Laplace}
\end{figure}

\begin{figure}
    \centering
    \includegraphics[width=.32\textwidth]{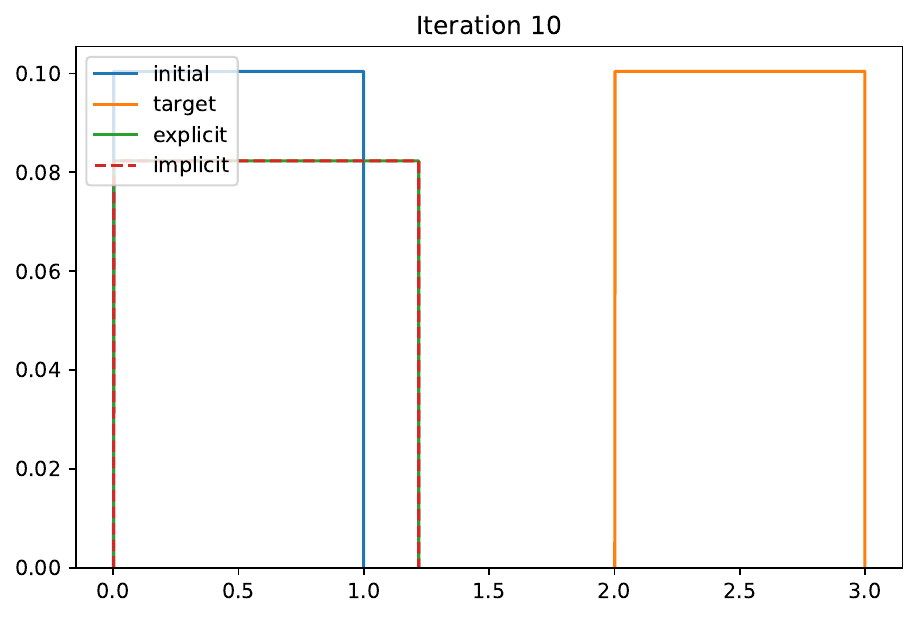}
    \includegraphics[width=.32\textwidth]{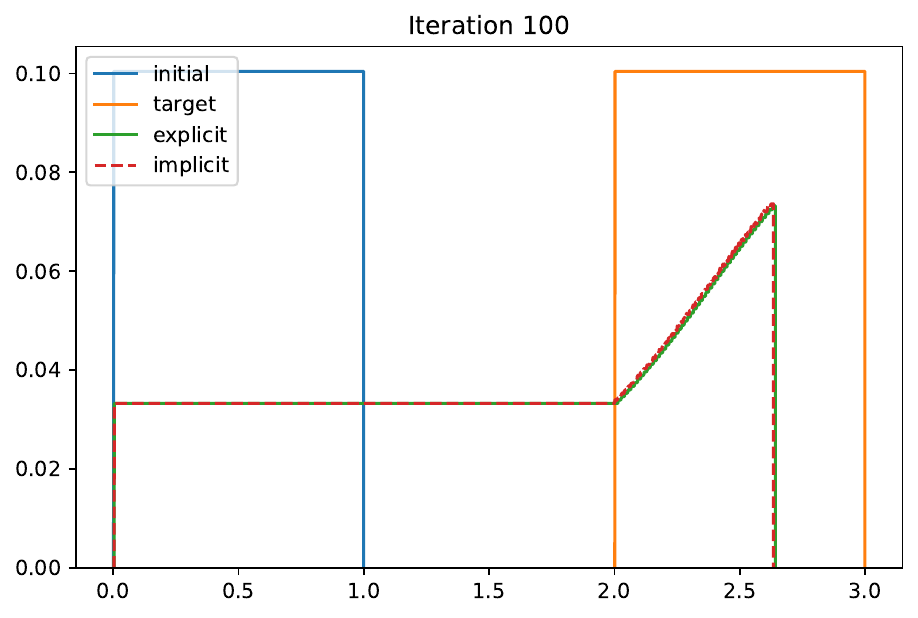}
    \includegraphics[width=.32\textwidth]{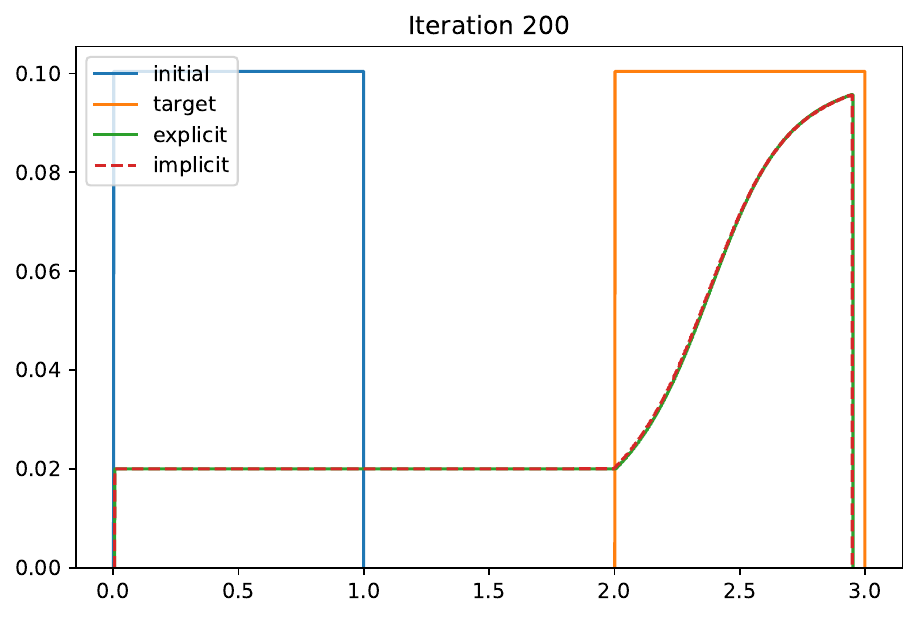}
    \includegraphics[width=.32\textwidth]{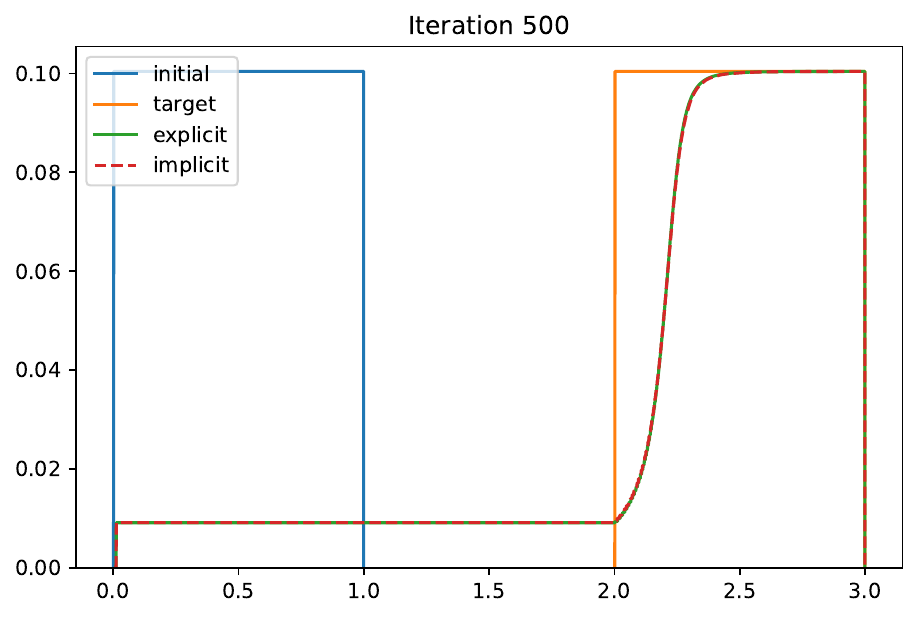}
    \includegraphics[width=.32\textwidth]{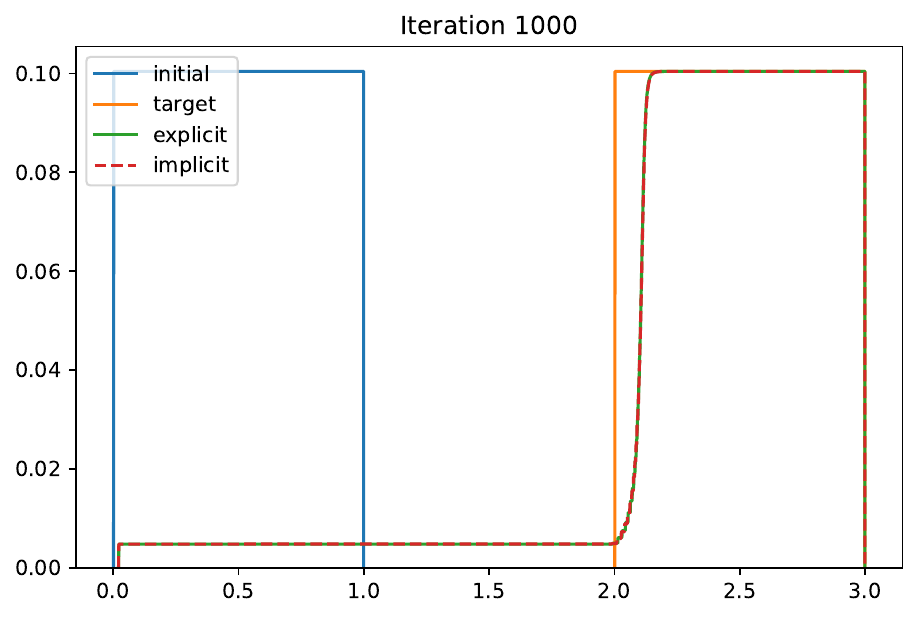}
    \includegraphics[width=.32\textwidth]{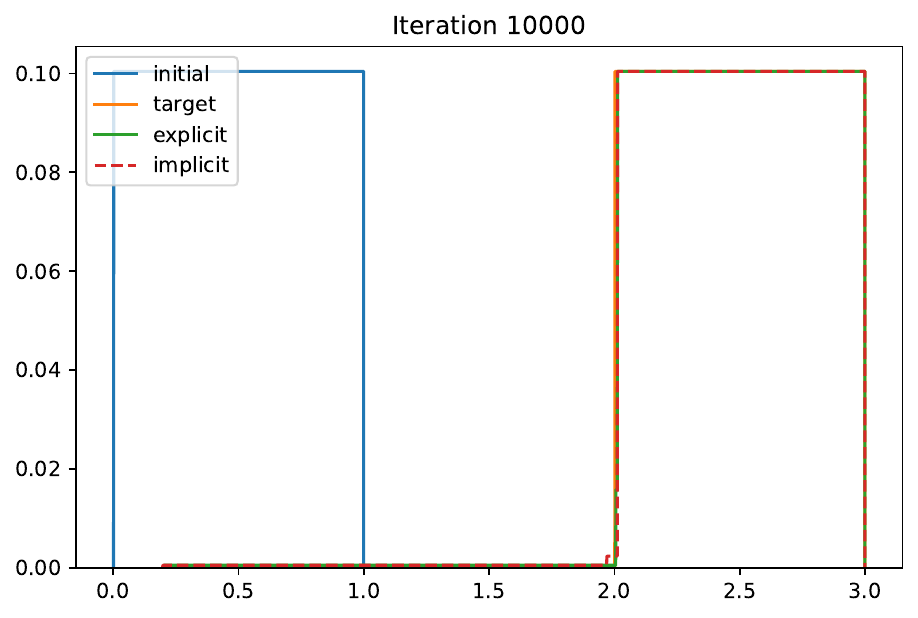}
    \caption{Comparison of implicit (red) and explicit (green) Euler schemes between two uniform distributions $\mu_0 \sim \mathcal U([0,1])$ and $\nu \sim \mathcal U([2,3])$ with $\tau = \tfrac{1}{100}$.
    For the corresponding quantile functions, see \figref{fig:Uniform_to_Uniform_q}.
    }
    \label{fig:Uniform-to-Uniform}
\end{figure}

\begin{figure}
    \centering
    \includegraphics[width=.32\textwidth]{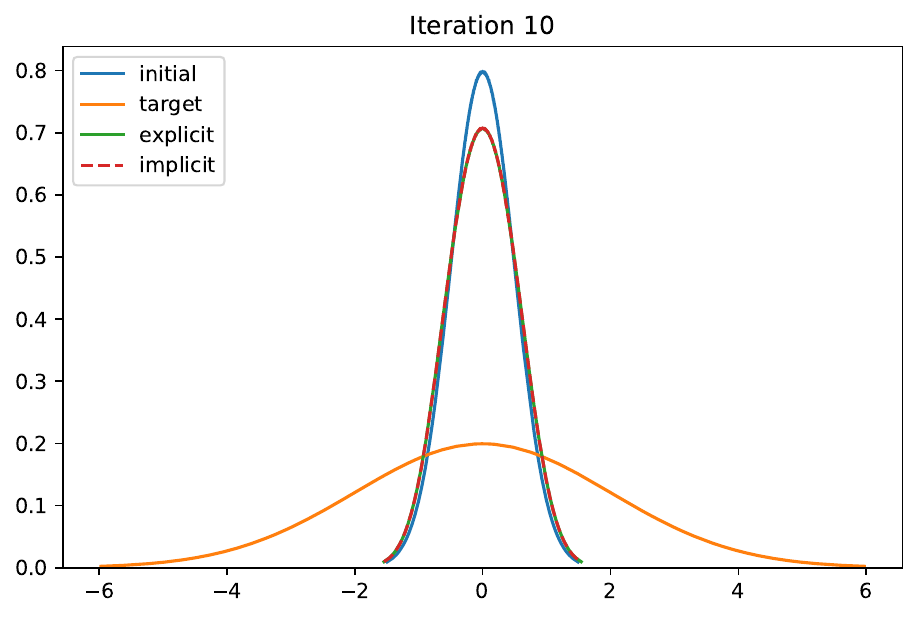}
    \includegraphics[width=.32\textwidth]{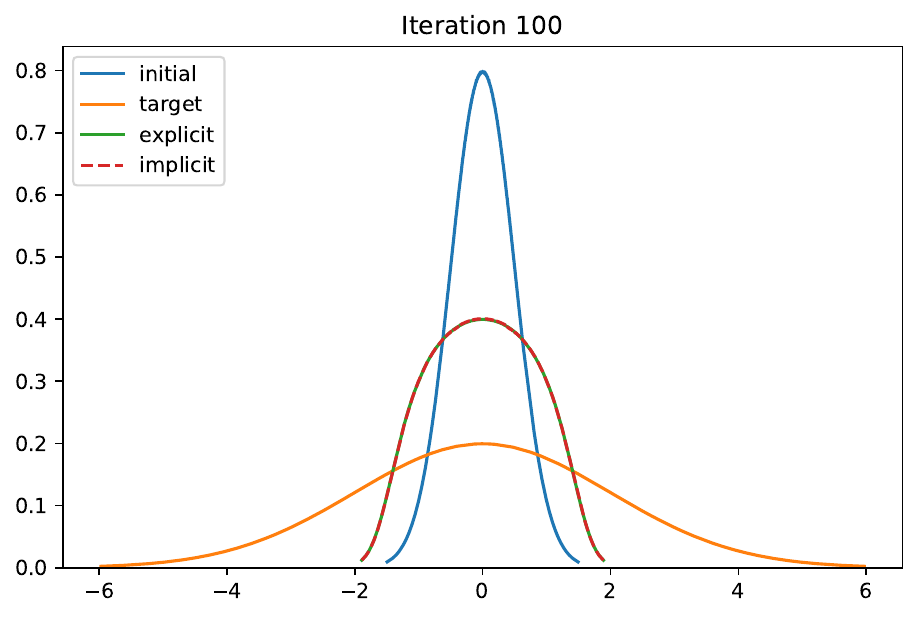}
    \includegraphics[width=.32\textwidth]{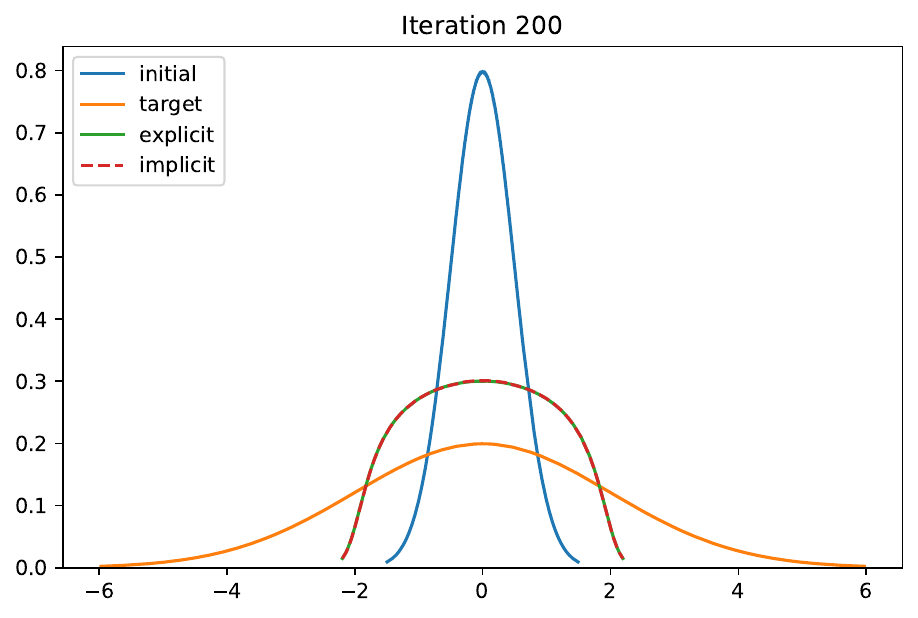}
    \includegraphics[width=.32\textwidth]{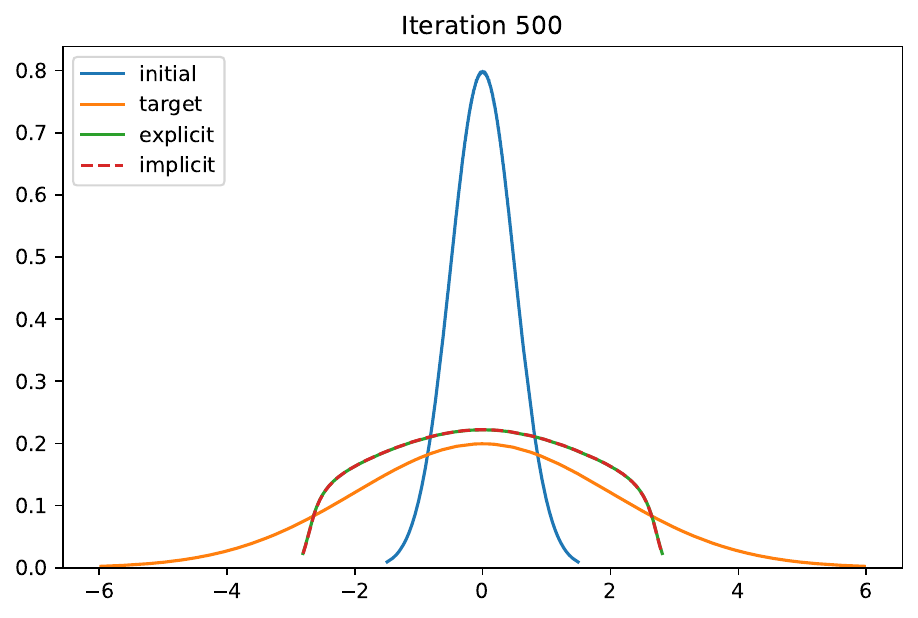}
    \includegraphics[width=.32\textwidth]{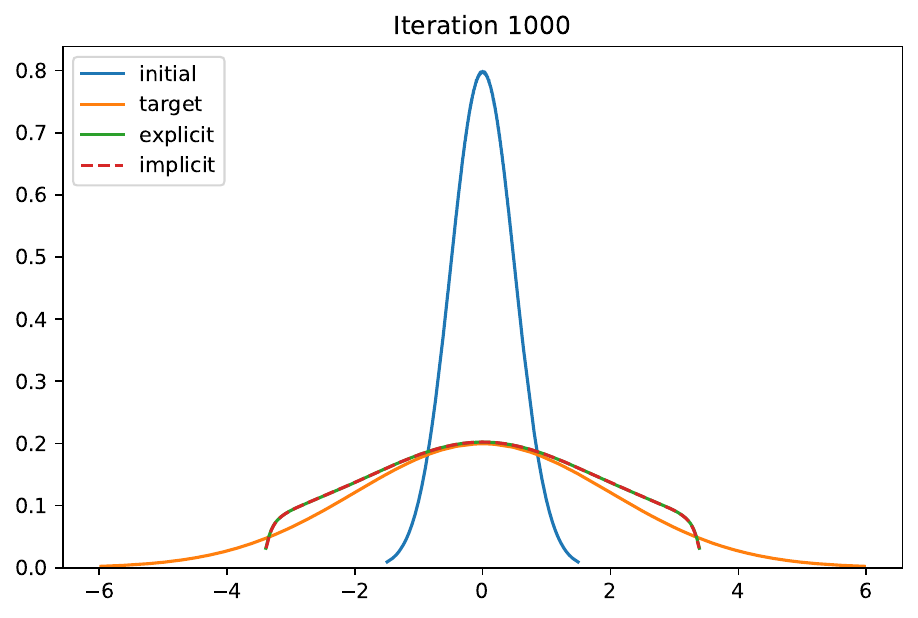}
    \includegraphics[width=.32\textwidth]{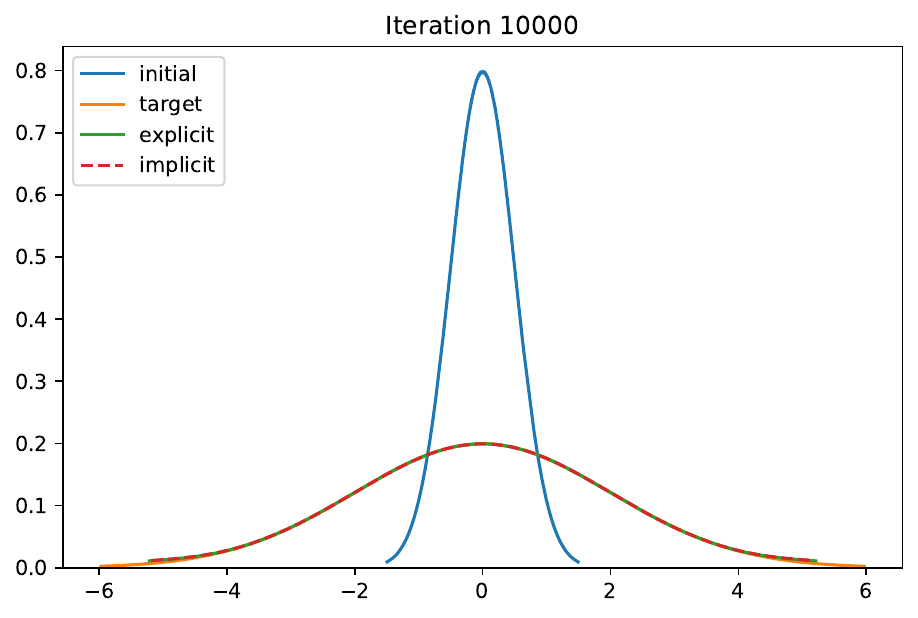}
    \caption{Comparison of implicit (red) and explicit (green) Euler schemes between two Gaussians $\mu_0 \sim \NN(0, \tfrac{1}{\sqrt{2}})$ and $\nu \sim \NN(0, \sqrt{2})$ with $\tau = \tfrac{1}{100}$.
    For the corresponding quantile functions, see \figref{fig:Norm_To_Norm_Different_Scales_q}.
    }
    \label{fig:Norm_To_Norm_Different_Scales}
\end{figure}

\paragraph{Flow between measures with different number of modes.}
    \figref{fig:Bimodal_Gauss_to_Gauss} shows the case
    where the initial measure $\mu_0$ is a Gaussian mixture model, which is symmetric with respect to the origin, and $\nu$ is a standard Gaussian.
    Interestingly, we see that when the two parts of the visible support of the flow collide at the origin, it takes a lot of time to recover the peak of the target measure, that is, the region of high density is recovered after regions of lower density.

    \noindent In \figref{fig:Gauss_to_Bimodal_Gauss} we switch initial and target measure.
    Here, we observe a similar behavior as in \figref{fig:Norm_To_Norm_Different_Means}: first the densities spread out and become more flat, and the two peaks of the target are developed not until the density meets the modes of the target.

\begin{figure}
    \centering
    \includegraphics[width=.32\textwidth]{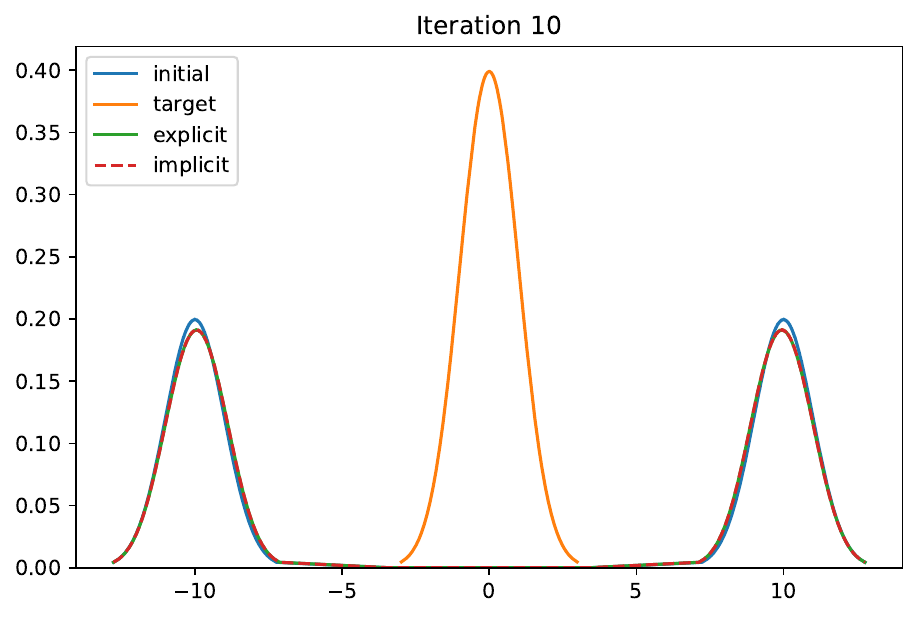}
    \includegraphics[width=.32\textwidth]{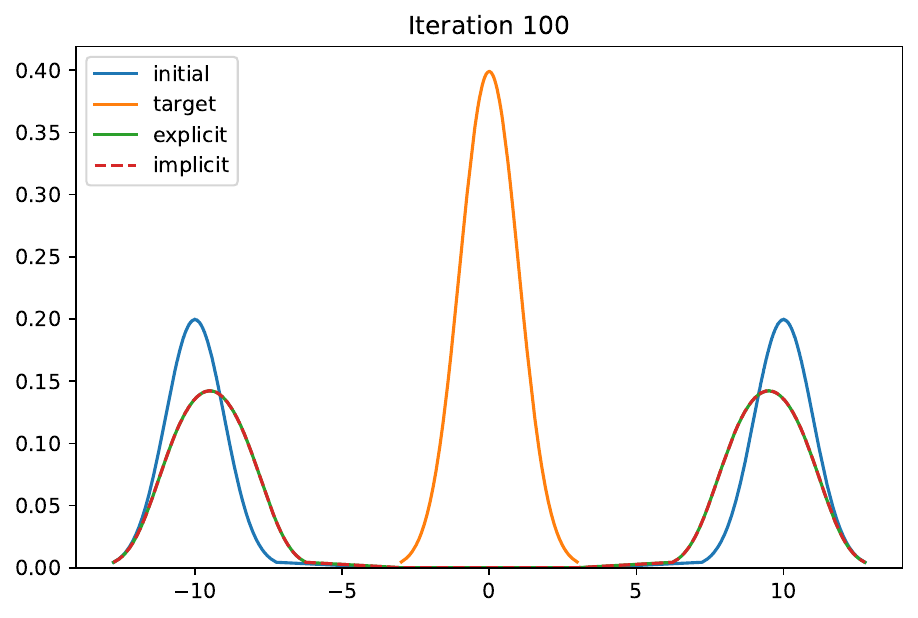}
    \includegraphics[width=.32\textwidth]{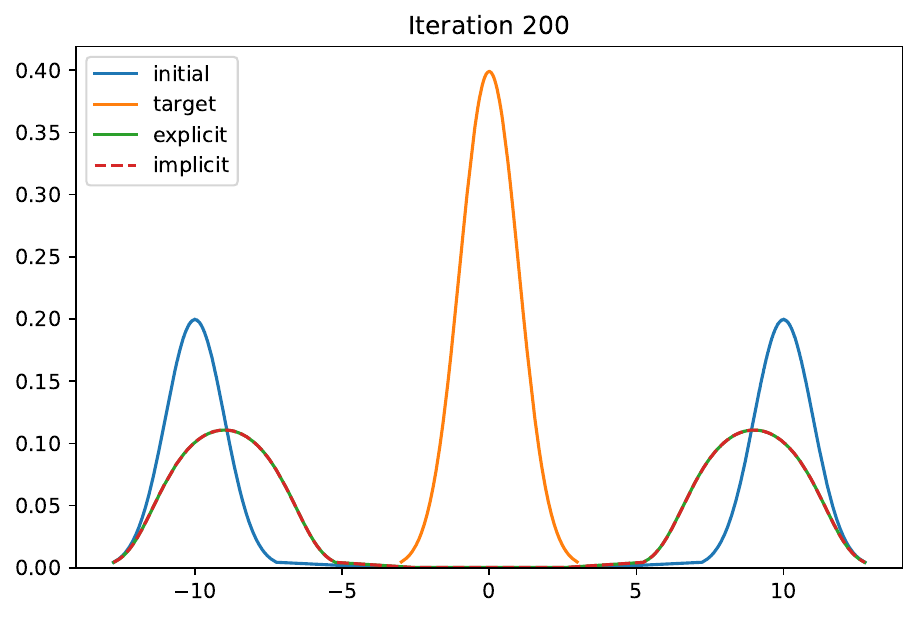}
    \includegraphics[width=.32\textwidth]{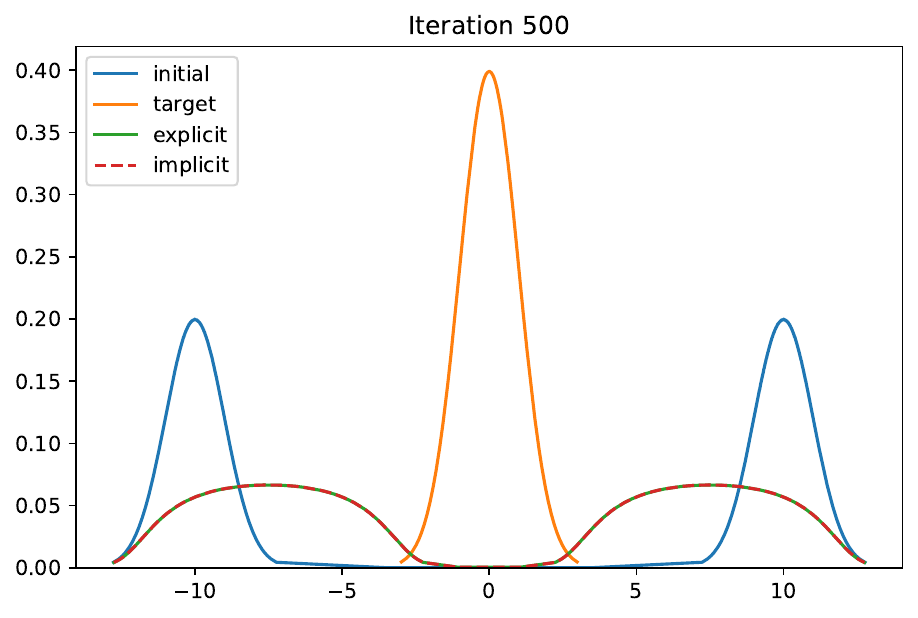}
    \includegraphics[width=.32\textwidth]{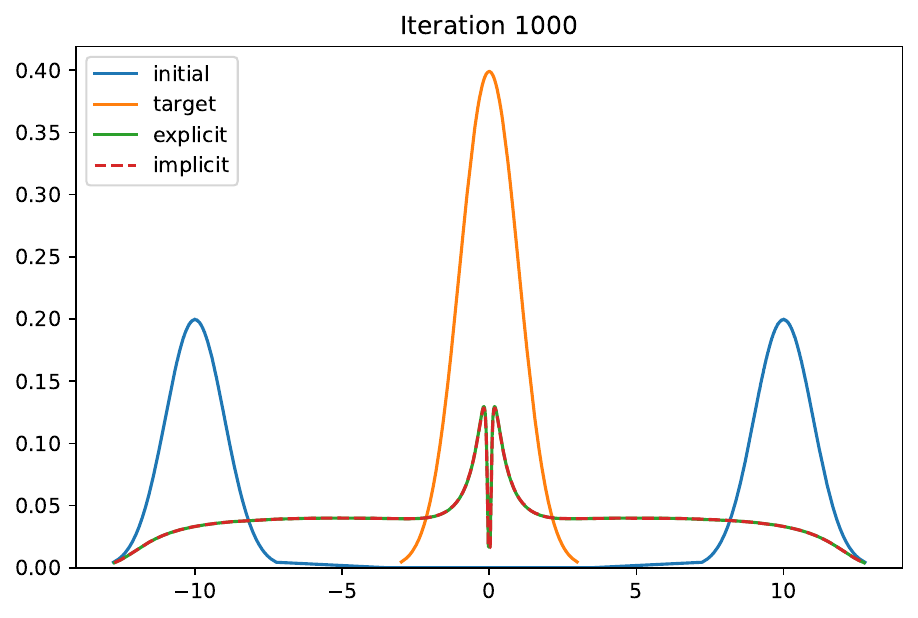}
    \includegraphics[width=.32\textwidth]{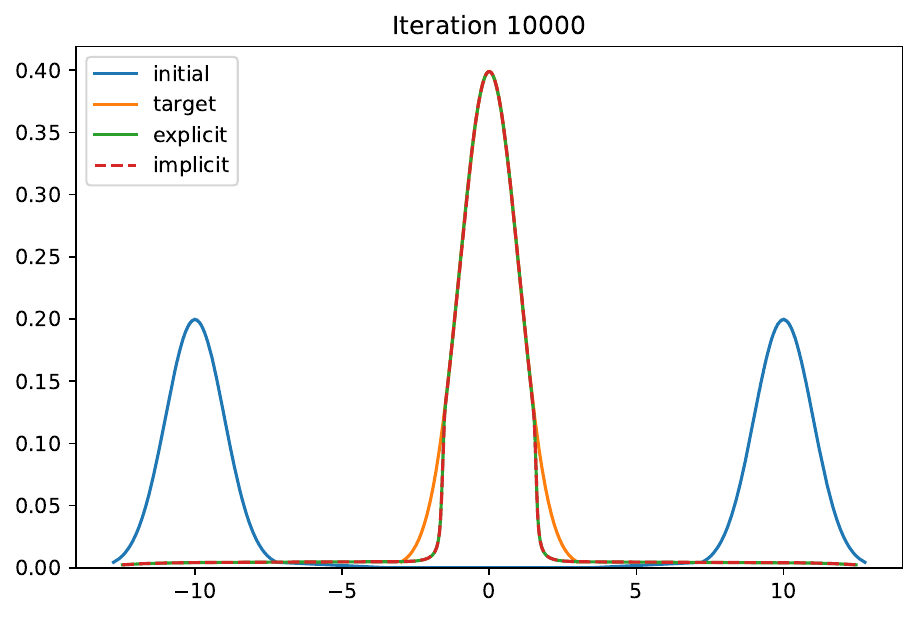}
    \caption{Comparison of implicit (red) and explicit (green) Euler schemes between $\mu_0 \sim \frac{1}{2} \NN(-10, 1) + \frac{1}{2} \NN(10, 1)$ and $\nu \sim \NN(0, 1)$ with $\tau = \tfrac{1}{100}$.
    For the corresponding quantile functions, see  \figref{fig:Bimodal_Gauss_to_Gauss_q}.
    }
    \label{fig:Bimodal_Gauss_to_Gauss}
\end{figure}

\begin{figure}
    \centering
    \includegraphics[width=.32\textwidth]{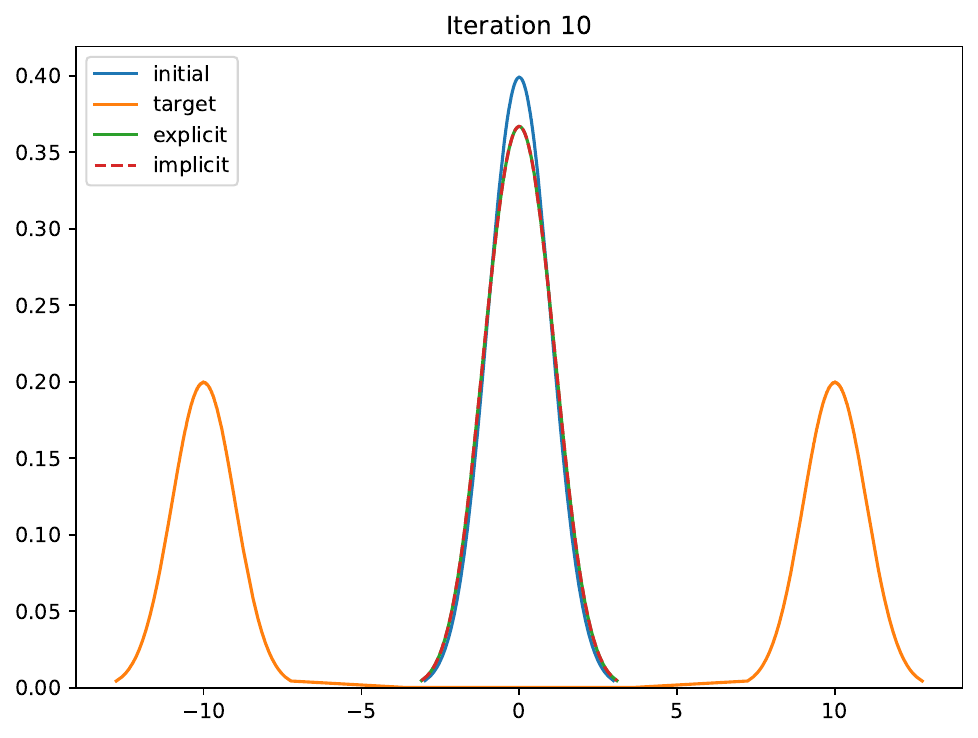}
    \includegraphics[width=.32\textwidth]{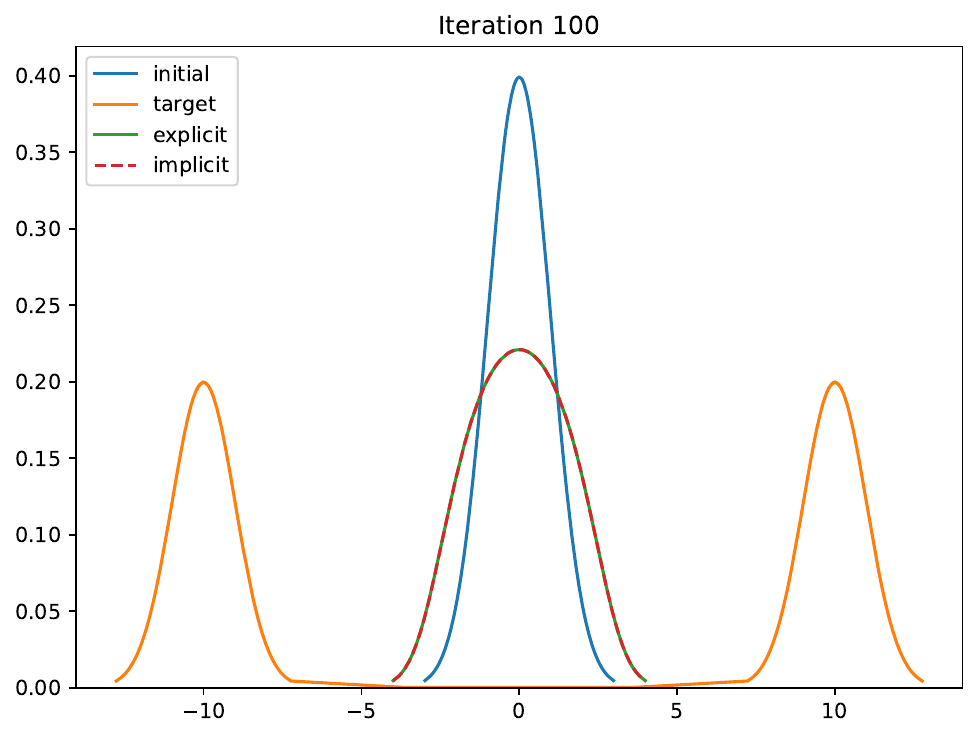}
    \includegraphics[width=.32\textwidth]{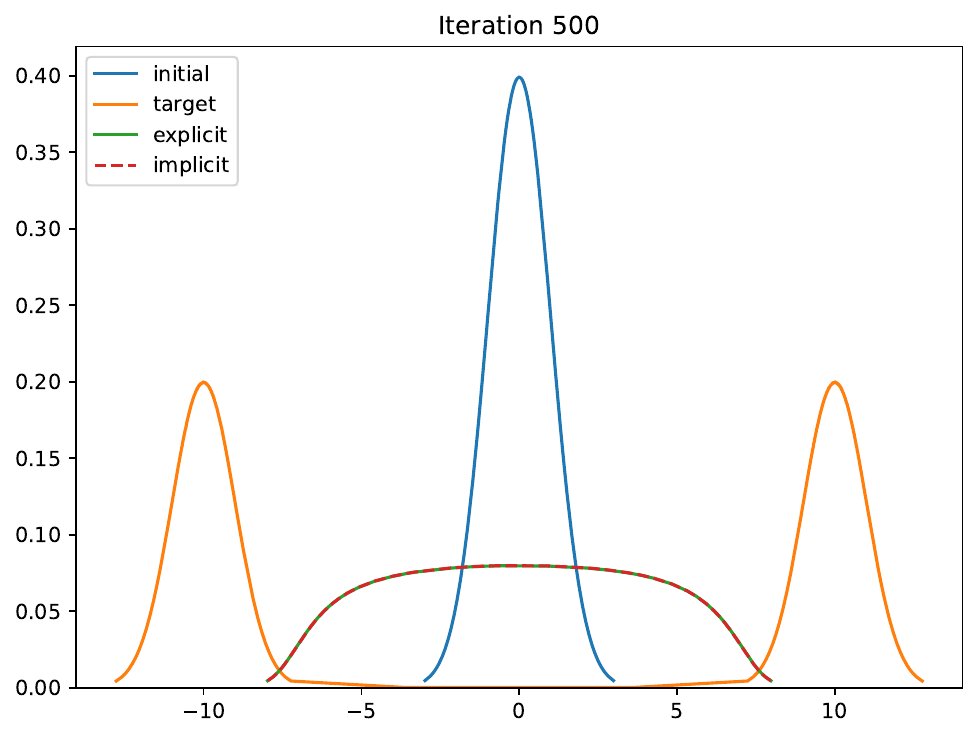}
    \includegraphics[width=.32\textwidth]{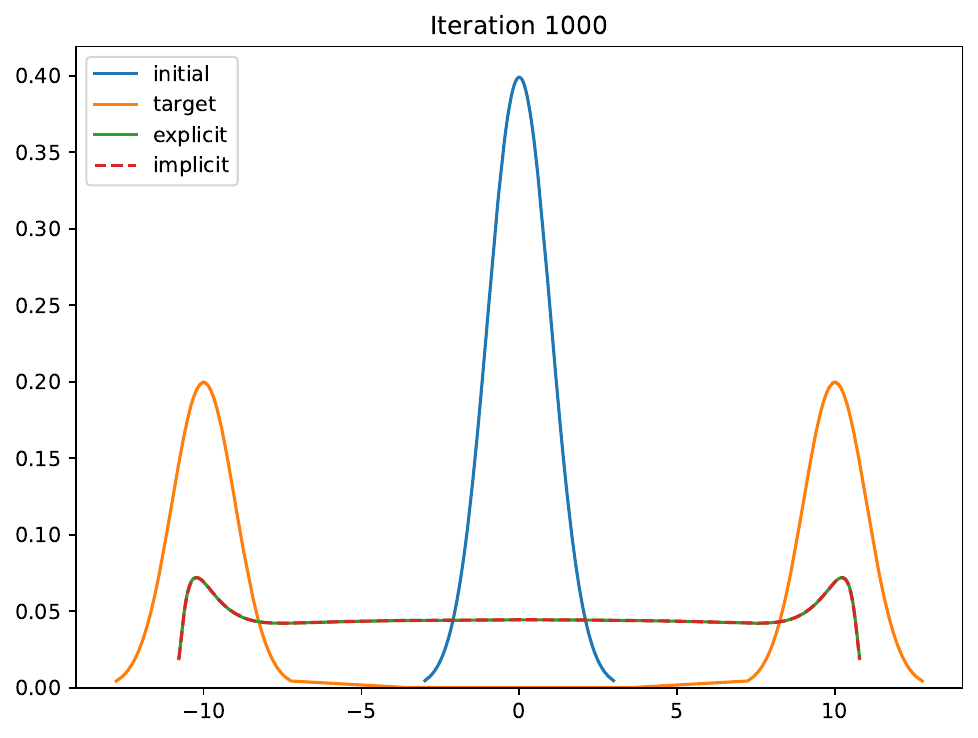}
    \includegraphics[width=.32\textwidth]{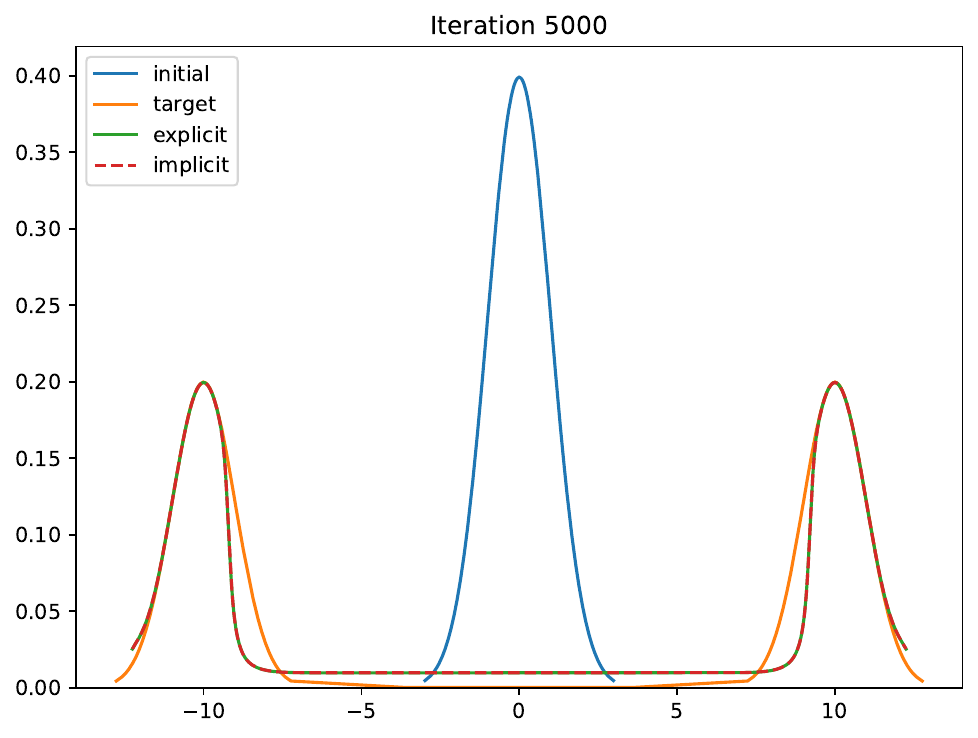}
    \includegraphics[width=.32\textwidth]{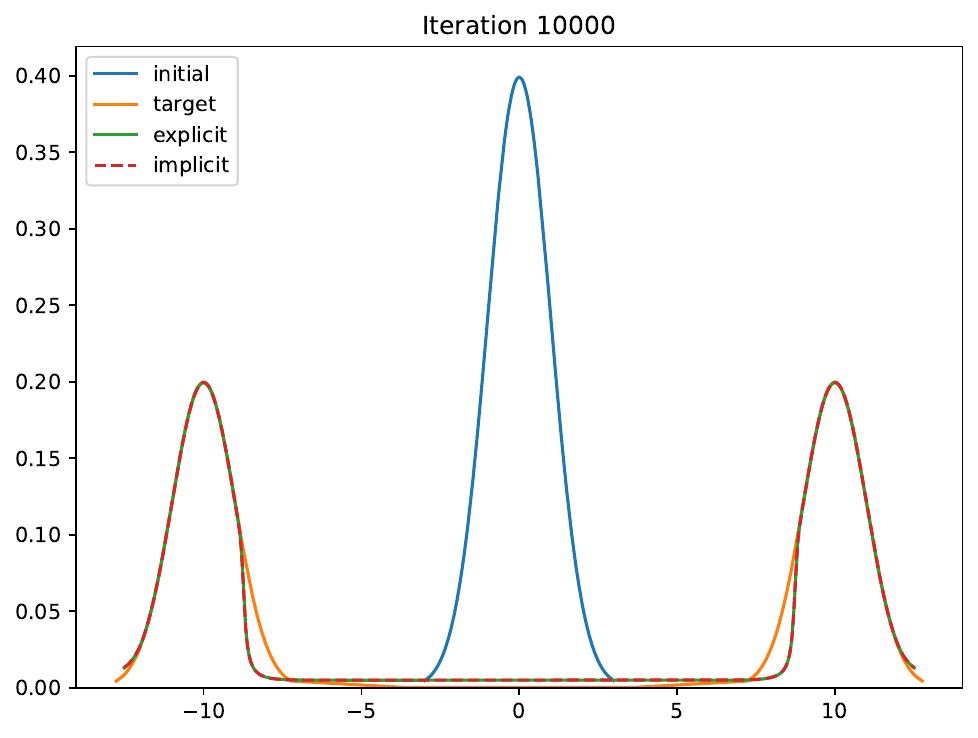}
    \caption{Comparison of implicit (red) and explicit (green) Euler schemes between $\mu_0 \sim \NN(0, 1)$ and $\nu \sim \frac{1}{2} \NN(-10, 1) + \frac{1}{2} \NN(10, 1)$ with $\tau = \tfrac{1}{100}$.
    For the corresponding quantile functions, see  \figref{fig:Gauss_to_Bimodal_Gauss_q}.
    }
    \label{fig:Gauss_to_Bimodal_Gauss}
\end{figure}

\paragraph{Flow between measures without full support}
    Our simulations can also handle initial measures and targets which are not everywhere supported. Consider for example the folded normal distribution $\mathcal {FN}(\mu)$ with "mean" $\mu$, which has the density 
    \begin{equation} \label{eq:folded_norm}
        \R \to \R, \qquad 
        x \mapsto \sqrt{\frac{2}{\pi}} \exp\left(- \frac{x^2 + \mu^2}{2} \right) \cosh(\mu x).
    \end{equation}
    We consider the case of the target and initial measure being folded Gaussians in \figref{fig:Folded_Norm_to_Folded_Norm}.
    We again observe that the structure of the distribution is not preserved by the flow, the support grows monotonically, and that the tail of the target distribution is matched exponentially slow.

\begin{figure}
    \centering
    \includegraphics[width=.32\textwidth]{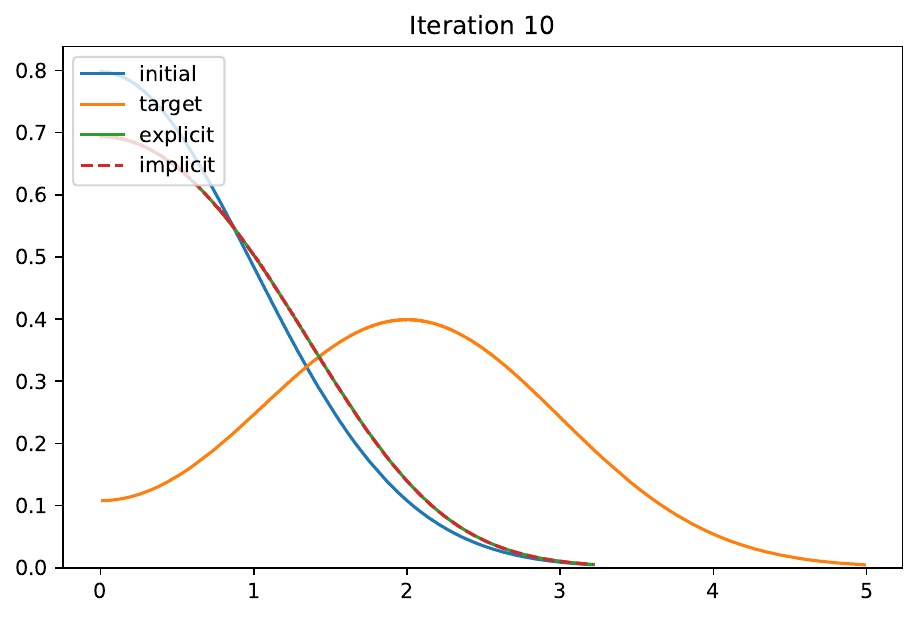}
    \includegraphics[width=.32\textwidth]{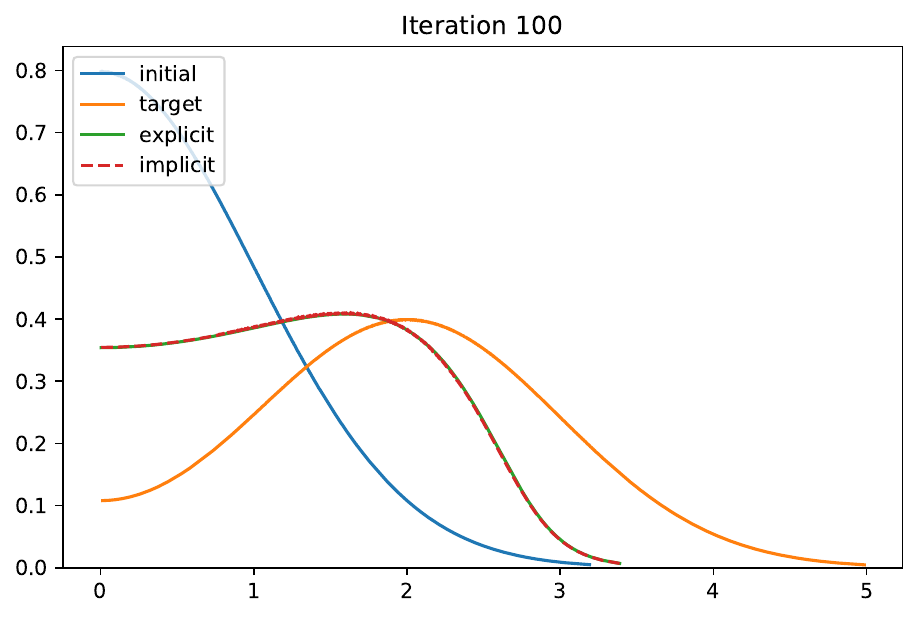}
    \includegraphics[width=.32\textwidth]{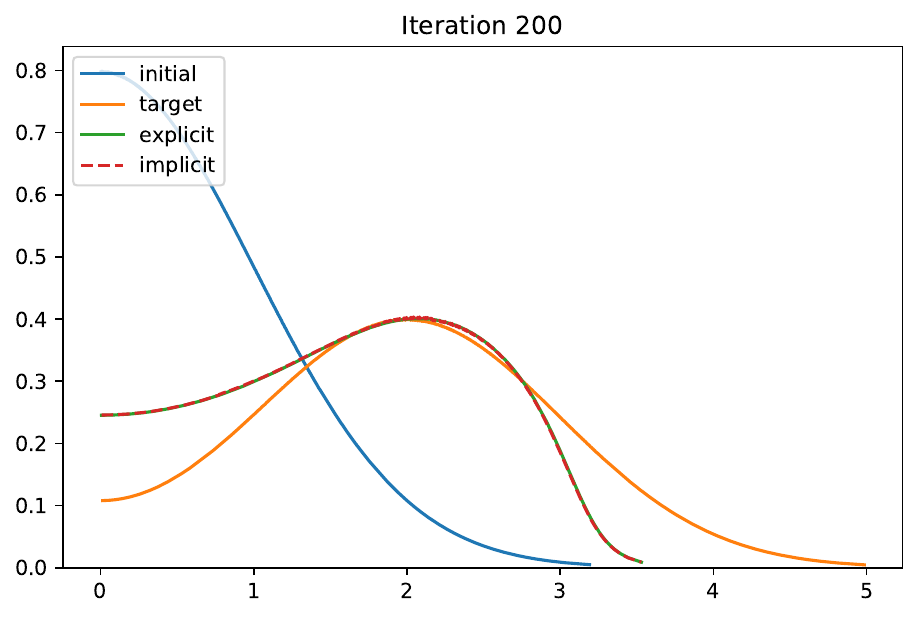}
    \includegraphics[width=.32\textwidth]{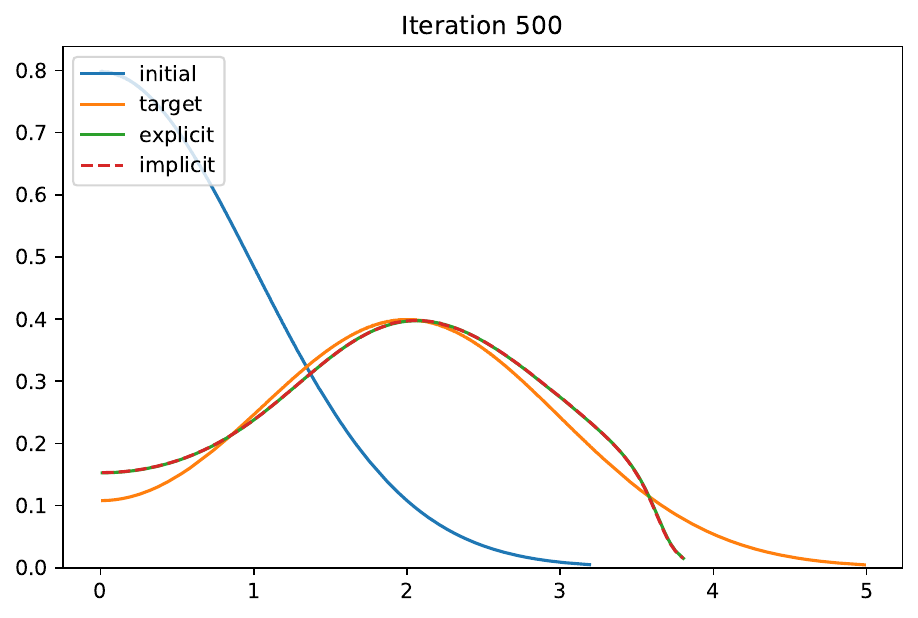}
    \includegraphics[width=.32\textwidth]{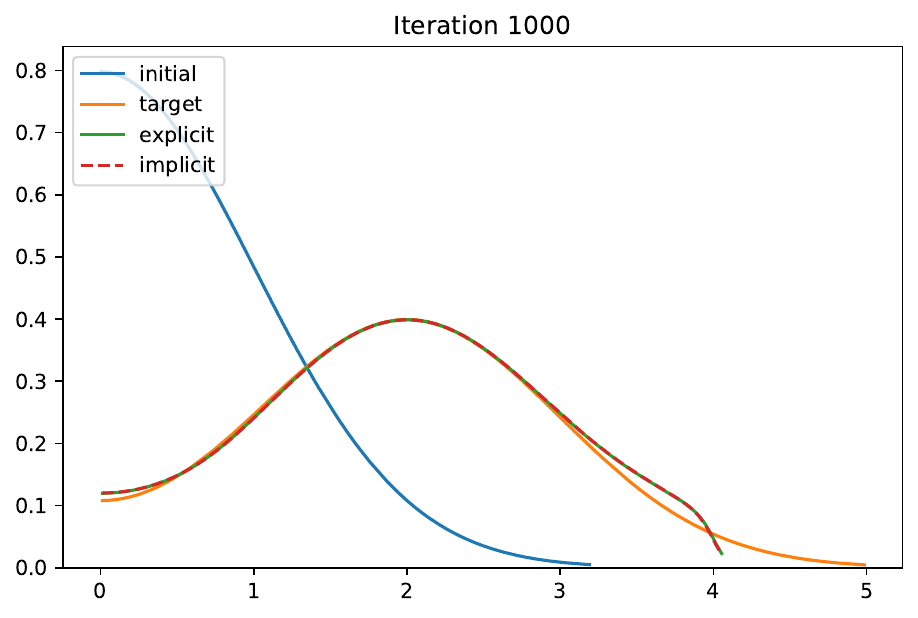}
    \includegraphics[width=.32\textwidth]{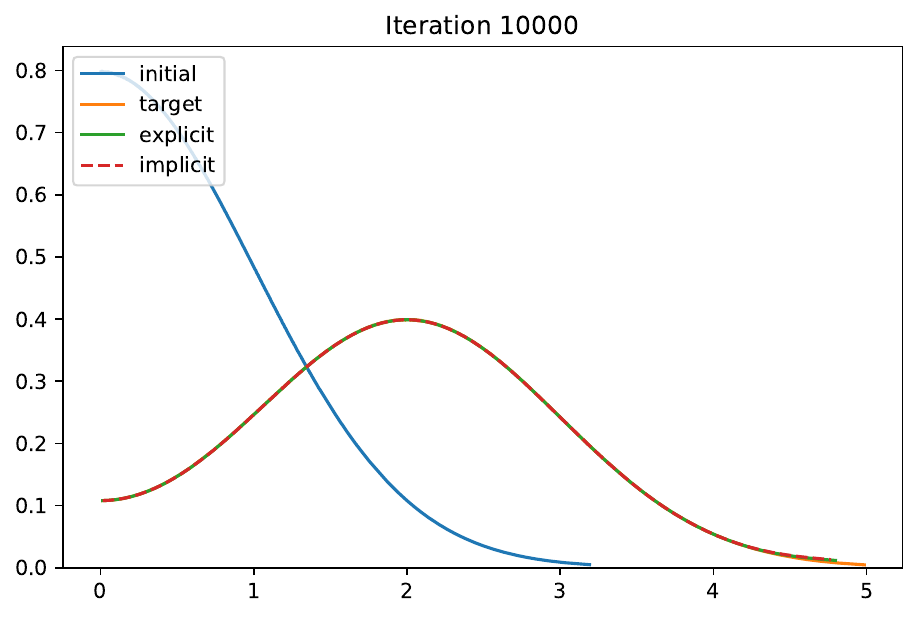}
    \caption{Comparison of implicit (red) and explicit (green) Euler schemes between $\mu_0 \sim \mathcal{FN}(0, 1)$ and $\nu \sim \mathcal{FN}(2, 1)$ with $\tau = \tfrac{1}{100}$.
    For the corresponding quantile functions, see \figref{fig:Folded_Norm_to_Folded_Norm_q}.
    }
    \label{fig:Folded_Norm_to_Folded_Norm}
\end{figure}

\paragraph*{Acknowledgments.}
GS acknowledges funding by the 
 German Research Foundation (DFG) within the project STE 571/16-1.
JH acknowledges funding by the EPSRC programme grant \textit{The Mathematics of Deep Learning} with reference EP/V026259/1 and by the DFG within project 530824055.

\bibliographystyle{abbrv}
\bibliography{Bibliography}

\appendix
\section{Additional Material}\label{sec:appendix}
\subsection{Supplement to Section~\ref{sec:prelim}}

\begin{remark} \label{rem:cone}
To be completely accurate, the set of quantile functions is the set of increasing and left-continuous functions (not equivalence classes) in $\mathcal L_2(0, 1)$.
However, each almost everywhere increasing function $f \in L_2(0, 1)$ has a unique left-continuous representative:
consider a function $\tilde{f} \in \mathcal L_2(0, 1)$ in the equivalence class $f$, which is everywhere increasing.
Then $\tilde{f}$ has at most countably many jump discontinuities $(s_k)_{k \in \N}$.
Define $f_0 \colon (0, 1) \to \R$ via $s \mapsto \lim_{t \uparrow s} \tilde{f}(t)$.
Then $f_0$ differs from $\tilde{f}$ only at possibly $(s_k)_{k \in \N}$, and $f_0$ is increasing everywhere and left-continuous.

 Indeed, $\C(0, 1)$ is closed, since for any sequence $(f_n)_{n \in \N} \subset \C(0, 1)$ converging to $f$ in $L_2(0, 1)$ as $n \to \infty$,
we have $f_n(s) \to f(s)$ along a subsequence for almost all $s \in (0, 1)$. Since the pointwise limit of increasing functions is increasing, we see that $f \in \C(0, 1)$.
\end{remark}

\noindent
\textbf{Proof of Proposition~\ref{prop:conv-geo}.}
    Let $\lambda \in \R$ and $F\colon L_2(0,1)\to\R$ be $\lambda$-convex, that is,
    \begin{equation*}
        F\left(t u + (1 - t) v\right)
        \le t F(u) + (1 - t) F(v) - \frac{1}{2} \lambda t (1 - t) \| u - v \|^2
    \end{equation*}
    for all $u, v \in \dom(F) \cap \C(0,1)$ and all $t \in (0, 1)$.
    Let $\gamma \colon [0,1] \to \P_2(\R)$ be any geodesic with $\gamma_0 = \mu$
    and $\gamma_1 = \nu$.
    Since $\mu\mapsto Q_\mu$ is an isometry by Theorem~\ref{prop:Q}, 
    the curve $t\mapsto Q_{\gamma_t}$ is a geodesic in $L_2(0,1)$. 
    Since $L_2(0,1)$ is a linear space, the only geodesics are straight
    line segments, so we obtain 
    \begin{equation*}
        Q_{\gamma_t}
        = (1 - t) Q_{\mu} + t Q_{\nu}.
    \end{equation*}
    Finally, we conclude
    \begin{align*}
        \F(\gamma_t)
        & = F(Q_{\gamma_t})
        = F\big((1 - t) Q_{\mu} + t Q_{\nu}\big) \\
        & \le t F(Q_{\mu}) + (1 - t) F(Q_{\nu}) - \frac{1}{2} \lambda t (1 - t) \| Q_{\mu} - Q_{\nu} \|_{L_2(0, 1)}^2 \\
        & = t \F(\mu) + (1 - t) \F(\nu) - \frac{1}{2} \lambda t (1 - t) W_2^2(\mu, \nu).\qedhere
    \end{align*}
The lower semicontinuity of $\mathcal F$ follows directly from the lower semicontinuity of $F$ using the isometric embedding, see Theorem~\ref{prop:Q}. \hfill $\Box$
\medskip

In the proof of Theorem \ref{thm:L2_representation} we assumed the existence of some $\tilde \tau > 0$ such that $I + \tilde\tau v_t$ is monotonically increasing, in order to employ the isometry to $L_2(0,1)$, see Theorem \ref{prop:Q}. Unfortunately, 
$I + \tilde\tau v_t$ does not need to be monotone in the general case. Hence, we include a rigorous treatment.

\noindent
\textbf{Appendix to the proof of Theorem~\ref{thm:L2_representation}.}
We only need to justify the use of the isometry to $L_2(0,1)$, see Theorem \ref{prop:Q}. Fix $t \ge 0$.
Since $v_t \in \text{T}_{\gamma_t} \mathcal P_2(\R)$, there exist sequences of positive numbers $\lambda_n > 0$ and mappings $T_n : \R \to \R$, which are optimal mappings from $\gamma_t$ to $(T_n)_\sharp \gamma_t$, such that 
\begin{equation}
    \zeta_n \coloneqq  v_t - \lambda_n(T_n - I) \to 0 \quad \text{in } L_2(\R; \R; \gamma_t).
\end{equation}
Setting $\xi_n \coloneqq (I + h_n v_t) - T_n$, where $h_n \coloneqq \lambda_n^{-1}$, we obtain
\begin{equation}\label{o(h)}
    \frac{\xi_n}{h_n} = \zeta_n \to 0 \quad \text{in } L_2(\R; \R; \gamma_t).
\end{equation}
Now, we consider two cases.

\underline{Case 1:} $(\lambda_n)_n$ is unbounded. Here, we can extract a subsequence (still labeled $(\lambda_n)_n$) such that $\lambda_n \uparrow \infty$, i.e. $h_n \downarrow 0$.
By the reverse triangle inequality, we have
\begin{align}
    &|W_2(\gamma_{t +h_n}, (I+h_n v_t)_\sharp \gamma_t) - W_2(\gamma_{t +h_n}, (T_n)_\sharp \gamma_t)|
    \le W_2((I+h_n v_t)_\sharp \gamma_t, (T_n)_\sharp \gamma_t)\\
    \le &\left( \int_{\R} |x + h_n v_t(x) - T_n(x)|^2 \d \gamma_t(x)
    \right)^\frac{1}{2}
    = \left( \int_{\R} |\xi_n(x)|^2 \d \gamma_t(x)
    \right)^\frac{1}{2}
    = \|\xi_n\|_{L_2(\R;\R;\gamma_t)}.
\end{align}
Here, the second estimate uses the plan $\tilde T_\sharp \gamma_t$, where $\tilde T : x \mapsto (x+h_n v_t(x), T_n(x))$. 
Dividing by $h_n$ and using \eqref{o(h)}, we obtain together with \cite[Proposition 8.4.6]{BookAmGiSa05} that
\begin{equation}
     0 =\lim_{n \to \infty}\frac{W_2(\gamma_{t+h_n},(I+h_n \, v_t)_\#\gamma_t)}{h_n} 
     = \lim_{n \to \infty}\frac{W_2(\gamma_{t+h_n},(T_n)_\#\gamma_t)}{h_n}.
\end{equation}
Since the $T_n$ are optimal mappings in $\R$, they are increasing. In particular, $T_n \circ g(t)$ is increasing. Hence, the isometry to $L_2(0,1)$ from Theorem \ref{prop:Q} yields
\begin{align}
   0 &=  \lim_{n \to \infty}\frac{W_2(g(t+h_n)_\sharp \Lambda_{(0,1)}, (T_n \circ g(t))_\sharp \Lambda_{(0,1)})}{h_n}\\
   &= \lim_{n \to \infty}\frac{\|g(t+h_n) - T_n \circ g(t) \|_{L_2(0,1)}}{h_n}\\
   &= \lim_{n \to \infty}\frac{\|g(t+h_n) - (g(t) +h_n v_t \circ g(t) - \xi_n \circ g(t)) \|_{L_2(0,1)}}{h_n}\\
   &= \lim_{n \to \infty}\|\frac{g(t+h_n) - g(t)}{h_n} - v_t \circ g(t) + \frac{\xi_n \circ g(t)}{h_n} \|_{L_2(0,1)}.
\end{align}
The reverse triangle inequality leads to 
\begin{align}
    \limsup_{n \to \infty}\|\frac{g(t+h_n) - g(t)}{h_n} - v_t \circ g(t) \|_{L_2(0,1)}
    &\le 
    \lim_{n \to \infty} \|\frac{\xi_n \circ g(t)}{h_n} \|_{L_2(0,1)}\\
    &= \lim_{n \to \infty} \|\frac{\xi_n}{h_n} \|_{L_2(\R;\R;\gamma_t)} = 0.
\end{align}
Note that the second step does \emph{not} need $\xi_n$ to be monotone, and the third step is exactly \eqref{o(h)}.
Since we have already argued that $g:[0,\infty) \to L_2(0,1)$ is Lipschitz continuous, the derivative $g'(t) = \lim_{\tau \to 0} \frac{g(t+\tau)-g(t)}{\tau}$ must exist for a.e. $t$, and by above, it coincides with $v_t \circ g(t)$. 
Now, the remaining part follows as in the given proof of Theorem \ref{thm:L2_representation}.

\underline{Case 2:} $(\lambda_n)_n$ is bounded. Here, we can extract a subsequence $(\lambda_n)_n$ which converges to some $\lambda \ge 0$. Again, consider two subcases:

\underline{Case 2i:} $\lambda = 0$. Here, we have $h_n \to \infty$. 
We know that $T_n = I + h_n v_t - \xi_n$ is increasing. For any $x,y\in \R$ with $x < y$, it then holds $x + h_n v_t(x) - \xi_n(x) \le y + h_n v_t(y) - \xi_n(y)$. Dividing by $h_n$ yields $\lambda_n x + v_t(x) - \zeta_n(x) \le \lambda_n y + v_t(y) - \zeta_n(y)$. For $n \to \infty$ (along a subsequence), this becomes $v_t(x) \le v_t(y)$ for $\gamma_t$-a.e.\ $x,y$. Hence, $v_t$ admits a non-decreasing representative $\tilde{v}_t$.
Thus, the map $I + \tau \tilde v_t$ is increasing as well. One quickly verifies that also $\tilde v_t \in \text{T}_{\gamma_t} \mathcal P_2(\R)$ and $\tilde v_t$ solves \eqref{eq:CE}, and we can proceed as in the given proof of Theorem \ref{thm:L2_representation}.

\underline{Case 2ii:} $\lambda > 0$. Here, we have the convergence (along a subsequence)
\begin{equation}
    T_n \to I + \lambda^{-1} v_t
\end{equation}
for $\gamma_t$-a.e.\ $x$. Since the pointwise limit of increasing functions is increasing, the map $I + \lambda^{-1} v_t$ is increasing outside a $\gamma_t$-null set. By a suitable modification $\tilde v_t$ of $v_t$, we again obtain that $I + \lambda^{-1} \tilde v_t$ is increasing everywhere, and the arguments from the subcase 2i) conclude the proof.
\hfill $\Box$

\begin{theorem}[Existence and regularity of strong solutions to \eqref{eq:cauchy} {\cite[Thm.~3.1, p.~54]{B1973}, \cite{CL1971}}] \label{theorem:BrezisRegularity}
    Let $H$ be a Hilbert space and $A \colon H \to 2^H$ be a maximal monotone operator.
    For all $g_0 \in \dom(A)$, there exists a unique function $g \colon [0, \infty) \to H$ such that
    \begin{enumerate}
        \item 
        $g(t) \in \dom(A)$ for all $t > 0$,
        \item 
        $g$ is Lipschitz continuous on $[0, \infty)$, that is, $\frac{\d g}{\d t} \in L_{\infty}((0, \infty), H)$ (in the sense of distributions and in the strong sense a.e.) and
        \begin{equation*}
            \left\| \frac{\d g}{\d t}  \right\|_{L_{\infty}((0, \infty), H)}
            \le \| A^{\circ} g_0 \|,
        \end{equation*}
where $A^{\circ} z \coloneqq \argmin \{ \| y \| : y \in A z \}$ denotes the minimal norm selection,
        \item 
        $\frac{\d g}{\d t}(t) \in - A g(t)$ for almost all $t > 0$,
        \item 
        $g(0) = g_0$.
    \end{enumerate}
    Furthermore, $g$ satisfies the following properties:
    \begin{enumerate}
    \setcounter{enumi}{4}
        \item 
        $g$ admits a right derivative for all $t > 0$ and $\frac{\d^+ g}{\d t}(t) + A^{\circ} g(t) = 0$ for all $t > 0$,

        \item 
        $t \mapsto A^{\circ} g(t)$ is right-continuous and $t \mapsto \| A^{\circ} g(t) \|$ is decreasing.

        \item
        $g$ is given by the exponential formula
        \begin{equation} \label{eq:exponentialFormula}
            g(t)
            = \lim_{n \to \infty} \left( I + \frac{t}{n} A\right)^{-n} g_0,
        \end{equation}
        uniformly on compact time intervals.
    \end{enumerate}
\end{theorem}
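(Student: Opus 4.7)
The plan is to follow the classical Crandall--Liggett / Brezis strategy via Yosida regularization. First I would introduce, for each $\lambda>0$, the resolvent $J_\lambda\coloneqq(I+\lambda A)^{-1}$, which is single-valued and non-expansive on all of $H$ (by maximal monotonicity), and the Yosida approximation $A_\lambda\coloneqq\lambda^{-1}(I-J_\lambda)$. Standard facts that I would invoke or quickly re-derive are: $A_\lambda$ is $\lambda^{-1}$-Lipschitz and monotone on $H$; $A_\lambda x\in A(J_\lambda x)$ for every $x\in H$; and for $x\in\dom(A)$ one has $\|A_\lambda x\|\le\|A^\circ x\|$ with $A_\lambda x\to A^\circ x$ as $\lambda\downarrow0$. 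Because $A_\lambda$ is globally Lipschitz, the approximating Cauchy problem $\dot g_\lambda+A_\lambda g_\lambda=0$, $g_\lambda(0)=g_0$, has a unique $C^1$ solution on $[0,\infty)$ by Picard--Lindel\"of.

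Next I would derive the a priori estimates. Monotonicity of $A_\lambda$ gives $\frac{d}{dt}\tfrac12\|g_\lambda(t)-g_\mu(t)\|^2\le\langle A_\mu g_\mu-A_\lambda g_\lambda,g_\lambda-g_\mu\rangle$, and the identity $g_\lambda=J_\lambda g_\lambda+\lambda A_\lambda g_\lambda$ lets one bound this by a constant multiple of $(\lambda+\mu)\|A^\circ g_0\|^2$ after using the uniform bound $\|A_\lambda g_\lambda(t)\|\le\|A_\lambda g_0\|\le\|A^\circ g_0\|$ (which itself follows from differentiating $\tfrac12\|\dot g_\lambda\|^2$ and using monotonicity, so that $t\mapsto\|A_\lambda g_\lambda(t)\|$ is non-increasing). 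This shows $(g_\lambda)$ is Cauchy in $C([0,T];H)$ for every $T>0$, so it converges uniformly on compacts to some limit $g$. The uniform Lipschitz bound passes to the limit, yielding (2), and the relation $J_\lambda g_\lambda\to g$ follows from $\|J_\lambda g_\lambda-g_\lambda\|=\lambda\|A_\lambda g_\lambda\|\to0$.

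The main obstacle is identifying the limit, i.e.\ showing $\dot g(t)\in-Ag(t)$ a.e. Here I would extract a subsequence with $A_\lambda g_\lambda\rightharpoonup\chi$ weakly in $L_2((0,T);H)$ (using the uniform $L_\infty$ bound), note that $A_\lambda g_\lambda\in A(J_\lambda g_\lambda)$ with $J_\lambda g_\lambda\to g$ strongly, and invoke the fact that the graph of a maximal monotone operator is sequentially closed in the strong$\times$weak topology of $H\times L_2$ (equivalently, its realization on $L_2((0,T);H)$ is maximal monotone); this forces $\chi(t)\in Ag(t)$ a.e. Combined with $\dot g_\lambda=-A_\lambda g_\lambda\rightharpoonup\dot g$, one obtains $\dot g(t)\in-Ag(t)$ and hence (1)--(4). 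Uniqueness is immediate from monotonicity: if $g,\tilde g$ are two solutions, then $\tfrac{d}{dt}\tfrac12\|g-\tilde g\|^2=\langle\dot g-\dot{\tilde g},g-\tilde g\rangle\le0$.

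For the refined statements (5)--(7), I would proceed as follows. The contraction property of the semigroup (obtained from uniqueness) gives that $h^{-1}\|g(t+h)-g(t)\|$ is non-increasing in $h$, so the right derivative $\tfrac{d^+}{dt}g(t)$ exists for every $t>0$ by a monotonicity argument; matching it with the minimal-norm selection $A^\circ g(t)$ uses that $\tfrac{d^+g}{dt}(t)\in-Ag(t)$ combined with the minimality from the $L_\infty$-bound, yielding (5) and (6). Finally, (7) is the Crandall--Liggett exponential formula; I would not re-prove it from scratch but rather sketch the comparison: writing $u_n\coloneqq(I+(t/n)A)^{-n}g_0$ and using the resolvent identity together with discrete Gr\"onwall estimates on $\|u_n-g_\lambda(t)\|$ for suitable $\lambda=t/n$, one obtains convergence $u_n\to g(t)$ uniformly on compact time intervals.
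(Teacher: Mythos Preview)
The paper does not give its own proof of this theorem: it is stated in the appendix as a classical background result, with attribution to Brezis \cite[Thm.~3.1, p.~54]{B1973} and Crandall--Liggett \cite{CL1971}, and is invoked as a black box throughout. Your sketch via Yosida approximation, a priori estimates, and graph-closedness of maximal monotone operators is precisely the standard argument from those references, so it is correct and there is nothing further to compare.
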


\subsection{Supplement to Section~\ref{sec:mmd} }
\noindent
\textbf{Proof of Proposition~\ref{lemma:FnuGamma0}.}
    The functional $F_{\nu}$ is convex, since for $u, v \in L_2(0, 1)$ and $t \in (0, 1)$ we have
    \begin{align*}
        F_{\nu}\big(t u + (1 - t) v\big)
        & = \int_{0}^{1} \bigg( (1 - 2 s) \big( t u(s) + (1 - t) v(s) \big) \\
        & + \int_{0}^{1} | t u(s) + (1 - t) v(s) - Q_{\nu}(t) | \d{t} \bigg) \d{s} \\
        & \le t \int_{0}^{1} \left( (1 - 2 s)  u(s)  + \int_{0}^{1} | u(s) - Q_{\nu}(t) | \d{t} \right) \d{s} \\
        & + (1 - t) \int_{0}^{1} \left( (1 - 2 s)  v(s)  + \int_{0}^{1} | v(s) - Q_{\nu}(t) | \d{t} \right) \d{s} \\
        & = t F_{\nu}(u) + (1-t) F_{\nu}(v).
    \end{align*}
    To show that $F_{\nu}$ is finite everywhere, notice that for all $u \in L_2(0,1)$,
    \begin{align*}
        |F_{\nu}(u)|
        & \le \int_{0}^{1} | 1 - 2 s | | u(s) | + |u(s)| + \|Q_\nu\|_{L_1(0,1)}\d{s} \\
        &\le 2\|u\|_{L_2(0,1)} + \|Q_\nu\|_{L_2(0,1)} < \infty.
        \end{align*}
    Lastly, we will show the continuity of $F_{\nu}$.
    Suppose that $(u_n)_{n \in \N} \subset L_2(0, 1)$ converges to $u \in L_2(0, 1)$.
    Then there exists a subsequence $(u_{n_k})_{k \in \N}$ such that $u_{n_k}(s) \to u(s)$ for $k \to \infty$ for a.e. $s \in (0, 1)$, and there exists a $m \in L_1(0, 1)$ such that $| u_{n_k}(s) | \le m(s)$ for a.e. $s \in (0, 1)$.\\
    Hence, applying Lebesgue's dominated convergence theorem twice yields 
    \begin{align*}
        \lim_{k \to \infty} F_{\nu}(u_{n_k})
        & = \lim_{k \to \infty} \int_{0}^{1} \left( (1 - 2 s) u_{n_k}(s)  + \int_{0}^{1} | u_{n_k}(s) - Q_{\nu}(t) | \d{t} \right) \d{s}
         = F_{\nu}(u).
    \end{align*}
    We can apply the dominated convergence theorem because in both cases the integrands are bounded above by an integrable function as follows: for almost all $s \in (0, 1)$, we have by applying the triangle inequality many times, that
    \begin{align*}
        & \quad \left| (1 - 2 s)  u_{n_k}(s)  + \int_{0}^{1} | u_{n_k}(s) - Q_{\nu}(t) | \d{t} \right| \\
        & \le \underbrace{| 1 - 2 s |}_{\le 1} | u_{n_k}(s)  | + \int_{0}^{1} | u_{n_k}(s) - Q_{\nu}(t) | \d{t} \\
        & \le | u_{n_k}(s) |  + | u_{n_k}(s) | + \int_{0}^{1} | Q_{\nu}(t) | \d{t} \\
        & \le 2 m(s)  + \| Q_{\nu} \|_{L_1(0, 1)}.
    \end{align*}
    This function is integrable because $m \in L_1(0, 1)$ and $Q_{\nu} \in L_2(0, 1) \subset L_1(0, 1)$.
    Analogously, for any $s \in (0, 1)$, the integrand $(0, 1) \ni t \mapsto | u_{n_k}(s) - Q_{\nu}(t) |$ is bounded by the integrable function $t \mapsto | u_{n_k}(s) | + | Q_{\nu}(t) |$. 
\hfill $\Box$
\subsection{Supplement to Section~\ref{sec:inv} }
 \textbf{Proof of Lemma~\ref{lem:helper}.}
    Let $s_0 \in (0,1)$ be arbitrary and take a sequence $(s_n) \subset (0,1)$ of values $s$ where \eqref{eq:RC-start} holds, and such that $s_n \uparrow s_0$. Then, it holds
    \begin{equation}
      h(s_0) + \vareps s_0 = \lim_{n \to \infty} h(s_n) + \vareps s_n \ge \lim_{n \to \infty} u(s_n) + \vareps R_\nu^-(u(s_n)) = u(s_0) + \vareps R_\nu^-(u(s_0))
    \end{equation}
    by the left-continuity of $h$, $u$ and $R_\nu^-$, and since $u$ is increasing. Also, one has
    \begin{equation}
      h(s_0) + \vareps s_0 = \lim_{n \to \infty} h(s_n) + \vareps s_n \le \lim_{n \to \infty} u(s_n) + \vareps R_\nu^+(u(s_n)) \le u(s_0) + \vareps R_\nu^+(u(s_0)),
    \end{equation}
    since $R_\nu^+$ is upper semicontinuous. Since $s_0 \in (0,1)$ was arbitrary, this shows the claim.
    \hfill $\Box$ 
\\[2ex]
\textbf{Proof of Proposition~\ref{l:DLOW-eq}.}
  Let us write $R_\mu \coloneqq R_\mu^+$.
  \\
 i) \emph{'Only if'}:
  Let $x, y \in R_\mu^{-1}((0,1))$ with $x > y$. W.l.o.g. suppose $R_\mu(x) > R_\mu(y)$ and let $\varepsilon > 0$ such that $R_\mu(x) > R_\mu(y) + \varepsilon$. Then, it holds $Q_\mu(R_\mu(x)) \le x$ and $Q_\mu(R_\mu(y) + \varepsilon) > y$ by the Galois inequalities \eqref{eq:galois}. Now, using that $Q_\mu \in D_{L, \text{low}}$, it follows that
  \begin{align*}
      L|R_\mu(x) - (R_\mu(y)+\varepsilon)| &\le |Q_\mu(R_\mu(x)) - Q_\mu(R_\mu(y)+\varepsilon)|\\
      &= Q_\mu(R_\mu(x)) - Q_\mu(R_\mu(y)+\varepsilon)\\
      &\le x - y = |x - y|.
  \end{align*}
  Letting $\varepsilon \downarrow 0$ shows
  \begin{equation}
   |R_\mu(x) - R_\mu(y)| \le \frac{1}{L} |x-y| \quad \text{for all } x,y \in R_\mu^{-1}((0,1)).  
  \end{equation}
  Since $R_\mu \equiv 0$ on $R_\mu^{-1}((-\infty,0])$, $R_\mu \equiv 1$ on $R_\mu^{-1}([1,\infty))$, and since $R_\mu$ is continuous (on $\R$) by Remark~\ref{l:DLOW-eq-lem}, $R_\mu$ is Lipschitz continuous on $\R$ with constant $\le L^{-1}$, which proves the 'only-if' part.
  \\[1ex]
  \emph{'If'}:
  Since $R_\mu$ is (Lipschitz) continuous, one has $R_\mu (Q_\mu (s)) = s$ for all $s \in (0,1)$. It immediately follows
  \begin{equation}
      |Q_\mu(s_1) - Q_\mu(s_2)| \ge L | R_\mu(Q_\mu(s_1)) - R_\mu(Q_\mu(s_2))| = L|s_1 - s_2|
  \end{equation}
  for all $s_1,s_2 \in (0,1)$, which concludes the proof of part i).
  \\[1ex]
  ii) \emph{'Only if'}:
  Since $Q_\mu$ is (Lipschitz) continuous, $R_\mu$ is strictly increasing on $R_\mu^{-1}((0,1))$ by Remark~\ref{l:DLOW-eq-lem}, and hence, $Q_\mu (R_\mu (x)) = x$ for all $x \in R_\mu^{-1}((0,1))$. It immediately follows
  \begin{equation}
      |R_\mu(x) - R_\mu(y)| \ge \frac{1}{L} | Q_\mu(R_\mu(x)) - Q_\mu(R_\mu(y))| = \frac{1}{L}|x - y|
  \end{equation}
  for all $x, y \in R_\mu^{-1}((0,1))$. This inequality directly extends to all $x,y \in \overline{R_\mu^{-1}((0,1))}$, since $R_\mu$ is increasing.
  \\[1ex]
  \emph{'If'}:
  Let $s_1, s_2 \in (0,1)$ with $s_1 > s_2$. W.l.o.g. suppose $Q_\mu(s_1) > Q_\mu(s_2)$ and let $\varepsilon > 0$ such that $Q_\mu(s_1) - \vareps > Q_\mu(s_2)$. Then, it holds $R_\mu(Q_\mu(s_2)) \ge s_2$ and $R_\mu(Q_\mu(s_1) - \varepsilon) < s_1$ by the Galois inequalities \eqref{eq:galois}. Now, using the lower Lipschitz bound of $R_\mu$, it follows
  \begin{align*}
      \frac{1}{L}|(Q_\mu(s_1)-\varepsilon) - Q_\mu(s_2)| 
      &\le |R_\mu(Q_\mu(s_1)-\varepsilon) - R_\mu(Q_\mu(s_2))|\\
      &= R_\mu(Q_\mu(s_1)-\varepsilon) - R_\mu(Q_\mu(s_2))\\
      &\le s_1 - s_2 = |s_1 - s_2|.
  \end{align*}
  Letting $\varepsilon \downarrow 0$ shows $Q_\mu \in D_L$, which completes the proof of ii).
  \\[1ex]
    Finally, assume that $\mu$ is absolutely continuous.
    Then the bounds for its density follow by applying the fundamental theorem of calculus for absolutely continuous $R_\mu$, i.e., for all  $x \ge y$,
    \begin{equation}
        |R_\mu(x) - R_\mu(y)| = R_\mu(x) - R_\mu(y) = \int_y^x f_\mu(\xi) \d\xi.
        \qquad \Box
    \end{equation}

\subsection{Smoothness Properties: Different Approach}\label{app:diff}

In the rest of this section, we highlight the statements from Section~\ref{sec:inv} from a different point of view. Instead of working with the exponential formula \eqref{eq:form}, we deal with the pointwise differential inclusion \eqref{eq:pw-ode}, allowing for a more direct approach. \footnote{But note that the exponential approach of Section~\ref{sec:inv} also yields information about the implicit Euler steps.}

The following theorem is analogous to Theorem~\ref{thm:regularisation-Lip_1}. Let us remark that both theorems also hold true in a local sense, i.e., on subintervals $I \subseteq (0,1)$.

\begin{theorem}\label{thm:regularisation-Lip_1_Robert}
        Let $I \coloneqq (a,b)$ be such that $Q_{\mu_0}(I) \subset \supp \nu$.
        If $Q_{\mu_0}$ is locally upper Lipschitz on $I$ 
        with constant $L_{\mu_0} \ge 0$,
        and if $R^\pm_\nu$ is locally lower Lipschitz
        on $\conv(Q_{\mu_0}(I) \cup Q_\nu(I))$
        with constant $L_{\nu}^{-1} > 0$,
        then $g(t)$ is locally upper Lipschitz on $I$ with 
        \begin{equation}\label{eq:invariance1_Robert}
            {\rm Lip}( g(t) ) 
            \le 
            L_{\mu_0} \, e^{-2t L_\nu^{-1}}
            + L_\nu \, (1 - e^{-2t L_\nu^{-1}}).
        \end{equation}
\end{theorem}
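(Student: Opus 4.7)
The plan is to exploit the pointwise characterization of the flow from Lemma~\ref{lem:pt-ode} and reduce the upper Lipschitz estimate to a scalar linear differential inequality, rather than going through the resolvent/exponential formula as in Theorem~\ref{thm:regularisation-Lip_1}. I will first fix two continuity points $s_1 > s_2$ of $Q_\nu$ inside $I$; such points are dense in $I$ because $Q_\nu$ is monotone and hence has at most countably many jumps. By Lemma~\ref{lem:pt-ode} the trajectories $g_{s_1}, g_{s_2} \colon [0,\infty) \to \R$ are absolutely continuous and satisfy
\[
\dot g_{s_i}(t) \in 2 s_i - 2 \bigl[R_\nu^-(g_{s_i}(t)),\, R_\nu^+(g_{s_i}(t))\bigr] \qquad \text{for a.e.\ } t > 0.
\]
Since $g_s(\cdot)$ travels monotonically between $Q_{\mu_0}(s)$ and $Q_\nu(s)$, both trajectories stay inside $\conv(Q_{\mu_0}(I) \cup Q_\nu(I))$, which is precisely the set on which the lower Lipschitz hypothesis on $R_\nu^\pm$ is imposed; the monotonicity of $g(t)$ in $s$ also gives $g_{s_1}(t) \ge g_{s_2}(t)$.

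Next I will set $\varphi(t) \coloneqq g_{s_1}(t) - g_{s_2}(t) \ge 0$ and combine the one-sided bounds $\dot g_{s_1}(t) \le 2s_1 - 2 R_\nu^-(g_{s_1}(t))$ and $-\dot g_{s_2}(t) \le -2s_2 + 2 R_\nu^+(g_{s_2}(t))$ to get, for a.e.\ $t>0$,
\[
\dot\varphi(t) \le 2(s_1-s_2) - 2\bigl(R_\nu^-(g_{s_1}(t)) - R_\nu^+(g_{s_2}(t))\bigr) = 2(s_1-s_2) - 2\nu\bigl((g_{s_2}(t), g_{s_1}(t))\bigr).
\]
Rewriting the lower Lipschitz hypothesis as $\nu((y,x)) \ge L_\nu^{-1}(x-y)$ for $x > y$ in $\conv(Q_{\mu_0}(I)\cup Q_\nu(I))$, I obtain the linear scalar inequality
\[
\dot\varphi(t) + 2 L_\nu^{-1}\varphi(t) \le 2(s_1-s_2), \qquad \varphi(0) = Q_{\mu_0}(s_1) - Q_{\mu_0}(s_2) \le L_{\mu_0}(s_1-s_2).
\]
Multiplying by the integrating factor $e^{2L_\nu^{-1}t}$, integrating, and finally dividing by $s_1 - s_2 > 0$ yields exactly the bound \eqref{eq:invariance1_Robert} for the chosen pair.

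Finally I will extend the estimate from continuity points of $Q_\nu$ to arbitrary $s_1, s_2 \in I$ via the left-continuity of the quantile function $g(t) \in \mathcal{C}(0,1)$: approaching $s_1, s_2$ from the left through continuity points of $Q_\nu$ gives the full local upper Lipschitz estimate in the limit. The main technical subtlety I anticipate is the plateau regime of the pointwise solution, i.e., the times $t \ge T_{s_i}$ at which $g_{s_i}(t) = Q_\nu(s_i)$ and $\dot g_{s_i}(t) = 0$, where the one-sided bounds used above must be reverified. This is handled by observing that at a continuity point of $Q_\nu$ the Galois inequalities \eqref{eq:galois} force $s_i \in [R_\nu^-(Q_\nu(s_i)), R_\nu^+(Q_\nu(s_i))]$, so the inclusion $0 \in 2s_i - 2[R_\nu^-(Q_\nu(s_i)), R_\nu^+(Q_\nu(s_i))]$ makes the choice of either endpoint $R_\nu^\pm$ in the one-sided bounds legitimate on the plateau. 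Once this is in place, the differential inequality survives unchanged and the Grönwall-type integration goes through.
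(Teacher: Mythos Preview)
Your argument is correct and uses a genuinely different mechanism than the paper's. Both proofs work with the pointwise trajectories $g_{s_i}$ from Lemma~\ref{lem:pt-ode}, but the paper anchors each trajectory separately: assuming w.l.o.g.\ $Q_{\mu_0}(s)<Q_\nu(s)$ on $[s_1,s_2]$, it derives a linear comparison ODE for $g_{s_1}$ centred at $Q_\nu(s_1)$ and one for $g_{s_2}$ centred at $Q_{\mu_0}(s_2)$, solves both explicitly, and only then subtracts and simplifies using the lower Lipschitz bound once more. Your approach bypasses this by going straight to a single linear differential inequality for the gap $\varphi=g_{s_1}-g_{s_2}$ and integrating via Gr\"onwall; this is shorter, needs no case distinction on the sign of $Q_{\mu_0}-Q_\nu$, and your explicit treatment of the plateau regime and of the restriction to continuity points of $Q_\nu$ (with the passage to all $s$ by left-continuity) is more careful than what the paper writes down. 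One small point to tidy: the identity $R_\nu^-(x)-R_\nu^+(y)=\nu((y,x))$ is only valid for $x>y$; when $\varphi(t)=0$ and $\nu$ has an atom at the common value, your displayed bound on $\dot\varphi$ can overshoot. This is harmless because on the set $\{t:\varphi(t)=0\}$ one has $\dot\varphi(t)=0$ a.e.\ (density points of a level set of an absolutely continuous function), so the inequality $\dot\varphi+2L_\nu^{-1}\varphi\le 2(s_1-s_2)$ still holds a.e.; alternatively, one can use the $\kappa$-trick from the proof of Theorem~\ref{thm:regularisation-Lip_1} to get $R_\nu^-(x)-R_\nu^+(y)\ge L_\nu^{-1}(x-y)$ directly for $x\ge y$.
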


\begin{proof}
    W.l.o.g.
    we consider $s_1<s_2$ in $I$
    with $Q_{\mu_0} (s) < Q_\nu (s)$ for all $s_1 \le s \le s_2$.
    Due to the local lower Lipschitz property of $R_{\nu}^+$,
    the derivatives of $g_{s_1}$ are bounded by
    \begin{equation*}
        \dot g_{s_1}(t) = 2 s_1 - 2 R_\nu^+ (g_{s_1}(t))
        \ge
        2 L_\nu^{-1} (Q_\nu(s_1) - g_{s_1}(t))
    \end{equation*}
    for almost every $t \ge 0$.
    Solving the differential equation 
    with the lower bound 
    and the initial value $Q_{\mu_0}(s_1)$,
    we obtain
    \begin{equation}
        g_{s_1}(t) 
        \ge
        Q_\nu(s_1) + (Q_{\mu_0}(s_1) - Q_\nu(s_1)) \, 
        e^{-2t L_\nu^{-1}}.
    \end{equation}
    A similar procedure for $g_{s_2}$ yields
    \begin{equation}
        \dot g_{s_2}(t) 
        = 
        2 s_2 - 2 R^+_\nu(g_{s_2}(t))
        \le
        2 s_2 - 2 R^+_\nu(Q_{\mu_0}(s_2) 
        + 2 L_\nu^{-1}( Q_{\mu_0}(s_2) - g_{s_2}(t))
    \end{equation}
    and
    \begin{equation}
        g_{s_2}(t)
        \le
        L_\nu( s_2 - R^{+}_\nu(Q_{\mu_0}(s_2)))
        + Q_{\mu_0}(s_2)
        - L_\nu( s_2 - R^{+}_\nu(Q_{\mu_0}(s_2)))
        \, e^{-2 t L_\nu^{-1}}.
    \end{equation}
    Taking the difference 
    and exploiting that
    \begin{align}
        Q_{\mu_0}(s_2) - Q_\nu(s_1) 
        &\le 
        L_\nu (R^+_\nu(Q_{\mu_0}(s_2)) - R^+_\nu(Q_\nu(s_1))) 
        \\
        &\le
        L_\nu (R^+_\nu(Q_{\mu_0}(s_2)) - s_1)
    \end{align}
    by the lower Lipschitz property of $R^+_\nu$,
    we obtain
    \begin{align}
        \lvert g_{s_2}(t) - g_{s_1}(t) \rvert
        &\le
        (Q_{\mu_0}(s_2) - Q_{\mu_0}(s_1)) \, e^{-2t L_\nu^{-1}}
        \\
        &+
        (Q_{\mu_0}(s_2) - Q_\nu(s_1)
        + L_\nu s_2 - L_\nu R^+_\nu(Q_{\mu_0}(s_2))
        \, (1 - e^{-2t L_\nu^{-1}} )
        \\
        &\le
        ( L_{\mu_0} \, e^{-2t L_\nu^{-1}} 
        +
        L_\nu \, (1 - e^{-2t L_\nu^{-1}}  ))
        \, (s_2 - s_1).
    \end{align}
\end{proof}


We can analogously argue for the lower Lipschitz property.

The following theorem is a counterpart to Theorem~\ref{p:rangeDc}, but with a subtle difference in its nature: it states an invariance of the continuity in a \textit{single point} $s \in (0,1)$, assuming in addition that $s$ is a continuity point of the target $Q_\nu$. On the other hand, Theorem~\ref{p:rangeDc} states an invariance of continuity on \textit{neighborhoods} of $s$, without any further assumptions on the target.

\begin{theorem}\label{p:rangeDc-Robert}
    Let $s \in (0,1)$ be a continuity point of $Q_\nu$ and $Q_{\mu_0}$,
    then $g(t)$ is continuous at $s$ for all $t \ge 0$.
\end{theorem}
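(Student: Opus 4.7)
The plan is to use the pointwise formulation from Section~\ref{sec:explicit} together with a Lipschitz-type comparison estimate between the pointwise solutions $g_{s_1}, g_{s_2}$ at nearby continuity points of $Q_\nu$. The two hypotheses enter for different reasons: continuity of $Q_\nu$ at $s$ is what makes the object $g_s$ well-defined through Lemma~\ref{lem:pt-ode} and guarantees $[g(t)](s) = g_s(t)$ by Theorem~\ref{thm:pw-l2}, whereas continuity of $Q_{\mu_0}$ at $s$ will drive the continuity conclusion itself.

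For the key estimate, fix any two continuity points $s_1 < s_2$ of $Q_\nu$. By Theorem~\ref{main_mmd}, $g(t) \in \C(0,1)$ for every $t \ge 0$, so $g_{s_1}(t) \le g_{s_2}(t)$. Let $\delta(t) \coloneqq g_{s_2}(t) - g_{s_1}(t) \ge 0$. Using the pointwise differential inclusion \eqref{eq:pw-ode} for $i = 1,2$ with selections $\rho_i(t) \in [R_\nu^-(g_{s_i}(t)), R_\nu^+(g_{s_i}(t))]$, subtracting yields, for a.e.\ $t > 0$,
\begin{equation*}
    \dot\delta(t) = 2(s_2 - s_1) - 2(\rho_2(t) - \rho_1(t)) \le 2(s_2 - s_1),
\end{equation*}
where the inequality uses $g_{s_1}(t) \le g_{s_2}(t)$ together with the monotonicity of $R_\nu^\pm$ to obtain $\rho_2(t) \ge \rho_1(t)$. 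Integrating from $0$ to $t$ then yields the Lipschitz-type bound
\begin{equation*}
    0 \le g_{s_2}(t) - g_{s_1}(t) \le |Q_{\mu_0}(s_2) - Q_{\mu_0}(s_1)| + 2(s_2 - s_1)\, t.
\end{equation*}

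To finish, observe that since $Q_\nu$ is monotone, its set of continuity points is dense in $(0,1)$. Choose sequences $(s_n^-), (s_n^+)$ of continuity points of $Q_\nu$ with $s_n^- \uparrow s$ and $s_n^+ \downarrow s$. Applying the bound above between $s_n^\pm$ and $s$ (all three being continuity points of $Q_\nu$) and using continuity of $Q_{\mu_0}$ at $s$, we get $g_{s_n^\pm}(t) \to g_s(t) = [g(t)](s)$. Since $[g(t)]$ is increasing, this forces both one-sided limits of $[g(t)]$ at $s$ to equal $[g(t)](s)$, which is exactly continuity at $s$. The main technical subtlety is that the a.e.\ derivatives of $g_{s_1}$ and $g_{s_2}$ live on full-measure sets that a priori may differ, so the termwise subtraction needs to be justified either by intersecting these sets or, more cleanly, by working with right derivatives, which exist for every $t > 0$ by Theorem~\ref{theorem:BrezisRegularity}(5).
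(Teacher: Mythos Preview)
Your argument is correct and takes a genuinely different route from the paper's. The paper works through the explicit solution formula $g_s(t)=\Phi_{\nu,s}^{-1}(2t)$ of Lemma~\ref{lem:pt-ode}: it shows that $\Phi_{\nu,s}(x)$ is continuous in $s$ via dominated convergence, and then runs a direct $\epsilon$--$\delta$ argument with a case split on whether $2t<\Phi_{\nu,s}(Q_\nu(s))$. You bypass the explicit formula and instead extract the quantitative modulus
\[
0 \;\le\; g_{s_2}(t) - g_{s_1}(t) \;\le\; |Q_{\mu_0}(s_2) - Q_{\mu_0}(s_1)| + 2(s_2-s_1)\,t
\]
straight from the differential inclusion. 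This is more conceptual and in fact recovers exactly the equicontinuity estimate that the paper obtains (via resolvent iterates rather than the ODE) in the proof of the related Theorem~\ref{p:rangeDc}; your proof is essentially the pointwise-ODE counterpart of that argument. The paper's approach, by contrast, makes more visible how the continuity of $Q_\nu$ at $s$ enters through the integrand $(s-R_\nu^\pm)^{-1}$.

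Two small points to tighten. First, deducing $g_{s_1}(t)\le g_{s_2}(t)$ from ``$g(t)\in\C(0,1)$'' is not quite complete: that is an $L_2$ statement, and Theorem~\ref{thm:pw-l2} only gives $g_{s'}(t)=Q_{\gamma_t}(s')$ for a.e.\ $s'$, not at the two \emph{specific} points. It is cleaner to get the ordering directly from the inclusion: with $h\coloneqq g_{s_2}-g_{s_1}$ one has $h(0)\ge 0$, and on $\{h<0\}$ monotonicity of $u\mapsto[R_\nu^-(u),R_\nu^+(u)]$ forces $\dot h\ge 2(s_2-s_1)>0$ a.e., which precludes $h$ ever going negative. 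Second, in the final step, choose the sequences $s_n^\pm$ inside the (still dense on both sides of $s$) full-measure set where $g_{s'}(t)=Q_{\gamma_t}(s')$; then monotonicity of $Q_{\gamma_t}$ identifies both one-sided limits with $g_s(t)$, yielding continuity of the quantile representative at $s$.
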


\begin{proof}
    Since $Q_\nu$ is continuous in $s$,
    for all $x < Q_\nu(s)$,
    there exists $\delta_x >0$ such that
    $\Phi_{\nu,s'}(x) < \infty$
    for all $s' \in (s - \delta_x, s + \delta_x)$.
    The integrand of $\Phi_{\nu,s}$ is monotone in $s$
    such that $\Phi_{\nu,s}(x)$ is continuous in $s$
    due to Lebesgue's dominated convergence theorem.
    To show that $g(t)$ is continuous in $s$,
    we use the $\epsilon$--$\delta$ criterion.
    First, 
    we discuss the case $2t = \Phi_{\nu,s}(x)$ with $x < Q_\nu(s)$. 
    For arbitrary $\epsilon > 0$,
    we find $x_1 \in (x - \epsilon, x)$ and $\delta_1 > 0$
    such that
    $\Phi_{\nu,s'}(x_1) < 2t$
    for all $s' \in (s - \delta_1, s)$.
    Due to the monotonicity of $\Phi_{\nu,s}(x)$ in $s$,
    we have $\Phi_{\nu,s}^{-1}(2t) \in (x-\epsilon,x)$
    for all $s' \in (s - \delta_1, s)$.
    Analogously,
    we find $x_2 \in (x, x+ \epsilon)$ and $\delta_2 > 0$
    such that
    $\Phi_{\nu,s'}(x_2)$ is finite 
    and $\Phi_{\nu,s'}(x_2) > 2t$
    for all $s' \in (s, s + \delta_2)$.
    Again due to the monotonicity,
    we have 
    $\Phi_{\nu,s}^{-1}(2t) \in (x, x + \epsilon)$
    for all $s' \in (s, s + \delta_2)$.
    Taking $\delta \coloneqq \min\{\delta_1, \delta_2\}$,
    we finally have 
    \begin{equation}
        [g(t)](s') \in ([g(t)](s) - \epsilon, [g(t)](s) + \epsilon)   
        \quad\text{for all}\quad
        s' \in (s-\delta, s+ \delta).
    \end{equation}
    Second,
    for the case $2t \ge \Phi_{\nu,s}(Q_\nu(s))$,
    we have to argue slightly differently.
    Here, 
    we find $x_1 \in (Q_\nu(s) - \epsilon, Q_\nu(s))$
    and $\delta_1 > 0$
    such that
    $\Phi_{\nu,s'}(x_1) < 2t$
    and thus
    $\Phi_{\nu,s'}^{-1} (2t) > Q_\nu(s) - \epsilon$
    for all $s' \in (s - \delta_1, s + \delta_1)$.
    Due to the continuity of the target $Q_\nu$ in $s$,
    we find $\delta_2 > 0$
    such that
    $Q_\nu(s') < Q_\nu(s) + \epsilon)$
    for all $s' \in (s-\delta_2,s+\delta_2)$.
    Taking $\delta \coloneqq \min\{\delta_1, \delta_2\}$,
    we again have 
    \begin{equation}
        [g(t)](s') \in ([g(t)](s) - \epsilon, [g(t)](s) + \epsilon)   
        \quad\text{for all}\quad
        s' \in (s-\delta, s+ \delta).\vspace{-3mm}
    \end{equation}
\end{proof}


  \subsection{Quantile functions plots for the experiments}

In this supplementary section, we plot the quantile functions belonging to the densities in the Figures \ref{fig:pt-target} and \ref{fig:Norm_To_Norm_Different_Means} -- \ref{fig:Folded_Norm_to_Folded_Norm} from the main text.
Here, one can nicely verify the results from the previous subsection and Section \ref{sec:inv}. 

Generally speaking, we observe that the quantile functions differ mostly from the target at the boundary where their slopes rise sharply, i.e., at values of $s$ close to zero or one. This is the case, e.g., when the supports (or mass) of the initial and target measure are disjoint, and the flow's mass gets arbitrarily slim due to Theorem~\ref{p:rangeDc}~ii).

In the fully discrete case, see \figref{fig:pt-target_q}, we observe the following:
\begin{enumerate}
    \item The initial quantile $Q_{\mu_0}$ is discontinuous, i.e., $Q_{\mu_0} \not\in D_c$, and so we can not apply Theorem~\ref{p:rangeDc}~i). In fact, $Q_{\gamma_t} \not\in D_c$ for all $t \ge 0$, i.e., the support of $\gamma_t$ stays disconnected.
    \item The target quantile fulfills no lower Lipschitz condition, i.e., $Q_{\nu} \notin D_L^-$, so Theorem \ref{thm:regularisation-Lip} cannot be applied. Indeed, it holds $Q_{\gamma_t} \notin D_L^-$ and $\gamma_t$ is \emph{not} absolutely continuous for all sufficiently late times $t \ge 0$.
\end{enumerate}

\begin{figure}
    \includegraphics[width=.32\textwidth]{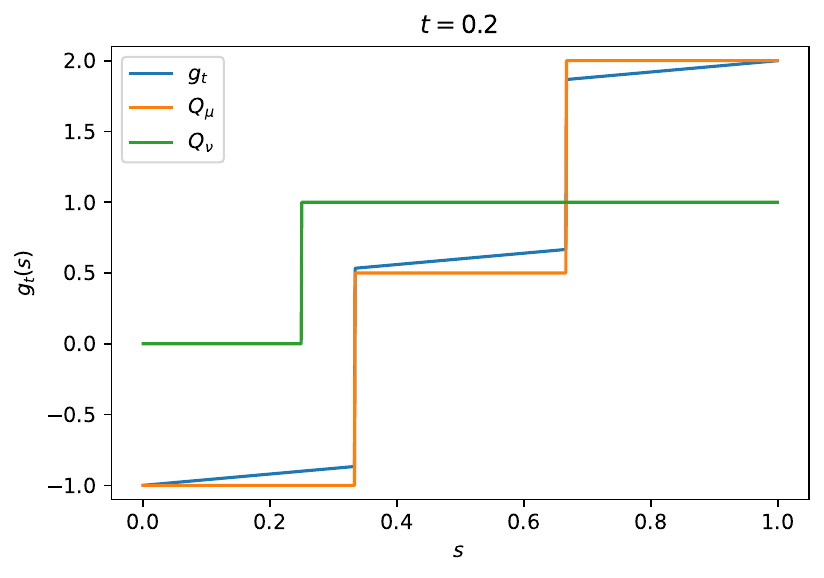}
    \includegraphics[width=.32\textwidth]{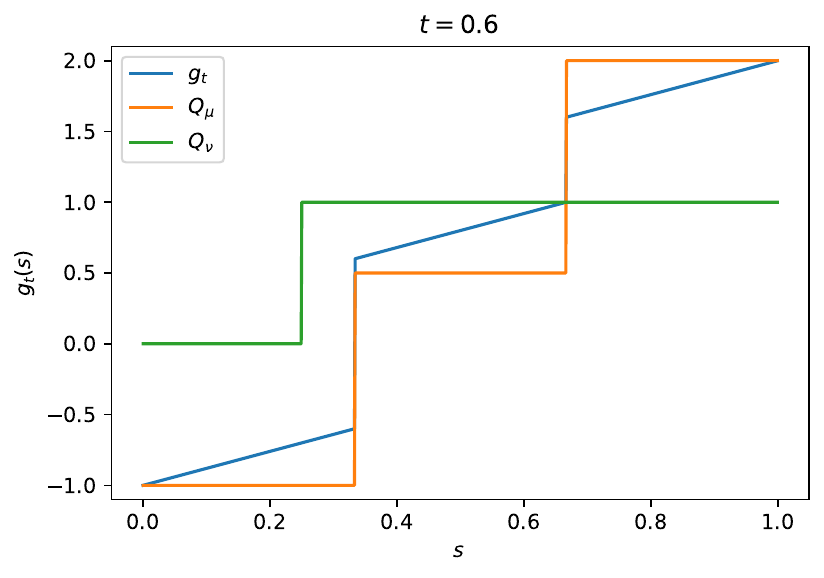}
    \includegraphics[width=.32\textwidth]{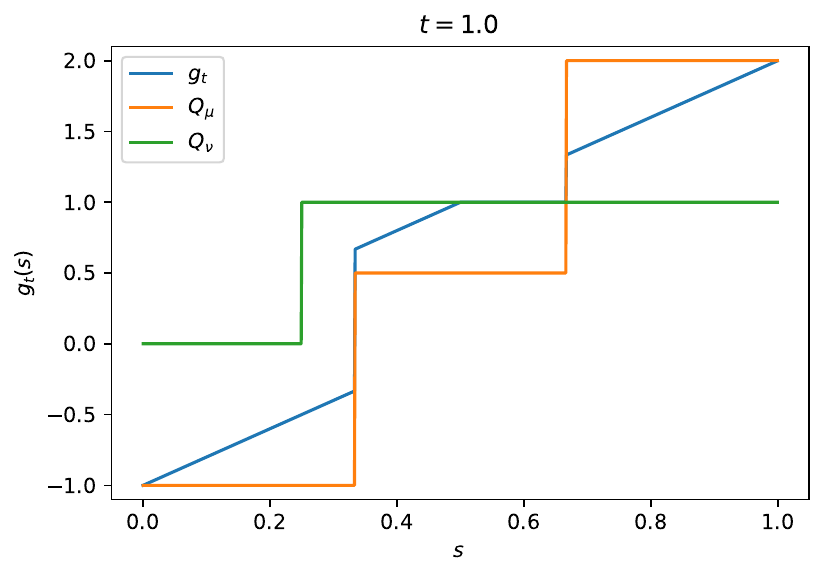} \\
    \includegraphics[width=.32\textwidth]{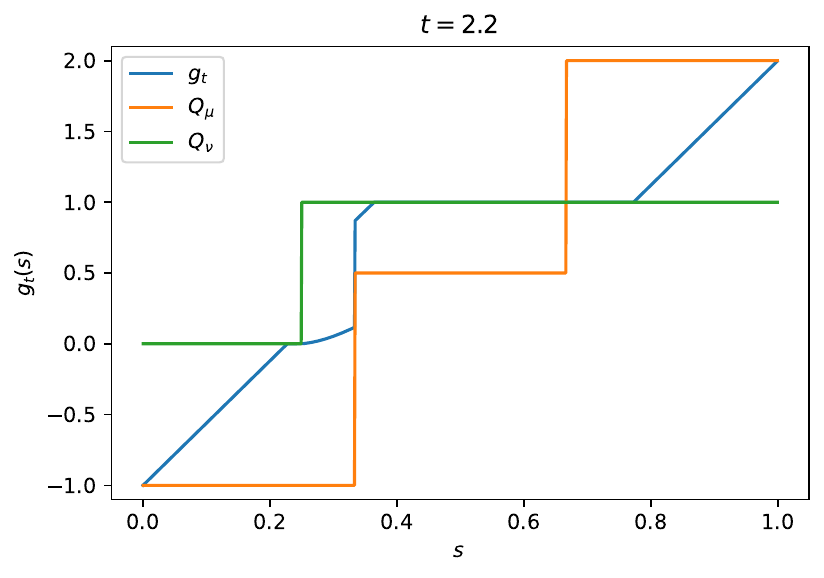}
    \includegraphics[width=.32\textwidth]{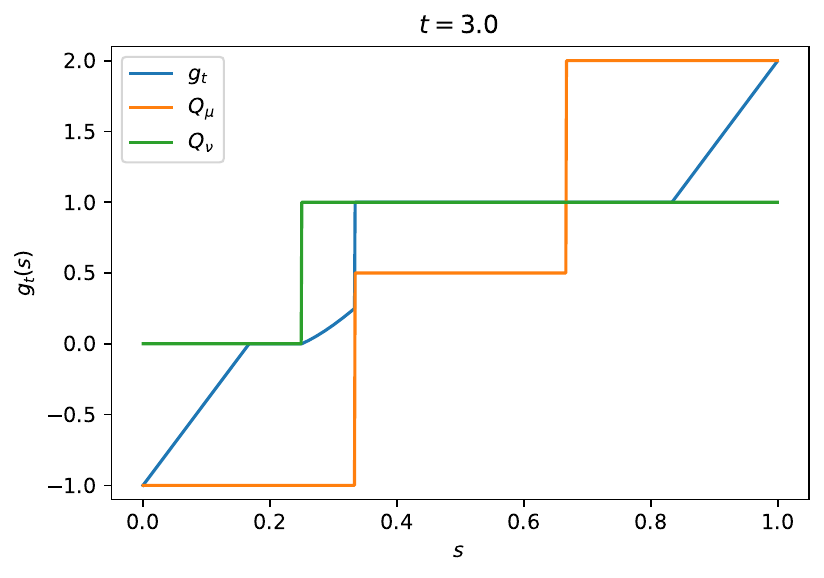}
    \includegraphics[width=.32\textwidth]{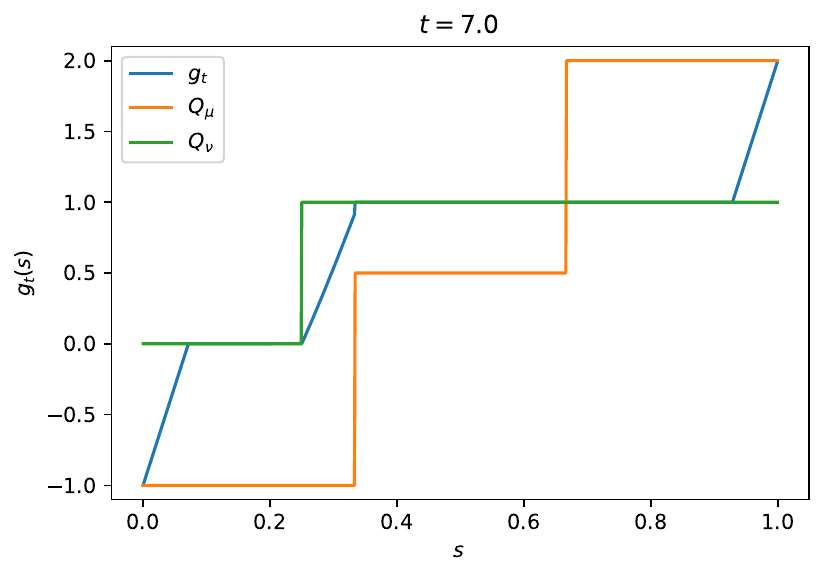}
    \caption{The quantile functions of the Wasserstein gradient flow between the 3-point and 2-point measures given
    in Example~\ref{example:TwoToThree}.
    For the corresponding densities, see \figref{fig:pt-target}.}
    \label{fig:pt-target_q}
\end{figure}

Concerning the examples with absolutely continuous targets $\nu$, plotted in the Figures \ref{fig:Norm_To_Norm_Different_Means_q} -- \ref{fig:Folded_Norm_to_Folded_Norm_q}, we can verify the following facts:

\begin{enumerate}
    \item The range of the quantile functions $Q_{\gamma_t}$ does not leave the convex hull of the ranges of $Q_{\mu_0}$ and $Q_{\nu}$ by Proposition~\ref{p:rangeDab}. That is, the support of $\gamma_t$ stays enclosed by the convex hull of the initial and target supports.
    \item The quantiles from Figures \ref{fig:Norm_To_Norm_Different_Means_q} -- \ref{fig:Folded_Norm_to_Folded_Norm_q} all satisfy a lower Lipschitz condition because their respective targets do, as described by Theorem \ref{thm:regularisation-Lip}. Hence, the flow $\gamma_t$ stays an absolutely continuous measure for all times.
    \item The quantiles from Figures \ref{fig:Norm_To_Norm_Different_Means_q} -- \ref{fig:Folded_Norm_to_Folded_Norm_q} all stay continuous because their priors are continuous. Furthermore, their ranges grow monotonically over time, see Theorem \ref{p:rangeDc}. This translates to the flow's support staying convex and growing monotonically.
    \item Lastly, we have ${\rm Lip}(Q_{\mu_0}) = {\rm Lip}(Q_{\nu}) = \infty$ in all of these examples, so Theorem~\ref{thm:regularisation-Lip_1} does not apply. Indeed, $Q_{\gamma_t} \not\in D_{L}^+$ for all $L > 0$. In other words, in our examples, the flow's mass gets arbitrarily slim in certain regions.
\end{enumerate}

\begin{figure}
    \centering
    \includegraphics[width=.32\textwidth]{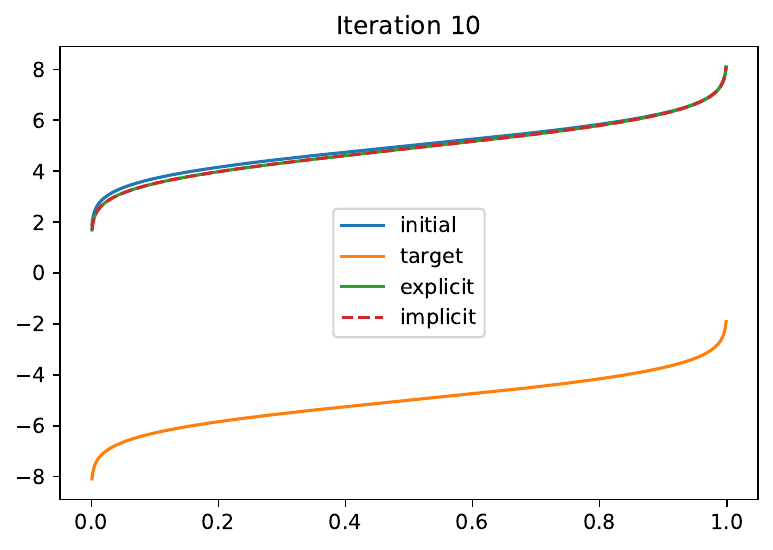}
    \includegraphics[width=.32\textwidth]{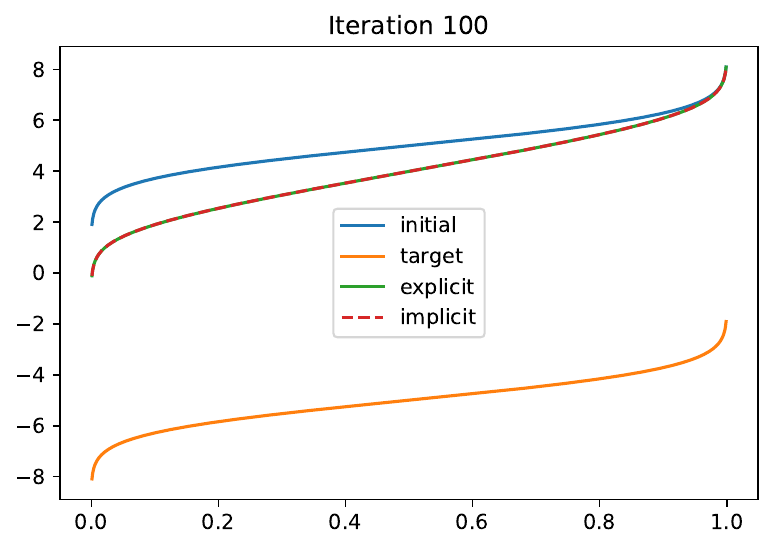}
    \includegraphics[width=.32\textwidth]{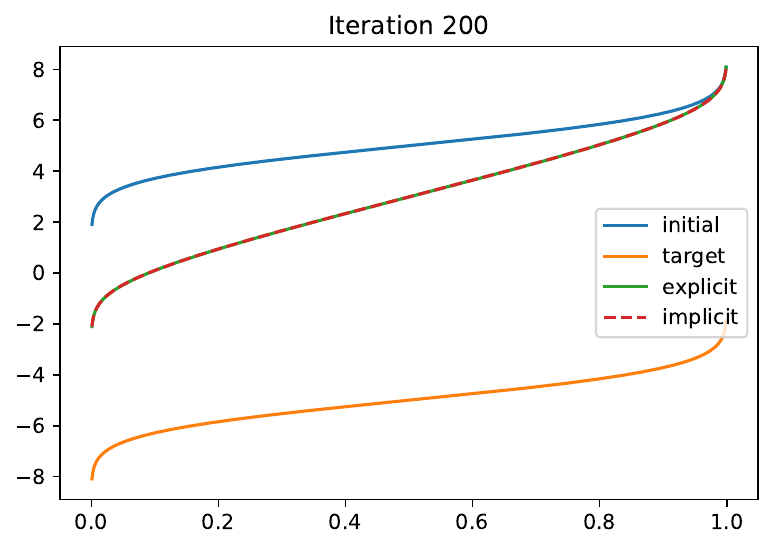}
    \\
    \includegraphics[width=.32\textwidth]{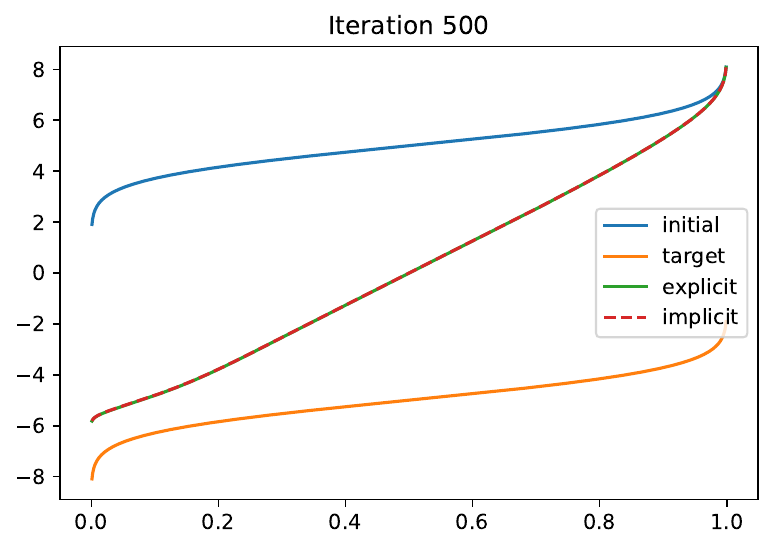}
    \includegraphics[width=.32\textwidth]{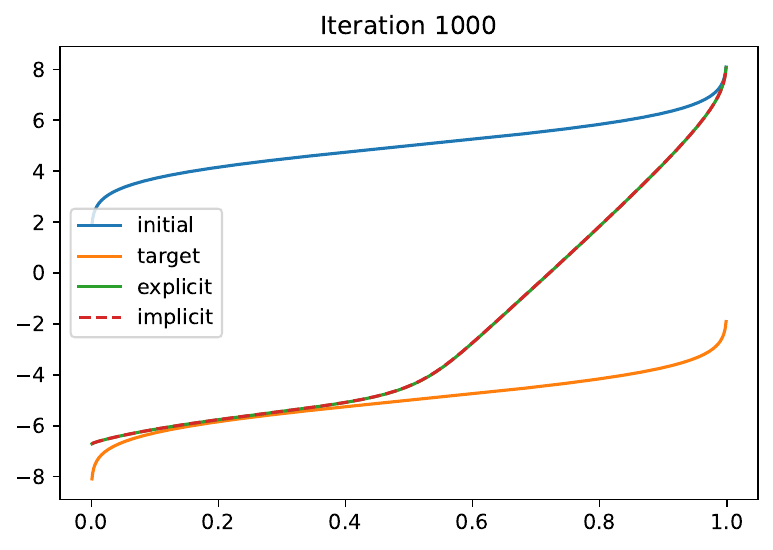}
    \includegraphics[width=.32\textwidth]{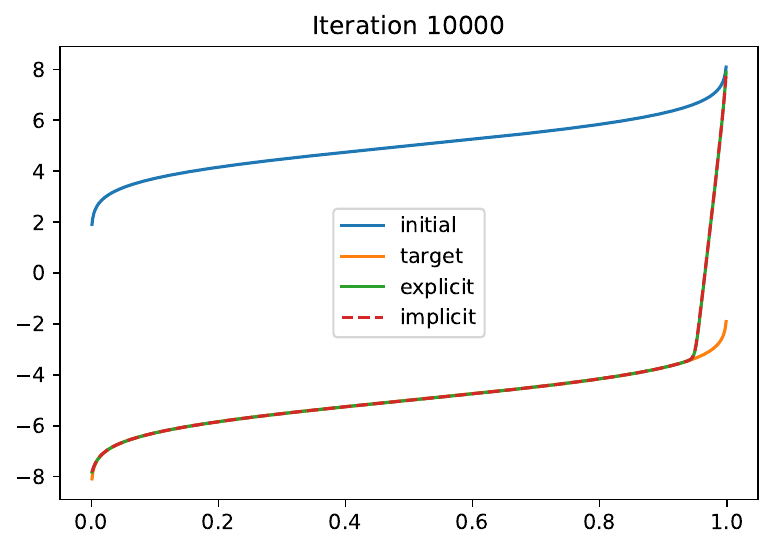}
    \caption{Comparison of the quantile functions belonging to the implicit (red) and explicit (green) Euler schemes between two Gaussians $\mu_0 \sim \NN(5, 1)$ and $\nu \sim \NN(-5, 1)$ with $\tau = \tfrac{1}{100}$. For the corresponding densities, see \figref{fig:Norm_To_Norm_Different_Means}.
    }
    \label{fig:Norm_To_Norm_Different_Means_q}
\end{figure}

\begin{figure}
    \centering
    \includegraphics[width=.32\textwidth]{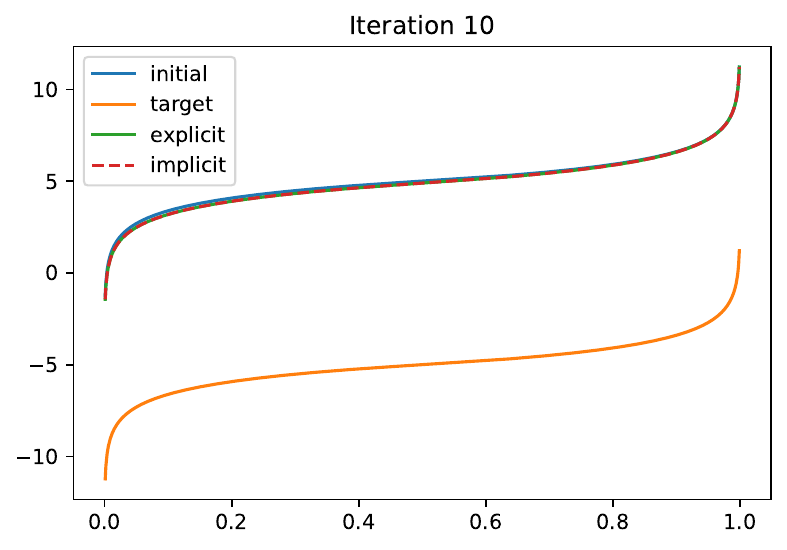}
    \includegraphics[width=.32\textwidth]{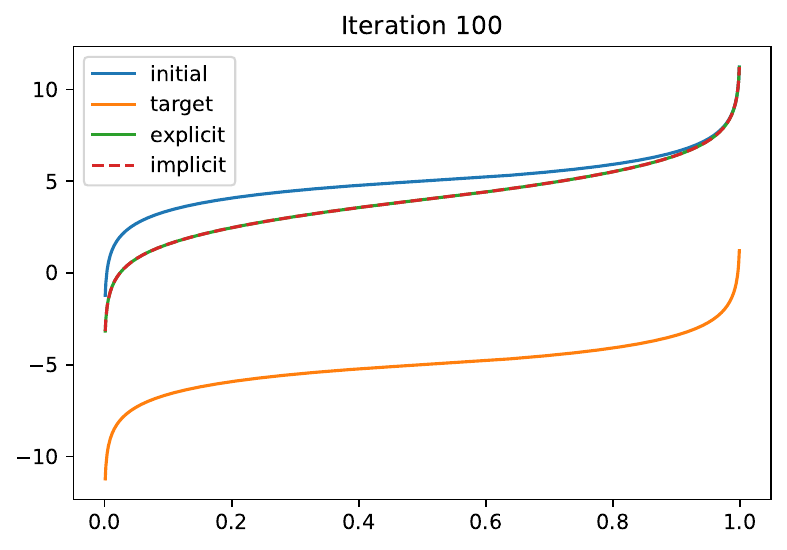}
    \includegraphics[width=.32\textwidth]{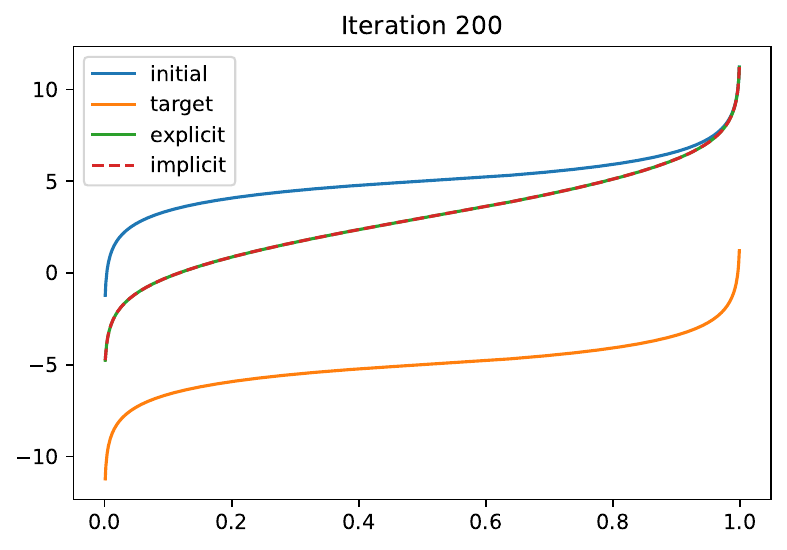}
    \includegraphics[width=.32\textwidth]{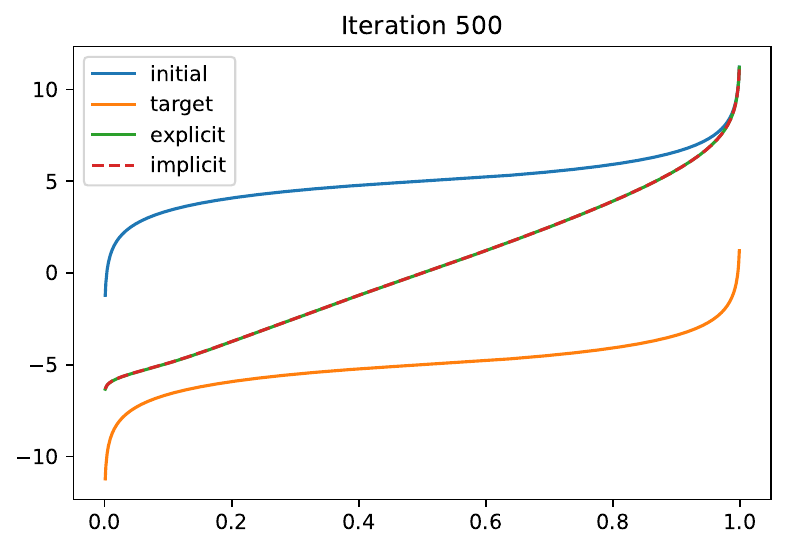}
    \includegraphics[width=.32\textwidth]{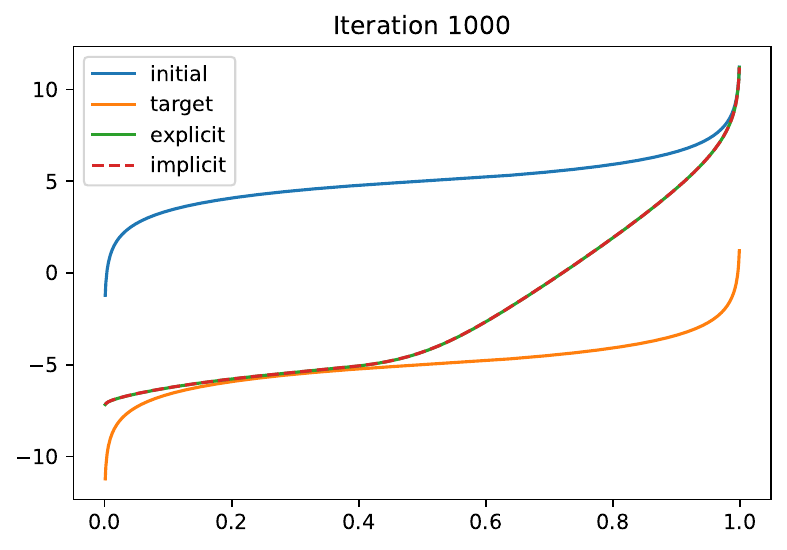}
    \includegraphics[width=.32\textwidth]{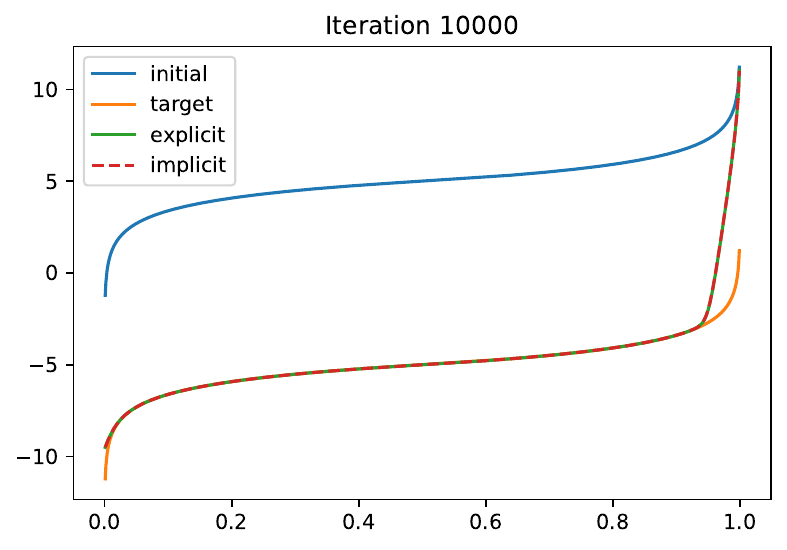}
    \caption{Comparison of the quantile functions belonging to the implicit (red) and explicit (green) Euler schemes between two Laplacians $\mu_0 \sim \mathcal L(5, 1)$ and $\nu \sim \mathcal L(-5, 1)$ with $\tau = \tfrac{1}{100}$. For the corresponding densities, see \figref{fig:Laplace_to_Laplace}.
    }
    \label{fig:Laplace_to_Laplace_q}
\end{figure}

\begin{figure}
    \centering
    \includegraphics[width=.32\textwidth]{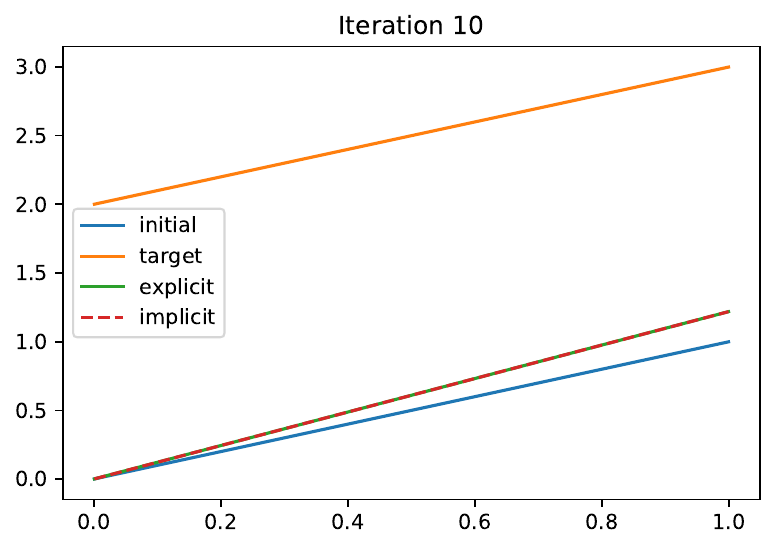}
    \includegraphics[width=.32\textwidth]{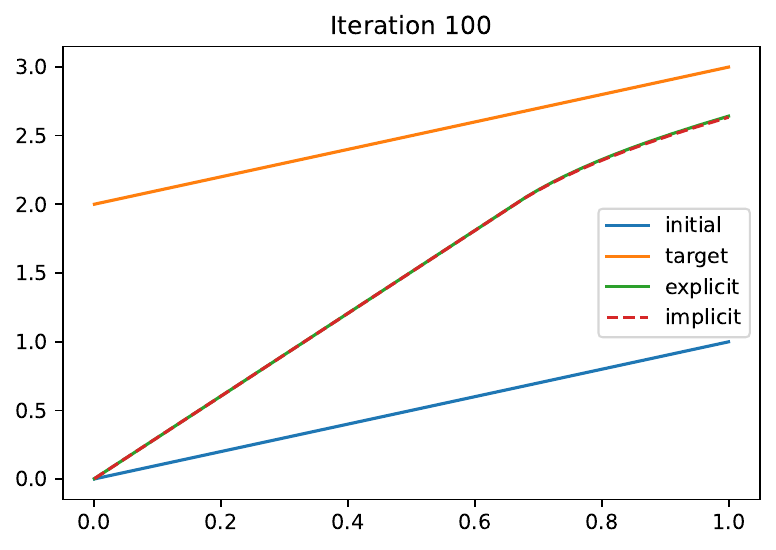}
    \includegraphics[width=.32\textwidth]{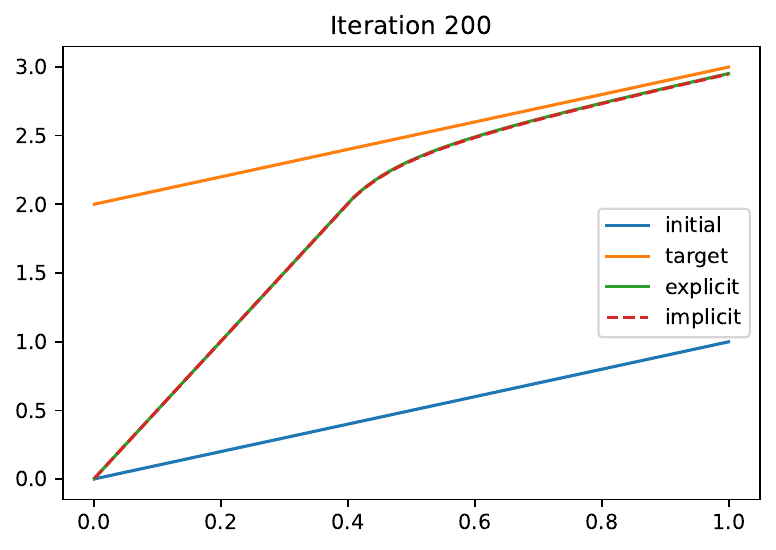}
    \includegraphics[width=.32\textwidth]{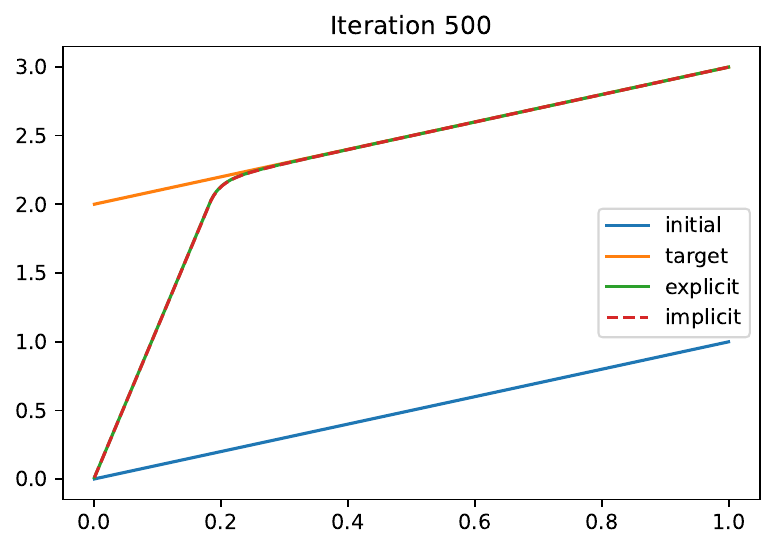}
    \includegraphics[width=.32\textwidth]{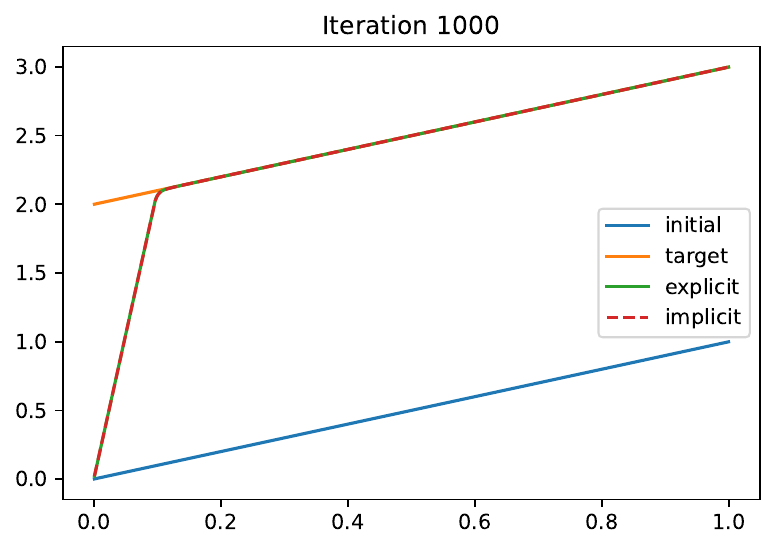}
    \includegraphics[width=.32\textwidth]{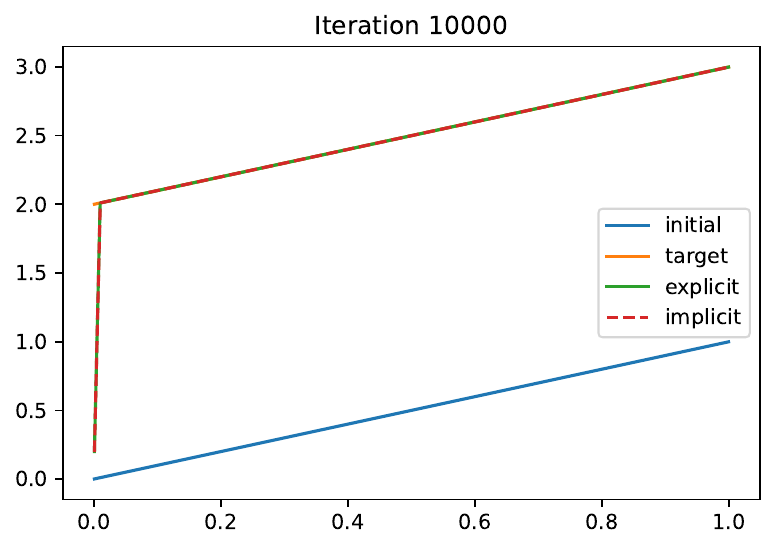}
    \caption{Comparison of the quantile functions belonging to the implicit (red) and explicit (green) Euler schemes between two uniform distributions $\mu_0 \sim \mathcal U([0,1])$ and $\nu \sim \mathcal U([2,3])$ with $\tau = \tfrac{1}{100}$.
    For the corresponding densities, see \figref{fig:Uniform-to-Uniform}.
    }
    \label{fig:Uniform_to_Uniform_q}
\end{figure}

\begin{figure}
    \centering
    \includegraphics[width=.32\textwidth]{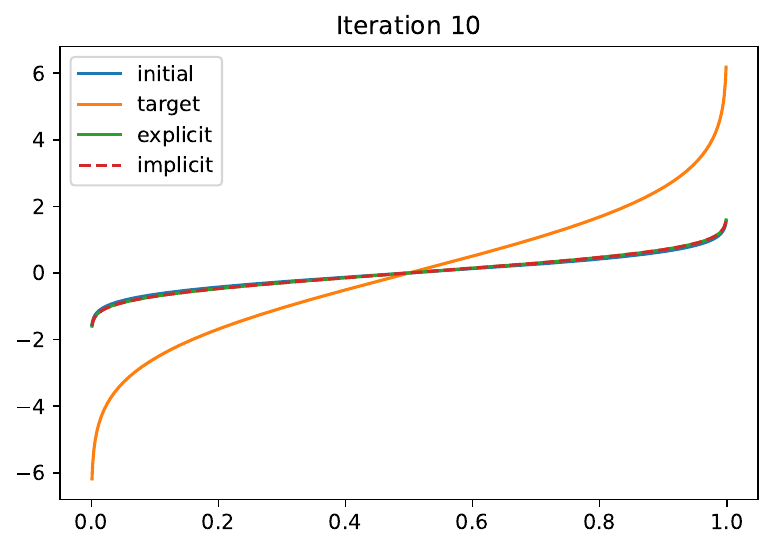}
    \includegraphics[width=.32\textwidth]{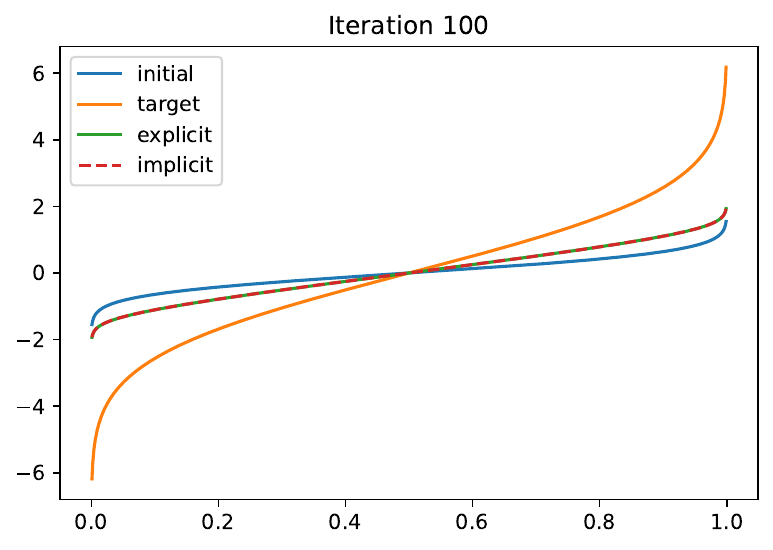}
    \includegraphics[width=.32\textwidth]{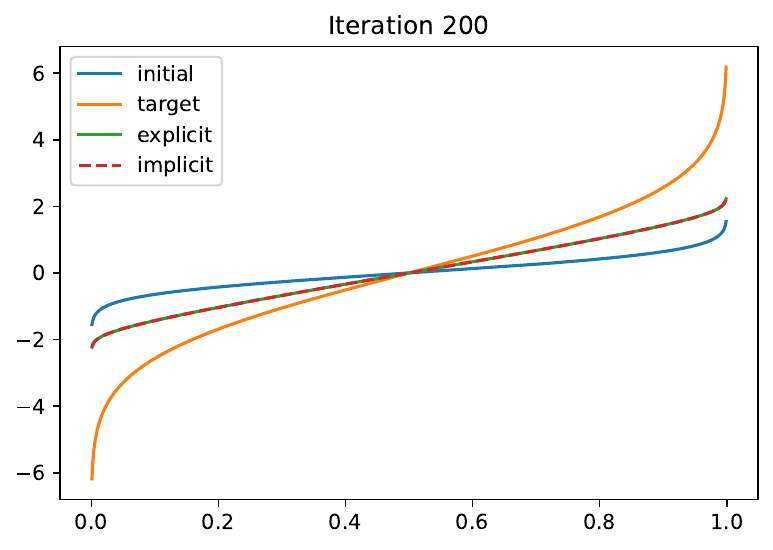}
    \includegraphics[width=.32\textwidth]{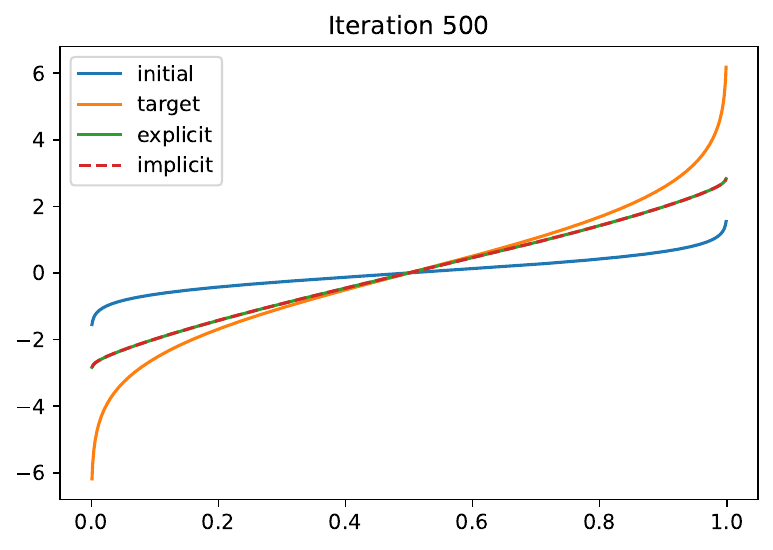}
    \includegraphics[width=.32\textwidth]{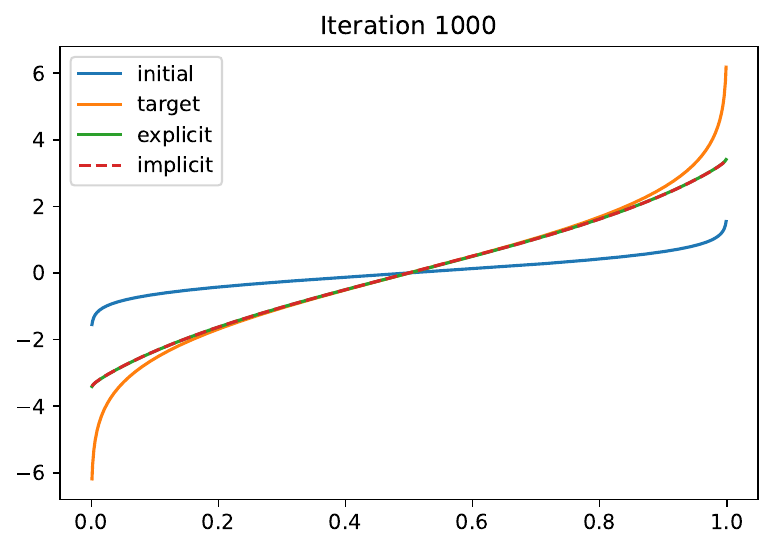}
    \includegraphics[width=.32\textwidth]{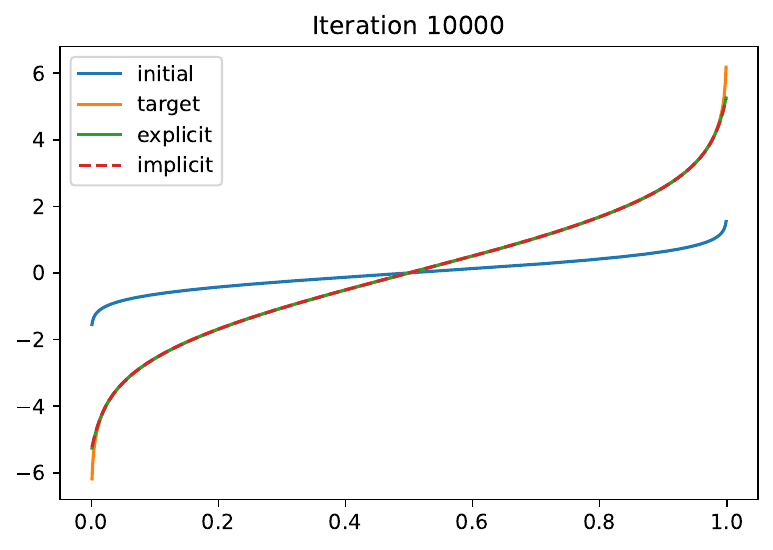}
    \caption{Comparison of the quantile functions belonging to the implicit (red) and explicit (green) Euler schemes between two Gaussians $\mu_0 \sim \NN(0, \tfrac{1}{\sqrt{2}})$ and $\nu \sim \NN(0, \sqrt{2})$ with $\tau = \tfrac{1}{100}$.
    For the corresponding densities, see \figref{fig:Norm_To_Norm_Different_Scales}.
    }
    \label{fig:Norm_To_Norm_Different_Scales_q}
\end{figure}

\begin{figure}
    \centering
    \includegraphics[width=.32\textwidth]{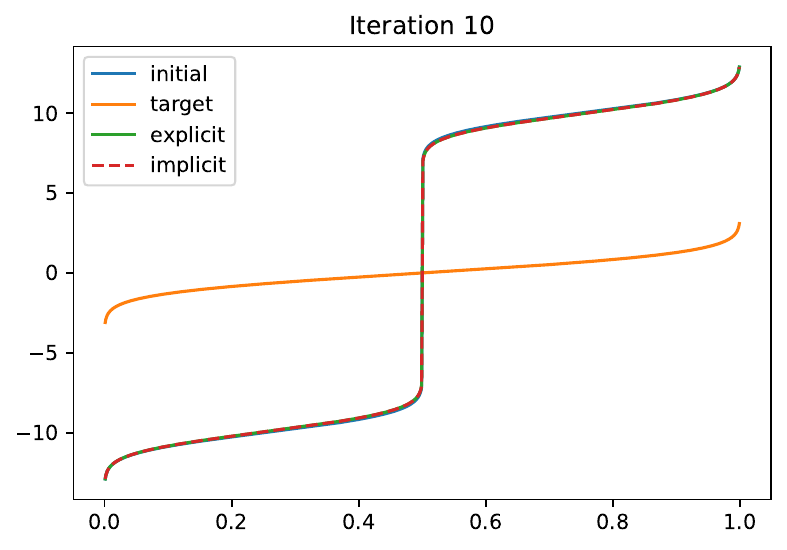}
    \includegraphics[width=.32\textwidth]{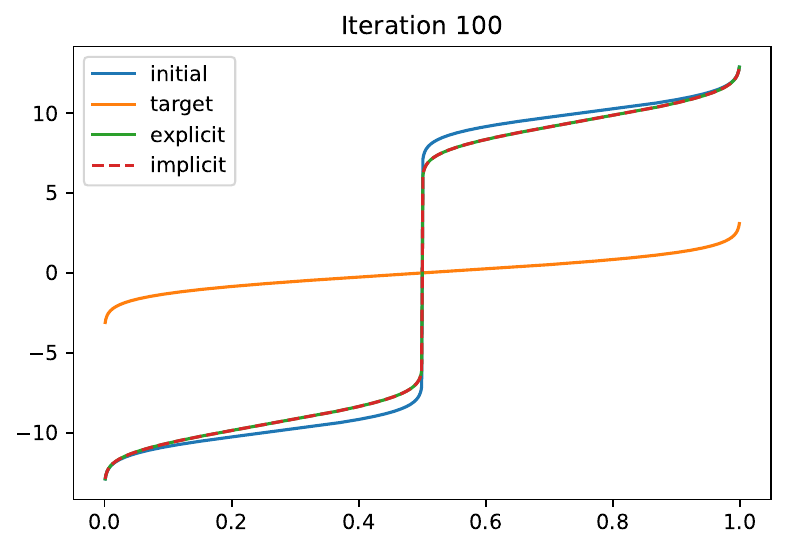}
    \includegraphics[width=.32\textwidth]{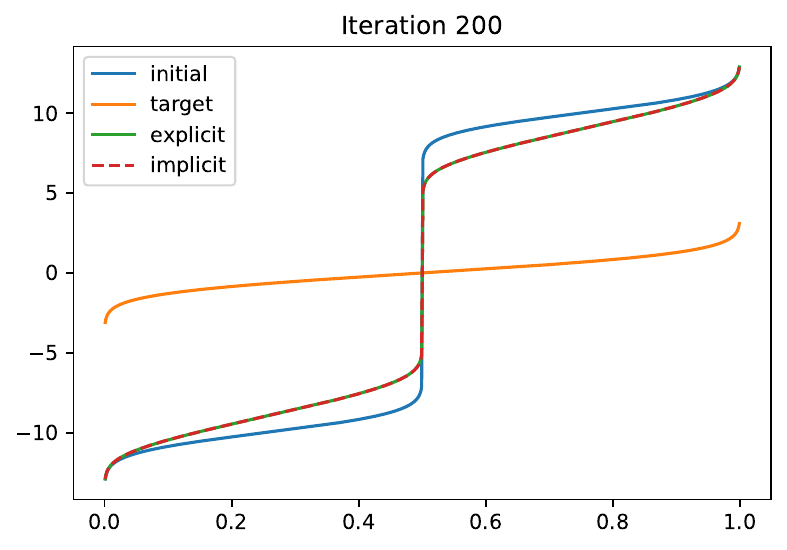}
    \includegraphics[width=.32\textwidth]{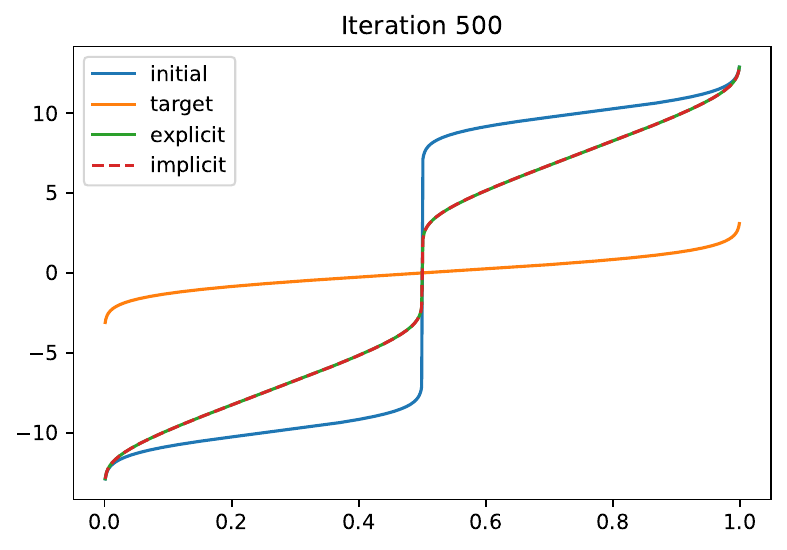}
    \includegraphics[width=.32\textwidth]{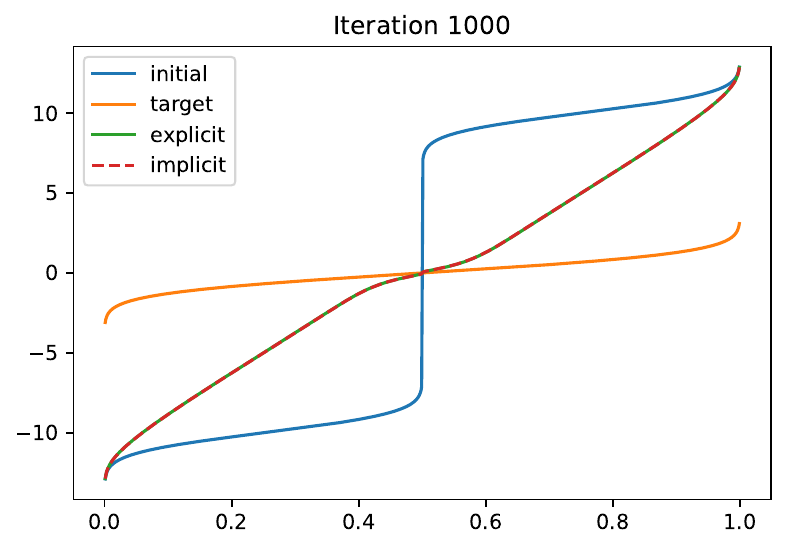}
    \includegraphics[width=.32\textwidth]{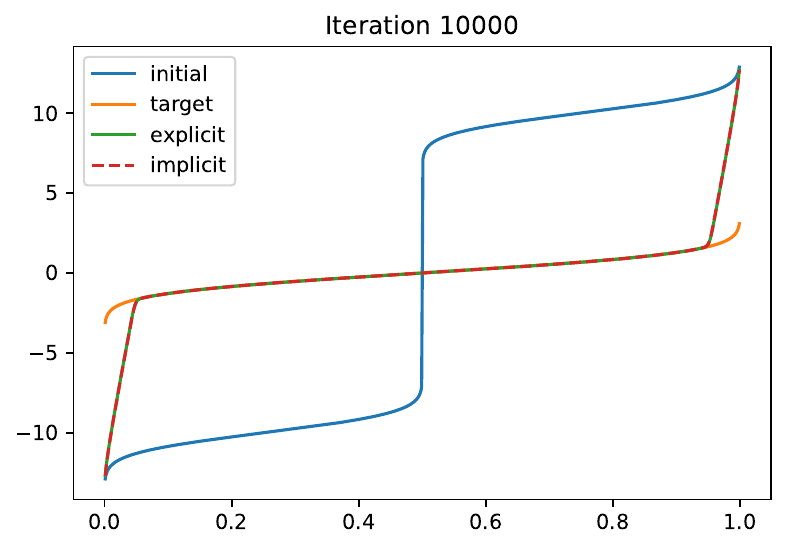}
    \caption{Comparison of of the quantile functions belonging to the implicit (red) and  explicit (green) Euler schemes between $\mu_0 \sim \frac{1}{2} \NN(-10, 1) + \frac{1}{2} \NN(10, 1)$ and $\nu \sim \NN(0, 1)$ with $\tau = \tfrac{1}{100}$. For the corresponding densities, see \figref{fig:Bimodal_Gauss_to_Gauss}.}
    \label{fig:Bimodal_Gauss_to_Gauss_q}
\end{figure}

\begin{figure}
    \centering
    \includegraphics[width=.32\textwidth]{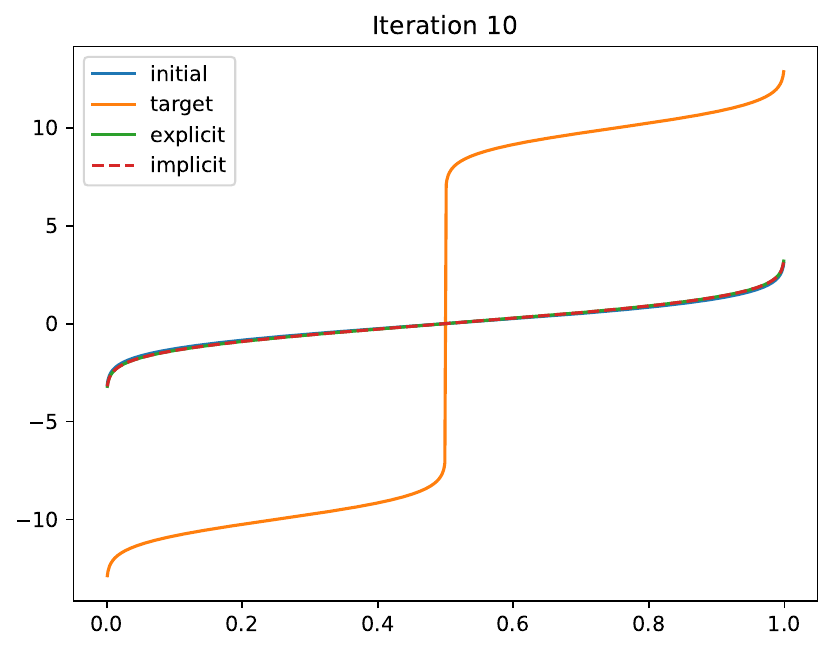}
    \includegraphics[width=.32\textwidth]{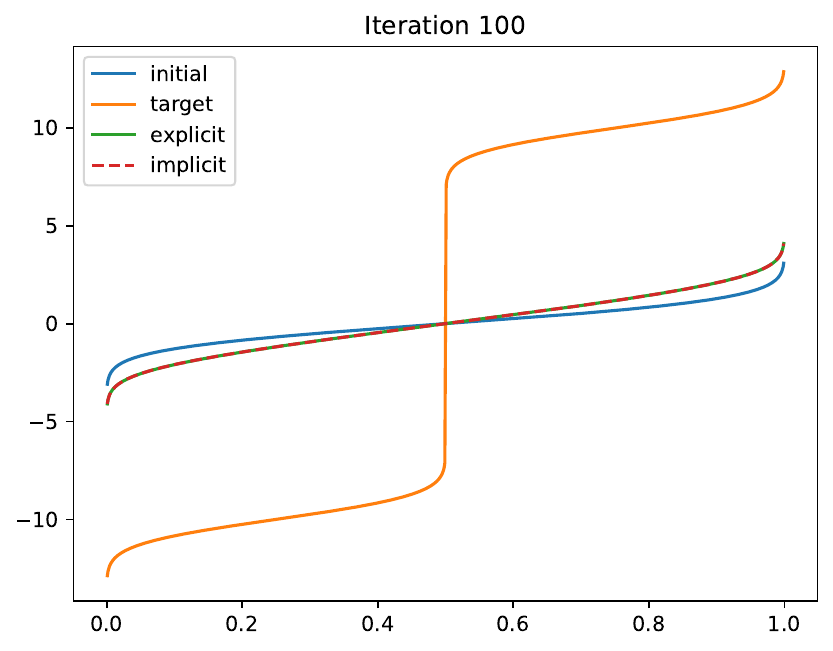}
    \includegraphics[width=.32\textwidth]{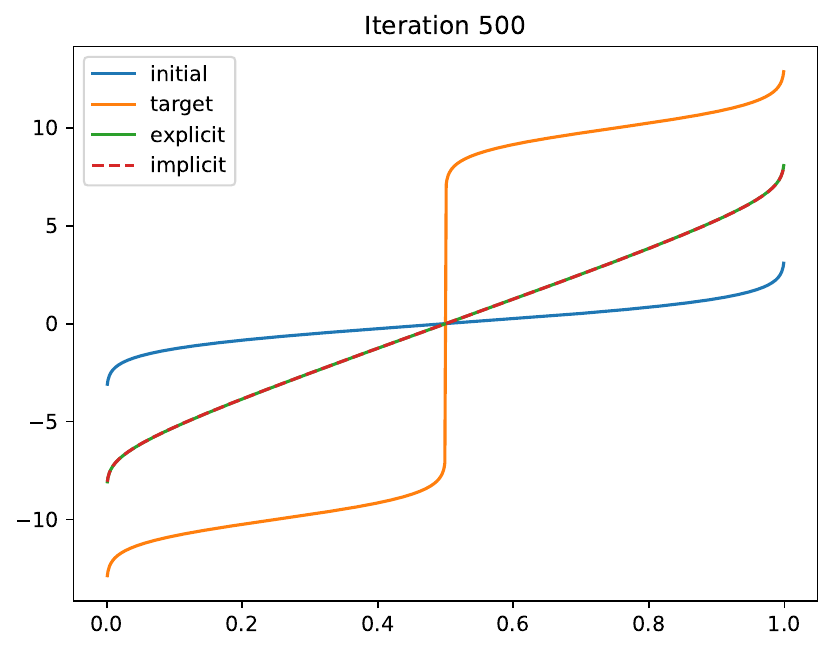}
    \includegraphics[width=.32\textwidth]{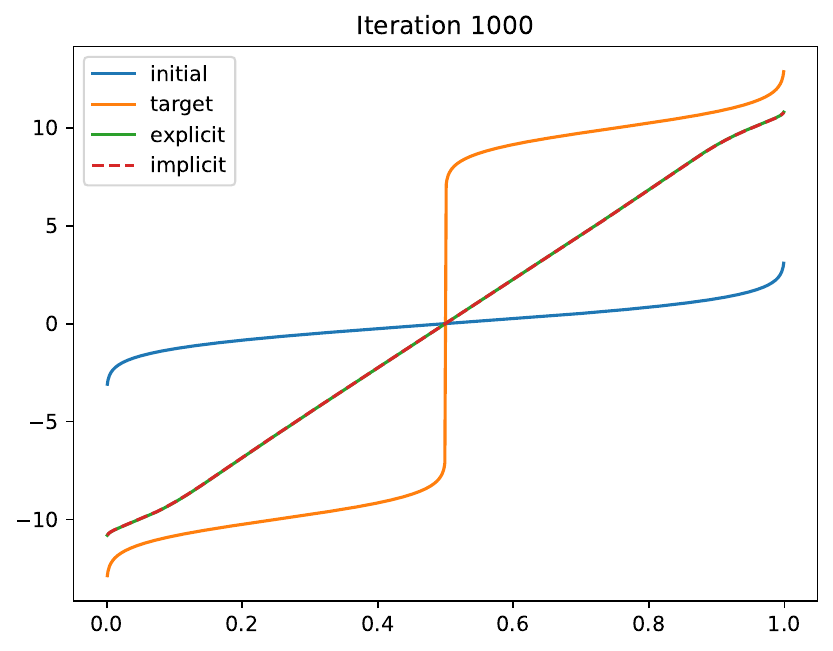}
    \includegraphics[width=.32\textwidth]{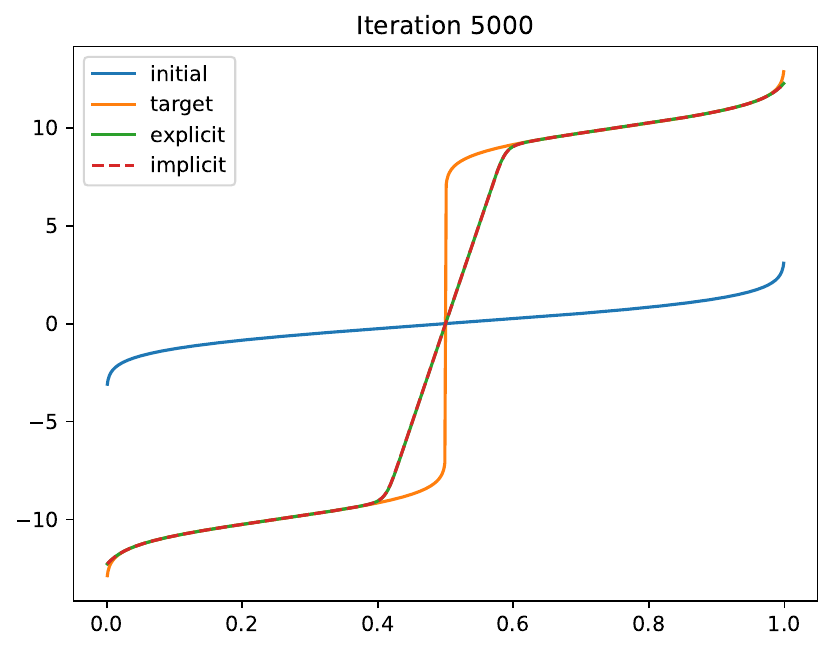}
    \includegraphics[width=.32\textwidth]{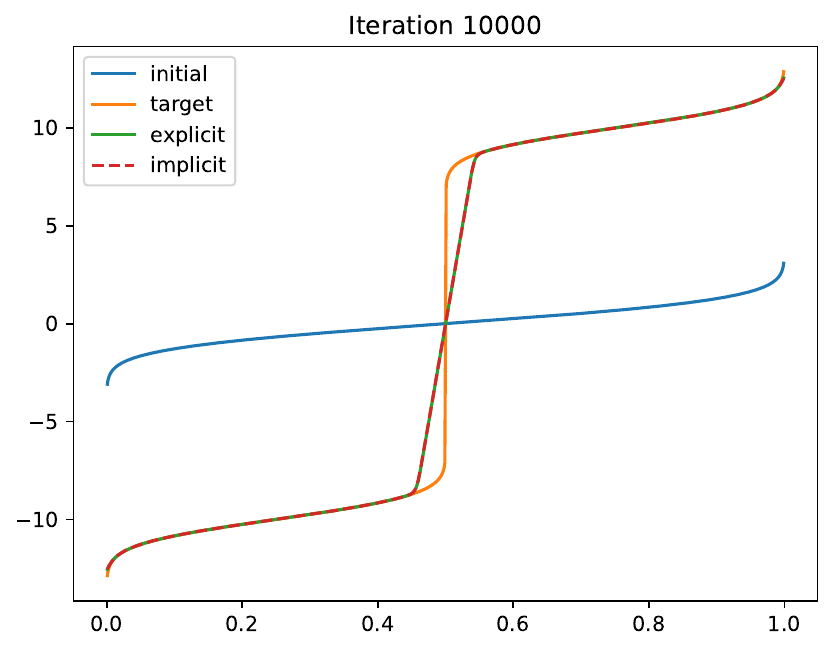}
    \caption{Comparison of implicit (red) and explicit (green) Euler schemes between $\mu_0 \sim \NN(0, 1)$ and $\nu \sim \frac{1}{2} \NN(-10, 1) + \frac{1}{2} \NN(10, 1)$ with $\tau = \tfrac{1}{100}$. For the corresponding densities, see \figref{fig:Gauss_to_Bimodal_Gauss}.}
    \label{fig:Gauss_to_Bimodal_Gauss_q}
\end{figure}

\begin{figure}
    \centering
    \includegraphics[width=.3\textwidth]{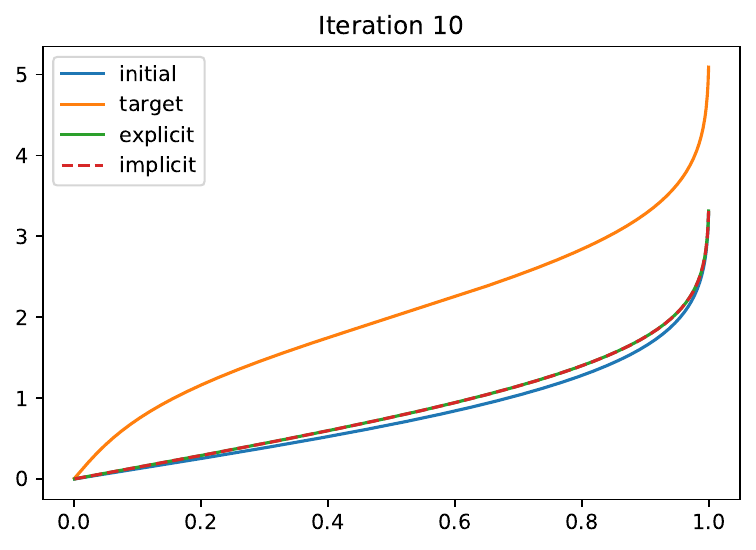}
    \includegraphics[width=.3\textwidth]{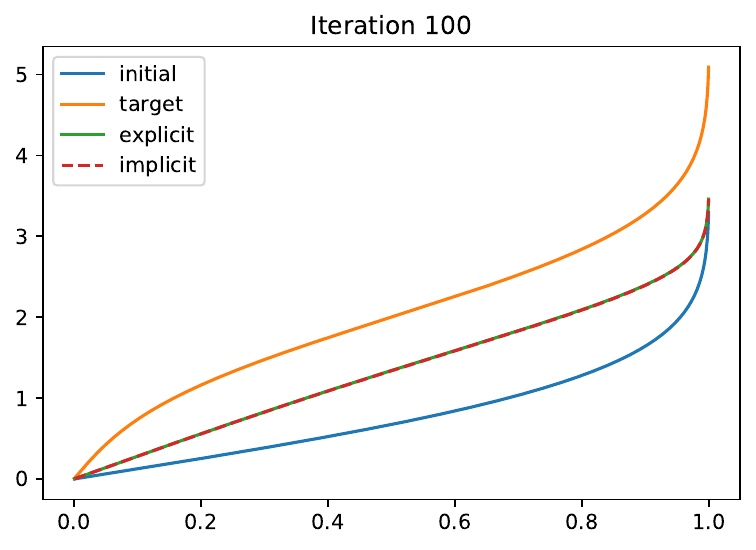}
    \includegraphics[width=.3\textwidth]{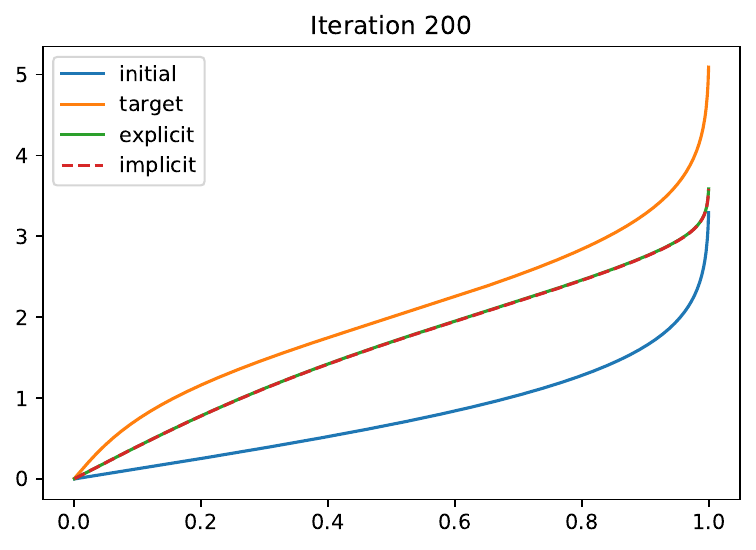}
    \includegraphics[width=.3\textwidth]{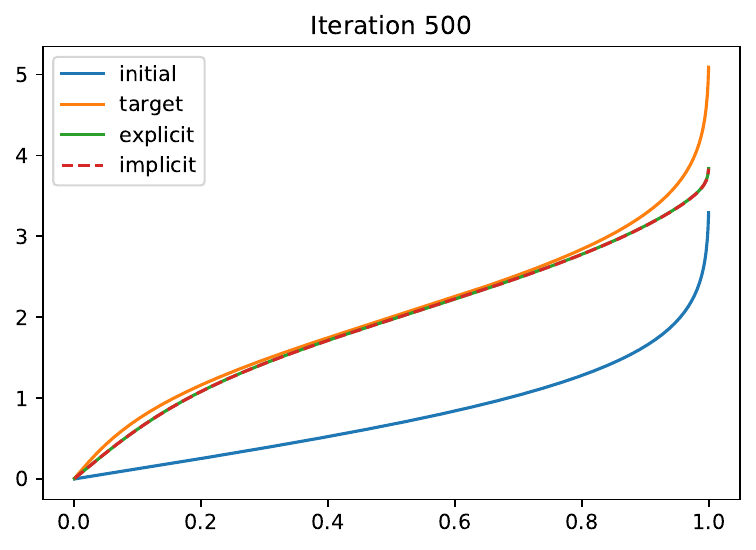}
    \includegraphics[width=.3\textwidth]{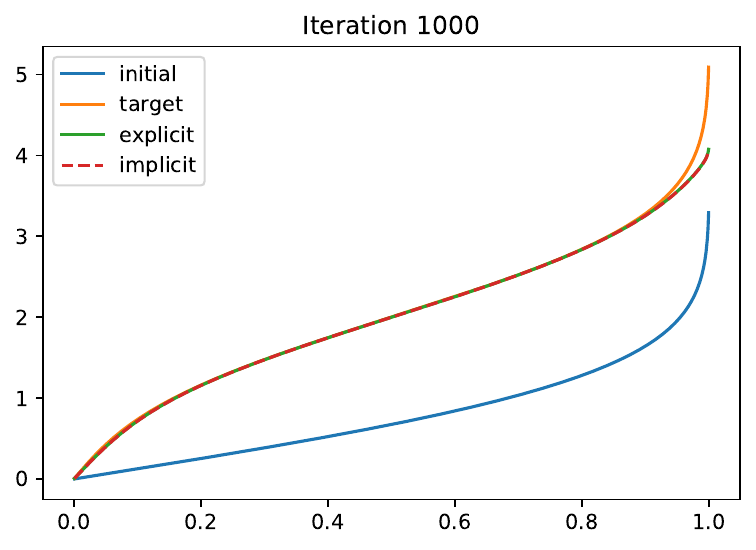}
    \includegraphics[width=.3\textwidth]{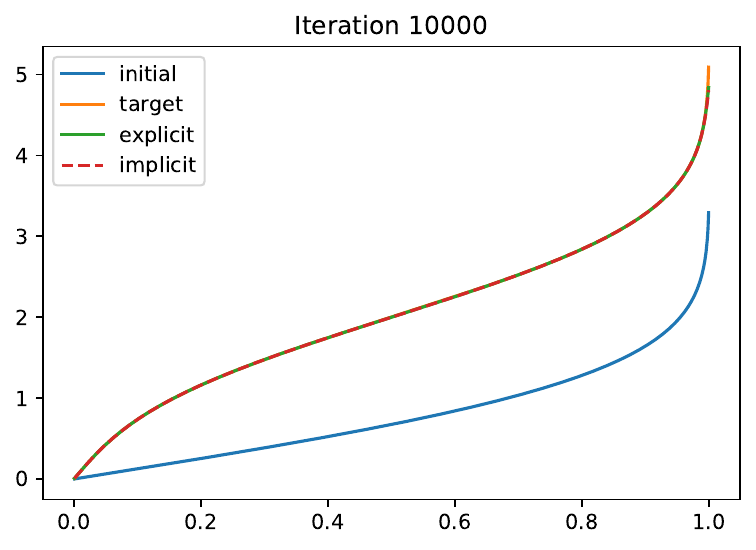}
    \caption{Comparison of the quantile functions belonging to the implicit (red) and explicit (green) Euler schemes between $\mu_0 \sim \mathcal{FN}(0, 1)$ and $\nu \sim \mathcal{FN}(2, 1)$ with $\tau = \tfrac{1}{100}$. For the corresponding densities, see \figref{fig:Folded_Norm_to_Folded_Norm}.}
    \label{fig:Folded_Norm_to_Folded_Norm_q}
\end{figure}

 
\subsection{Implicit Euler Scheme for the Flow Towards \texorpdfstring{$\delta_0$}{a Dirac Measure at Zero} Starting in a Uniform or Gaussian Measure}
\label{app:ex}
 Let $\nu = \delta_0$.
    Then the multi-valued operator $I + 2 \tau [R_{\nu}^-, R_{\nu}^+]$ appearing on the left hand side of \eqref{eq:implicit_Euler_pointwise} is given by
    \begin{equation*}
        \R \ni x \mapsto x + 2 \tau [R_{\nu}^-(x), R_{\nu}^{+}(x)]
        = \begin{cases}
            x, & \text{if } x < 0, \\
            [0, 2 \tau], & \text{if } x = 0, \\
            x + 2 \tau, & \text{if } x > 0,
        \end{cases}
    \end{equation*}
    and its inverse is given by $S_{\tau}(\cdot - \tau)$, where
    \begin{equation*}
        S_{\tau} \colon \R \to \R, \qquad
        x \mapsto \begin{cases}
            x + \tau,   & \text{if } x < - \tau, \\
            0,          & \text{if } - \tau \le x \le \tau, \\
            x - \tau,   & \text{if } x > \tau,
        \end{cases}
    \end{equation*}
    is the soft-shrinkage operator with threshold $\tau > 0$.
    
    Thus the implicit Euler scheme on quantile functions is given by the simple update
    \begin{equation*}
        g_{n + 1}(s) = S_{\tau}(g_n(s) + 2 \tau s - \tau).
    \end{equation*} 
    For a uniform initial distribution and for a Gaussian initial distribution, these iterates are displayed in \figref{fig:implicit_Euler_Dirac_Target}.
    
    \begin{figure}[H]
        \centering
        \includegraphics[width=0.4\textwidth]{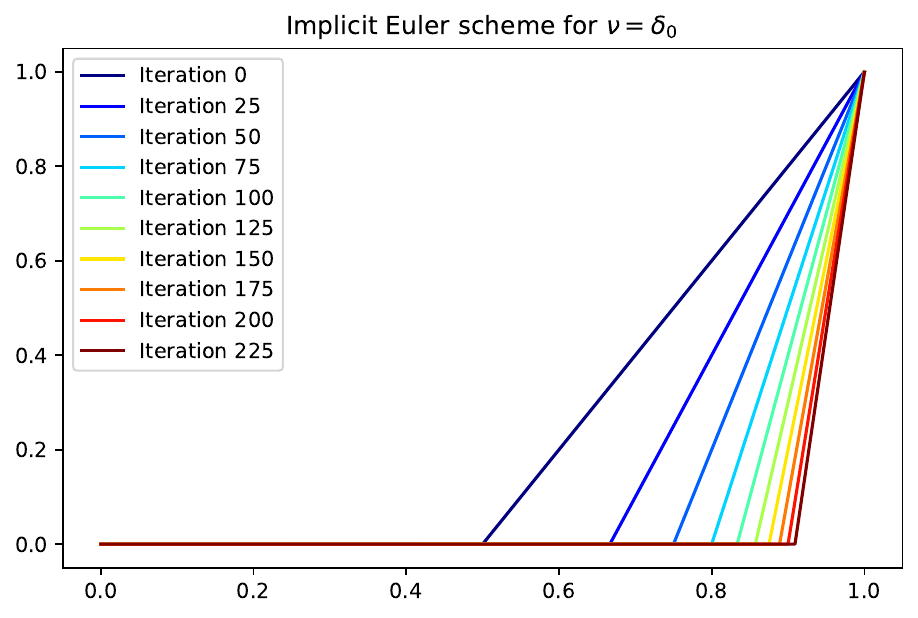}%
        \includegraphics[width=0.4\textwidth]{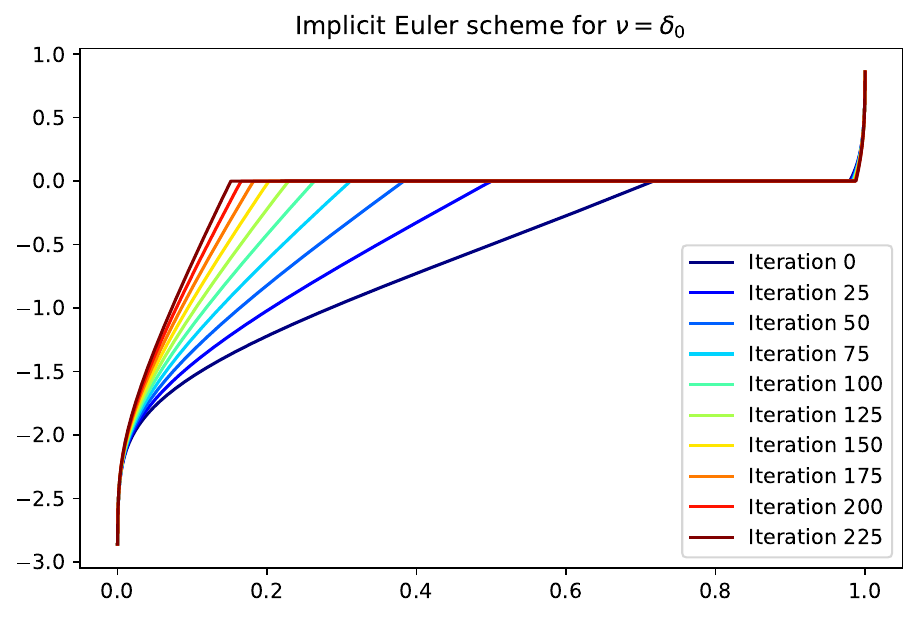}
        \caption{Iterates (quantile functions) of the implicit Euler scheme \eqref{eq:cauchy} with $\nu = \delta_0$ and $\tau = \tfrac{1}{100}$ starting in
        $\mu_0 \sim \mathcal U[0, 1]$ (left) and $\mu_0 \sim \NN(-1, 0.5)$ (right).}
        \label{fig:implicit_Euler_Dirac_Target}
    \end{figure}

\end{document}